\documentclass[11pt,letterpaper]{article}
\usepackage{amsfonts, amsmath, amssymb, amscd, amsthm, color, graphicx, mathrsfs, wasysym, setspace, mdwlist, calc, float, mathtools}
\usepackage{tikz}
\usepackage{comment}
\usepackage{setspace}
\usepackage{blindtext}
\usepackage{xcolor}
\usepackage{hyperref}
\usepackage{tocloft}
\usepackage{xcolor}
\usepackage{hyperref}
\hypersetup{linktocpage}
\hypersetup{colorlinks,
    linkcolor={red!50!black},
    citecolor={blue!80!black},
    urlcolor={blue!80!black}}
\usepackage{float}

\newcommand{\NN}{\mathbb{N}}
\newcommand{\ZZ}{\mathbb{Z}}
\newcommand{\RR}{\mathbb{R}}
\newcommand{\HH}{\mathbb{H}}
\newcommand{\CC}{\mathbb{C}}

\renewcommand{\d}{{\rm d}}

\renewcommand{\d}{{\rm d}}

\renewcommand{\phi}{\varphi}
\renewcommand{\l}{\langle}
\renewcommand{\r}{\rangle}

\DeclareMathOperator{\rist}{Rist}
\DeclareMathOperator{\Mon}{Mon}
\DeclareMathOperator{\Aut}{Aut}
\DeclareMathOperator{\Int}{Int}
\DeclareMathOperator{\Fix}{Fix}

\DeclareMathOperator{\supp}{supp}
\DeclareMathOperator{\SL}{SL}
\DeclareMathOperator{\PSL}{PSL}

\DeclareMathOperator{\Sym}{Sym}

\DeclareMathOperator{\Isom}{Isom}

\DeclareMathOperator{\HNN}{HNN}

\DeclareMathOperator{\SO}{SO}

\DeclareMathOperator{\CAT}{CAT}

\newcommand\blfootnote[1]{%
  \begingroup
  \renewcommand\thefootnote{}\footnote{#1}%
  \addtocounter{footnote}{-1}%
  \endgroup
}

\newfont{\eufm}{eufm10}

\newtheorem{thmA}{Theorem}[section]

\newtheorem{corA}[thmA]{Corollary}
\newtheorem{propA}[thmA]{Proposition}

\newtheorem{thm}{Theorem}[section]
\newtheorem{cor}[thm]{Corollary}
\newtheorem{lem}[thm]{Lemma}
\newtheorem{prop}[thm]{Proposition}

\theoremstyle{definition}
\newtheorem{defn}[thm]{Definition}

\theoremstyle{remark}
\newtheorem{rem}[thm]{Remark}
\newtheorem{ex}[thm]{Example}

\title{Boundary dynamics, triple transitivity, and mixed identities in weakly hyperbolic groups}
\author{Ekaterina Rybak}
\date{}
\begin{document}
\maketitle
\vspace{-10mm}
\begin{abstract}\blfootnote{\textbf{MSC} Primary: 20F65, 20F69. Secondary: 20F67, 46L05}
We study the interplay between the algebraic and dynamical properties of groups that admit a general type action on a $\delta$-hyperbolic space such that the induced action on the limit set in the Gromov boundary is faithful. We divide the class of such groups into two subclasses based on a dynamical criterion: groups whose induced action on the limit set is topologically free, and those whose action is not. We prove that satisfying the criterion is equivalent to a purely algebraic property of being mixed identity free, generalizing results of P. Fima, F. Le Ma\^{i}tre, S. Moon, and Y. Stalder from groups acting on trees to groups acting on arbitrary hyperbolic spaces. As one corollary, we obtain that the reduced $C^*$-algebra of such groups is selfless, using the results of N. Ozawa. For the subclass of groups whose action on the limit set is not topologically free, we prove the rigidity result for all $3$-transitive faithful actions and bound the transitivity degree by $3$, generalizing the result of A. Le Boudec and N. Matte Bon. As an application, we show that non-affine simple Kac-Moody groups over finite fields are MIF, answering the question of J. Belk, F. Fournier-Facio, J. Hyde, and M. Zaremsky.
\end{abstract}

\tableofcontents

\section{Introduction}

\subsection{Weakly hyperbolic groups}
One of the generalizations of the class of non-elementary hyperbolic groups is the class of acylindrically hyperbolic groups. Recently, studies of acylindrically hyperbolic groups have attracted a lot of attention. A group is {\it acylindrically hyperbolic} if it admits an acylindrical action of general type on a hyperbolic space. The acylindricity condition is a generalization of properness of the action. We refer the reader to \cite{Osi16} for the definitions. The class of acylindrically hyperbolic groups is large; in particular, it includes all non-elementary hyperbolic and relatively hyperbolic groups, mapping class groups of punctured closed surfaces except for surfaces with genus $g=0$ and fewer than three punctures, $Out(F_n)$ for $n \geq 2$, directly indecomposable right-angled Artin groups, and 1-relator groups with at least 3 generators. Despite being such a broad class, the requirement of the acylindricity of the action is restrictive enough to obtain nontrivial results about the algebraic properties of acylindrically hyperbolic groups. For example, acylindrically hyperbolic groups are closed under taking infinite normal subgroups \cite{Osi16}, are SQ-universal, and have finite amenable radical \cite{DGO}. Finitely generated acylindrically hyperbolic groups have cut points in all of their asymptotic cones \cite{Sis} and have exponential conjugacy growth \cite{HO13}. 

We want to enlarge the class of acylindrically hyperbolic groups even further and study what properties hold when the acylindricity condition is removed. Following the terminology from \cite{MT}, we say
\begin{defn}
A group $G$ is {\it weakly hyperbolic} if it admits a general type action on a geodesic hyperbolic space. An isometric action on a hyperbolic space is {\it of general type} if orbits are unbounded and $G$ contains two loxodromic elements with disjoint fixed points on the Gromov boundary.
\end{defn}

\begin{figure}
  \centering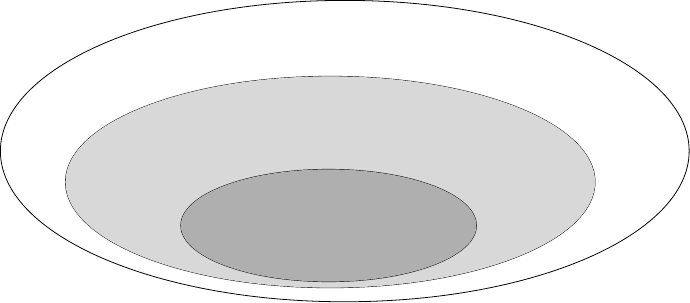\\
   \caption{The class of weakly hyperbolic groups. By $F_2$ we denote the free group on $2$ generators, $MCG(\Sigma_g)$ denotes the mapping class group of any punctured closed surface, except for surfaces with genus
$g=0$ and fewer than three punctures, $PSL_2(\CC)$ denotes the projective special linear group over $\CC$.}\label{classes inclusion}
\end{figure}

We refer the reader to Section \ref{Group actions on hyperbolic spaces} for details on Gromov hyperbolic spaces and the classification of group actions on them.
Examples of weakly hyperbolic groups that are not necessarily acylindrically hyperbolic come from groups acting on trees and, more generally, groups of isometries of hyperbolic spaces. 

Many properties of particular examples of weakly hyperbolic groups are well documented in the literature. Various results on random walks in weakly hyperbolic groups were obtained in \cite{MT}, \cite{Sun}, \cite{BMSS}, \cite{AMS}, \cite{AS}, \cite{GTT22}, \cite{Cho}, and \cite{Hor}. However, not many group structural results are known about the whole class. Using a simple ping-pong argument, one can prove that any weakly hyperbolic group contains a nonabelian free subgroup (see \cite[8.2.F]{Gro87} or \cite[Thm. 2.7]{Ham}). In \cite{Osi17}, it is proved that for every invariant random subgroup $H$ of a countable weakly hyperbolic group $G$, when $G$ acts on some hyperbolic space via a general type action, the induced action of $H$ either has the same limit set as $G$ or all orbits of $H$ are bounded. In particular, every normal subgroup of a weakly hyperbolic group satisfies this dichotomy. In \cite[Thm 1.2]{GOR} it is proved that the Frattini subgroup of a countable weakly hyperbolic group acts on a hyperbolic space with bounded orbits, and thus has infinite index.

 A natural question to ask is what other interesting algebraic properties are satisfied by every weakly hyperbolic group. Finding such properties is challenging, as having just the general type action without any properness condition is not very limiting. Therefore, we want to impose an additional condition on the group action.

Recall that the limit set $\Lambda_S(G)$ in the Gromov boundary of a hyperbolic space $S$ is the closure of the set of accumulation points of orbits under the $G$-action. 

\begin{defn}
We say that a group $G$ is {\it faithful weakly hyperbolic} if it admits a general type action on a hyperbolic space $S$ such that the induced $G$-action on $\Lambda_S(G)$ is faithful. 
\end{defn}

Note that not every weakly hyperbolic group is faithful weakly hyperbolic. A source of examples of weakly hyperbolic groups that are not faithful weakly hyperbolic groups is given in Proposition \ref{WH not FWH groups}. However, by Proposition \ref{faithful weakly hyperbolic quotient}, any weakly hyperbolic group has a faithful weakly hyperbolic quotient. Here, $E_S(G)$ is the {\it elliptic radical} with respect to the action of $G$ on $S$. It is the set of all elements of $G$ that act on $\Lambda_S(G)$ trivially. We refer the reader to Section \ref{elliptic radical} for the basic properties of $E_S(G)$.

\begin{propA}\label{faithful weakly hyperbolic quotient}
Suppose that a group $G$ admits a general type action on a hyperbolic space $S$. Then $G/E_S(G)$ is faithful weakly hyperbolic.
\end{propA}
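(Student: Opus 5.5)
The plan is to build a new hyperbolic space on which $G/E_S(G)$ acts, keeping the same limit set. The obvious attempt, letting $G/E_S(G)$ act on $S$ itself, fails because $E_S(G)$ need not act trivially on $S$. Instead, I would use the fact that $E_S(G)$ is a normal subgroup of $G$ acting trivially on $\Lambda_S(G)$. Normality holds because $\Lambda_S(G)$ is $G$-invariant, so the kernel of $G\to \mathrm{Homeo}(\Lambda_S(G))$ is normal. By the dichotomy recalled from \cite{Osi17} for normal subgroups, $E_S(G)$ either has bounded orbits on $S$ or has limit set all of $\Lambda_S(G)$. The second alternative is impossible. A normal subgroup with unbounded orbits contains a loxodromic element, as in the proof of that dichotomy. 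A loxodromic element acts with north--south dynamics on $\Lambda_S(G)$, which has more than two points since the action is of general type, so it cannot act trivially there. Hence $E_S(G)$ has bounded orbits on $S$.

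Next I would pass to a quotient space on which $E_S(G)$ acts trivially. The cleanest option is the coned-off or orbit-space construction. Let $N=E_S(G)$ and take $S' = S/N$ with the quotient pseudometric $d'(Nx,Ny)=\inf_{n\in N} d(x,ny)$. Alternatively, and more robustly, use the map $x\mapsto N\cdot x$ to the space of $N$-orbits with Hausdorff distance. Since every $N$-orbit has diameter at most some uniform $D$, the natural map $S\to S'$ is a $(1,2D)$-quasi-isometry. Uniformity of $D$ needs care: bounded orbits only give $\operatorname{diam}(Nx)<\infty$ for each $x$. However, $\operatorname{diam}(Nx)\le \operatorname{diam}(Nx_0)+2d(x,x_0)$, and this bound is not uniform. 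This is the main obstacle.

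To handle it, I would use a different model. Take $S'$ to be the Cayley-graph-type space: fix a basepoint $x_0$ and consider the orbit $Gx_0$. Replace $x_0$ by a point $y_0$ fixed up to bounded error by $N$, namely a quasi-centre of the bounded orbit $Nx_0$. Then $\operatorname{diam}(Ng y_0)=\operatorname{diam}(g^{-1}Ng\,y_0)=\operatorname{diam}(Ny_0)=:D$ by normality. So on the orbit $Gy_0$ the $N$-orbits are uniformly bounded. Now build the graph $\Gamma$ whose vertex set is the coset space $G/N$, joining $gN$ and $hN$ when $d(gy_0,hy_0)\le R$ for a large constant $R$. This is well defined up to the additive constant $D$. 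For $R$ large, standard arguments (as for orbit graphs of actions on geodesic hyperbolic spaces, with $R$ exceeding a quasi-geodesic constant) show that $\Gamma$ is connected and quasi-isometric to the quasi-convex hull of $Gy_0$ in $S$. It is therefore hyperbolic, and $G/N$ acts on it isometrically.

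Finally I would check the conclusions. Loxodromic elements of $G$ map to loxodromic elements of $G/N$ acting on $\Gamma$, with the same fixed points under the boundary identification $\partial\Gamma\cong\Lambda_S(G)$ induced by the quasi-isometric embedding. Hence the action of $G/N$ on $\Gamma$ is of general type. This identification is equivariant, so $\Lambda_\Gamma(G/N)$ corresponds to $\Lambda_S(G)$, and $gN$ acts trivially on it iff $g\in N$. The induced action is therefore faithful, and $G/E_S(G)$ is faithful weakly hyperbolic. The step I expect to require the most care is the hyperbolicity of $\Gamma$ together with the identification of its boundary with $\Lambda_S(G)$. If quasi-convexity of the orbit is problematic, I would instead take the hull of $Gy_0$ in $S$ with $N$-orbits collapsed via the Hausdorff-distance metric restricted to that uniformly bounded family.
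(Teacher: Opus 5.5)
\textbf{There is a genuine gap in your main construction.} You claim that, for $R$ large, the graph $\Gamma$ on $G/N$ (with edges when orbit points are $R$-close) is connected and quasi-isometric to the quasi-convex hull of $Gy_0$. This is false in general.

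An orbit of a general type action need be neither coarsely connected nor quasiconvex. Since paths in $\Gamma$ must hop between orbit points, $\Gamma$ records the intrinsic, possibly badly distorted, geometry of $G$ rather than the geometry of $S$. Two examples:
\begin{itemize}
\item Let $K$ be a finitely generated subgroup of a hyperbolic group $H$ that is not itself hyperbolic (such subgroups exist, e.g.\ N.~Brady's finitely presented examples). Then $K$ acts properly and with general type on a Cayley graph of $H$, and $N=E_S(K)$ is finite. For large $R$ your $\Gamma$ is quasi-isometric to a Cayley graph of $K$, hence not hyperbolic.
\item If $G$ is not finitely generated, $\Gamma$ can be disconnected for every $R$. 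Take $F_\infty=\langle a_1,a_2,\dots\rangle$ acting on its Cayley tree with the $a_i$-edges given length $i$.
\end{itemize}
So neither the hyperbolicity of $\Gamma$ nor the identification $\partial\Gamma\cong\Lambda_S(G)$, on which your faithfulness argument rests, follows from this construction.

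\textbf{Your fallback is the right route, but it skips the obstacle you identified.} Working on a $G$-invariant quasiconvex hull, collapsing $N$-orbits and taking a graph is essentially what the paper does by quoting \cite[Lemma 4.10]{BFGS}. That lemma gives a $G$-invariant $Y\subset S$, a hyperbolic graph $S'$ with a $G/N$-action, and a $G$-equivariant quasi-isometry $Y\to S'$. However, you proved a uniform bound on $N$-orbits only for points of $Gy_0$, not for points of the hull.

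Here is how to close it. Let $Y$ be the union of all geodesics joining points of $Gy_0$. Take $p\in[gy_0,hy_0]$ and $n\in N$. Then $np$ lies on $n[gy_0,hy_0]$ at the same distance from $ngy_0$ as $p$ is from $gy_0$, and both endpoints move by at most $D$. The $2\delta$-slimness of the resulting quadrilateral gives $d(p,np)\le 3D+4\delta$.

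With this bound the rest goes through:
\begin{itemize}
\item $Y$ is $G$-invariant and $2\delta$-quasiconvex.
\item $d'(Np,Nq)=\inf_{n\in N}d(p,nq)$ is a $G$-invariant pseudometric on $Y/N$, with $d'\le d\le d'+3D+4\delta$.
\item For $R\ge 4\delta+1$, the graph on $Y/N$ with edges when $d'\le R$ is connected, hyperbolic, and $G$-equivariantly quasi-isometric to $Y$.
\item $\partial Y=\Lambda_S(G)$, because every point of a geodesic $[a,b]$ has Gromov product (based at $y_0$) with $a$ or with $b$ within $\delta$ of its distance to $y_0$.
\end{itemize}
Your faithfulness argument via the equivariant identification of the boundary with $\Lambda_S(G)$ then works. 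The paper argues differently: the preimage of $E_{S'}(G/N)$ is an elliptic normal subgroup of $G$, so by maximality of $E_S(G)$ \cite[Proposition 3.4]{Osi17} it lies in $E_S(G)$. Your first paragraph, showing that $E_S(G)$ is elliptic, is correct.
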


Let $X$ be a topological space. Recall that an action of a group $G$ on $X$ by homeomorphisms is {\it topologically free} if the set of fixed points $\Fix_X(g)$ of every non-trivial element $g \in G$ has empty interior.

\begin{defn}
A weakly hyperbolic group $G$ is called {\it lim-free} if it admits a general type action on a hyperbolic space $S$ such that the induced action on $\Lambda_S(G)$ is topologically free.
\end{defn}

We have an obvious inclusion: Lim-free weakly hyperbolic groups are faithful weakly hyperbolic groups. While the property of being weakly hyperbolic is not closed under taking non-trivial normal subgroups, both properties of being faithful weakly hyperbolic and of being lim-free are closed under taking non-trivial normal subgroups by Lemma \ref{lim-free passes to normal subgroups}. We will see that lim-free weakly hyperbolic groups and faithful weakly hyperbolic groups that are not lim-free are two distinct classes. Groups from the former class include acylindrically hyperbolic groups without non-trivial finite normal subgroups (see Corollary \ref{acylindrically hyperbolic groups are lim-free}) and behave similarly to them, while groups from the latter class have very different algebraic properties. 

\subsection{MIF weakly hyperbolic groups}

Let $F_n$ denote a free group of rank $n$. Recall that a group $G$ satisfies a {\it mixed identity} $w=1$ for some $w \in G * F_n$ if every homomorphism $G * F_n \rightarrow G$ that is the identity on $G$ sends $w$ to $1$. A mixed identity $w=1$ is {\it non-trivial} if $w$ is a non-trivial element of $G * F_n$, and a group $G$ is {\it mixed identity free} (or MIF for brevity) if it does not satisfy any non-trivial mixed identity. 
The property of being MIF imposes strong restrictions on the algebraic structure of $G$. If $G$ is MIF, then $G$ has infinite conjugacy classes, it is directly indecomposable, and it has infinite girth (see \cite[Prop. 5.4]{OH}). A countable group $G$ is MIF if and only if $G$ and $G * F_n$ are universally equivalent as $G$-groups for all $n \in \NN$ (see \cite[Prop. 5.3]{OH}). A surprising connection between MIF groups and the Boone-Higman conjecture was recently discovered in \cite{BFFHZ}.

In our first main theorem, part (b) reveals a rigidity phenomenon: properties of one general type action impose strong constraints on all general type actions. Furthermore, part (c) shows that the geometrically defined class of lim-free groups can be characterized within the class of faithful weakly hyperbolic groups by a purely algebraic condition.

\begin{thmA}\label{main_theorem}
For any faithful weakly hyperbolic  group $G$, the following conditions are equivalent:
\begin{enumerate}
\item[(a)] $G$ is lim-free;
\item[(b)] for any general type action of $G$ on a hyperbolic space with trivial elliptic radical, the induced action on the limit set is topologically free;
\item[(c)] $G$ is MIF.
\end{enumerate}
\end{thmA}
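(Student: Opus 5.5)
The proof will establish the cycle (b) $\Rightarrow$ (a) $\Rightarrow$ (c) $\Rightarrow$ (b).

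\textbf{(b) $\Rightarrow$ (a).} This is immediate. Since $G$ is faithful weakly hyperbolic, it has a general type action on some $S$ with faithful action on $\Lambda_S(G)$, which is exactly the statement $E_S(G)=1$. By (b), the induced action on $\Lambda_S(G)$ is topologically free, so $G$ is lim-free.

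\textbf{(a) $\Rightarrow$ (c).} Fix a general type action on $S$ whose action on $\Lambda=\Lambda_S(G)$ is topologically free, and a nontrivial $w=g_0x_{i_1}^{\epsilon_1}g_1\cdots x_{i_k}^{\epsilon_k}g_k\in G*F_n$, written in reduced form. We must find $h_1,\dots,h_n\in G$ with $w(h_1,\dots,h_n)\neq 1$.

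The plan is a ping-pong argument on the boundary, modelled on the tree case.
\begin{enumerate}
\item Loxodromic fixed points are dense in $\Lambda\times\Lambda$ (pairs of distinct points), and $\Lambda$ is perfect. Using these, choose loxodromics $h_1,\dots,h_n$ with pairwise disjoint fixed pairs $\{h_j^{\pm}\}$.
\item Arrange that each coefficient $g_i$ occurring between letters moves the relevant attracting and repelling points off the fixed pairs, i.e.\ $g_i h_j^{\pm}\notin\{h_l^{\pm}\}$ wherever the word needs this. Topological freeness is used here: if $g_i\neq 1$, then $\Fix(g_i)$ has empty interior, so the fixed points can be chosen generically.
\item Replace $h_j$ by a high power $h_j^{N}$. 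Then for large $N$ the standard north--south ping-pong shows that $w(h_1^N,\dots,h_n^N)$ maps a small neighbourhood $U$ of a suitably chosen point into a disjoint small neighbourhood $V$, so the element is nontrivial.
\end{enumerate}
Coefficients $g_i=1$ do not occur in the interior of a reduced word. The outer coefficients $g_0,g_k$ are handled by conjugating $w$, which changes nothing.

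The main obstacle is step 2. For each interior coefficient we need the sets $\{p: g_ip\in \text{small neighborhood of }p\}$ to avoid the chosen fixed points. This requires the neighbourhoods $U_j^{\pm}$ to satisfy $g_iU_j^{\epsilon}\cap U_l^{\epsilon'}=\emptyset$ whenever $(j,\epsilon)$ and $(l,\epsilon')$ are juxtaposed in the word. I would use finitely many open conditions plus density of loxodromic fixed pairs, choosing the points one at a time off the nowhere dense sets $\Fix(g_i)$ and their finite translates.

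\textbf{(c) $\Rightarrow$ (b).} Argue by contraposition. Take a general type action on $S$ with $E_S(G)=1$ whose action on $\Lambda$ is not topologically free, so some $g\neq 1$ fixes a nonempty open set $U\subset\Lambda$. By density of loxodromic fixed pairs, pick a loxodromic $h$ with $h^{\pm}\in U$. Since $E_S(G)=1$, there is $f$ with $fg f^{-1}$ not fixing $U$ pointwise; in fact I would take $f\in G$ moving the complement $\Lambda\setminus U$ into $U$, using north--south dynamics of a loxodromic whose attracting and repelling points lie in $U$. Then the supports of $g$ and $f^{-1} x^{-1} g x f$ become disjoint under a suitable substitution.

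The cleanest version is the standard trick for groups with a nontrivial element of small support. For every $x\in G$, the commutator $[g, x^{-1}c\, g\, c^{-1}x]$ should be trivial. Here $c$ is chosen so that $\supp(cgc^{-1})\subset U$ and, for arbitrary $x$, some fixed element of the word forces disjoint supports. This is the subtle point, since $x$ is arbitrary. I would follow the tree-case proof: use the identity
\[[g,\,x^{-1}[g,\,y g y^{-1}]x]=1,\]
or a variant with two variables and two nontrivial rigid stabilizers. By a Baire or pigeonhole argument on $\Lambda$, for every substitution one of the commuting pairs has disjoint supports, so an identity such as $[[g,x g x^{-1}],[g,x^{-1}gx]]$ or a longer nested commutator always holds. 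Finally, faithfulness on $\Lambda$ ($E_S(G)=1$) is needed to check that the chosen word $w$ is nontrivial in $G*F_n$; this is immediate from the normal form once $g\neq 1$.
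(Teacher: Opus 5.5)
Your (b)$\Rightarrow$(a) is fine. Your (a)$\Rightarrow$(c) is a workable alternative to the paper's argument. The paper first reduces to one variable via \cite[Rem.~5.1]{OH}. It then uses a single loxodromic with both fixed points in a small neighbourhood $U$ of a point $s$ moved by every coefficient, so that $a_iU\cap U=\emptyset$ handles all juxtapositions at once. You instead ping-pong with $n$ loxodromics whose $2n$ fixed points are chosen off a finite union of closed nowhere dense sets in $\Lambda_S(G)^{2n}$. This avoids the citation at the cost of more bookkeeping. (Trivial coefficients can in fact occur between letters, e.g.\ in $x_1x_2$. Consecutive non-inverse letters are handled by ordinary ping-pong, so this slip is harmless.)

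The genuine gap is in (c)$\Rightarrow$(b).

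Knowing that some $g\neq 1$ has non-dense support, or even that the boundary action is micro-supported, does not by itself produce a mixed identity. What you need is a finite family of pairs of nonempty open sets $(A_i,B_i)$ such that for \emph{every} $x\in G$ some $A_i\cap xB_i=\emptyset$. That is, the action on $\Lambda_S(G)$ must fail to be topologically $k$-transitive for some $k$. Only then do elements $a_i,b_i\neq 1$ supported in $A_i,B_i$ give $[a_1^x,b_1]=1\lor\dots\lor[a_k^x,b_k]=1$ for all $x$. An iterated commutator with fresh conjugating variables then converts this disjunction into one nontrivial identity (Lemma \ref{top.k-transitive or micro-supported}).

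No Baire or pigeonhole argument on $\Lambda_S(G)$ can supply this. Micro-supported actions can be topologically $k$-transitive for all $k$ (Thompson's group $V$ on the Cantor set is an example). The words you propose, such as $[[g,xgx^{-1}],[g,x^{-1}gx]]$, have no reason to vanish identically.

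The missing input is geometric: Proposition \ref{not 4-transitivity}, which says the limit-set action of a general type action is never topologically $4$-transitive. The paper proves it with the cross-ratio $[a,b,c,d]_o$. This quantity is isometry invariant, independent of $o$ up to $48\delta$, stable under small perturbations of the four points, and unbounded on any dense subset of $\Lambda_S(G)$.

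You also need the compression step (Lemmas \ref{Gen type actions admit compressible open sets} and \ref{Not_top_free actions are micro-supported}) to place nontrivial elements inside each $A_i$ and $B_i$. Your loxodromic with fixed points in $U$ is the right idea there.

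Finally, $E_S(G)=1$ is not what makes $w$ nontrivial in $G*F_n$; that only needs $a_i,b_i\neq 1$. What faithfulness guarantees is that elements with disjoint supports on $\Lambda_S(G)$ actually commute in $G$.
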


\begin{figure}
  \centering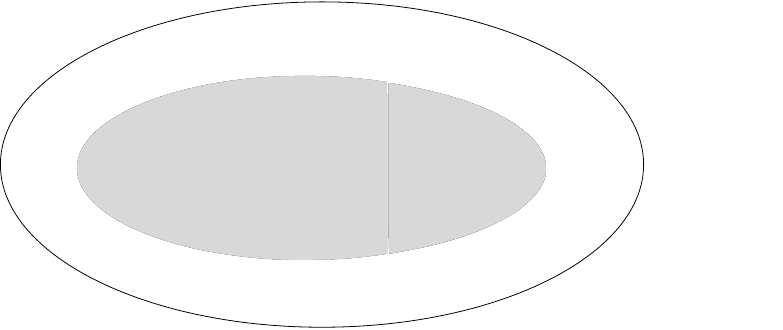\\
   \caption{The interaction between classes of weakly hyperbolic groups and MIF groups. Here $A*_VB$ denotes the verbal product of non-trivial groups $A$ and $B$ with respect to a nonempty set of words $V$, where at least one of the factors is weakly hyperbolic (see Example \ref{Verbal products}), $V_n$ denotes the  Higman-Thompson group (see \cite[Thm. 3.5]{BEH}), and $H$ stands for a group constructed in Example \ref{example H}.}\label{MIF and not MIF}
\end{figure}

Considering mixed identities in weakly hyperbolic groups was inspired by the results about groups acting on simplicial trees. An action of a group $G$ on a tree $T$ is called {\it minimal} if there are no nontrivial invariant subtrees. Combining \cite[Proposition 3.7]{BM} with their results, P. Fima, F. Le Ma\^{i}tre, S. Moon, and Y. Stalder prove that if the action of a group $G$ on a simplicial tree $T$ is faithful, minimal, and of general type, then $G$ is MIF if and only if the induced action on $\partial T$ is topologically free \cite[Theorem B]{FMMS}. Their result can be obtained as a corollary of Theorem \ref{main_theorem} (See Corollary \ref{previous results about MIF in trees}).

One of the applications of Theorem \ref{main_theorem}, not covered by \cite[Theorem B]{FMMS}, is the proof that infinite simple Kac-Moody groups over finite fields are MIF (see Example \ref{ex BM}). This partially answers Question 3.6 from \cite{BFFHZ}. 

Let $G$ be a group and $w(t) \in G*\langle t\rangle$ be a mixed identity. The solution set $\{g \in G \mid w(g)=1\}$ is called { \it the primitive solution set corresponding to $w(t)$}. {\it The Zariski topology} on $G$ is defined by taking the collection of primitive solution sets to be a subbasis for the closed sets of the topology. A Zariski-closed subgroup (or more generally, a subset) of $G$ is called {\it algebraic}. The natural question to ask is what proper subgroups of a group $G$ can be algebraic. If $G$ is torsion-free non-elementary hyperbolic, then all such subgroups are cyclic \cite{KM}. In \cite{Jac}, structural conditions for non-elementary algebraic subgroups of acylindrically hyperbolic groups were obtained. By a similar argument as in the proof of Theorem \ref{main_theorem}, we get

\begin{corA}\label{algebraic subgroups}
Let $G$ be a lim-free weakly hyperbolic group. Then $G$ does not contain any non-trivial proper normal Zariski-closed subgroups.
\end{corA}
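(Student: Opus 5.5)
Let $N \le G$ be a non-trivial proper normal subgroup. Suppose, for contradiction, that $N$ is Zariski-closed. The plan is to write down, for each $g\notin N$, a mixed identity in one variable that vanishes on $N$ but not at $g$; then to use the dynamics of a general type action with topologically free boundary action to show that such a word must be trivial or must fail to vanish somewhere on $N$.

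Fix a general type action of $G$ on a hyperbolic space $S$ whose induced action on $\Lambda=\Lambda_S(G)$ is topologically free. Since $N$ is closed and proper, there is $g\notin N$ in the open complement. Hence there is a basic open neighbourhood of $g$ missing $N$. Concretely, there are finitely many words $w_1(t),\dots,w_k(t)\in G*\langle t\rangle$ with two properties: $N\subseteq \bigcup_i\{x\mid w_i(x)=1\}$, and $w_i(g)\neq1$ for all $i$. Not all $w_i$ are trivial elements of $G*\langle t\rangle$, since $w_i(g)\ne 1$. In fact no $w_i$ is trivial, for the same reason.

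The next step is to combine the $w_i$ into a single word $w(t)$ with $w(x)=1$ for all $x\in N$ but $w(g)\neq1$. The standard trick is nested commutators with conjugating constants: $[w_1(t),c\,w_2(t)c^{-1}]$, where $c$ is chosen so that the commutator is non-trivial at $t=g$. This needs that the centraliser of a non-trivial element is "small". That follows from topological freeness: a non-trivial $h$ moves some open set $U\subset\Lambda$ off itself, and one picks $c$ as a loxodromic whose dynamics moves supports appropriately. This mirrors the step in the proof of Theorem~\ref{main_theorem}, (a)$\Rightarrow$(c), which I assume produces, for non-trivial elements $a,b$, some $c$ with $[a,cbc^{-1}]\neq 1$. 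Thus I get one word $w$ vanishing on all of $N$ and non-vanishing at $g$.

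Next I use normality. Since $N$ is normal and non-trivial, Lemma~\ref{lim-free passes to normal subgroups} and the result of \cite{Osi17} show that $N$ has unbounded orbits and $\Lambda_S(N)=\Lambda$. So $N$ contains loxodromics, and its action on $\Lambda$ is minimal with general type dynamics. The claim is that a word $w(t)$ vanishing identically on $N$ is a "mixed identity of $N$ with constants in $G$". The MIF proof from Theorem~\ref{main_theorem} should apply verbatim with the variable restricted to $N$. There, one substitutes $t=x$ with $x$ a high power of a suitably chosen loxodromic (here taken in $N$) with attracting/repelling points placed by minimality in open sets avoiding the fixed sets of the constants, and runs a ping-pong on $\Lambda$. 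Topological freeness guarantees each non-trivial constant moves some small open set. The ping-pong then shows $w(x)\neq1$ unless $w$ is trivial in $G*\langle t\rangle$.

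Since $w(g)\ne1$, the word $w$ is non-trivial, which is a contradiction, so no such $N$ exists. The main obstacle is the last step: the ping-pong in Theorem~\ref{main_theorem} must only require the variable to range over elements with freely prescribable fixed points in $\Lambda$. My first attempt is to check that the loxodromics used there can be found in any subgroup $N$ with $\Lambda_S(N)=\Lambda$. This uses density of pairs of fixed points of loxodromics of $N$ in $\Lambda\times\Lambda$, which holds for general type actions.
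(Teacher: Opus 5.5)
Your proof is correct. Its core is the same as the paper's: $N$ is non-trivial and normal, so it acts with general type and $\Lambda_S(N)=\Lambda_S(G)$ (Lemma~\ref{lim-free passes to normal subgroups}). One then picks $s$ moved by all the constants, a neighbourhood $U$ of $s$ with $U\cap aU=\emptyset$ for each constant $a$, and a high power $x$ of an element of $\mathcal{L}_S(N)$ with both fixed points in $U\setminus\{s\}$. The ping-pong from Theorem~\ref{real main theorem} then produces a point of $N$ outside the given algebraic set.

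Where you differ is the preliminary reduction of $\bigcup_i\{w_i=1\}$ to a single word by nested commutators $[\,\cdot\,,c\,w_j(t)c^{-1}]$. That step is valid, and its justification is simpler than the dynamical sketch you give. For $a,b\neq 1$, the element $[a,tbt^{-1}]$ is non-trivial in $G*\langle t\rangle$ by the normal form theorem, so MIF of $G$ (Theorem~\ref{main_theorem}) directly supplies $c$ with $[a,cbc^{-1}]\neq 1$. Inductively, the nested commutator is non-trivial at $g$ and vanishes wherever some $w_i$ vanishes.

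The paper shows this step is unnecessary. The ping-pong depends only on a finite set of constants, so one collects the constants of all of $w_1,\dots,w_k$ (in cyclically reduced form) at once. A single $x\in\mathcal{L}_S(N)$ then gives $w_i(x)s\neq s$ for every $w_i\notin\langle t\rangle$ simultaneously, and $x^m\neq 1$ handles any $w_i=t^m$.

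So the paper's route is shorter and uses only topological freeness plus the dynamics of $N$. Yours invokes the full MIF conclusion for $G$, but as a by-product proves a reusable fact: any finite union of primitive solution sets that avoids a point $g$ lies inside a single primitive solution set avoiding $g$. Your choice of $g\notin N$ with $w_i(g)\neq 1$ also makes explicit why the $w_i$ are non-trivial, which the paper leaves implicit.
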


Non-lim-free faithful weakly hyperbolic groups also have interesting common purely algebraic properties. For example, they contain infinite direct sums of infinite non-abelian subgroups (see Corollary \ref{infinite direct products}). Recall that the {\it monolith} $\Mon(G)$ of a group $G$ is the intersection of all non-trivial normal subgroups of $G$, and $G$ is said to be {\it monolithic} if $\Mon(G) \neq \{1\}$. As a corollary of \cite[Corollary 1.7]{GR}, we obtain the following. 

\begin{propA}\label{monolithic groups}
Let $G$ be a faithful weakly hyperbolic group. If $G$ is not lim-free, then it is monolithic with an infinite simple monolith.
\end{propA}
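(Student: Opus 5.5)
We argue via rigid stabilizers and the action on $\Lambda=\Lambda_S(G)$, where $G$ acts on $S$ with general type and faithful action on $\Lambda$. Since $G$ is not lim-free, this particular action is not topologically free. So some $g\neq 1$ fixes pointwise a nonempty open set $U\subseteq\Lambda$.

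The plan is to show that the setting of \cite[Corollary 1.7]{GR} applies. That corollary states, roughly, that a group acting faithfully and minimally on a compact Hausdorff space with a \emph{micro-supported} (or extremely proximal) action is monolithic. Its monolith is the subgroup generated by the derived subgroups of the rigid stabilizers, and that monolith is simple.

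The key steps are as follows.
\begin{enumerate}
\item Check that the $G$-action on $\Lambda$ is minimal. Every orbit closure contains the attracting fixed points of all loxodromics, and these are dense in $\Lambda$ for a general type action.
\item Check that the action is extremely proximal, using north–south dynamics of loxodromic elements.
\item Show that the action is micro-supported: for every nonempty open $V\subseteq\Lambda$, the rigid stabilizer $\rist_G(V)$ is nontrivial. This is the main obstacle.
\item Apply \cite[Corollary 1.7]{GR} to conclude that $\Mon(G)\neq\{1\}$ and that it is simple.
\end{enumerate}

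For step 3, we start from $g$. Its support $\supp(g)$ is contained in the complement of the interior of $\Fix_\Lambda(g)$, which lies inside $\Lambda\setminus U$. Choose a loxodromic $h$ whose repelling point $h^-$ lies in $U$; this is possible by density of loxodromic fixed points. Given a target open set $V$, we may also choose $h$ with $h^+\in V$. Then $h^n$ maps the compact set $\Lambda\setminus U$ into $V$ for large $n$. The conjugate $h^n g h^{-n}$ is nontrivial by faithfulness, and it is supported in $h^n(\Lambda\setminus U)\subseteq V$. Hence $\rist_G(V)\neq 1$. The delicate point is to guarantee $\Lambda\setminus U\subseteq \Lambda\setminus\{h^-\}$ with uniform convergence, and to have loxodromics with prescribed attracting and repelling points in arbitrary open sets. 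I expect this to follow from standard ping-pong facts in Section \ref{Group actions on hyperbolic spaces}.

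It remains to show the monolith is infinite. $\Mon(G)$ is a nontrivial normal subgroup, so by Lemma \ref{lim-free passes to normal subgroups} (normal subgroups of faithful weakly hyperbolic groups remain faithful weakly hyperbolic), or directly by the dichotomy of \cite{Osi17}, it either has unbounded orbits or acts with bounded orbits. If its orbits were bounded, its limit set would be empty and it would lie in $E_S(G)$, which is trivial. So it has unbounded orbits, hence it is infinite, indeed of general type and non-abelian.

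If the version of \cite{GR} requires the monolith to be described as $[\rist_G(V),\rist_G(V)]$-generated, we would also verify that these commutator subgroups are nontrivial. This follows by choosing two noncommuting elements of $\rist_G(V)$, obtained by conjugating by different loxodromics, together with faithfulness.
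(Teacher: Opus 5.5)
Your route matches the paper's. You get micro-support by compressing the support of an element that fixes an open set (your step 3 is essentially Lemma \ref{Not_top_free actions are micro-supported}), then apply \cite[Corollary 1.7]{GR}, then get infiniteness because a finite normal subgroup is elliptic and so lies in $E_S(G)=1$.

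\textbf{The gap is in step 4.} You verify minimality and compressibility only for the action of $G$. Before invoking \cite[Corollary 1.7]{GR}, the paper checks something stronger. Let $\mathcal C$ be the family of nonempty non-dense open subsets of $\Lambda_S(G)$ and $H=\langle \rist_G(Y)\mid Y\in\mathcal C\rangle$. The hypothesis needed is that $\mathcal C$ is compressible by $H$ itself, i.e.\ the $H$-action on $\Lambda_S(G)$ is fully compressible; the monolith is then $[H,H]$.

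This is not a formality. Compressing by elements of $G$ suffices to push $[\rist_G(Y),\rist_G(Y)]$ into any nontrivial normal subgroup of $G$. But a normal subgroup of $[H,H]$ need not be normalized by $G$, so proving $[H,H]$ simple requires compressions realized by elements of $H$. That is why the criterion is phrased in terms of $H$.

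The missing step is Lemma \ref{dichotomy}, applied to $H$:
\begin{enumerate}
\item $H$ is a nontrivial normal subgroup of $G$, by micro-support.
\item $H$ acts nontrivially on $\Lambda_S(G)$, by faithfulness.
\item Hence $H$ acts on $S$ with general type and $\Lambda_S(H)=\Lambda_S(G)$.
\item Lemma \ref{Gen type actions admit compressible open sets} applied to $H$ then gives full compressibility by loxodromics of $H$.
\end{enumerate}
You already use exactly this dichotomy in your last paragraph for $\Mon(G)$; it has to be applied to $H$ before \cite{GR} can be used.

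\textbf{A secondary point.} $S$ is not assumed proper, so $\Lambda_S(G)$ need not be compact. The framework of extremely proximal actions on compact Hausdorff spaces is therefore not available. Also, what is needed is micro-support \emph{and} compressibility, not one or the other. Your arguments in steps 2 and 3 survive if you replace ``compact'' by ``closed''. The set $\Lambda\setminus U$ is closed in $\widehat S$ (as $\Lambda$ is) and misses $h^-$, so Lemma \ref{north-south} already gives $h^n(\Lambda\setminus U)\subseteq V$ for large $n$, with no compactness needed.
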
 
In this way, non-lim-free faithful weakly hyperbolic groups are very different from acylindrically hyperbolic groups, whose monolith is always trivial (see Proposition \ref{monolith is trivial}). This allows us to obtain the result of M. Hull and D. Osin \cite[Corollary 5.10]{OH} that acylindrically hyperbolic groups without nontrivial finite normal subgroups are MIF as a corollary of Theorem \ref{main_theorem} and Proposition \ref{monolithic groups} (see Corollary \ref{acylindrically hyperbolic groups are lim-free}). 

Another corollary of Proposition \ref{monolithic groups} gives us a large class of lim-free weakly hyperbolic groups.

\begin{corA}\label{res finite groups}
Every residually finite faithful weakly hyperbolic group is lim-free. In particular, every finitely generated linear faithful weakly hyperbolic group is lim-free.
\end{corA}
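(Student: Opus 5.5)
Suppose $G$ is residually finite and faithful weakly hyperbolic, and assume toward a contradiction that $G$ is not lim-free. By Proposition \ref{monolithic groups}, $G$ is then monolithic, and its monolith $M=\Mon(G)$ is an infinite simple group. We will show that residual finiteness is incompatible with an infinite monolith.

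\emph{Step 1: $M$ lies in every finite-index normal subgroup.} Let $N$ be a normal subgroup of finite index in $G$. Since $G$ is weakly hyperbolic, it contains a nonabelian free subgroup, so $G$ is infinite. Hence $N$ has finite index in an infinite group, so $N \neq \{1\}$. By the definition of the monolith as the intersection of all non-trivial normal subgroups, $M \le N$.

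\emph{Step 2: contradiction with residual finiteness.} Residual finiteness means that the intersection of all finite-index normal subgroups of $G$ is trivial. By Step 1, this intersection contains $M$, so $M=\{1\}$. This contradicts the fact that $M$ is infinite. Therefore $G$ is lim-free.

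\emph{Step 3: the linear case.} Let $G$ be finitely generated, linear, and faithful weakly hyperbolic. By Mal'cev's theorem, every finitely generated linear group is residually finite. The first part of the corollary then shows that $G$ is lim-free.

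There is no real obstacle here: all of the depth is contained in Proposition \ref{monolithic groups}. The only point needing care is that $G$ must be infinite, so that finite-index normal subgroups are non-trivial, and this follows from the existence of a free subgroup (or of loxodromic elements) in any weakly hyperbolic group.
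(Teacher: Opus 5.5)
Your proof is correct and follows the paper's route: Proposition \ref{monolithic groups} gives a non-trivial (infinite simple) monolith, residual finiteness rules this out, and Mal'cev's theorem handles the linear case. The only difference is in how the contradiction is drawn. The paper argues that $\Mon(G)$ would be an infinite simple subgroup of a residually finite group, hence itself residually finite, which is impossible. You instead observe that the monolith lies in every non-trivial, and in particular every finite-index, normal subgroup of the infinite group $G$. This is equally valid and uses only $\Mon(G)\neq 1$.
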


The famous Tits Alternative states that a finitely generated linear group is either virtually solvable or contains a nonabelian free subgroup. As another corollary of Theorem \ref{main_theorem}, we obtain an analogous statement for all subgroups of $\PSL_2(\CC)$.

\begin{corA}\label{generalized Tits Alternative}
Every subgroup of $\PSL_2(\CC)$ is either virtually solvable or MIF.
\end{corA}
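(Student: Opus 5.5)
Let $G \le \PSL_2(\CC)$ be a subgroup. The plan is to reduce to Theorem \ref{main_theorem} via the action of $\PSL_2(\CC)$ on hyperbolic $3$-space $\HH^3$, whose Gromov boundary is $\partial\HH^3=\CC P^1$. We split into cases according to the classification of the induced $G$-action on $\HH^3$.

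\textbf{Elementary case.} Suppose the action of $G$ is not of general type. Then it is bounded, horocyclic, lineal, or focal (quasi-parabolic).
\begin{itemize*}
\item If orbits are bounded, $G$ fixes a point of $\HH^3$ (or its closure is compact), so $G$ is conjugate into $\mathrm{PSU}(2)\cong \SO(3)$.
\item In the horocyclic, lineal and focal cases, $G$ fixes a boundary point or preserves a pair of boundary points. Then $G$ has an index $\le 2$ subgroup contained in a Borel subgroup (upper triangular matrices), which is solvable.
\end{itemize*}
The bounded case needs care: subgroups of $\SO(3)$ need not be virtually solvable, since dense free subgroups exist. So for bounded actions we cannot conclude solvability directly. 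Instead we use the structure of closed subgroups of $\SO(3)$. If the closure $\overline{G}$ is not all of $\SO(3)$, then its identity component is a torus or trivial, so $\overline{G}$, and hence $G$, is virtually abelian. If $\overline{G}=\SO(3)$, we must show $G$ is MIF. Since $\SO(3)$ is compact, this is not reachable from $\HH^3$ directly.

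For this subcase I would change the field. A dense finitely generated subgroup contains non-unipotent elements with entries in a finitely generated field $K$. Choosing a suitable absolute value on $K$ (archimedean via a Galois-twisted embedding, or non-archimedean via a place where some trace has absolute value $>1$, as in Tits' proof), $G$ acts on $\HH^3$ or on a Bruhat--Tits tree with an unbounded, non-elementary action. The argument then runs through the general type case below, with that space replacing $\HH^3$. If no such place exists for the finitely generated subgroups, then all traces are algebraic integers with all conjugates bounded by $2$. Such subgroups are then virtually solvable, or even finite, by Kronecker-type arguments, and a directed union argument handles infinitely generated $G$. This is the step I expect to be the main obstacle: making the choice of place uniform for non-finitely-generated $G$, and verifying that MIF for $G$ follows from the general type action of $G$ itself on one fixed space.

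\textbf{General type case.} Now assume $G$ acts on $\HH^3$ (or the chosen tree) with general type. The action of $\PSL_2(\CC)$ on $\CC P^1$ is by Möbius transformations, which are real-analytic. A nontrivial Möbius map fixes at most two points of $\CC P^1$. On a tree, the analogous fact is that the fixed-point set of an element of $\PSL_2(K)$ on the boundary $\mathbb{P}^1(K)$ has at most two points.

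The limit set $\Lambda(G)$ is perfect, since it has no isolated points under a general type action. Hence a nontrivial element cannot fix an open subset of $\Lambda(G)$, which gives both faithfulness and topological freeness on $\Lambda(G)$ at once. So $G$ is faithful weakly hyperbolic and lim-free, and by Theorem \ref{main_theorem}, (a)$\Rightarrow$(c), $G$ is MIF.

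Finally, the two alternatives in the statement are consistent with each other: an MIF group contains no nontrivial finite normal subgroup and no nontrivial virtually solvable normal subgroup, so it cannot be virtually solvable unless trivial.
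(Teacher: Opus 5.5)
Your general-type case and your handling of the fixed-point, lineal, and non-dense bounded cases match the paper's proof. The divergence is the case where $G$ is conjugate into $\mathrm{PSU}(2)\cong\SO(3)$ with dense closure. The paper notes that such a $G$ is Zariski dense in $\PSL_2(\CC)$ and quotes Tomanov's theorem; you instead change the field.

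\textbf{The gap.} As written, your route has a genuine gap exactly at the step you flag. You never explain how to pass from finitely generated subgroups to $G$, and you suggest what is needed is a single place working for all of $G$ together with a general type action of $G$ itself. This cannot be arranged in general: for $G=\mathrm{PSU}(2)$ itself, which is uncountable, there is no finitely generated field containing the entries, so there is nothing to choose a place of. It is also not needed. The missing observation is that MIF is a local property. A mixed identity $w\in G*F_n$ involves only finitely many constants, and $H*F_n\hookrightarrow G*F_n$ for $H\le G$. Hence $G$ is MIF as soon as every finite subset of $G$ lies in an MIF subgroup.

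\textbf{Completing your route.} With this observation your argument closes as follows.
\begin{enumerate}
\item Fix a finitely generated $G_0\le G$ that is not virtually solvable. For example, take a finitely generated subgroup with the same Zariski closure $\PSL_2(\CC)$ as $G$, which exists by Noetherianity.
\item For each finitely generated $G_1$ with $G_0\le G_1\le G$, Tits' lemma gives a place for that $G_1$ alone. This is an embedding of the field of entries of $G_1$ into $\CC$ or into a non-archimedean local field $k$, at which some element of $G_1$ becomes loxodromic on $\HH^3$ or on the Bruhat--Tits tree. The lemma applies because $G_1$ is not virtually solvable, so some element has an eigenvalue that is not a root of unity.
\item You must also say why this action is of general type and not merely unbounded. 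Otherwise $G_1$ fixes a point of $\mathbb{P}^1(k)$ or preserves a pair of points, so it is virtually solvable, contradicting $G_0\le G_1$.
\item Your topological-freeness argument and Theorem \ref{main_theorem} then make each such $G_1$ MIF, and locality gives that $G$ is MIF.
\end{enumerate}

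\textbf{Comparison.} This completed route stays inside the paper's hyperbolic machinery, at the cost of Tits' place-selection lemma and the locality observation. The paper's route is much shorter but outsources the dense case to Tomanov. An even shorter self-contained substitute for Tomanov is available. $\PSL_2(\CC)$ is itself MIF by the general-type case. A mixed identity $w\in G*F_n$ holding on $G^n$ then holds on the Zariski closure $\PSL_2(\CC)^n$, since $x\mapsto w(x)$ is a morphism of varieties. Because $G*F_n\hookrightarrow\PSL_2(\CC)*F_n$, this would be a nontrivial mixed identity of $\PSL_2(\CC)$, a contradiction.
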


We should note that in \cite{GGS}, it is proved that any center-free unbounded and non-virtually solvable countable subgroup of $\SL_2(k)$ is highly transitive, where $k$ is a local field, which implies being MIF by the result of \cite{OH}. In \cite{Tom}, Tomanov gives necessary and sufficient conditions for Zariski-dense subgroups of linear groups to be MIF. Corollary \ref{generalized Tits Alternative} does not require any assumptions on subgroups of  $\PSL_2(\CC)$ and is not an immediate consequence of Corollary \ref{res finite groups}, \cite{Tom}, or \cite{GGS}.

A property called {\it selflessness} was recently introduced for a $\mathrm{C}^*$-algebra by L. Robert. It implies many important regularity properties such as simplicity, stable rank one in the tracial setting, and strict comparison. 
We refer the reader to $\cite{Rob}$ for the definitions. In \cite{AGKEP}, Tattwamasi Amrutam, David Gao, Srivatsav Kunnawalkam Elayavalli, and Gregory Patchell proved that acylindrically hyperbolic groups without finite normal subgroups have selfless reduced $C^*$-algebras. In their proof, they used a stronger quantitative version of the property of being MIF and the rapid decay property. This attracted attention to the quantitative version of being MIF, which resulted in a series of works: \cite{BS}, \cite{AG}, \cite{Vid}. Later, in his remarkable paper, Ozawa obtained a dynamical condition for the group to have a selfless reduced $C^*$-algebra. Using the results of $\cite{Oza}$, we obtain 

\begin{corA}\label{selfless groups}
If a weakly hyperbolic group $G$ is lim-free, its reduced group $C^*$-algebra $C_\lambda^*(G)$ is selfless.
\end{corA}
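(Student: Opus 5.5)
The plan is to feed a suitable boundary action directly into Ozawa's dynamical criterion from \cite{Oza} for selflessness of $C^*_\lambda(G)$. As I read that criterion, it suffices to exhibit a minimal, topologically free action of $G$ on a compact Hausdorff space $X$ with a strong contraction property: for every finite family of nonempty open sets one can find group elements that push the complement of a small neighbourhood into one of them. Equivalently, the action should be an extreme boundary action, i.e.\ minimal and strongly proximal, with north--south dynamics of loxodromics. The natural candidate is $X=\Lambda_S(G)$, where $G$ acts on a hyperbolic space $S$ with a general type action such that the action on $\Lambda_S(G)$ is topologically free. Such an action exists by the definition of lim-free.

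The key steps are as follows.

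\textbf{Step 1.} Recall the standard facts about general type actions. The limit set $\Lambda_S(G)$ is the closure of the set of fixed points of loxodromic elements. Each loxodromic $g$ has exactly two fixed points $g^{\pm}\in\partial S$. Moreover, $g^{n}$ converges uniformly on compact subsets of $\partial S\setminus\{g^-\}$ to $g^+$.

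\textbf{Step 2.} Deduce minimality of the action on $\Lambda_S(G)$. The pairs $(g^+,g^-)$ are dense in $\Lambda\times\Lambda$ minus the diagonal. From this, every orbit closure contains all attracting points of loxodromics, so every orbit closure is all of $\Lambda$. I also expect to use that $\Lambda$ is perfect and has no isolated points, since it is infinite and minimal.

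\textbf{Step 3.} Verify Ozawa's hypothesis in whatever form he states it (I recall it as a ``strongly faithful'' or ``$n$-filling'' type condition). Given nonempty open sets $U_1,\dots,U_n\subseteq\Lambda$, pick loxodromics with attracting points in the $U_i$ and pairwise distinct repelling points. Powers of these elements then contract the complement of small neighbourhoods of the repelling points into the $U_i$. Topological freeness is used so that nontrivial elements can be moved off their fixed-point sets. This is the mechanism by which Ozawa promotes the action to a selfless embedding. If his theorem also requires countability or an amenable/exactness-type hypothesis on the action, it should follow from amenability of boundary actions at infinity. That may fail for non-proper actions, so I would check the exact hypotheses.

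\textbf{Step 4.} Conclude that $C^*_\lambda(G)$ is selfless. Note that lim-free already implies faithful, and topological freeness gives $C^*$-simplicity-type behaviour consistent with selflessness.

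The main obstacle is matching precisely the dynamical hypothesis Ozawa requires. If it demands more than minimality, strong proximality and topological freeness (e.g.\ a uniform filling condition), I would first try to derive it from north--south dynamics of independent loxodromics via a ping-pong argument. Failing that, I would instead use Theorem \ref{main_theorem}(c), that $G$ is MIF, combined with whatever algebraic version of Ozawa's criterion is available.
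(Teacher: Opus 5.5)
Your route is the paper's: feed the action on $\Lambda_S(G)$ into Ozawa's criterion \cite{Oza}, with minimality from density of loxodromic fixed points, contraction from north--south dynamics, and topological freeness from the definition of lim-free. But your framing has a concrete defect, and it is not something you can leave to ``whatever form he states it''.

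You take $X=\Lambda_S(G)$ to be a \emph{compact} Hausdorff space carrying an extreme boundary action. Incidentally, an extreme boundary action means minimal and \emph{extremely} proximal, not minimal and strongly proximal. Since $S$ is not proper, $\Lambda_S(G)$ is in general neither compact nor locally compact; a general type action on a tree with vertices of infinite valence is an example. So a compact-space criterion, or any argument through probability measures, does not apply to $\Lambda_S(G)$ as it stands. For the same reason your Step 1 is too weak. It only gives uniform convergence $g^n\to g^+$ on compact subsets avoiding $g^-$, but the open sets you must contract need not have compact closure. What is true, and what is needed, is uniform convergence on every set whose \emph{closure} avoids $g^-$.

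The version of Ozawa's result the paper uses (his Proposition 15 combined with Theorem 14) is formulated for a continuous action on an arbitrary Hausdorff space. It asks that the action be minimal, topologically free, and \emph{fully compressible}: every nonempty non-dense open $V$ can be moved by some group element into any prescribed nonempty open $U$. No compactness, countability, or amenability-at-infinity hypothesis is involved, so your worry about non-proper actions is moot.

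Full compressibility is exactly what your Step 3 gives if you run it for a single set:
\begin{enumerate}
\item Choose disjoint nonempty open sets $F^+\subseteq U$ and $F^-\subseteq\Lambda_S(G)\setminus\overline{V}$; the latter is nonempty because $V$ is not dense.
\item Choose a loxodromic $f$ with $f^{\pm}\in F^{\pm}$.
\item Take a power $f^k$ with $f^k(\Lambda_S(G)\setminus F^-)\subseteq F^+$, so that $f^kV\subseteq U$.
\end{enumerate}
With that in place, and minimality of the limit set action, the proof is the paper's.

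Drop your fallback through MIF. The only algebraic route of this kind in the cited literature, \cite{AGKEP}, needs a quantitative strengthening of MIF plus rapid decay, and neither is available here.
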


Corollary \ref{selfless groups} gives us the first examples of finitely presented simple groups with selfless reduced $C^*$-algebras.

\begin{propA}\label{Kac-Moody}
Non-affine simple finitely presented Kac-Moody groups over finite fields constructed in \cite{CR} are MIF, and their reduced $C^*$-algebras are selfless.
\end{propA}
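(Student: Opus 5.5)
The plan is to reduce the statement to Theorem \ref{main_theorem} for the MIF part and to Corollary \ref{selfless groups} for the $C^*$-algebraic part. Both reductions go through one concrete geometric fact: each Kac--Moody group $G$ from \cite{CR} is lim-free.

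\textbf{Setting up the action.} Let $G$ be a non-affine simple finitely presented Kac--Moody group over a finite field $\mathbb F_q$, as in \cite{CR}. It has a twin root datum and hence acts on each half $X_+$ of its twin building, strongly transitively and type-preservingly. Since the Weyl group $W$ is non-affine (and infinite), the Davis realization of $X_+$ is a CAT(0) space. The action is not cocompact-proper (stabilizers are infinite parabolics), so we cannot just quote hyperbolicity. Instead I would use a hyperbolic space canonically attached to $X_+$.

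For non-affine, non-spherical irreducible $W$, the Coxeter group $W$ contains rank-one elements. The standard option is the contact graph, or the curtain model of Petyt--Zalloum. In the right-angled or hyperbolic-Coxeter case, the building itself is hyperbolic. The key input is Caprace--Fujiwara: $G$ acts on a hyperbolic space $S$ with a general type action, built from rank-one elements of the apartment $W$ extended to the building. I will take this as the general type action. By Proposition \ref{faithful weakly hyperbolic quotient}, $G/E_S(G)$ is faithful weakly hyperbolic. Since $G$ is simple and $E_S(G)$ is a normal subgroup that is not all of $G$ (loxodromics act nontrivially on the limit set), we get $E_S(G)=1$. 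So $G$ is faithful weakly hyperbolic via $S$.

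\textbf{Topological freeness.} By Theorem \ref{main_theorem}(b), it suffices to check topological freeness for this one action. Suppose $1\neq g\in G$ fixes a nonempty open set $U\subset\Lambda_S(G)$. Then the rigid stabilizer $\rist(U)$ is nontrivial. I would use Proposition \ref{monolithic groups} in the reverse direction. Assume $G$ is not lim-free. The monolith is then infinite simple, which is consistent with $G$ being simple, so this gives nothing directly. I therefore need a genuinely geometric argument. Boundary points of $S$ are represented by rank-one rays in $X_+$, and open sets in $\Lambda_S(G)$ contain shadows of half-apartments (halfspaces of walls). An element fixing such a shadow pointwise must fix pointwise a large region of $X_+$. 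This region contains a chamber together with a full residue around it in many directions. By the thickness and local finiteness of $X_+$ (the field is finite), plus a rigidity of twin buildings (an automorphism fixing a twin apartment-halfspace plus a spherical residue acts trivially, in the spirit of Tits' rigidity theorem and the Moufang property), $g$ acts trivially on $X_+$. Since $G$ is simple, the kernel of the action on $X_+$ is trivial, so $g=1$.

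\textbf{Conclusion and main obstacle.} With topological freeness established, Theorem \ref{main_theorem} gives that $G$ is MIF, and Corollary \ref{selfless groups} gives that $C^*_\lambda(G)$ is selfless. I expect the main obstacle to be the rigidity step: passing from "fixes an open subset of $\Lambda_S(G)$" to "fixes a big enough subcomplex of the building". This is exactly where non-affineness matters. In affine buildings, root groups fix whole half-apartment boundaries, and the analogous statement fails.

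My first attempt would use the opposition/twinning. An element fixing a neighborhood in the boundary fixes many chambers of $X_+$ pairwise far apart along rank-one axes. Via the twin structure, it fixes chambers opposite to a fixed chamber. Rigidity of twin buildings (fixing $E_1(c)\cup\{c'\}$ with $c'$ opposite $c$ forces triviality) then finishes. As a fallback, I would show that root groups $U_\alpha$ do not fix open boundary sets when $W$ is non-affine, using that the boundaries of walls are nowhere dense in $\Lambda_S$ for rank-one geometry.
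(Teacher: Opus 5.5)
Your setup agrees with the paper's in outline, with one correction: \cite{CF} supplies the rank-one elements, while the hyperbolic space comes from the curtain model of \cite{PSZ} and its injective hull. Then $E_S(G)=1$ by simplicity. The genuine gap is the topological-freeness step. You flag it as the main obstacle but do not carry it out, and the mechanism you propose cannot work as stated.

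(i) Nothing supports the claim that open subsets of $\Lambda_S(G)$ contain ``shadows of half-apartments.'' Nor does anything support the claim that fixing such a set pointwise forces $g$ to fix chambers and full residues of $X_+$. Fixing boundary points constrains $g$ only coarsely; a loxodromic element fixes its endpoints while translating.

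(ii) Even large pointwise-fixed regions of $X_+$ force nothing. $X_+$ is locally finite and the chamber stabilizer $B_+$ is infinite, so the fixator in $G$ of any finite ball of $X_+$ has finite index in $B_+$ and is infinite. Since $G$ is simple, it acts faithfully on $X_+$, so nontrivial elements fix arbitrarily large regions. Twin-building rigidity requires $g$ to fix $E_1(c)$ together with a chamber of $X_-$ opposite $c$. The boundary of a hyperbolic model of $X_+$ carries no information about $X_-$, so ``via the twin structure, it fixes chambers opposite to a fixed chamber'' is exactly the unsupported step.

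(iii) The fallback treats only root-group elements. Topological freeness concerns every nontrivial element, and elements of rigid stabilizers need not be conjugate into root groups.

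Relatedly, non-affineness is not where a rigidity phenomenon enters. For example, $\PSL_2(\mathbb{F}_q[t,t^{-1}])$ acts on $\mathbb{P}^1(\mathbb{F}_q((t)))$ by M\"obius transformations, and this boundary action is topologically free. Non-affineness is needed only for simplicity \cite{CR} and for rank-one elements \cite{CF}.

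The missing idea is to skip fixed-point analysis in the building entirely. Instead, convert non-topological-freeness into algebraic structure that contradicts two finiteness properties of $G(F)$.

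If the action on $\Lambda$ is not topologically free, it is micro-supported. Then rigid stabilizers of pairwise disjoint open sets are infinite and non-abelian, and they form infinite direct sums (Corollary \ref{infinite direct products}).

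By Lemma \ref{wreath products in rigid stabilizers}, a rigid stabilizer of a non-dense open set that contains a nontrivial torsion element also contains $\rist_G(U)\wr\ZZ_k$. Iterating produces elements of unbounded finite order. This contradicts Caprace's uniform bound \cite[Corollary 1.3]{Cap}, which applies because $Z(\Gamma(F))$ is finite.

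So these rigid stabilizers are torsion-free, and $G(F)$ contains $\ZZ^n$ for every $n$. Hence $G(F)$ has infinite asymptotic dimension \cite{BD}. But its proper action on the product of its two locally finite buildings gives finite asymptotic dimension \cite{DS,SWZ}.

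This contradiction is what yields topological freeness. After that, Theorem \ref{main_theorem} and Corollary \ref{selfless groups} finish the proof exactly as in your conclusion.
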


\subsection{Rigidity of $3$-transitive actions}

Recall that a $G$-action on some set $\Omega$ is {\it $k$-transitive} for some $k \in \NN$ if for any two $k$-tuples of distinct points $(\omega_1, \dots, \omega_k) \in \Omega^k$ and  $(v_1, \dots, v_k) \in \Omega^k$, there exists $g \in G$ such that $g\omega_i=v_i$ for all $i=1, \dots, k$.  As discussed earlier, Theorem \ref{main_theorem} gives a rigidity phenomenon for general type actions of lim-free weakly hyperbolic groups. If a group is faithful weakly hyperbolic but not lim-free, we obtain a rigidity phenomenon for all $3$-transitive actions on any set, as our second main theorem shows.  

\begin{defn}
Two actions of a group $G$ on sets $\Omega$ and $\Omega^{\prime}$ are {\it conjugate} if there exists a bijective $G$-equivariant map $\Omega \rightarrow \Omega^{\prime}$.
\end{defn}

\begin{thmA}\label{thm about conjugate orbits}
Suppose that the action of a group $G$ on a hyperbolic space $S$ is of general type and that the induced action of $G$ on $\Lambda_S(G)$ is faithful and not topologically free. 
Assume that $G$ acts faithfully and 3-transitively on a set $\Omega$. Then there exists a $G$-orbit $\mathcal{O} \subset \Lambda_S(G)$ such that the actions of $G$ on $\Omega$ and on $\mathcal{O}$ are conjugate.
\end{thmA}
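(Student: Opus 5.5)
Following the Le Boudec--Matte Bon strategy for groups acting on trees, the plan is to turn the failure of topological freeness into a supply of nontrivial \emph{rigid stabilizers} and then use $3$-transitivity on $\Omega$ to force these subgroups to behave like point stabilizers. For an open set $U \subset \Lambda_S(G)$, let $\rist(U)$ be the pointwise fixator of $\Lambda_S(G)\setminus U$. Since the action on $\Lambda_S(G)$ is faithful but not topologically free, some $g\neq 1$ fixes a nonempty open set $V$ pointwise. Minimality of $G$ on $\Lambda_S(G)$ and north--south dynamics of loxodromics then give nontrivial $\rist(U)$ for every nonempty open $U$: conjugate $g$ by a loxodromic $h$ whose attracting point lies in $V$, so that $hgh^{-1}$ is supported in a small neighbourhood of the repelling point, and move that point into $U$ using the density of loxodromic fixed points. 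Commutator tricks show moreover that $\rist(U)$ is non-abelian, indeed contains free subgroups. The resulting action is micro-supported and extremely proximal, and Proposition~\ref{monolithic groups} records that $G$ has a simple monolith $M=\Mon(G)$, generated by derived subgroups of rigid stabilizers.

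The steps are as follows.

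\textbf{Step 1.} The monolith $M$ acts on $\Omega$ nontrivially, since the action is faithful and $M\neq 1$. Because $M$ is normal and $G$ is $2$-transitive (hence primitive), $M$ acts transitively on $\Omega$.

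\textbf{Step 2.} Find $\xi\in\Lambda_S(G)$ with $G_\omega$ related to $G_\xi$. Following Le Boudec--Matte Bon, I would show that for some $\omega$ and all small open $U$, the subgroup $\rist(U)'$ cannot act with all orbits large on $\Omega$. The main tool is $3$-transitivity: if $\rist(U)$ and $\rist(W)$ commute for disjoint $U,W$, then fixed-point and support arguments on $\Omega$ constrain their supports.

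A cleaner route uses the dichotomy for commuting subgroups under $3$-transitive actions. If $A,B\le G$ commute and both act nontrivially on $\Omega$, then their supports $\mathrm{supp}_\Omega(A)$ and $\mathrm{supp}_\Omega(B)$ cannot cover almost everything in a complementary way. In fact, $3$-transitivity rules out commuting subgroups with disjoint supports unless one support is very small. From this one derives that for every $\omega$ there is a unique $\xi\in\Lambda_S(G)$ such that $\rist(U)$ fixes $\omega$ for every open $U$ whose closure avoids $\xi$. Existence comes from compactness: the set of points where ``$\rist$ moves $\omega$'' forms a filter of closed sets with nonempty intersection. Uniqueness comes from the fact that rigid stabilizers of two disjoint neighbourhoods generate a group containing enough elements to move $\omega$ arbitrarily, contradicting $3$-transitivity-induced constraints.

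\textbf{Step 3.} Define $\phi(\omega)=\xi$. This map is $G$-equivariant by construction, so its image is a $G$-orbit $\mathcal O$. It remains to show that $\phi$ is injective. If $\phi(\omega)=\phi(\omega')$ with $\omega\ne\omega'$, then $2$-transitivity makes $\phi$ constant on all of $\Omega$, hence $\xi$ is $G$-fixed, which is impossible for a general type action.

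I expect Step 2 to be the main obstacle: producing the point $\xi$ attached to $\omega$ requires showing that rigid stabilizers of small sets fix $\omega$. Here one must replace the tree-specific half-tree arguments with hyperbolic-space dynamics, namely the density of loxodromic attracting points and the ability to conjugate rigid stabilizers into arbitrary small neighbourhoods. I would first try to prove that $\rist(U)$ has a fixed point in $\Omega$ by playing $\rist(U)$ against $\rist(hU)$ for a loxodromic $h$ with $hU\cap U=\emptyset$, using $3$-transitivity on triples to derive a contradiction if both move every point.
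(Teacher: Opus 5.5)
Your Step~3 matches the paper's final step: an equivariant map that identifies two points of $\Omega$ must, by $2$-transitivity, be constant, which gives a $G$-fixed boundary point. Step~1 is correct but never used. However, Step~2 carries the entire content of the theorem and is not an argument. The tools it invokes are unavailable or false.

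\begin{itemize}
\item \emph{Compactness is unavailable.} $\Lambda_S(G)$ need not be compact, since $S$ is not assumed proper; already for trees of infinite valence the boundary is non-compact. So your ``filter of closed sets'' has no reason to have nonempty intersection. Even with compactness you would need the finite intersection property, i.e.\ that rigid stabilizers of finitely many open sets covering $\Lambda_S(G)$ cannot all lie in $G_\omega$. You do not address this.
\item \emph{The commuting-subgroups dichotomy is false as stated.} Take the very situation the theorem describes, $\Omega\cong\mathcal O$. For disjoint open $U,W$, the subgroups $\rist_G(U)$ and $\rist_G(W)$ commute, act nontrivially, and have disjoint supports in $\mathcal O$. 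Both supports are infinite: a nontrivial element moves a nonempty open set, which meets the dense orbit $\mathcal O$ of the perfect space $\Lambda_S(G)$ in infinitely many points.
\item \emph{Uniqueness and the role of $3$.} The uniqueness sketch is equally unsubstantiated. The outline also never uses the one geometric input that makes the number $3$ appear: the action on $\Lambda_S(G)$ is not topologically $4$-transitive (Proposition~\ref{not 4-transitivity}, via the cross-ratio).
\end{itemize}

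What is missing is an analysis of how the point stabilizers $G_\omega$ act on $S$. In the paper, the point $\xi$ is simply the unique fixed point of $G_\omega$, once $G_\omega$ is shown to be quasi-parabolic (Lemma~\ref{The action of a stabilizer of a point is quasi-parabolic}).

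First, using $G=G_\omega\sqcup G_\omega gG_\omega$ (Lemma~\ref{double cosets}), Proposition~\ref{non-elementary action} gives $\Lambda_S(G_\omega)=\Lambda_S(G)$, by excluding three cases:
\begin{itemize}
\item Bounded $G_\omega$-orbits would bound all $G$-orbits.
\item A parabolic $G_\omega$ would give $l^m=\gamma_1l^n\gamma_2$ with $\gamma_1,\gamma_2$ almost fixing a common point. This contradicts the growth of $\d_S(s,l^ms)$ for $m\gg n$.
\item A proper limit set is excluded by the cross-ratio $[\Lambda_S(G_\omega),f\Lambda_S(G_\omega)]_o$. It is bounded uniformly in $f$ by the double coset decomposition. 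Yet it becomes arbitrarily large when $f$ is a high power of a loxodromic with fixed points off $\Lambda_S(G_\omega)$.
\end{itemize}

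Applying this again to $G_\Delta\le G_\omega$, the paper rules out both remaining types for $G_\Delta$:
\begin{itemize}
\item \emph{General type.} Non-topological $4$-transitivity yields eight open sets. Let the $g_i$ be $G_\Delta$-conjugates of a rigid-stabilizer element $g_1$ with $\omega_1\mapsto\omega_2\mapsto\omega_3$. For every $f\in G_\Delta$, some conjugate $g_i^f$ commutes with some $g_j$, and this forces $f\omega_3=\omega_3$.
\item \emph{Quasi-parabolic.} This uses the block partition $\pi^{-1}(q(\Delta))$ of $\Omega$ together with commuting rigid-stabilizer elements (Proposition~\ref{Stab Delta is not quasi-parabolic}).
\end{itemize}

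Your closing suggestion of playing $\rist_G(U)$ against $\rist_G(hU)$ is precisely the part left open. Without some substitute for this geometric classification of $G_\omega$, the proposal does not prove the theorem.
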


This theorem is the analogue of \cite[Theorem 1.4]{BM} for group actions on simplicial trees. We generalize their arguments to groups acting on arbitrary hyperbolic spaces and obtain their result as a corollary (see Corollary \ref{conjugate orbits for trees}).  

The {\it transitivity degree} of a group $G$ is the maximum $k$ such that $G$ admits a $k$-transitive faithful action on some set. Among infinite groups, examples of groups with transitivity degree $1$, $2$, $3$, and $\infty$ are known. Whether there exists an infinite group with transitivity degree $4$ is an open question. The next corollary shows that if such an example exists, it cannot be a faithful weakly hyperbolic group that is not lim-free.     

\begin{corA}\label{td is leq 3}
Let $G$ be a faithful weakly hyperbolic group that is not lim-free. Then the transitivity degree of $G$ is at most $3$.
\end{corA}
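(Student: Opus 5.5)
We deduce the corollary from Theorem \ref{thm about conjugate orbits}. Since $G$ is faithful weakly hyperbolic but not lim-free, it has a general type action on a hyperbolic space $S$ whose induced action on $\Lambda_S(G)$ is faithful but not topologically free. So the hypotheses of Theorem \ref{thm about conjugate orbits} hold for $S$.

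Suppose, for contradiction, that $G$ acts faithfully and $4$-transitively on some set $\Omega$. Note that $|\Omega|\geq 4$ is then automatic, since $G$ is infinite and the action is faithful, so $\Omega$ is infinite. A $4$-transitive action is in particular $3$-transitive. Theorem \ref{thm about conjugate orbits} therefore gives a $G$-orbit $\mathcal{O}\subset\Lambda_S(G)$ with the action on $\Omega$ conjugate to the action on $\mathcal{O}$. Hence the action of $G$ on $\mathcal{O}$ is $4$-transitive. It remains to show that no $G$-orbit in $\Lambda_S(G)$ is $4$-transitive.

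For this I would use the Gromov product, or equivalently cross-ratio type quantities on the boundary, which $G$ preserves up to a bounded error depending only on $\delta$. Fix distinct $\xi_1,\xi_2,\xi_3\in\mathcal{O}$, and let $\mathcal{A}$ be the set of points $\eta\in\mathcal{O}\setminus\{\xi_1,\xi_2,\xi_3\}$. By $4$-transitivity, the stabilizer of the triple $(\xi_1,\xi_2,\xi_3)$ acts transitively on $\mathcal{A}$. Any element fixing three distinct boundary points moves a quasi-center $x_0$ of the ideal triangle $\xi_1\xi_2\xi_3$ only a bounded distance $C(\delta)$. Therefore the quantity $(\eta\mid\xi_1)_{x_0}$, up to bounded error, is almost invariant under this stabilizer.

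The set $\mathcal{O}$ is dense in $\Lambda_S(G)$, since the limit set is minimal for a general type action. So we can pick $\eta\in\mathcal{A}$ very close to $\xi_1$, and another $\eta'\in\mathcal{A}$ far from $\xi_1$, say close to $\xi_2$. Then $(\eta\mid\xi_1)_{x_0}$ is huge while $(\eta'\mid\xi_1)_{x_0}$ is bounded. This contradicts the existence of an element $g$ of the triple stabilizer with $g\eta=\eta'$, because $(g\eta\mid g\xi_1)_{gx_0}=(\eta\mid\xi_1)_{x_0}$ and $d(gx_0,x_0)\le C$ change the product by at most a bounded amount.

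The main obstacle is justifying rigorously that fixing three boundary points coarsely fixes the center. I would do this via the standard fact that centers of ideal triangles are coarsely well defined, with constants depending only on $\delta$, and equivariant under isometries. It also needs $\Lambda_S(G)$ to have at least $5$ points near the relevant places, which follows from it being infinite and perfect in the general type case. Hence the transitivity degree is at most $3$.
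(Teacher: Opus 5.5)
Your proof is correct. Its first half, reducing to a dense $G$-orbit $\mathcal{O}\subset\Lambda_S(G)$ via Theorem \ref{thm about conjugate orbits}, is exactly what the paper does. The routes differ in how $4$-transitivity on $\mathcal{O}$ is ruled out.

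The paper invokes Proposition \ref{not 4-transitivity}. That proposition uses the cross-ratio $[a,b,c,d]_o$, which is coarsely independent of $o$ and stable under small perturbations of the four points. It shows that the action on any $G$-invariant subset of $\Lambda_S(G)$ is not even topologically $4$-transitive. Lemma \ref{transitivity and topological transitivity} then says that a $4$-transitive action on a dense orbit would be topologically $4$-transitive, giving the contradiction.

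You instead use that the pointwise stabilizer of $(\xi_1,\xi_2,\xi_3)$ coarsely fixes a quasi-center $x_0$. Hence $\langle\eta|\xi_1\rangle_{x_0}$ is a coarse invariant of that stabilizer. This is essentially a normalized cross-ratio: for $\eta$ near $\xi_1$, the quantity $[\xi_1,\eta,\xi_2,\xi_3]_{x_0}$ equals $\langle\eta|\xi_1\rangle_{x_0}$ up to an error depending only on $\delta$ and the center constant.

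What each approach buys:
\begin{itemize}
\item Your argument is self-contained and avoids the perturbation-stability lemma and the topological-transitivity lemma.
\item It only excludes genuine $4$-transitivity, because it needs three points fixed exactly.
\item You must supply the quasi-center lemma for possibly non-proper $S$. This means existence of $x_0$ with $\langle\xi_i|\xi_j\rangle_{x_0}\le K(\delta)$ for all $i\neq j$, and a bound on $\d_S(x,y)$ for any two such points. The bound follows by pigeonholing an index $k$ with both $\langle\xi_k|y\rangle_x$ and $\langle\xi_k|x\rangle_y$ small, and using $\langle\xi_k|y\rangle_x+\langle\xi_k|x\rangle_y\approx\d_S(x,y)$.
\item In the paper, the stronger topological statement costs nothing extra here. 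Proposition \ref{not 4-transitivity} is needed anyway, in the proofs of Theorem \ref{real main theorem} and Proposition \ref{Stab Delta is not of general type}, so the corollary becomes a two-line consequence.
\end{itemize}

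A minor simplification: you do not need $\eta'$ near $\xi_2$. Any fixed $\eta'\in\mathcal{A}$ works, once $\eta\in\mathcal{A}$ is chosen so that $\langle\eta|\xi_1\rangle_{x_0}>\langle\eta'|\xi_1\rangle_{x_0}+C$.
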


Recall that an action of a group $G$ on a topological space $X$ is {\it topologically transitive} if, for every pair of nonempty open subsets $U$ and $V$ of $X$, there is an element $g \in G$ such that $g(U) \cap V \neq \emptyset$. It is {\it topologically $k$-transitive} for $k \in \NN$ if the induced action of $G$ on the $k$-fold Cartesian product $X^k$ is topologically transitive.

Since Theorem \ref{thm about conjugate orbits} tells us that the limit set ``sees" all $3$-transitive actions of a faithful weakly hyperbolic group $G$ that is not lim-free, it suffices to examine just the $G$-action on the limit set to determine whether the transitivity degree of $G$ is $3$ or less, as the following corollary shows.

\begin{corA}\label{topological 3-transitivity}
Let $G$ be a faithful weakly hyperbolic group that is not lim-free. If the transitivity degree of $G$ is $3$, then for any general type action of $G$ on any hyperbolic space $S$, if the induced action on $\Lambda_S(G)$ is faithful, it is topologically $3$-transitive.
\end{corA}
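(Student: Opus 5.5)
Fix a general type action of $G$ on a hyperbolic space $S$ whose induced action on $\Lambda_S(G)$ is faithful. Since $G$ has transitivity degree $3$, it admits a faithful $3$-transitive action on some set $\Omega$.

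The plan is to transfer $3$-transitivity from $\Omega$ to the limit set via Theorem \ref{thm about conjugate orbits}, and then to use density of orbits to upgrade it to topological $3$-transitivity.

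\textbf{Step 1: the hypotheses of Theorem \ref{thm about conjugate orbits} hold.} The action on $\Lambda_S(G)$ is faithful by assumption. It is not topologically free: otherwise $G$ would be lim-free by definition, since the action is of general type. Theorem \ref{thm about conjugate orbits} then produces a $G$-orbit $\mathcal{O}\subset\Lambda_S(G)$ with the action on $\mathcal{O}$ conjugate to the action on $\Omega$. Hence $G$ acts $3$-transitively on $\mathcal{O}$.

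\textbf{Step 2: $\mathcal{O}$ is dense in $\Lambda_S(G)$.} For a general type action, every $G$-orbit in the limit set is dense, because the action on $\Lambda_S(G)$ is minimal. I would take this as standard from Section \ref{Group actions on hyperbolic spaces}. If a proof is needed: fix a loxodromic $h$ with fixed points $h^{\pm}$, and take $\xi\in\mathcal{O}$ with $\xi\neq h^-$, which exists since $\mathcal{O}$ is infinite. Then $h^n\xi\to h^+$, so $h^+\in\overline{\mathcal{O}}$. The fixed points of loxodromic elements are dense in $\Lambda_S(G)$, and for each such point we can choose $\xi\in\mathcal{O}$ avoiding its repelling fixed point. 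So $\overline{\mathcal{O}}=\Lambda_S(G)$. Also, $\Lambda_S(G)$ is Hausdorff and has no isolated points, since it is perfect for a general type action.

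\textbf{Step 3: topological $3$-transitivity.} Let $U_1\times U_2\times U_3$ and $V_1\times V_2\times V_3$ be nonempty basic open sets in $\Lambda_S(G)^3$; it suffices to treat these. Because $\Lambda_S(G)$ has no isolated points, each $U_i$ is infinite, and $U_i\cap\mathcal{O}$ is dense in $U_i$ by Step 2, so it is infinite too. We can therefore choose pairwise distinct points $x_i\in U_i\cap\mathcal{O}$, and likewise pairwise distinct $y_i\in V_i\cap\mathcal{O}$. By $3$-transitivity on $\mathcal{O}$ there is $g\in G$ with $gx_i=y_i$ for $i=1,2,3$. Then $g(U_1\times U_2\times U_3)\cap(V_1\times V_2\times V_3)\neq\emptyset$, which proves the claim.

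The only step that needs care is Step 2, specifically the density of orbits and the absence of isolated points in the limit set. Both are standard for general type actions.
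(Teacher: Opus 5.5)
Your proof is correct and follows the paper's argument. You use Theorem \ref{thm about conjugate orbits} to get a $G$-orbit $\mathcal{O}\subset\Lambda_S(G)$ conjugate to $\Omega$, note that $\mathcal{O}$ is dense by minimality, and then pass from $3$-transitivity on the dense orbit to topological $3$-transitivity on $\Lambda_S(G)$, as in Lemma \ref{transitivity and topological transitivity}. The one difference is that the paper cites Theorem \ref{main_theorem} to show that the given action on $\Lambda_S(G)$ is not topologically free, whereas you observe, correctly and more directly, that this follows immediately from the definition of lim-free.
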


The paper is organized as follows. In Section \ref{preliminary}, we provide the necessary background and preliminary results and prove Proposition \ref{faithful weakly hyperbolic quotient} in Subsection \ref{elliptic radical}. In Subsection \ref{boundary dynamics and mixed identities}, we prove Theorem \ref{main_theorem}, Corollary \ref{algebraic subgroups}, Proposition \ref{monolithic groups}, Corollary \ref{res finite groups}, Corollary \ref{generalized Tits Alternative}, and Corollary \ref{selfless groups}. In Subsection \ref{conjugate orbits}, we prove Theorem \ref{thm about conjugate orbits}, Corollary \ref{td is leq 3}, and Corollary \ref{topological 3-transitivity}. In Subsection \ref{examples}, we prove Proposition \ref{Kac-Moody} and discuss some examples and the necessity of the conditions of Theorem \ref{main_theorem} and Theorem \ref{thm about conjugate orbits}.

\paragraph{Acknowledgments.}The author is grateful to Denis Osin and Spencer Dowdall for providing guidance throughout the course of this project.
The author is thankful to Francesco Fournier-Facio for comments on the first version, for drawing her attention to Kac–Moody groups, and for helpful discussions concerning Example \ref{ex BM}.

\section{Preliminary results}\label{preliminary}

\paragraph{Notation.}
Throughout the paper, we denote by $S$ a geodesic $\delta$-hyperbolic metric space, and by $\d_S$ the metric on $S$. All group actions on $S$ are by isometries. For a group $G$ acting on $S$ and any $s\in S$, let $Gs=\{ gs\mid g\in G\}$ denote the $G$-orbit of the point $s$.

For group elements $g$ and $h$ we denote by $g^h$ the conjugate $h^{-1}gh$.
If a group $G$ acts on a set $X$, we denote by $\Fix_X(G)$ the set of fixed points of $G$ in $X$, and by $G_x$ the stabilizer subgroup of $x \in X$ under the action of $G$. If $U$ is a subset of a topological space $X$, we denote by $\Int(U)$ the interior of $U$ in $X$.

\subsection{Group actions on hyperbolic spaces}\label{Group actions on hyperbolic spaces}
We refer the reader to \cite[Chapter 3]{DSU} and \cite{BS07} for more details on the material discussed in this subsection.

\begin{defn}\label{Gromov product}
Let $S$ be a geodesic metric space. Recall that the \textit{Gromov product} of two points $x, y \in S$ with respect to a point $o \in S$ is defined by
$$
\l x|y\r_o:=\frac{1}{2}(\d_S(x, o)+\d_S(y, o)-\d_S(x, y)).
$$
\end{defn}
\begin{defn}[Gromov hyperbolicity]\label{four point condition}
A geodesic metric space $S$ is  {\it $\delta$-hyperbolic} if for any points $x, y, z, o \in S$, we have
$$
\l x|y \r_o \geq \min\left\{\l x|z \r_o, \l z|y\r_o\right\} - \delta.
$$ 
\end{defn}
If $Y \subseteq S$ is a subspace, then for any constant $K \geq 0$, we denote the closed $K$-neighborhood of $Y$ in $S$ by $\mathcal{N}_K(Y)=\left\{x \in S \mid \d_S(x, Y) \leq K\right\}$.

\begin{defn}[Rips hyperbolicity]\label{rips def} 
Fix $\delta \geq 0$. A geodesic metric space $S$ is $\delta$-hyperbolic if given any $x, y, z \in S$ and any geodesics $\alpha, \beta, \gamma$ between them, we have $\gamma \subseteq \mathcal{N}_\delta(\alpha \cup \beta)$.
\end{defn}

A geodesic metric space is Rips hyperbolic if and only if it is Gromov hyperbolic (see, for example, \cite[Lemma 1.46]{KS}). In this work, we choose $\delta$ in a way that $S$ satisfies both Definitions \ref{four point condition} and Definition \ref{rips def} simultaneously.

A sequence $\left(x_i\right)$ of elements of $S$ \textit{converges at infinity} if $\l x_i|x_j \r_o \rightarrow \infty$ as $i, j \rightarrow \infty$ for some, or equivalently for any, point $o\in S$. Two such sequences $\left(x_i\right)$ and $\left(y_i\right)$ are \textit{equivalent} if $\l x_i|y_j \r_o \rightarrow \infty$ as $i, j \rightarrow \infty$. The {\it Gromov boundary} of $S$, denoted by $\partial S$, is defined as the set of equivalence classes of sequences converging at infinity. If $a$ is the equivalence class of $\left(x_i\right)$, we say that the sequence $(x_i)$ converges to $a$. There is a natural topology on $\widehat{S}=S \cup \partial S$ extending the topology on $S$ such that $S$ is dense in $\widehat{S}$, which will be discussed later.  The definition of $\partial S$ is independent of the choice of the basepoint, and any isometric group action on $S$ extends to a continuous action on $\widehat S$. The Gromov product can be extended to $\widehat{S}$ in the following way.
\begin{defn}\label{Gromov product of boundary points}
Let $a, b \in \partial S$ and $o, x \in S$. 
\begin{enumerate}
\item The {\it Gromov product} of  $a$ and $b$ with respect to $o$ is defined by
\begin{equation*}
\l a| b\r_o = \inf \left\{ \liminf_{i, j \rightarrow +\infty} \l x_i|y_j\r_o \; \bigm | \; (x_i) \rightarrow a, (y_j) \rightarrow b \right \}.
\end{equation*}
\item The {\it Gromov product} of $a$ and $x$ with respect to $o$ is defined by
\begin{equation*}
\l a|x\r_o = \inf \left \{ \liminf_{i \rightarrow \infty} \l x_i|x\r_o \; \bigm | \; (x_i) \rightarrow a \right \}.
\end{equation*}
\end{enumerate}
\end{defn}
The next lemma shows that taking the $\liminf$ in the definition of the Gromov product of the boundary points is the same as taking $\limsup$ up to a constant and that the boundary points also satisfy the four-point condition.

\begin{lem}[\cite{BS07}, Lemma 2.2.2]\label{lemma 2.2.2}
Let $S$ be a $\delta$-hyperbolic space. Let $o\in S$, and let $a, b, c \in \partial S$.
\begin{enumerate}
\item For arbitrary sequences $\left(x_i\right) \rightarrow a,\left(y_i\right) \rightarrow b$, we have
$$
\l a| b\r_o \leq \liminf _{i \rightarrow \infty}\l x_i| y_i\r_o \leq \limsup _{i \rightarrow \infty}\l x_i|y_i\r_o \leq \l a| b\r_o+2 \delta .
$$
\item 
$$
\l a|b \r_o \geq \min\left\{\l a|c \r_o, \l c|b\r_o\right\} - \delta.
$$ 

\end{enumerate}
\end{lem}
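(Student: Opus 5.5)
This is the standard fact from \cite{BS07}; I would derive both parts directly from the four-point condition of Definition \ref{four point condition}, applied to points of $S$, and then pass to limits.

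\emph{Part 1.} The middle inequality is trivial. For the left inequality, note that $(x_i)$ and $(y_i)$ are admissible sequences in the infimum of Definition \ref{Gromov product of boundary points}. Moreover $\liminf_{i,j}\l x_i|y_j\r_o \le \liminf_i \l x_i|y_i\r_o$, since the diagonal is a subsequence of the double-indexed family. Hence $\l a|b\r_o\le\liminf_i\l x_i|y_i\r_o$.

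For the right inequality, fix $\varepsilon>0$ and choose sequences $(x'_k)\to a$ and $(y'_l)\to b$ with $\liminf_{k,l}\l x'_k|y'_l\r_o\le \l a|b\r_o+\varepsilon$. Apply the four-point condition twice:
\[
\l x'_k|y'_l\r_o\ge\min\{\l x'_k|x_i\r_o,\l x_i|y_i\r_o,\l y_i|y'_l\r_o\}-2\delta .
\]
Since $(x_i)$ and $(x'_k)$ are equivalent, $\l x'_k|x_i\r_o\to\infty$ as $i,k\to\infty$, and similarly $\l y_i|y'_l\r_o\to\infty$. Pick a subsequence of $i$ realizing the $\limsup$ of $\l x_i|y_i\r_o$. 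If this $\limsup$ is infinite, the right-hand side tends to infinity, which contradicts finiteness of $\liminf_{k,l}\l x'_k|y'_l\r_o$. (If $a=b$, then all quantities are $\infty$ and there is nothing to prove.) Otherwise, for large $i,k,l$ the minimum equals $\l x_i|y_i\r_o$ up to $\varepsilon$. Taking $\liminf$ over $k,l$ gives $\limsup_i\l x_i|y_i\r_o\le\l a|b\r_o+2\delta+2\varepsilon$. Now let $\varepsilon\to0$.

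\emph{Part 2.} Choose any sequences $x_i\to a$, $y_i\to b$, $z_i\to c$. The four-point condition gives
\[
\l x_i|y_i\r_o\ge\min\{\l x_i|z_i\r_o,\l z_i|y_i\r_o\}-\delta .
\]
Taking $\liminf$ alone, and using the lower bound from part 1 on the right, yields only a weaker constant. To obtain exactly $\delta$, I would choose $(x_i),(y_i)$ nearly realizing the infimum defining $\l a|b\r_o$ up to $\varepsilon$, passing to a subsequence along which $\l x_i|y_i\r_o$ converges to at most $\l a|b\r_o+\varepsilon$. Then take the $\liminf$ of the right-hand side and use $\liminf\min\ge\min\liminf$. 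Part 1 gives $\liminf_i\l x_i|z_i\r_o\ge\l a|c\r_o$, and similarly $\liminf_i\l z_i|y_i\r_o\ge\l c|b\r_o$. Combining, $\l a|b\r_o+\varepsilon\ge\min\{\l a|c\r_o,\l c|b\r_o\}-\delta$, and letting $\varepsilon\to0$ finishes.

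\emph{Main obstacle.} The main step is the diagonal-versus-double-index bookkeeping in part 2. The infimum defining $\l a|b\r_o$ is over double $\liminf$s, so I would first extract index sequences $i_n,j_n\to\infty$ with $\l x_{i_n}|y_{j_n}\r_o$ close to that $\liminf$. I then relabel these as diagonal sequences, which still converge to $a$ and $b$, before applying the argument above.
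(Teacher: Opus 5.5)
Your proof is correct. The paper gives no argument of its own and simply cites \cite[Lemma 2.2.2]{BS07}. Your route is the standard one: apply the four-point condition to points of $S$, pass to the limit, and in part 2 choose sequences that nearly realize the infimum so the constant stays $\delta$; your diagonal extraction correctly handles the fact that this paper defines the boundary Gromov product with a double-indexed $\liminf$.
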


The Gromov product of distinct points of the boundary is always finite, i.e., if $a, b \in \partial S$, then $\l a|b\rangle_o = \infty$ if and only if $a=b$. It is also preserved by isometries of $S$, i.e., if $g$ is an isometry of $S$, $a,b \in \widehat{S}$ then $\l ga|gb \r_{go} = \l a|b\r_o$ (see \cite[Section 3.4.1]{DSU}).

For any point $a \in \partial S$, $o \in S$, and $r \in \mathbb R$, denote by
$$\mathcal U_o(a,r) = \left \{ s \in \widehat S \mid \l a|s\r_o > r \right \}.$$

The subset $\mathcal U_o(a,r)$ is open in the topology on $\widehat{S}$, and $\{ \mathcal U_o(a,r) \mid r \geq 0 \}$ form a neighborhood base for the topology on $\widehat{S}$ at $a$ (see \cite[Remark 3.4.15 (II)]{DSU}).

The \textit{limit set} $\Lambda_S(G)$ of $G$ is defined as the intersection of the closure of $Gs$ in $\widehat S$ with $\partial S$. This definition is independent of the choice of $s\in S$.

An element $g \in G$ is said to be {\it loxodromic} if the group $\l g\rangle $ has unbounded orbits and fixes exactly two points on $\partial S$ (equivalently, $|\Lambda_S(\l g\r)|=2$). By $\mathcal{L}_S(G)$ we denote the set of all loxodromic elements of $G$. 

The following lemma establishes the standard notation for fixed points of loxodromic elements, motivated by the so-called north-south dynamics (see \cite[Lemma 8.1.G]{Gro87} or \cite[Theorem 6.1.10]{DSU}).

\begin{lem} \label{north-south}
The fixed points of $g \in \mathcal{L}_S(G)$ can be denoted by $g^+$ and $g^-$ so that
$$
\lim _{n \rightarrow \infty} g^n s=g^{+} \;\;\;  \forall s \in \widehat S \setminus \left\{g^{-}\right\}\;\;\;\;\; {\rm and }\;\;\;\;\; \lim _{n \rightarrow \infty} g^{-n} s=g^{-} \;\;\;  \forall s \in \widehat S \setminus \left\{g^{+}\right\}
$$
where the convergence in the first (respectively, second) limit is uniform on sets whose closure does not contain $g^{-}$ (respectively, $g^+$).
\end{lem}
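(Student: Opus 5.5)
The plan is to derive Lemma \ref{north-south} from the structure of a loxodromic isometry: it has a quasi-geodesic axis, and it translates along that axis with positive stable translation length. I will use Gromov products throughout, together with the neighborhood base $\mathcal U_o(a,r)$.

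\emph{Step 1: the two ends of the orbit.} Fix $o\in S$. Since $g$ is loxodromic, the stable translation length $\tau=\lim_n \d_S(o,g^no)/n$ is positive. Then $n\mapsto g^n o$ is a quasi-geodesic, and in particular $\d_S(g^mo,g^no)\ge \tau|m-n|$. Using $\langle g^mo|g^no\rangle_o=\frac12(\d(g^mo,o)+\d(g^no,o)-\d(g^mo,g^no))$ together with the quasi-geodesic property (Morse stability in a $\delta$-hyperbolic space), I will show the following. The sequence $(g^no)_{n\ge0}$ converges at infinity to a point, which I call $g^+$. Likewise $(g^{-n}o)$ converges to a point $g^-$. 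Moreover $\langle g^+|g^-\rangle_o<\infty$, so $g^+\neq g^-$. Both points are fixed by $g$, since $g(g^no)=g^{n+1}o$. Because $\Lambda_S(\langle g\rangle)$ consists of the accumulation points of $\langle g\rangle o$, it equals $\{g^+,g^-\}$. This matches the labelling in the definition of loxodromic.

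\emph{Step 2: attraction of an individual point.} Let $s\in\widehat S\setminus\{g^-\}$ and set $C=\langle s|g^-\rangle_o<\infty$. I want $\langle g^ns|g^+\rangle_o\to\infty$. Applying $g^{-n}$, which preserves Gromov products up to moving the basepoint, gives
\[
\langle g^ns|g^no\rangle_o=\langle s|o\rangle_{g^{-n}o}=\d_S(o,g^{-n}o)-\langle s|g^{-n}o\rangle_o .
\]
For large $n$, Lemma \ref{lemma 2.2.2} (with $2\delta$ errors for boundary points) bounds $\langle s|g^{-n}o\rangle_o$ by roughly $\min\{C,\dots\}+O(\delta)$. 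Hence $\langle g^ns|g^no\rangle_o\ge \d_S(o,g^{-n}o)-C-O(\delta)\to\infty$. Since $\langle g^no|g^+\rangle_o\to\infty$ as well, the four-point condition (Lemma \ref{lemma 2.2.2}(2)) gives $\langle g^ns|g^+\rangle_o\to\infty$, that is, $g^ns\to g^+$.

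\emph{Step 3: uniformity.} Suppose $K\subseteq\widehat S$ has closure avoiding $g^-$. Because the sets $\mathcal U_o(g^-,r)$ form a neighborhood base, some $r$ satisfies $K\cap\mathcal U_o(g^-,r)=\emptyset$. Then $\langle s|g^-\rangle_o\le r$ for every $s\in K$, so the estimate in Step 2 holds with $C=r$ uniformly over $s\in K$. This gives uniform convergence to $g^+$. The statement for $g^{-n}$ follows by replacing $g$ with $g^{-1}$, which swaps $g^+$ and $g^-$.

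The main obstacle is the bookkeeping of the $\delta$-constants in Step 2. The bound on $\langle s|g^{-n}o\rangle_o$ must be uniform in $n$, and I would obtain it from the four-point inequality applied to $s$, $g^{-n}o$ and $g^-$. Since $\langle g^{-n}o|g^-\rangle_o\to\infty$, the inequality forces $\langle s|g^{-n}o\rangle_o\le C+\delta$ once $n$ is large. One must also treat the case $s\in S$ separately from $s\in\partial S$; the boundary case adds the extra $2\delta$ from Lemma \ref{lemma 2.2.2}(1).
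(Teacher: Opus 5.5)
The paper does not prove this lemma. It quotes north--south dynamics for a loxodromic isometry from \cite[8.1.G]{Gro87} and \cite[Theorem 6.1.10]{DSU}, so there is no internal argument to match. Your proposal is a correct, self-contained proof of that cited fact, so the comparison is between a direct argument and an appeal to the literature.

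Your route uses only a few tools:
\begin{itemize}
\item isometry invariance of Gromov products;
\item the exact identity $\langle s|o\rangle_p+\langle s|p\rangle_o=\d_S(o,p)$ for $s\in S$, with an $O(\delta)$ error when $s\in\partial S$;
\item the four-point inequality;
\item the Morse lemma for the orbit $n\mapsto g^no$.
\end{itemize}
Its payoff is that the uniformity becomes explicit. If $\langle s|g^-\rangle_o\le r$ for all $s\in K$, then $g^nK\subseteq\mathcal U_o(g^+,R)$ as soon as $\langle g^{-n}o|g^-\rangle_o$, $\d_S(o,g^{-n}o)$ and $\langle g^no|g^+\rangle_o$ exceed thresholds depending only on $r$, $R$ and $\delta$. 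The citation instead buys brevity. It also places the lemma inside the full classification of isometries of possibly non-proper hyperbolic spaces, including the fact that a loxodromic element has $\tau_S(g)>0$. Your argument is self-contained only modulo that positivity, which you import exactly as the paper does from \cite{french book}.

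Two small tightenings are worth making.
\begin{itemize}
\item Instead of saying the labelling ``matches the definition'', note that Step 2 already shows the fixed set is exactly $\{g^+,g^-\}$. If $s\in\partial S\setminus\{g^-\}$ is $g$-fixed, then $\langle s|g^+\rangle_o=\langle g^ns|g^+\rangle_o\to\infty$, which forces $s=g^+$.
\item You apply the four-point inequality to the mixed triples $(s,g^{-n}o,g^-)$ and $(g^ns,g^no,g^+)$. This is not Lemma~\ref{lemma 2.2.2}(2) as stated, since that only covers three boundary points. It does hold on $\widehat S$ with a slightly larger additive constant, which is harmless for your estimates.
\end{itemize}
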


\begin{rem} \label{congugation}
 If $g \in \mathcal{L}_S(G)$, then $fgf^{-1} \in \mathcal{L}_S(G)$ for any $f\in G$. Moreover, we have $(fgf^{-1})^{+}=fg^+$ and  $(fgf^{-1})^{-}=fg^-$.
\end{rem}

We denote the set of endpoints of loxodromic elements by $ 
\mathcal{H}_S(G)=\left\{ g^\pm \mid g \in \mathcal{L}_S(G) \right\}$.

Possible actions of groups on hyperbolic spaces can be classified as follows according to the cardinality of $\Lambda_S(G)$ (see \cite[Sections 8.1-8.2]{Gro87}, \cite{Ham}, or \cite[Chapter 6]{DSU}):
\begin{enumerate}
\item \label{elliptic action} ({\it elliptic action}) $|\Lambda_S(G)|=0$. Equivalently, $G$ has bounded orbits.
\item \label{parabolic action} ({\it parabolic action}) $|\Lambda_S(G)|=1$. Equivalently, $G$ has unbounded orbits, and $\mathcal{L}_S(G)=\emptyset$. In this case $\Fix_{\partial S}(G)=\Lambda_S(G)$.
\item \label{lineal action} ({\it lineal action}) $|\Lambda_S(G)|=2$. Equivalently, $\mathcal{L}_S(G) \neq \emptyset$, and any two loxodromic elements have the same fixed points on $\partial S$. In this case $\Fix_{\partial S}(G) \subseteq \Lambda_S(G)$.
\item \label{non-elementary} $|\Lambda_S(G)|=\infty$. Then $\mathcal{L}_S(G) \neq \emptyset$. In turn, this case breaks into two subcases.
	\begin{enumerate}
	\item \label{quasi-parabolic action} ({\it quasi-parabolic action}) $\Fix_{\partial S}(G) \neq \emptyset$. Then $G$ fixes a unique point of $\partial S$.
	\item \label{general type action} ({\it general type action}) $\Fix_{\partial S}(G)=\emptyset$. Equivalently, $G$ contains at least two (or infinitely many) loxodromic elements with disjoint fixed points.
	\end{enumerate}
\end{enumerate}

The action of $G$ is called {\it elementary} in cases \ref{elliptic action} -- \ref{lineal action} and {\it non-elementary} in case \ref{non-elementary}.

Recall that an action of a group $G$ by homeomorphisms on a topological space $X$ is {\it minimal} if there is no proper closed nonempty $G$-invariant subset $Y \subset X$.

\begin{cor}\cite[Corollary 7.4.1]{DSU}\label{action on the limit set is minimal}
Suppose that the action of a group $G$ on a hyperbolic space $S$ is of general type. Then $\Lambda_S(G)$ is the smallest nonempty closed $G$-invariant subset of $\partial S$. Therefore, the action of $G$ on $\Lambda_S(G)$ is minimal.
\end{cor}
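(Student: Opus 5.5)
To prove the corollary I would first show that $\Lambda_S(G)$ is the smallest nonempty closed $G$-invariant subset of $\partial S$. Minimality of the $G$-action on $\Lambda_S(G)$ then follows at once: if $Y\subseteq \Lambda_S(G)$ is nonempty, closed in $\Lambda_S(G)$ and $G$-invariant, then $Y$ is closed in $\partial S$ because $\Lambda_S(G)$ is closed, so minimality of $\Lambda_S(G)$ forces $Y=\Lambda_S(G)$.

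So let $C\subseteq\partial S$ be nonempty, closed and $G$-invariant; I must show $\Lambda_S(G)\subseteq C$. I would proceed in three steps.

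\textbf{Step 1: $C$ contains some $g^+$.} Pick $c\in C$. Since the action is of general type, there are loxodromic elements $g,h$ with $\{g^\pm\}\cap\{h^\pm\}=\emptyset$. If $c\neq g^-$, Lemma \ref{north-south} gives $g^nc\to g^+$. Each $g^nc$ lies in $C$ and $C$ is closed, so $g^+\in C$. If instead $c=g^-$, then $c\neq h^-$, and the same argument with $h$ gives $h^+\in C$.

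\textbf{Step 2: $C$ contains every point of $\mathcal H_S(G)$.} Suppose $p^+\in C$ for some loxodromic $p$, and let $f$ be any loxodromic element. If $p^+\neq f^-$, then $f^np^+\to f^+$, so $f^+\in C$. If $p^+=f^-$, I would use a conjugate of $f$ instead. By general type there is a loxodromic $k$ with $k^{\pm}\notin\{f^-,f^+\}$. Then $k^mf^-\to k^+\neq f^-$, so for large $m$ the conjugate $f'=k^mfk^{-m}$ satisfies $(f')^-=k^mf^-\neq f^-=p^+$ (Remark \ref{congugation}). Hence $(f')^+=k^mf^+\in C$. Applying $k^{-m}$ and using invariance of $C$ gives $f^+\in C$. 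Replacing $f$ by $f^{-1}$ gives $f^-\in C$ as well. This case analysis with conjugation is the fiddliest part of the argument.

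\textbf{Step 3: $\mathcal H_S(G)$ is dense in $\Lambda_S(G)$.} This is the standard fact that loxodromic fixed points are dense in the limit set, and I expect it to be the main obstacle. To prove it, take $\xi\in\Lambda_S(G)$ and a sequence $g_is\to\xi$. The plan is to show that for large $i$ either $g_i$ itself, or $g_i$ composed with a fixed loxodromic element chosen so the orbits avoid the repelling points, is loxodromic with attracting point close to $\xi$. This uses ping-pong and north-south estimates with Gromov products, in the spirit of \cite[Chapter 7]{DSU}.

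Combining the steps, $\Lambda_S(G)=\overline{\mathcal H_S(G)}\subseteq C$, as required.
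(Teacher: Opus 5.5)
The paper gives no proof of this statement; it quotes it from \cite{DSU}. Your Steps 1 and 2 are correct. North--south dynamics (Lemma \ref{north-south}), closedness and invariance of $C$, and the conjugation trick of Remark \ref{congugation} do give $\mathcal H_S(G)\subseteq C$. In Step 2 you only need $f^-\notin\{k^+,k^-\}$, and one of your two independent elements $g,h$ already serves as $k$. Moreover $k^mf^-\neq f^-$ for every $m\neq 0$, since otherwise the loxodromic $k^m$ would fix $f^-$.

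However, these steps only reduce the corollary to the inclusion $\Lambda_S(G)\subseteq\overline{\mathcal H_S(G)}$, and that inclusion carries essentially all of the content. Your Step 3 is a plan rather than an argument. To carry it out you would need a quantitative criterion guaranteeing two things: that $g_i$ (or $g_i$ composed with an auxiliary loxodromic) is loxodromic, \emph{and} that its attracting point lies in $\mathcal U_o(\xi,r)$. The auxiliary element also has to be chosen uniformly in $i$ without assuming that $(g_i^{-1}s)$ converges, since $\widehat S$ need not be sequentially compact when $S$ is not proper.

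Nor can you simply quote Lemma \ref{set of fixed points of loxodromic elements is dense} at this point. A standard proof of that density is precisely to apply the present minimality statement to the closed $G$-invariant set $\overline{\mathcal H_S(G)}$. In \cite{DSU} the density statements 7.4.6--7.4.7 also come after the cited corollary. So quoting the lemma here is circular unless you rely on an independent proof of density.

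The simplest way to close the gap is to bypass loxodromic elements entirely.
\begin{itemize}
\item Since $G$ fixes no point of $\partial S$, $C$ contains at least two points $\eta_1\neq\eta_2$; put $R=\l\eta_1|\eta_2\r_o<\infty$.
\item Let $\xi\in\Lambda_S(G)$ and $g_no\to\xi$. By the four-point condition (in its version with two boundary points and one point of $S$), for each $n$ there is $i_n\in\{1,2\}$ with $\l\eta_{i_n}|g_n^{-1}o\r_o\le R+O(\delta)$.
\item Use the identity $\l x|y\r_o+\l o|y\r_x=\d_S(o,x)$ with $x=g_no$, together with $\l o|g_ny\r_{g_no}=\l g_n^{-1}o|y\r_o$. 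Passing to boundary points with the usual $2\delta$ errors gives $\l g_n\eta_{i_n}|g_no\r_o\ge \d_S(o,g_no)-R-O(\delta)\to\infty$.
\item Combined with $\l\xi|g_no\r_o\to\infty$, this yields $g_n\eta_{i_n}\to\xi$. Since $g_n\eta_{i_n}\in C$ and $C$ is closed, $\xi\in C$.
\end{itemize}
This uses only the absence of a global fixed point on $\partial S$. Your route through $\mathcal H_S(G)$ becomes valid once density is established independently, but it is longer and places the whole difficulty in the step you left open.
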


The following can be found in \cite[Section 8.2]{Gro87}; alternatively, see \cite[Theorems 2.6, 2.9]{Ham} or \cite[Propositions 7.4.6, 7.4.7]{DSU}.

\begin{lem}\label{set of fixed points of loxodromic elements is dense}
Suppose that a group G acts non-elementarily on a hyperbolic space $S$. Then $\mathcal{H}_S(G)$ is dense in $\Lambda_S(G)$. Moreover, if the action is of general type, then $\{(g^+, g^-) \mid g \in \mathcal{L}_S(G)\}$ is dense in $\Lambda_S(G) \times \Lambda_S(G)$.
\end{lem}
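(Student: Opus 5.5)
The plan is to prove the first assertion by an invariance-plus-approximation argument, and then to obtain the second by a ping-pong construction built from the first. For the first part, note that $\mathcal{H}_S(G)$ is nonempty, since the action is non-elementary and so $\mathcal{L}_S(G)\neq\emptyset$. It is also $G$-invariant by Remark~\ref{congugation}. Hence its closure $\overline{\mathcal{H}_S(G)}\cap\partial S$ is a closed $G$-invariant subset of $\Lambda_S(G)$; the inclusion in $\Lambda_S(G)$ holds because $g^{\pm}=\lim g^{\pm n}s$ lie in the closure of an orbit.

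In the general type case, Corollary~\ref{action on the limit set is minimal} then immediately gives that $\mathcal{H}_S(G)$ is dense. In the quasi-parabolic case minimality is unavailable, so I will argue directly. Take $\xi\in\Lambda_S(G)$ and $g_n\in G$ with $g_n o\to\xi$, and fix a loxodromic $h$. I will form $w_n=g_nh^{k_n}$, choosing the power $k_n\in\ZZ$ (sign and size) so that the Gromov product $\l w_n o\,|\,w_n^{-1}o\r_o$ stays bounded while $\d_S(o,w_no)\to\infty$. The standard criterion in $\delta$-hyperbolic spaces says that if $\d_S(o,wo)>2\l wo|w^{-1}o\r_o+C(\delta)$, then $w$ is loxodromic and $\l w^+|wo\r_o\ge \l wo|w^{-1}o\r_o$ up to a bounded error, and more precisely $w^+$ lies in a shadow around $wo$. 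Applying it gives $w_n^+\to\xi$. The delicate point is choosing $k_n$ so that $h^{k_n}$ does not backtrack along $g_n^{-1}o$. Because $h^{+}\neq h^{-}$, at most one of the two directions $h^{\pm}$ can be Gromov-close to the direction of $g_n^{-1}o$, and I will choose the sign of $k_n$ accordingly.

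For the second part, assume the action is of general type, and let $U,V\subseteq\Lambda_S(G)$ be nonempty open sets. Since $\Lambda_S(G)$ is infinite and minimal, it has no isolated points, so I may shrink $U$ and $V$ to have disjoint closures. By the first part, pick $a\in\mathcal{L}_S(G)$ with $a^+\in U$ and $b\in\mathcal{L}_S(G)$ with $b^+\in V$. If necessary, I will replace $b$ by a conjugate $cbc^{-1}$ with $c$ chosen so that $\{a^\pm\}\cap\{cb^\pm\}=\emptyset$ while $cb^+$ still lies in $V$. Such a $c$ exists by density in the first part together with the existence of infinitely many independent loxodromics.

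Now set $g=a^nb^{-n}$ for large $n$. Choose small disjoint neighborhoods $N_{a^\pm}$ and $N_{b^\pm}$ of the four endpoints, with $N_{a^+}\subseteq U$ and $N_{b^+}\subseteq V$. By the uniform convergence in Lemma~\ref{north-south}, for $n$ large, $b^{-n}$ maps $\widehat S\setminus N_{b^+}$ into $N_{b^-}$, and $a^n$ maps $N_{b^-}$ into $N_{a^+}$. Hence $g$ maps $\widehat S\setminus N_{b^+}$ into $N_{a^+}$. Symmetrically, $g^{-1}=b^na^{-n}$ maps $\widehat S\setminus N_{a^+}$ into $N_{b^+}$.

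The main obstacle is deducing from this ping-pong that $g$ is loxodromic with $g^+\in N_{a^+}$ and $g^-\in N_{b^+}$. For this I will take $o$ outside both neighborhoods; shrinking the neighborhoods to sets $\mathcal U_o(\cdot,r)$ with $r$ large makes this possible. Then $g^ko\in N_{a^+}$ and $g^{-k}o\in N_{b^+}$ for all $k\geq 1$. Since $N_{a^+}$ and $N_{b^+}$ are far apart, $\langle g^k o\,|\,g^{-k}o\rangle_o$ is bounded, so the orbit of $\langle g\rangle$ is unbounded and is not parabolic: a parabolic orbit would converge to a single point from both sides. The limit set of $\langle g\rangle$ therefore consists of exactly two points, one in $\overline{N_{a^+}}$ and one in $\overline{N_{b^+}}$. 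This shows $(g^+,g^-)\in U\times V$.
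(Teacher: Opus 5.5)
The paper gives no proof of this lemma; it is quoted from Gromov, Hamann, and Das--Simmons--Urba\'nski. Your outline is the standard ping-pong argument behind those results. As written, however, two steps do not follow and one is only asserted. All three are repairable.

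\emph{First assertion: the choice of sign for $k_n$.} To bound $\langle w_no\,|\,w_n^{-1}o\rangle_o$ for $w_n=g_nh^{k_n}$, you need non-backtracking at two corners, not one.
First, $\langle g_n^{-1}o\,|\,h^{k_n}o\rangle_o$ must be bounded. This is what makes $w_no$ shadow $g_no$ and $w_n^{-1}o$ shadow $h^{-k_n}o$.
Second, $\langle g_no\,|\,h^{-k_n}o\rangle_o$ must be bounded, i.e.\ the endpoint of $h$ towards which $h^{-k_n}$ moves must be far from $\xi$. Your sign rule only controls the first condition.

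It genuinely fails for $\xi=h^+$ and $g_n=h^nfh^{-n}$ with $fh^-\neq h^-$. Then $g_n^{\pm1}o\to h^+$, your rule picks $k_n<0$, and both $w_no$ and $w_n^{-1}o$ tend to $h^+$.

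The fix is to dispose of the trivial case $\xi\in\{h^+,h^-\}$ first. If $\xi\notin\{h^\pm\}$, the second condition holds for either sign once $n$ and $|k_n|$ are large, with a bound depending only on $\langle\xi|h^\pm\rangle_o$ and $\delta$.

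Also, the conclusion of your criterion should read $\langle w^+|wo\rangle_o\ge \d_S(o,wo)-\langle wo|w^{-1}o\rangle_o-C(\delta)$, which is the shadow statement. The inequality $\langle w^+|wo\rangle_o\gtrsim\langle wo|w^{-1}o\rangle_o$ you wrote gives nothing. Incidentally, this direct argument works verbatim for general type actions, so you do not need to appeal to minimality there.

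\emph{Second assertion: why $g$ is loxodromic.} ``$\langle g^ko|g^{-k}o\rangle_o$ is bounded, so the orbit of $\langle g\rangle$ is unbounded'' is a non sequitur, since elliptic isometries also have bounded such products. What you need is that the displacement dominates the bound, and you have this.

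Since $go\in\mathcal U_o(a^+,r)$, you get $\d_S(o,go)>r$. Once $r>\langle a^+|b^+\rangle_o+2\delta$, you also get $\langle go|g^{-1}o\rangle_o\le A$ with $A=\langle a^+|b^+\rangle_o+O(\delta)$, and $A$ does not depend on $r$. Fix $r>4A+24\delta$ before choosing $n$. Then Lemma \ref{loxodromic criterion} gives $g\in\mathcal L_S(G)$ directly. Moreover $g^{\pm}=\lim_k g^{\pm k}o$ lie in the closures of $N_{a^+}$ and $N_{b^+}$, so choose those closures inside $U$ and $V$. Your remark about parabolic orbits then becomes unnecessary; in a non-proper space it would itself need justification.

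\emph{Second assertion: the conjugation replacing $b$.} This step is only asserted. Note that the ping-pong needs only $a^+\neq b^+$, which is automatic, and $a^-\neq b^-$.

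If $a^-=b^-$, choose $f\in G$ with $f^{-1}b^-\notin\{b^+,b^-\}$. This is possible because $Gb^-$ is infinite, by minimality. Put $c=b^mf$. Then $cb^{\pm}=b^mfb^{\pm}\to b^+$ as $m\to\infty$. Since $b^+\in V\setminus\{a^+,a^-\}$, for large $m$ the loxodromic $cbc^{-1}$ satisfies $(cbc^{-1})^+\in V$ and $(cbc^{-1})^-\neq a^-$.
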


The following lemma follows from \cite[Theorem 4.5]{Osi17}, where a much more general fact is proved.

\begin{lem}\label{dichotomy}
Suppose that a group $G$ admits a general type action on a hyperbolic space $S$. Then the induced action of any normal subgroup $N \leq G$ either has bounded orbits in $S$ or is also of general type. If the action of $N$ on $\Lambda_S(G)$ is nontrivial, then $\Lambda_S(N)=\Lambda_S(G)$.
\end{lem}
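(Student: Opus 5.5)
The plan is to use two facts: the limit set of a normal subgroup is $G$-invariant, and the action of $G$ on $\Lambda_S(G)$ is minimal (Corollary~\ref{action on the limit set is minimal}). The second claim will then reduce to showing that a normal subgroup with bounded orbits acts trivially on $\Lambda_S(G)$.

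\emph{Step 1: $G$-invariance.} Fix $s\in S$ and $g\in G$. By normality, $gNs=Ngs$. Hence $g\Lambda_S(N)$ is the limit set of $N$ computed with basepoint $gs$. Since the limit set does not depend on the basepoint, $g\Lambda_S(N)=\Lambda_S(N)$. So $\Lambda_S(N)$ is a closed $G$-invariant subset of $\Lambda_S(G)$. The same argument shows that $\Fix_{\partial S}(N)$ is $G$-invariant: if $n\xi=\xi$ for all $n\in N$, then $n g\xi=g(g^{-1}ng)\xi=g\xi$.

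\emph{Step 2: the dichotomy.} Suppose $\Lambda_S(N)\neq\emptyset$. By minimality, $\Lambda_S(N)=\Lambda_S(G)$, and this set is infinite because the action of $G$ is of general type. This rules out the parabolic and lineal cases for $N$, so the action of $N$ is non-elementary. If it were quasi-parabolic, $N$ would fix a unique point of $\partial S$. By Step 1 this point would be fixed by $G$, contradicting $\Fix_{\partial S}(G)=\emptyset$. Therefore $N$ is of general type, with $\Lambda_S(N)=\Lambda_S(G)$. In the remaining case $\Lambda_S(N)=\emptyset$, and the action of $N$ is elliptic, i.e.\ it has bounded orbits.

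\emph{Step 3: bounded orbits imply trivial action on $\Lambda_S(G)$.} By Step 2, it suffices to prove this. Let $N$ have bounded orbits, and let $D$ be the diameter of $Ns$. Take $g\in\mathcal L_S(G)$ and $n\in N$. For every $k$,
\[
\d_S(ng^ks,\,g^ks)=\d_S\bigl(g^{-k}ng^k s,\, s\bigr)\le D,
\]
because $g^{-k}ng^k\in N$. The sequences $(ng^ks)$ and $(g^ks)$ stay at bounded distance, and $g^ks\to g^+$ by Lemma~\ref{north-south}. Hence both sequences converge to the same boundary point, so $ng^+=g^+$; similarly $ng^-=g^-$. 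Thus $N$ fixes $\mathcal H_S(G)$ pointwise. This set is dense in $\Lambda_S(G)$ by Lemma~\ref{set of fixed points of loxodromic elements is dense}, and each $n$ acts continuously, so $N$ acts trivially on $\Lambda_S(G)$.

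Consequently, if $N$ acts nontrivially on $\Lambda_S(G)$, its orbits are unbounded, and Step 2 gives $\Lambda_S(N)=\Lambda_S(G)$.

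The main point needing care is Step 3, namely that bounded distance between the two sequences forces the same limit point. This follows from the Gromov product estimate: if $\d_S(x_k,y_k)\le D$, then $\l x_k|y_k\r_o\ge \d_S(x_k,o)-D\to\infty$.
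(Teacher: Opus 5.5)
Your proof is correct, but it does not follow the paper, which gives no argument of its own. The paper deduces the lemma from Osin's Theorem~4.5 in \cite{Osi17} on invariant random subgroups (a normal subgroup $N$ is the Dirac measure at $N$) and adds a remark that the countability hypothesis there is unnecessary.

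You instead give a direct, elementary proof that uses only background already recorded in the paper:
\begin{itemize}
\item $G$-invariance of $\Lambda_S(N)$ and of $\Fix_{\partial S}(N)$, combined with minimality (Corollary~\ref{action on the limit set is minimal}), gives both the dichotomy and $\Lambda_S(N)=\Lambda_S(G)$ whenever $N$ has unbounded orbits.
\item Your Step~3 shows that a normal subgroup with bounded orbits fixes every $g^{\pm}$ with $g\in\mathcal L_S(G)$. By density (Lemma~\ref{set of fixed points of loxodromic elements is dense}) and continuity, it then acts trivially on $\Lambda_S(G)$.
\end{itemize}
Step~3 is exactly the inclusion of elliptic normal subgroups in $E_S(G)$ from Proposition~\ref{elliptic radical is maximal}, so you could simply cite that instead.

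What your route buys is transparency and the absence of any countability or measure-theoretic input. In particular, it justifies the paper's remark about uncountable groups in the normal-subgroup case. What the citation buys is a much more general statement about random subgroups, which the paper never uses.

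The only implicit step worth writing out is that $ng^ks\to ng^+$, by continuity of the extended action on $\widehat S$. Your Gromov product estimate together with Lemma~\ref{lemma 2.2.2}(1) then gives $\l ng^+|g^+\r_o=\infty$, that is, $ng^+=g^+$.
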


\begin{rem}
The proof of Lemma \ref{dichotomy} remains true for an arbitrary group $G$ with a general type action on a hyperbolic space $S$, not necessarily countable as stated in \cite[Theorem 4.5]{Osi17}.
\end{rem}

For any $\lambda \geq 1$ and $c\in \RR$, a map of metric spaces $\phi:\left(X, \d_X\right) \rightarrow\left(Y, \d_Y\right)$ is a {\it $(\lambda, c)$-quasi-isometric embedding} if for all $x, y \in X$

$$
\frac{1}{\lambda} \d_X(x, y)-c \leq \d_Y(\phi(x), \phi(y)) \leq \lambda \d_X(x, y)+c.
$$

A {\it $(\lambda, c)$-quasi-geodesic} is a $(\lambda, c)$-quasi-isometric embedding $\phi$ of an interval $I \subseteq \RR$ into $S$. In this work, we will not distinguish between quasi-geodesics and their images when the parametrization is not important. If the constants $\lambda$ and $c$ of $\phi$ are not important, we write that $\phi$ is {\it a quasi-geodesic}.

Let $Y \subseteq S$, and let $\sigma \geq 0$. We say that $Y$ is {\it $\sigma$-quasiconvex in $S$} if every geodesic with endpoints in $Y$ is contained in the $\sigma$-neighborhood of $Y$. A subset $Y \subset S$ is said to be {\it quasiconvex} if it is $\sigma$-quasiconvex for some $\sigma \geq 0$. Any ($\lambda, c$)-quasi-geodesic in $S$ is quasiconvex by the Morse Lemma. 

\begin{lem} [Morse Lemma]\label{Morse lemma}
Let $S$ be a $\delta$-hyperbolic metric space, and fix $\lambda \geq 1$ and $c \geq 0$. There exists a constant $\sigma$ depending only on $\delta, \lambda$, and $c$ such that if $\gamma_1$ and $\gamma_2$ are ($\lambda, c$)-quasi-geodesics in $S$ with the same endpoints, then $\gamma_1 \subseteq \mathcal{N}_\sigma\left(\gamma_2\right)$.
\end{lem}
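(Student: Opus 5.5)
We would reduce to comparing each quasi-geodesic with a geodesic $[x,y]$ joining the common endpoints $x,y$. Suppose there is a constant $\sigma_0=\sigma_0(\delta,\lambda,c)$ such that every $(\lambda,c)$-quasi-geodesic $\gamma$ from $x$ to $y$ satisfies $\gamma\subseteq\mathcal N_{\sigma_0}([x,y])$ and $[x,y]\subseteq\mathcal N_{\sigma_0}(\gamma)$. Then for $p\in\gamma_1$ we pick $q\in[x,y]$ with $\d_S(p,q)\le\sigma_0$, and $r\in\gamma_2$ with $\d_S(q,r)\le\sigma_0$. This gives $\gamma_1\subseteq\mathcal N_{2\sigma_0}(\gamma_2)$, so we may take $\sigma=2\sigma_0$. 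We treat the case of a compact interval $I=[a,b]$, which is the setting in which "the same endpoints" makes sense here.

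\textbf{Step 1 (taming).} First we replace $\gamma$ by a continuous, piecewise-geodesic path $\gamma'$. We restrict $\gamma$ to the points of $I\cap(a+\ZZ)$ together with $b$, and join consecutive images by geodesic segments. A routine check shows that $\gamma'$ is a $(\lambda,c')$-quasi-geodesic with $c'$ depending only on $\lambda$ and $c$. It also satisfies the length estimate $\ell(\gamma'|_{[s,t]})\le k_1\d_S(\gamma'(s),\gamma'(t))+k_2$, and the Hausdorff distance between $\gamma$ and $\gamma'$ is at most $\lambda+c$. Hence it suffices to prove the claim for such tame paths.

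\textbf{Step 2 (geodesics are close to paths).} Next we prove the standard estimate: if $\alpha$ is a continuous rectifiable path from $x$ to $y$ and $p\in[x,y]$, then $\d_S(p,\alpha)\le\delta\log_2\ell(\alpha)+1$. The proof bisects $\alpha$ repeatedly. At each stage we form geodesic triangles on the bisection points and use the Rips condition of Definition \ref{rips def} to move from $p$ to a geodesic spanning half of the previous subpath, at a cost of $\delta$ per step. After about $\log_2\ell(\alpha)$ steps the subpath has length at most $1$, which gives the bound.

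\textbf{Step 3 (the main obstacle: the two inclusions).} Let $D=\sup_{p\in[x,y]}\d_S(p,\gamma')$, attained at some $p$. We expect this to be the hardest step, since the logarithmic estimate alone is not uniform and must be played off against the linear length bound to force $D$ to be bounded. We take points $y_1,z_1\in[x,y]$ at distance $2D$ on either side of $p$ (or the endpoints, if closer), and points $y',z'\in\gamma'$ within $D$ of them. The concatenation of $[y_1,y']$, $\gamma'|_{[y',z']}$ and $[z',z_1]$ is a path avoiding the open $D$-ball about $p$. Its length is at most $2D+k_1(6D)+k_2$. Step 2 applied to this path and the geodesic $[y_1,z_1]\ni p$ gives
$$D\le\delta\log_2(2D+6k_1D+k_2)+1,$$
which bounds $D$ by a constant $D_0(\delta,\lambda,c)$. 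This proves $[x,y]\subseteq\mathcal N_{D_0}(\gamma')$.

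For the reverse inclusion, consider a maximal subinterval $[s,t]$ on which $\gamma'$ stays outside $\mathcal N_{D_0}([x,y])$. By connectedness of $[x,y]$, some point $w\in[x,y]$ is within $D_0$ of both $\gamma'([a,s])$ and $\gamma'([t,b])$. So there are parameters $s'\le s$ and $t'\ge t$ with $\d_S(\gamma'(s'),\gamma'(t'))\le 2D_0$. The quasi-geodesic inequality then bounds $t'-s'$, hence the length of $\gamma'|_{[s',t']}$, by a constant. Therefore every point of $\gamma'([s,t])$ lies within a uniform distance $D_0'$ of $[x,y]$. Setting $\sigma_0=\max(D_0,D_0')+\lambda+c$ and returning to $\gamma$ via Step 1 completes the argument.
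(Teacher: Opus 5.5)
Your proof is correct. It is the standard proof of stability of quasi-geodesics from Bridson--Haefliger \cite[Theorem III.H.1.7]{BH99}: taming (their Lemma III.H.1.11), the logarithmic estimate for rectifiable paths (their Proposition III.H.1.6), and then the two inclusions. The paper gives no proof of its own; it quotes the lemma as a standard fact.

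Two small points need tidying, neither of which affects the argument:
\begin{itemize}
\item In Step 2 the bound should read $\delta\,|\log_2 \ell(\alpha)|+1$, or you should restrict to $\ell(\alpha)\ge 1$. As written, the right-hand side is negative for very short paths.
\item In Step 3, when $y_1$ (or $z_1$) is an endpoint of $[x,y]$ at distance less than $2D$ from $p$, you must take $y'=y_1$. This is possible because $y_1\in\gamma'$. An arbitrary point of $\gamma'$ within $D$ of $y_1$ could make $[y_1,y']$ enter the open $D$-ball about $p$.
\end{itemize}
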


Let $Y \subset S$, and let $x \in S$. The {\it nearest-point projection} of $x$ onto $Y$ is a point $\overline{x}$ such that $\d_S(x, \overline{x}) \leq \d_S(x, Y)+1$. When $Y$ is $\sigma$-quasiconvex, the nearest-point projection is coarsely well-defined: for
different choices $\overline{x}_1$, $\overline{x}_2$ of points in $Y$ nearest to $x$ we have $\d_S(\overline{x}_1, \overline{x}_2) \leq \max(2, 2\sigma + 9\delta)$ (see \cite[Remark 1.103]{KS}).

Let $x$ and $y$ have nearest-point projections $\overline{x}$ and $\overline{y}$ onto a quasi-geodesic segment $\gamma$. The following lemma shows that if $\overline{x}$ and $\overline{y}$ are $O(\delta)$ apart, then, up to additive error, the geodesic from $x$ to $y$ goes from $x$ to $\overline{x}$, then runs along the geodesic from $\overline{x}$ to $\overline{y}$, and then heads back out to $y$.

\begin{figure}
\centering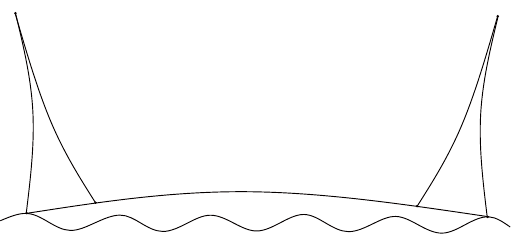\\
  \caption{For Lemma \ref{projection distanse formula}}\label{sgp}
\end{figure}

\begin{lem}\label{projection distanse formula}
Let $x, y \in S$ and let $\gamma \subset S$ be a $(\lambda, c)$-quasi-geodesic segment. Let $\overline{x}$ be a nearest-point projection of $x$ on $\gamma$ and let $\overline{y}$ be a nearest-point projection of $y$ on $\gamma$. Then there exists a constant $M=M(\lambda, c, \delta)$ such that if $\d_S(\overline{x}, \overline{y})>14\delta+2M$ then 
$$
\d_S(x, y) \geq \d_S(x, \overline{x})+\d_S(\overline{x}, \overline{y})+\d_S(\overline{y}, y)-24 \delta-4M.
$$
\end{lem}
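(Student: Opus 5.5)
The plan is to reduce the statement to two standard facts about $\delta$-hyperbolic spaces. First, quasi-geodesics are uniformly close to geodesics (Lemma \ref{Morse lemma}). Second, for a geodesic segment, the projections of two points that are far apart force the geodesic between those points to pass near both projections. Let $\sigma=\sigma(\delta,\lambda,c)$ be the Morse constant. Fix a geodesic $[a,b]$ between the endpoints of $\gamma$, so that $\gamma\subseteq\mathcal N_\sigma([a,b])$ and, by the Morse Lemma in the other direction up to a uniform constant, $[a,b]\subseteq \mathcal N_{\sigma'}(\gamma)$. I will take $M$ to be a sum of such constants, enlarged as needed.

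Step 1 transfers the projections to the geodesic. Let $x'$ be a nearest-point projection of $x$ onto $[a,b]$. Since $\overline x$ nearly minimizes the distance from $x$ to $\gamma$, and $\gamma$ and $[a,b]$ are Hausdorff-close, $\d_S(x,x')$ and $\d_S(x,\overline x)$ differ by at most $O(\sigma)$. The aim is to show that $\d_S(\overline x,x')\le M_0$ for some $M_0=M_0(\lambda,c,\delta)$. The standard argument runs as follows. If $x'$ were far from $\overline x$, take a point $p\in[a,b]$ within $\sigma$ of $\overline x$. The triangle $x,x',p$ is thin, and the geodesic $[x',p]$ lies in $[a,b]$. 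Hence the geodesic $[x,p]$ passes within about $2\delta$ of a point of $[a,b]$ near $x'$, and from there it continues along $[a,b]$. This shows that $\d_S(x,p)\ge \d_S(x,x')+\d_S(x',p)-O(\delta)$. It contradicts the near-minimality of $\overline x$ as soon as $\d_S(x',p)$ exceeds $O(\delta+\sigma)$, since the points of $\gamma$ near $x'$ would be strictly closer to $x$. The same holds for $y$ and $y'$.

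Step 2 is the geodesic version of the estimate. Suppose $\d_S(x',y')$ exceeds about $14\delta$. Consider the quadrilateral $x,x',y',y$, in which the side $[x',y']$ is a subsegment of $[a,b]$. I will use thinness of quadrilaterals, namely that each side lies in the $2\delta$-neighbourhood of the other three. Because $x'$ is a nearest-point projection, points of $[x,x']$ far from $x'$ cannot be $2\delta$-close to $[x',y']$. The same holds for $[y,y']$ and $y'$. Points of $[x,x']$ and $[y,y']$ also cannot be close to each other without creating a shortcut to $[a,b]$. From these facts, the geodesic $[x,y]$ must pass within $O(\delta)$ of both $x'$ and $y'$, in that order along $[x,y]$. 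Splitting $[x,y]$ at these two points and applying the triangle inequality gives
\[
\d_S(x,y)\ge \d_S(x,x')+\d_S(x',y')+\d_S(y',y)-C\delta.
\]
I expect $C$ to be around $24$ after careful bookkeeping. This is the classical bounded geodesic projection or ``concatenation'' estimate.

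Step 3 combines the two steps. Take $M$ large enough to absorb $M_0$ and the $O(\sigma)$ errors. Then $\d_S(\overline x,\overline y)>14\delta+2M$ gives $\d_S(x',y')>14\delta$. Replacing $x'$ and $y'$ by $\overline x$ and $\overline y$ in the inequality from Step 2 changes each of the three distances by at most $M$ in total. This yields the stated bound with $-24\delta-4M$.

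The main obstacle is Step 2. To get the exact constants $14\delta$ and $24\delta$ one has to track the thin-quadrilateral argument carefully. In particular, one must rule out that $[x,y]$ shortcuts between $[x,x']$ and $[y,y']$. That is where the hypothesis $\d_S(x',y')>14\delta$ is used: such a shortcut would place a point of $[x,x']$ within about $4\delta$ of a point of $[a,b]$ far from $x'$, contradicting the nearest-point property. If the constants do not come out exactly, I would enlarge $M$ to absorb the slack, since only the dependence $M=M(\lambda,c,\delta)$ matters.
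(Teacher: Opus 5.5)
Your proposal is correct and follows essentially the same route as the paper. The paper likewise moves the projections to a geodesic using the Morse Lemma and the fact that projections onto Hausdorff-close sets are uniformly close (citing \cite[Proposition 3.6]{Mah}), then applies the geodesic concatenation estimate with exactly the constants $14\delta$ and $24\delta$ (\cite[Proposition 3.4]{Mah}), and absorbs the error into $4M$. The only difference is that the paper projects onto $[\overline{x},\overline{y}]$ instead of a geodesic joining the endpoints of $\gamma$, which does not require $\gamma$ to have endpoints; this matters for the later application to the bi-infinite axis $L_g$.
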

\begin{proof}
Consider the geodesic segment $[\overline{x}, \overline{y}]$, and let $p_x$ and $p_y$ be nearest-point projections to $[\overline{x}, \overline{y}]$ of $x$ and $y$, respectively.   
By \cite[Proposition 3.6]{Mah}, if $\gamma$ lies in a bounded neighborhood of a geodesic, then the nearest-point projections of any point to $\gamma$ and to the geodesic are a bounded distance apart. Combining this with the Morse Lemma, we obtain that $p_x$, $p_y$ lie within  $M=M(\lambda, c, \delta)$ of $\overline{x}$ and $\overline{y}$. Since $\d_S(\overline{x}, \overline{y})>14\delta+2M$, we have $\d_S(p_x, p_y)>14\delta$. Therefore, by \cite[Proposition 3.4]{Mah}, the following holds
$$
\d_S(x, y) \geq \d_S(x, p_x)+\d_S(p_x, p_y)+\d_S(p_y, y)-24 \delta.
$$
Using that $\d_S(\overline{x}, p_x)\leq M$ and $\d_S(\overline{y}, p_y)\leq M$, we obtain the conclusion of the lemma.
\end{proof}

\begin{defn}
Let $g \in G$. The {\it asymptotic translation length} of $g$ in $S$ is
$$
\tau_S(g):=\lim _{n \rightarrow \infty} \frac{\d_S\left(s_0, g^n s_0\right)}{n}
$$
for some (equivalently, any) $s_0 \in S$.
\end{defn}

An element $g \in \mathcal{L}_S(G)$ if and only if $\tau_S(g)>0$ \cite[ Chapitre 10, Proposition 6.3]{french book}, and it acts on $S$ by translation along a quasi-geodesic axis which connects the two limit points $g^+$ and $g^-$ of $g$ in $\partial S$.

If $\tau_S(g)$ is sufficiently large, such an axis can be chosen uniformly, as shown in the following lemma, which summarizes results from \cite[Section 3]{Cou16}.

\begin{lem}\label{uniform axis}
Let $G$ act on a $\delta$-hyperbolic space $S$. Suppose $g \in \mathcal{L}_S(G)$ and $\tau_S(g) \geq L_S \delta-16 \delta$, where $L_S$ depends only on $\delta$ (and is more explicitly described in \cite[Definition 2.8]{Cou16}). Then there exists a $(2, \delta)$-quasi-geodesic $L_g$ in $S$ which connects the limit points $g^+$ and $g^-$ of $g$ in $\partial S$. $L_g$ is preserved by $g$ and is $(\delta+8 \delta)$-quasiconvex. The quasi-geodesic $L_g$ is rectifiable, i.e., the length $l(\gamma)$ is well-defined for any segment $\gamma \subset L_g$.
\end{lem}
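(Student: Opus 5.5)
The statement is a summary of \cite[Section 3]{Cou16}, so the plan is to rebuild the axis in the standard way and check the constants against Coulon's local-to-global theorem, rather than produce a new argument. Let $\|g\| = \inf_{s\in S}\d_S(s,gs)$ be the minimal displacement. I will use the standard estimate $\tau_S(g) \leq \|g\| \leq \tau_S(g)+16\delta$, which I am taking from the literature and have not rederived. It is exactly why the hypothesis is written as $\tau_S(g)\geq L_S\delta-16\delta$: together they give $\|g\|\geq L_S\delta - 32\delta$, a lower bound that is large compared with $\delta$.

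First, I would choose a point $x\in S$ that almost realises the minimal displacement, say $\d_S(x,gx)\leq \|g\|+\delta$. Fix a geodesic $[x,gx]$ and set
$$L_g=\bigcup_{n\in\ZZ} g^n[x,gx].$$
This is a bi-infinite path, invariant under $g$ by construction. It is a concatenation of geodesic segments, so every finite subsegment has a well-defined length, which gives rectifiability.

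The core step is controlling the corners. At the junction point $g^n x$ I need $\l g^{n-1}x\,|\,g^{n+1}x\r_{g^nx}=\l g^{-1}x\,|\,gx\r_x$ to be $O(\delta)$. This should follow from near-minimality of $x$: a large Gromov product at $x$ would let me move $x$ a short way along $[x,gx]$ to a point with displacement smaller than $\|g\|$, which is a contradiction. So each junction is a corner with bounded Gromov product, joining two geodesics of length at least $\|g\|$, which is large. Hence $L_g$ is a local quasi-geodesic at the scale $L_S\delta$. Coulon's stability theorem for such local quasi-geodesics \cite[Definition 2.8 and the associated local-to-global statement]{Cou16} then upgrades it to a global $(2,\delta)$-quasi-geodesic.

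The endpoints come from north-south dynamics. The vertices satisfy $g^nx\to g^{\pm}$ as $n\to\pm\infty$ by Lemma \ref{north-south}, so $L_g$ joins $g^-$ to $g^+$. Quasiconvexity is obtained by applying Lemma \ref{Morse lemma}, with Coulon's explicit constants, to a geodesic between two points of $L_g$ and the subpath of $L_g$ between them; this should give the $(\delta+8\delta)$ bound.

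The step I expect to be the main obstacle is the quantitative corner estimate. It has to produce constants sharp enough to fall exactly inside the hypotheses of Coulon's local-to-global theorem, so that the output is $(2,\delta)$ with $9\delta$ quasiconvexity rather than something merely comparable. My first attempt will be to follow Coulon's computation of $\l g^{-1}x\,|\,gx\r_x$ for almost-minimal points essentially verbatim.
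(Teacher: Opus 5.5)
The paper gives no argument of its own here; it only attributes the lemma to \cite[Section 3]{Cou16}. Your construction is the standard one: a nearly minimally displaced point $x$, the path $\bigcup_{n\in\ZZ}g^n[x,gx]$, a corner estimate, then local-to-global. Qualitatively it works, giving a $g$-invariant rectifiable path from $g^-$ to $g^+$ that is a uniform quasi-geodesic and uniformly quasiconvex. The gap is the claim that it yields the stated constants $(2,\delta)$ and $9\delta$.

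Local-to-global theorems of the kind in \cite{Cou16} take an $L$-local $(1,l)$-quasi-geodesic to a global $(2,l)$-quasi-geodesic, with $L=L(l,\delta)$. They do not improve $l$. To output $(2,\delta)$ you would therefore need corner defects of size $\delta$, i.e.\ $\langle y\mid z\rangle_x\le\delta/2$ for all $y\in g^{-1}[x,gx]$ and $z\in[x,gx]$. Near-minimality does not give this. Carry out your own argument with $u=\langle y\mid z\rangle_x$. The points at distance $u$ from $x$ on $[x,y]$ and $[x,z]$ are $4\delta$-close. The one on $[x,z]$, call it $z_0$, then satisfies $\d_S(z_0,gz_0)\le\d_S(x,gx)-2u+4\delta$. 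Combined with $\d_S(x,gx)\le\|g\|+\delta$, this yields only $u\le\frac52\delta$. So $L_g$ is locally a $(1,5\delta)$-quasi-geodesic, and stability returns a $(2,5\delta)$-type quasi-geodesic, not $(2,\delta)$. Likewise, Lemma \ref{Morse lemma} gives only an unspecified $\sigma(\delta,2,\delta)$, not $9\delta$; an explicit quasiconvexity constant needs the explicit Hausdorff bound from the stability theorem with the correct $l$.

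There is also a problem of scale. For a lower bound on your minimal displacement, the relevant inequality is $\|g\|\ge\tau_S(g)$. The inequality $\|g\|\le\tau_S(g)+16\delta$ only bounds $\|g\|$ from above. This gives $\|g\|\ge L_S\delta-16\delta$, which is still below $L_S\delta$. So the segments $g^n[x,gx]$ can be shorter than the scale $L_S\delta$ at which you claim $L_g$ is a local quasi-geodesic. A window of that length may then contain two corners, which worsens the local additive constant again, and a larger $l$ needs a scale $L(l)$ beyond the one $L_S$ is calibrated for. Note that the hypothesis $\tau_S(g)\ge L_S\delta-16\delta$ is a consequence of $\|g\|\ge L_S\delta$, not a sufficient condition for it.

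Consequently, the exact constants in the statement cannot be derived from your sketch. They must be quoted verbatim from Coulon, which is all the paper does. If you want a self-contained proof, prove the version with absolute constants. Pass to a power of $g$ with $\tau_S(g)\ge L(5\delta)$, so that every segment is longer than the scale. Then $L_g$ is a $g$-invariant rectifiable $(2,C\delta)$-quasi-geodesic from $g^-$ to $g^+$ that is $C'\delta$-quasiconvex. That suffices for every later use: in Lemma \ref{Distance formula for loxodromic elements} and Case 2 of Proposition \ref{non-elementary action}, the constants only affect $D$ and $N$.
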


We will call such a uniform quasi-geodesic axis a {\it standard quasi-geodesic axis} of $g$. Note that we can always increase the asymptotic translation length by taking the appropriate power of $g$. Thus, the standard quasi-geodesic axis exists for every loxodromic element in $G$, up to passing to powers.

The following lemma is a reformulation of the “local-to-global'' property in terms of Gromov products and the reverse triangle inequality for the orbits of loxodromic elements. 

\begin{lem}\label{loxodromic criterion}
For every $A \in \NN$ and every isometry $g \in G$ of $S$, if 
$$
\langle gs \mid g^{-1}s \rangle_s \leq A \text{ and } \d_S(s, gs) > 4A+24\delta,
$$
for some $s \in S$, then $g \in \mathcal{L}_S(G)$.
\end{lem}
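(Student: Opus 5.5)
The natural strategy is the standard ``local-to-global'' argument: show that the orbit $s, gs, g^2s, \dots$ is a uniform quasi-geodesic. Then $\d_S(s, g^n s)$ grows linearly in $n$, so $\tau_S(g)>0$, and $g$ is loxodromic by the criterion quoted above from \cite{french book}. Set $L=\d_S(s,gs)$. By hypothesis $\langle g s\mid g^{-1}s\rangle_s\le A$ and $L>4A+24\delta$. Applying the isometry $g^{k}$, we get $\langle g^{k+1}s\mid g^{k-1}s\rangle_{g^ks}\le A$ for every $k$. So at each orbit point the two neighbouring orbit points have small Gromov product, i.e.\ the broken geodesic through the orbit has small ``backtracking'' at each corner.

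The key step is to prove by induction on $n$ the two estimates
$$
\d_S(s,g^ns)\ \ge\ \d_S(s,g^{n-1}s)+L-2A-2\delta \quad\text{and}\quad \langle s\mid g^{n}s\rangle_{g^{n-1}s}\le A+\delta .
$$
The second says that the previous orbit point is almost on a geodesic from $s$ to $g^ns$, seen from the vertex $g^{n-1}s$. For the inductive step, use the four-point condition (Definition~\ref{four point condition}) at the basepoint $g^{n}s$, with the points $s$, $g^{n-1}s$, $g^{n+1}s$:
$$
\langle g^{n-1}s\mid g^{n+1}s\rangle_{g^ns}\ \ge\ \min\{\langle g^{n-1}s\mid s\rangle_{g^ns},\ \langle s\mid g^{n+1}s\rangle_{g^ns}\}-\delta .
$$
The left side is at most $A$. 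Also $\langle g^{n-1}s\mid s\rangle_{g^ns}=\d_S(g^{n-1}s,g^ns)-\langle s\mid g^ns\rangle_{g^{n-1}s}\ge L-A-\delta$, by the inductive bound. Since $L-A-\delta-\delta>A$ (this is exactly where $L>4A+24\delta$ enters, with plenty of room), the minimum must be the second term. Hence $\langle s\mid g^{n+1}s\rangle_{g^ns}\le A+\delta$.

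This bound is the one that propagates, but it degrades by $\delta$ at each step, so it must be rebased. The cleanest way is to prove the stronger uniform statement $\langle s\mid g^{n+1}s\rangle_{g^ns}\le A+\delta$ directly from the corner condition at $g^ns$ together with a uniform bound $\langle g^{n-1}s\mid s\rangle_{g^ns}\ge L/2$, rather than chaining the error. Equivalently, one can follow the standard proof, e.g.\ the local-to-global lemma in Coornaert--Delzant--Papadopoulos, which gives exactly the constant $4A+24\delta$ via Rips thin triangles. Once the Gromov product bound is uniform, the distance formula $\d_S(s,g^{n+1}s)=\d_S(s,g^ns)+L-2\langle s\mid g^{n+1}s\rangle_{g^ns}$ yields increments of at least $L-2A-2\delta>0$.

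From the increments, $\d_S(s,g^ns)\ge n(L-2A-2\delta)$, so $\tau_S(g)\ge L-2A-2\delta>0$ and $g\in\mathcal L_S(G)$. The main obstacle is keeping the constant in the Gromov product bound from accumulating with $n$. I would handle it by the rebasing trick: control $\langle g^{n-1}s\mid s\rangle_{g^ns}$ from below by about $L-A-2\delta$ uniformly, using the four-point inequality only once per step with fixed-size inputs.
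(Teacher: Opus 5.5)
Your proof is correct and is already complete at the end of your inductive step. The paragraph about the constant ``degrading'' describes a problem that does not exist. Write $P_n=\langle s\mid g^ns\rangle_{g^{n-1}s}$. Your step derives $P_{n+1}\le A+\delta$ from $P_n\le A+\delta$ with the same constant. The reason is that $P_n$ enters only through the lower bound $\langle g^{n-1}s\mid s\rangle_{g^ns}=L-P_n\ge L-A-\delta$, and that bound only decides which term realizes the minimum in the four-point inequality. The upper bound on $P_{n+1}$ comes from the corner bound $A$ alone. With $P_1=0$ and the identity $\d_S(s,g^{n+1}s)=\d_S(s,g^ns)+L-2P_{n+1}$, you get $\d_S(s,g^ns)\ge n(L-2A-2\delta)$. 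So the ``rebasing'' discussion and the appeal to Coornaert--Delzant--Papadopoulos can simply be deleted.

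The overall strategy matches the paper's: show linear growth of $\d_S(s,g^ns)$, so $\tau_S(g)>0$, so $g$ is loxodromic. The difference is that the paper does not prove the local-to-global step. It quotes \cite[Lemma 2.1]{Spencer} to get $\d_S(s,g^ns)\ge\frac12\sum_{j=1}^n\d_S(g^{j-1}s,g^js)$, and reads off $\tau_S(g)\ge 2A+12\delta$. Your route is self-contained and uses only the four-point condition plus the $g$-equivariance of the chain (all segments of length $L$, all corners bounded by $A$). It needs only $L>2A+2\delta$ rather than $L>4A+24\delta$, and gives the sharper bound $\tau_S(g)\ge L-2A-2\delta$. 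The paper's route is shorter on the page, at the cost of an external citation and weaker constants.
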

\begin{proof}
    Consider the sequence $s, gs, g^2s, \dots, g^ns$ for some $n \in \NN$. By \cite[Lemma 2.1]{Spencer}, we have 
    \begin{equation}\label{sum of dist}
    \d_S(s, g^ns) \geq \frac{1}{2}\sum_{j=1}^n\d_S(g^{j-1}s, g^js).
    \end{equation}
    Letting $n \rightarrow \infty$ in (\ref{sum of dist}), and using that $\d_S(s, gs) > 4A+24\delta$, we conclude that 
    $\tau_S(g)\geq 2A+12\delta$. Since $\tau_S(g) > 0$, $g \in \mathcal{L}_S(G)$.
\end{proof}

\begin{prop}\label{elements of parabolic subgroups don't move some points far}
Suppose that the action of a group $G$ on a hyperbolic space $S$ is parabolic. Then for every $g_1, g_2 \in G$, there exists $s \in S$ such that $\d_S(g_1s, s) \leq 56\delta$ and $\d_S(g_2s,s) \leq 56\delta$.
\end{prop}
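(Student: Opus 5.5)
The plan is to exploit the unique fixed point $\xi \in \partial S$ of the parabolic action. We pick points far along a quasi-geodesic ray towards $\xi$ and show that both $g_1$ and $g_2$ move them by a uniformly bounded amount, with the bound depending only on $\delta$.

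First, we record that in the parabolic case $\Lambda_S(G)=\Fix_{\partial S}(G)=\{\xi\}$. Since $G$ has no loxodromics, every $g\in G$ is elliptic or parabolic, and so it fixes $\xi$.

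Next, we fix a basepoint $o$ and a $(1,20\delta)$-quasi-geodesic ray $\gamma$ from $o$ to $\xi$. Such rays exist in geodesic hyperbolic spaces; if we want genuinely uniform constants we can instead use a sequence $x_n\to\xi$ with geodesics $[o,x_n]$. For each $g\in G$, the ray $g\gamma$ also ends at $\xi$. By the standard fact that two quasi-geodesic rays with the same endpoint are eventually $O(\delta)$-close, for $t$ large there is $t'$ with $\d_S(g\gamma(t),\gamma(t'))\le C\delta$.

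The key step is to show that $|t-t'|$ is bounded by $O(\delta)$ once $t$ is large. In other words, $g$ does not translate along the ray. Suppose instead that $t'-t$ stays bounded below by some $D\gg\delta$ along a sequence $t\to\infty$. We then apply Lemma \ref{loxodromic criterion} at the point $s=\gamma(t)$. The point $gs$ lies near $\gamma(t+D)$, while $g^{-1}s$ lies near $\gamma(t-D')$, where $D'\approx D$ comes from applying the same fact to $g^{-1}$. Hence $\langle gs\mid g^{-1}s\rangle_s$ is $O(\delta)$, while $\d_S(s,gs)\approx D>4A+24\delta$. The lemma then makes $g$ loxodromic, a contradiction. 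Translations in the negative direction are handled symmetrically by passing to $g^{-1}$.

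Carrying out this estimate carefully gives a uniform constant, namely $\d_S(g\gamma(t),\gamma(t))\le 56\delta$ for all sufficiently large $t$, with the bound independent of $g$. Taking $t$ large enough simultaneously for $g_1$ and $g_2$ produces the desired point $s=\gamma(t)$.

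The main obstacle is this uniform bound in the key step. The "eventually close" threshold depends on $g$, which is harmless because we only need two elements. But the displacement constant must be explicit and independent of $g$, so the Gromov product estimates have to be done with four-point inequalities rather than with the Morse lemma, which would introduce uncontrolled constants. I would first try the version using geodesics $[o,x_n]$ and Definition \ref{four point condition} directly, with an approximating sequence converging to $\xi$, to keep the constants explicit.
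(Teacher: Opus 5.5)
Your proposal is essentially the paper's proof: the paper takes $s$ in a middle window of a geodesic $[x_0,x_n]$ with $x_n\to q$ (the window depends on $D=\max_i\d_S(x_0,g_ix_0)$), uses the $2\delta$-slim quadrilateral $x_0,x_n,gx_n,gx_0$ to show $\langle g^{-1}s\mid gs\rangle_s\le 8\delta$ for every such $s$ (Lemma \ref{gromov product is bounded}), and then applies Lemma \ref{loxodromic criterion} with $A=8\delta$ and $\mathcal{L}_S(G)=\emptyset$ to get $\d_S(s,g_is)\le 4\cdot 8\delta+24\delta=56\delta$. Your shift-based contradiction argument is equivalent, but the paper's Gromov-product bound needs no case split on the size of the translation, and the explicit constant $56\delta$ only comes out if you obtain $A=8\delta$ from geodesic segments in this way rather than from $(1,20\delta)$-quasi-geodesic rays.
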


For the proof, we will need the following technical lemma. 

\begin{figure}
\centering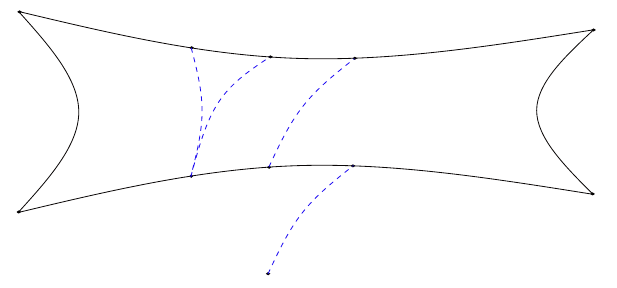\\
  \caption{For Lemma \ref{gromov product is bounded}}\label{sgp}
\end{figure}

\begin{lem}\label{gromov product is bounded}
Let $G$ be a group acting on $S$ by isometries. Let $(x_i)_{i\in \NN}$ be a sequence in $S$ converging to a point in $\partial S$. Fix a basepoint $x_0$ so that $\langle x_i \mid x_j \rangle_{x_0} \longrightarrow \infty$ as $i, j \longrightarrow \infty.$ and an arbitrary $g \in G$. Let $D=\d_S(x_0, gx_0)$. For a given $n \in \NN$ consider the geodesic quadrilateral $x_0, x_n, gx_n, gx_0$.
For $s \in [x_0, x_n]$, if 
\begin{equation}\label{Choice of s 2}
2D+4\delta < \d_S(x_0,s) <\langle x_n \mid  gx_n \rangle_{x_0} -2D -4\delta,
\end{equation}
then $\langle g^{-1}s \mid gs \rangle_s \leq 8\delta$.
\end{lem}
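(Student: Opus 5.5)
The idea is to exploit the tree-like shape of the quadrilateral $x_0, x_n, gx_n, gx_0$ in the range (\ref{Choice of s 2}). There $g$ behaves, near the geodesic $[x_0,x_n]$, like a translation by a fixed signed amount $a$ along a common ``trunk''. Consequently $gs$ and $g^{-1}s$ sit on opposite sides of $s$ along that trunk, which makes $\langle g^{-1}s\mid gs\rangle_s$ small. Write $t=\d_S(x_0,s)$ and $P=\langle x_n\mid gx_n\rangle_{x_0}$.

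\textbf{Step 1: locating $gs$.} The point $gs$ lies on $[gx_0,gx_n]$ with $\d_S(gx_0,gs)=t$. Apply Rips hyperbolicity (Definition \ref{rips def}) twice to the quadrilateral, cutting it along $[x_0,gx_n]$, so that every point of $[gx_0,gx_n]$ lies within $2\delta$ of $[x_0,x_n]\cup[x_n,gx_n]\cup[x_0,gx_0]$. Since $t>2D+4\delta$, the point $gs$ is more than $D+4\delta$ away from $[x_0,gx_0]$, because that segment has length $D$. Since $t<P-2D-4\delta$, the point $gs$ lies before the branch point of $x_n$ and $gx_n$ seen from $x_0$, up to $O(\delta)$; this is where the Gromov product $P$ enters. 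Hence $gs$ is $2\delta$-close to a point $p\in[x_0,x_n]$. I expect $\d_S(x_0,p)=t+a$, where the number $a:=\d_S(x_0,p)-t$, with $|a|\le D+2\delta$, does not depend on $t$ up to an error of a few $\delta$. The reason is that both $[x_0,x_n]$ and $[gx_0,gx_n]$ fellow-travel the trunk in this range, so arclength along one and along the other differ by a constant.

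\textbf{Step 2: locating $g^{-1}s$.} Run the same argument for $g^{-1}$. Here $\d_S(x_0,g^{-1}x_0)=D$, and
$$\langle x_n\mid g^{-1}x_n\rangle_{x_0}=\langle gx_n\mid x_n\rangle_{gx_0}\ge P-D,$$
so the window in (\ref{Choice of s 2}), which has the extra margin $2D$, still applies. A cleaner route is to apply $g^{-1}$ to the conclusion of Step 1. The map $g$ sends the point at trunk position $u$ to roughly position $u+a$, for all $u$ in the window. Therefore $g^{-1}s$ is $O(\delta)$-close to the point of $[x_0,x_n]$ at distance $t-a$ from $x_0$. The margin $2D$, rather than $D$, in (\ref{Choice of s 2}) is exactly what keeps $t\pm a$ inside the usable part of the window.

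\textbf{Step 3: computing the Gromov product.} Let $q^{\pm}$ be the points at positions $t\pm a$ on the single geodesic $[x_0,x_n]$. The point $s$ lies between them, so $\langle q^-\mid q^+\rangle_s=0$. Moving $q^{\pm}$ to $g^{\pm1}s$ changes the Gromov product by at most the displacement errors. Keeping the error bookkeeping down to $8\delta$ total is the main obstacle. To manage it, I would avoid tracking $a$ through two separate approximations. Instead, I would compare $\d_S(x_0,gs)$ and $\d_S(x_0,g^{-1}s)$ directly: using the four-point condition (Definition \ref{four point condition}) with basepoint $x_0$, show that $\d_S(x_0,gs)-t$ and $\d_S(x_0,g^{-1}s)-t$ have opposite signs up to $4\delta$. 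Then bound $\langle g^{-1}s\mid gs\rangle_s$ by $\delta$-thinness of the triangle with vertices $x_0$, $gs$, $g^{-1}s$, together with the observation that $s$ is $2\delta$-close to a geodesic joining the two points.
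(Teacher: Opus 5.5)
Your overall picture matches the paper's: in the window, $g$ acts on the trunk $[x_0,x_n]$ like a translation, so $gs$ and $g^{-1}s$ land near $[x_0,x_n]$ on opposite sides of $s$. Step 1 is sound. It is the paper's first observation with the roles of $[x_0,x_n]$ and $[gx_0,gx_n]$ swapped, and your bound $|a|\le D+2\delta$ is the paper's $L\le D+2\delta$. But the statement is a specific bound, $8\delta$, and your proposal never reaches it. You flag this yourself as the main obstacle, and the workaround you sketch does not resolve it. There are two concrete problems.

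(i) Step 2 depends on the shift $a$ being independent of $t$, which you only ``expect''. It is needed exactly at the scale $|a|\le D+2\delta$, where a fellow-travelling heuristic cannot even fix the orientation. The claim is true, and your Step 3 idea is a correct way to prove it. Write $\d_S(x_0,g^{\pm1}s)-t=D-2\langle g^{\mp1}x_0\mid s\rangle_{x_0}$. The four-point condition gives $\langle g^{\pm1}x_0\mid s\rangle_{x_0}\approx_\delta\langle g^{\pm1}x_0\mid x_n\rangle_{x_0}$ (since $t>D$) and $\langle gx_0\mid gx_n\rangle_{x_0}\approx_\delta\langle gx_0\mid x_n\rangle_{x_0}$ (since $P>D$). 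Exactly, $\langle g^{-1}x_0\mid x_n\rangle_{x_0}=D-\langle gx_0\mid gx_n\rangle_{x_0}$. So the two shifts are negatives of each other up to about $6\delta$. However, each approximation costs some $\delta$'s. Combined with the $2\delta$ displacement of each of $gs$ and $g^{-1}s$ from $[x_0,x_n]$, the straightforward bookkeeping gives $C\delta$ with $C$ somewhat larger than $8$.

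(ii) The closing ``observation'' that $s$ is $2\delta$-close to a geodesic $[g^{-1}s,gs]$ is not supported by anything before it. It would give $\langle g^{-1}s\mid gs\rangle_s\le 2\delta$, which is stronger than the lemma. So it is the whole difficulty, not an observation.

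The missing idea is how the paper avoids approximating $g^{-1}s$ separately. Take $s'\in[gx_0,gx_n]$ with $\d_S(s,s')\le 2\delta$, and let $L=\d_S(gs,s')\le D+2\delta$. Then $g^{-1}s'$ lies \emph{exactly} on $[x_0,x_n]$, at distance $L$ from $s$, and within $2\delta$ of $g^{-1}s$; no constancy of the shift is needed. Now let $y$ be the other point of $[x_0,x_n]$ at distance $L$ from $s$. The extra margin $D+2\delta$ in the window is what lets the first observation apply to $y$. This gives $y'\in[gx_0,gx_n]$ with $\d_S(y,y')\le 2\delta$ and $|\d_S(y',s')-L|\le 4\delta$. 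Comparing along $[gx_0,gx_n]$ then gives $\d_S(y',gs)\le 4\delta$, hence $\d_S(y,gs)\le 6\delta$. Since $s$ lies between $y$ and $g^{-1}s'$ on a geodesic, $\langle y\mid g^{-1}s'\rangle_s=0$. Moving the two points to $gs$ and $g^{-1}s$ costs $6\delta+2\delta=8\delta$.

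Downstream, the lemma only feeds a uniform constant into the parabolic case, so a completed version of your sketch with some $C\delta$ would serve the paper's purposes. It would not, however, prove the statement as written.
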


\begin{proof}
First, observe that for $s \in [x_0, x_n]$, if 
\begin{equation}\label{Choice of s}
D+2\delta < \d_S(x_0, s) < \langle x_n \mid gx_n \rangle_{x_0}- 2\delta,
\end{equation}
then $s$ is within $2\delta$ of some point $s' \in [gx_0,gx_n]$. Indeed, any geodesic quadrilateral in $S$ is $2\delta$-slim, so $s$ is within $2\delta$ of a point $s'$ on one of the sides $[x_0,gx_0]$, $[gx_0,gx_n]$, or $[gx_n,x_n]$.
If $s' \in [x_0, gx_0]$, then $$
\d_S(x_0,s) \leq \d_S(x_0,s')+\d_S(s',s) \leq D+2\delta,$$
which contradicts the first inequality in (\ref{Choice of s}).
If $s' \in [x_n, gx_n]$, then $$
\langle x_n \mid gx_n \rangle_{x_0} \leq \d_S(x_0, s') \leq \d_S(x_0,s)+2\delta,
$$ which contradicts the second inequality in (\ref{Choice of s}).

Therefore, if (\ref{Choice of s 2}) is satisfied, then there exists $s' \in [gx_0, gx_n]$ such that $\d_S(s, s') \leq 2\delta$. Consider the point $gs \in [gx_0, gx_n]$.
Note that, since $\d_S(x_0, gx_0) = D$ and $\d_S(s, s') \leq 2\delta$, by the triangle inequality, we obtain
$$
|\d_S(gx_0, s') - \d_S(x_0, s)| \leq D+2\delta.
$$
Thus, we have
$$
|\d_S(gx_0, s') - \d_S(gx_0,gs)| \leq D+2\delta.
$$
Since $gx_0, gs, s'$ are on the same geodesic $[gx_0, gx_n]$, we conclude that $L := \d_S(gs, s') \leq D+ 2\delta$. 
Consider the point $g^{-1}s' \in [x_0, x_n]$. We have $\d_S(s, g^{-1}s')=L$. Let $y$ be the other point on the geodesic $[x_0, x_n]$ at distance $L$ from $s$. Since $L\leq D+ 2\delta$, by (\ref{Choice of s 2}), we obtain that
$$
D+2\delta < \d_S(x_0, y) < \langle x_n \mid gx_n \rangle_{x_0}- 2\delta.
$$
Therefore, $y$ is within $2\delta$ of some point $y' \in [gx_0, gx_n]$.
Note that since $\d_S(s, s')\leq 2\delta$, by the triangle inequality
$|\d_S(y', s') - \d_S(y,s)| \leq 4\delta$.
Thus, we obtain
$$
L-4\delta \leq d_S(y', s') \leq L+4\delta.
$$
Since $\d_S(gs, s') =L$, and $y', gs, s'$ are all on the same geodesic, we get
$\d_S(y', gs) \leq 4\delta$.
We have $\d_S(g^{-1}s, g^{-1}s')=\d_S(s, s') \leq 2\delta$,
and $d_S(gs, y) \leq \d_S(gs, y')+\d_S(y, y') \leq 6\delta$.
So,
$$ 
\begin{aligned}
&\langle g^{-1}s, gs\rangle_s= \frac{1}{2}\left(\d_S(s, g^{-1}s)+\d_S(s, gs)-\d_S(g^{-1}s, gs)\right)\leq \\
&\leq \frac{1}{2}\left(\d_S(s, g^{-1}s')+2\delta
+ \d_S(s,y)+6\delta -\d_S(g^{-1}s', y)+8\delta)\right) \leq\\
&\leq \langle y \mid g^{-1}s'\rangle_s + 8\delta = 8\delta.
\end{aligned}
$$
\end{proof}

\begin{proof}[Proof of Proposition \ref{elements of parabolic subgroups don't move some points far}]
Let $q$ be the unique fixed point of $G$ on $\partial S$ and let $(x_i)_{i\in \NN} \in S$ be a sequence converging to $q$. Fix a basepoint $x_0$ so that $\langle x_i \mid x_j \rangle_{x_0} \longrightarrow \infty$ as $i, j \longrightarrow \infty.$ Now let arbitrary $g_1, g_2 \in G$ be given. Let $D= \max\{\d_S(x_0, g_1x_0), \d_S(x_0, g_2x_0)\}$, and take $n \in \NN$ such that the geodesic segment $[x_0, x_n]$ is sufficiently long to satisfy the condition (\ref{Choice of s 2}) in Lemma \ref{gromov product is bounded}. By Lemma \ref{gromov product is bounded}, we can find $s \in [x_0, x_n]$, such that
$$
\langle g_1^{-1}s \mid g_1s \rangle_s \leq 8\delta \text{ and } \langle g_2^{-1}s \mid g_2s \rangle_s \leq 8\delta.
$$
Since the action of $G$ on $S$ is parabolic, $\mathcal{L}_S(G)=\emptyset$. Therefore, by Lemma \ref{loxodromic criterion},
$$
\d_S(s,g_1s) \leq 56\delta \text{ and } \d_S(s,g_2s) \leq 56\delta. \qedhere
$$
\end{proof}

\begin{lem}\label{Distance formula for loxodromic elements}
Suppose that a group $G$ acts by isometries on a hyperbolic space $S$. Let $g \in \mathcal{L}_S(G)$. Let $L_g$ be the standard ($2, \delta$)-quasi-geodesic axis of $g$. Let $T$ be the translation distance of $g$ along $L_g$. Then there exist $D=D(\delta) \in \NN$ and $N \in \NN$ such that for every $n \geq N$ and for every $s \in S$, we have
$$
2\d_S(s, L_g)+\frac{n}{2}T - D \leq \d_S(s, g^ns) \leq 2\d_S(s, L_g)+2nT + D.
$$
\end{lem}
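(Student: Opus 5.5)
The plan is to project $s$ and $g^ns$ to the axis $L_g$ and compare the geodesic $[s,g^ns]$ with the broken path $s\to\overline{s}\to g^n\overline{s}\to g^ns$. Here $\overline{s}$ denotes a nearest-point projection of $s$ onto $L_g$. Since $g$ preserves $L_g$ and acts isometrically, $g^n\overline{s}$ is a nearest-point projection of $g^ns$ onto $L_g$, and $\d_S(g^ns,g^n\overline{s})=\d_S(s,\overline{s})$. Moreover $\d_S(s,\overline{s})$ differs from $\d_S(s,L_g)$ by at most $1$.

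The upper bound is just the triangle inequality:
$$\d_S(s,g^ns)\le 2\d_S(s,\overline{s})+\d_S(\overline{s},g^n\overline{s}).$$
It therefore suffices to show $\d_S(\overline{s},g^n\overline{s})\le 2nT+C$. Interpret $T$ as the displacement of $g$ along $L_g$, for instance $T=\d_S(p,gp)$ (or the arclength shift) for points $p\in L_g$, which is independent of $p$ up to a bounded error. Then the triangle inequality along the orbit $p,gp,\dots,g^np$ gives $\d_S(\overline{s},g^n\overline{s})\le nT$ plus a constant, which is even better than $2nT$. If $T$ is instead defined as an arclength shift, the $(2,\delta)$-quasi-geodesic inequality gives $\d_S(\overline{s},g^n\overline{s})\le 2nT+\delta$.

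For the lower bound, $\overline{s}$ and $g^n\overline{s}$ lie on $L_g$ at arclength distance $nT$. The quasi-geodesic inequality then gives
$$\d_S(\overline{s},g^n\overline{s})\ge \tfrac{n}{2}T-\delta.$$
Choose $N$ so that $\tfrac{N}{2}T-\delta>14\delta+2M$, where $M=M(2,\delta,\delta)$ is the constant from Lemma \ref{projection distanse formula}.

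Next apply Lemma \ref{projection distanse formula} to the quasi-geodesic segment $\gamma\subset L_g$ between (slightly beyond) $\overline{s}$ and $g^n\overline{s}$. For this we must check that projections onto the finite segment agree with projections onto the whole axis up to a bounded error. That holds because $L_g$ is quasiconvex, so nearest-point projections are coarsely well defined, as recorded after the Morse Lemma. The lemma then yields
$$\d_S(s,g^ns)\ge 2\d_S(s,\overline{s})+\tfrac{n}{2}T-\delta-24\delta-4M,$$
and absorbing the constants (including the $+1$ slack in the projections) into $D(\delta)$ gives the claim.

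The main obstacle I expect is the bookkeeping around the projections. The projection of $s$ onto the infinite axis $L_g$ versus onto the finite segment $\gamma$, and the coarse (non-unique) nature of nearest-point projection, must only cost constants depending on $\delta$. I would handle this by extending $\gamma$ a fixed amount past both projection points, so that the true nearest points on $L_g$ already lie in $\gamma$. Also note that $D$ depends only on $\delta$, since $\lambda=2$ and $c=\delta$ are fixed, while $N$ depends on $T$.
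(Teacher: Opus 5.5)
Your proposal is correct and is essentially the paper's proof. Both arguments project $s$ and $g^ns$ to $L_g$ and use $g$-invariance to see that $g^n\overline{s}$ projects $g^ns$. Both bound $\d_S(\overline{s},g^n\overline{s})$ between $\tfrac{n}{2}T-\delta$ and $2nT+\delta$ via the $(2,\delta)$-quasi-geodesic inequality on the arc of length $nT$, and choose $N$ so that this exceeds $14\delta+2M$. The lower bound then comes from Lemma \ref{projection distanse formula} and the upper bound from the triangle inequality, with $D=25\delta+4M+2$.

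The finite-segment issue you flag is easier than you suggest: $\overline{s}$ and $g^n\overline{s}$ lie on the arc $\gamma\subset L_g$ between them, and $\d_S(s,\gamma)\geq \d_S(s,L_g)$. So they are automatically valid nearest-point projections onto $\gamma$, and no extension of the segment is needed.
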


\begin{proof}
First, choose $N \in \NN$ such that for any $x \in L_g$ and $n \geq \NN$ the distance $\d_S(x, g^nx) \geq 2M(\delta)+14\delta$, where $M$ is a constant from Lemma \ref{projection distanse formula}, applied to $L_g$. Take any $s \in S$. Let $\overline s$ be any nearest point projection of $s$ on $L_g$. Then $g^n\overline{s} \in L_g$ and $g^n\overline{s}$ is the nearest-point projection of $g^ns$ onto $L_g$.
   
\begin{equation} \label{global estimate}
\begin{aligned}
&\d_S(s, g^ns) \geq \d_S(s, \overline{s})+\d_S(\overline{s}, g^n\overline{s})+\d_S(g^n\overline{s}, g^ns)-24 \delta-4M(\delta)\\
&\geq 2\d_S(s, L_g)+\d_S(\overline{s}, g^n\overline{s})-24 \delta-4M(\delta)-2.
\end{aligned}
\end{equation}
Let $\gamma$ be the segment of $L_g$ from $\overline{s}$ to $g^n\overline{s}$. Since $L_g$ is a $(2, \delta)$-quasi-geodesic and $T$ is the translation distance of $g$ along $L_g$, we have
\begin{equation} \label{middle estimate}
\frac{n}{2}T -\delta =\frac{1}{2}l(\gamma)-\delta \leq \d_S(\overline{s}, g^n\overline{s}) \leq 2l(\gamma)+\delta = 2nT + \delta.
\end{equation}
Combining (\ref{global estimate}), (\ref{middle estimate}), and the triangle inequality, we obtain 
$$
2\d_S(s, L_g)+\frac{n}{2}T - D \leq \d_S(s, g^ns) \leq 2\d_S(s, L_g)+2nT + D,
$$
where $D= 25\delta + 4M(\delta)+2$. 
\end{proof}

\subsection{Minimal actions  on trees}

We will use the facts established in this subsection to obtain the results about groups acting on simplicial trees from \cite{BM} and \cite{FMMS} as corollaries of Theorem \ref{main_theorem} and Theorem \ref{thm about conjugate orbits}.

\begin{prop}\cite[Proposition 7.5]{Bas}\label{union of loxodromic axes}
Suppose that a group $G$ acts on a simplicial tree $T$ by isometries.
If $G$ contains a loxodromic element, then there is a unique minimal nonempty $G$-invariant subtree $L \subseteq T$, which is the union of the axes of all loxodromic elements. 
\end{prop}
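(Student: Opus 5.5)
The plan is to take $L$ to be the union of the axes $A_g$ of all loxodromic elements $g\in\mathcal{L}_T(G)$, and to check three things. First, $L$ is a nonempty $G$-invariant subtree. Second, every nonempty $G$-invariant subtree contains $L$. Third, minimality and uniqueness follow from the first two. Nonemptiness holds by hypothesis.

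\emph{Invariance.} For $f\in G$ and $g$ loxodromic, $fgf^{-1}$ is loxodromic by Remark~\ref{congugation}. Its axis is $fA_g$, because $fgf^{-1}$ translates $fA_g$ by the same amount as $g$ translates $A_g$, and the axis of a loxodromic tree automorphism is the unique line it translates. Hence $fL=L$.

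\emph{Containment in invariant subtrees.} Let $T'\subseteq T$ be a nonempty $G$-invariant subtree, and pick $x\in T'$ and $g$ loxodromic. Let $p$ be the projection of $x$ onto $A_g$. Then $gp$ is the projection of $gx$, and since $p\neq gp$, the geodesic $[x,gx]$ is the concatenation $[x,p]\cup[p,gp]\cup[gp,gx]$. Because $x,gx\in T'$ and $T'$ is convex, we get $[p,gp]\subseteq T'$. Applying $g^n$ for all $n\in\mathbb Z$ and using invariance gives $A_g=\bigcup_n g^n[p,gp]\subseteq T'$. Thus $L\subseteq T'$.

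\emph{Connectedness of $L$.} This is the step I expect to be the main obstacle. Let $g,h$ be loxodromic with axes $A_g,A_h$. If the axes intersect, their union is connected. If they are disjoint, let $[a,b]$ be the bridge between them, with $a\in A_g$ and $b\in A_h$. I would use the standard tree computation: $gh$ is loxodromic with translation length $\ell(g)+\ell(h)+2d(a,b)$, and its axis contains $[a,b]$. To verify this, track the point $h^{-1}a$ or check that the path $[h^{-1}b,b]\cup[b,a]\cup[a,ga]$ and its $gh$-translates concatenate without backtracking into a bi-infinite line on which $gh$ acts by translation. Then $A_g\cup[a,b]\cup A_h\subseteq L$, so any two axes lie in a common connected subset of $L$, and $L$ is a subtree.

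\emph{Conclusion.} Combining the three steps, $L$ is a nonempty $G$-invariant subtree contained in every nonempty $G$-invariant subtree. Hence $L$ is minimal, and any minimal nonempty invariant subtree $T'$ satisfies $L\subseteq T'$, which forces $T'=L$. This gives uniqueness.
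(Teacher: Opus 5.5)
Your proof is correct. It is the standard argument behind this fact: every nonempty invariant subtree contains each axis by the projection argument, and the union of axes is connected because, when $A_g\cap A_h=\emptyset$, the product $gh$ is loxodromic with axis through the bridge. The paper gives no proof of its own and simply cites Bass, where essentially this argument is used.

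There is one small slip in the connectedness step. The $gh$-translates of $[h^{-1}b,b]\cup[b,a]\cup[a,ga]$ do not abut, since the first translate starts at $gb$, not at $ga$. Take instead the fundamental segment $[h^{-1}a,h^{-1}b]\cup[h^{-1}b,b]\cup[b,a]\cup[a,ga]$, which runs from $h^{-1}a$ to $gh(h^{-1}a)=ga$. This is what your first suggestion, tracking $h^{-1}a$, amounts to, and it gives $\ell(gh)=\ell(g)+\ell(h)+2d(a,b)$.
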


The next lemma shows that if a group $G$, acting on a tree $T$, contains loxodromic elements, then the minimality of the action is almost equivalent to $\Lambda_T(G) = \partial T$, in the sense that every nontrivial invariant subtree is ``large''.

\begin{lem}\label{limit set is the whole boundary}
Suppose that the group $G$ acts on a simplicial tree $T$ by isometries, and $\mathcal{L}_T(G) \neq \emptyset$. If the action of $G$ on $T$ is minimal, then $\Lambda_T(G) =\partial T$.
Moreover, without the minimality assumption, the following conditions are equivalent:
\begin{enumerate}
\item $\Lambda_T(G) = \partial T$;
\item For every $G$-invariant subtree $S \subseteq T$, we have $\partial T = \partial S$.
\end{enumerate}
\end{lem}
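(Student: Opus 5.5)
The first claim follows from the second: minimality rules out proper invariant subtrees, so every invariant subtree is $T$ itself. So the real work is the equivalence of (1) and (2), which I would prove in both directions, working throughout in $\widehat T = T\cup\partial T$ and recalling that $\partial S\subseteq \partial T$ canonically for any subtree $S\subseteq T$. The natural intermediate object is the minimal invariant subtree $L$ from Proposition \ref{union of loxodromic axes}, which exists because $\mathcal L_T(G)\neq\emptyset$.

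For (1)$\Rightarrow$(2), let $S\subseteq T$ be a $G$-invariant subtree and pick $s\in S$. Then the orbit $Gs$ lies in $S$, and the limit set can be computed from this basepoint. Hence
$$\Lambda_T(G)=\overline{Gs}\cap\partial T\subseteq \overline{S}\cap \partial T=\partial S.$$
The last equality uses that for a subtree, boundary points of $T$ that are limits of points of $S$ are exactly the ends of $S$; a geodesic ray from $s$ to such a point stays in $S$ by convexity. Therefore $\partial T=\Lambda_T(G)\subseteq\partial S\subseteq\partial T$.

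For (2)$\Rightarrow$(1), apply (2) to $S=L$, giving $\partial T=\partial L$. It then suffices to show $\partial L\subseteq\Lambda_T(G)$. By Proposition \ref{union of loxodromic axes}, $L$ is the union of the axes of loxodromic elements, and the endpoints of each axis lie in $\Lambda_T(G)$. For a general end $\xi\in\partial L$, take a ray $[s,\xi)\subset L$ and a sequence of points $x_k$ on it tending to $\xi$. Each $x_k$ lies on the axis of some loxodromic element $h_k$, and I would show that one endpoint of that axis (or some orbit point $h_k^{m}x_k$) is within bounded Gromov product distance of $\xi$. Concretely, I would aim for $\langle h_k^{\pm}|\xi\rangle_s\to\infty$, arranged by choosing $x_k$ along the ray. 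Since $\Lambda_T(G)$ is closed, this gives $\xi\in\Lambda_T(G)$.

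An alternative for (2)$\Rightarrow$(1) avoids axes: the convex hull of $Gs$ is an invariant subtree $S$, so $\partial T=\partial S$. Every end of the convex hull of a set is a limit of points of that set; in a tree, any ray in the hull branches off toward points of $Gs$ arbitrarily far out. This gives $\partial S\subseteq\Lambda_T(G)$ directly.

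The main obstacle is this last step: showing that every end of the invariant subtree is actually a limit of orbit points, not merely a limit of tree points. I would use the convex-hull version, since in a tree the hull of $Gs$ is the union of geodesics $[gs,hs]$. A point $x$ far along a ray toward $\xi$ lies on some $[gs,hs]$, and one of $gs,hs$ has Gromov product with $\xi$ at $s$ at least roughly $\d(s,x)$. The one subtle case is a ray along which the segments $[gs,hs]$ keep branching off, and I would check by direct tree combinatorics that it causes no trouble.
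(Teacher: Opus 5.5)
Your proposal is correct, and your argument for (1)$\Rightarrow$(2) is the same as the paper's. The rest is organized differently.

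The paper treats the minimal case directly: minimality forces the union of axes $L$ from Proposition \ref{union of loxodromic axes} to be all of $T$. It then proves (2)$\Rightarrow$(1) by applying (2) to $L$, asserting that endpoints of loxodromic axes are dense in $\partial L$ ``by the construction of $L$''.

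You instead deduce the minimal case from (2)$\Rightarrow$(1). You also prefer to apply (2) to the hull $\bigcup_{g,h\in G}[gs,hs]$ of an orbit. This route uses neither Bass's proposition nor the existence of loxodromic elements. The paper's route, in exchange, identifies $\Lambda_T(G)$ with $\partial L$ for the canonical subtree $L$.

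Both versions rest on the same elementary tree fact, which also settles the ``subtle case'' you flag. In a tree, every segment $[a,b]$ lies in $[s,a]\cup[s,b]$, and every bi-infinite line with ends $\eta_1,\eta_2$ lies in $[s,\eta_1)\cup[s,\eta_2)$. So if $x\in[s,\xi)$ lies on such a $\sigma$, then $x\in[s,y]$ for some endpoint $y$ of $\sigma$. Hence $[s,x]$ is a common initial piece of $[s,y]$ and $[s,\xi)$, giving $\langle y|\xi\rangle_s\geq \d_T(s,x)$.

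Nothing changes when $\sigma$ branches off the ray at $x$, so there is no exceptional case. With $\sigma=[gs,hs]$ this produces orbit points converging to $\xi$. With $\sigma$ the axis of $h_k$ through $x_k$, it gives $\langle \eta_k|\xi\rangle_s\to\infty$ for one endpoint $\eta_k\in\{h_k^+,h_k^-\}$ of each axis. That is exactly the density statement the paper leaves implicit.

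One small wording fix: ``within bounded Gromov product distance'' should read ``with large Gromov product''.
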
 
\begin{proof}
By Proposition \ref{union of loxodromic axes}, the union of the axes of all loxodromic elements $L\subseteq T$ is the unique nonempty $G$-invariant subtree. Since the action of $G$ on $T$ is minimal,  $L=T$. By the construction of $L$, the set of fixed points of loxodromic elements of $G$ is dense in $\partial L$. Therefore, $\Lambda_L(G)=\partial L=\partial T$. 

Now, assume that $\Lambda_T(G) = \partial T$. Let $S \subseteq T$ be a $G$-invariant subtree. Note that $\Lambda_T(G) \subseteq \partial S$ since $\Lambda_T(G)$ does not depend on a chosen point $t \in T$ and we can choose $t \in S$. Then we have $\partial T \subseteq \partial S$, so $\partial T =\partial S$. Conversely, assume that for every $G$-invariant subtree $S \subseteq T$, we have $\partial S = \partial T$. Let $L$ be the
union of the axes of all loxodromic elements. By Proposition \ref{union of loxodromic axes}, $L$ is a $G$-invariant subtree of $T$, and $\Lambda_L(G)=\partial L$ by the construction of $L$. Since the limit set does not depend on a chosen point $t \in T$, we have $\Lambda_T(G)=\Lambda_L(G)=\partial L = \partial T$. 
\end{proof}

\begin{lem}\label{fixed point in a tree}
 Every elliptic subgroup of a group $G$ acting on any tree $T$ by isometries either fixes a vertex or a midpoint of an edge.
\end{lem}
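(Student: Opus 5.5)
The plan is the classical Serre/Tits argument for trees. Let $H \le G$ be a subgroup whose action on the tree $T$ is elliptic, so every $H$-orbit in $T$ is bounded. Fix a vertex $v$ and let $B = Hv$; it is a bounded, nonempty, $H$-invariant set of vertices. I want to locate a canonical ``center'' of $B$ that every isometry preserving $B$ must fix, and then show this center is either a vertex or the midpoint of an edge.

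\textbf{Step 1: the circumcenter.} Let $r = \inf_{x \in T} \sup_{b \in B} \d_T(x,b)$, where $T$ is viewed as its geometric realization. First I show the infimum is attained and the minimizer is unique.

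\emph{Attainment.} Since $B$ is bounded, the infimum may be restricted to the convex hull of $B$ together with a bounded neighborhood of it; a point far from $B$ has large radius.

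\emph{Uniqueness.} Suppose $x \neq y$ both realize $r$, and let $m$ be the midpoint of $[x,y]$. For any $b \in B$, tree geometry gives: the geodesic from $b$ to $m$ meets $[x,y]$ at a point $p$, and $m$ lies on $[p,x]$ or on $[p,y]$, say on $[p,x]$. Then
\[
\d_T(b,m) = \d_T(b,x) - \d_T(m,x) \le r - \tfrac{1}{2}\d_T(x,y) < r .
\]
(Alternatively one can use the CAT(0) inequality, which holds in trees.) Taking the supremum over $b$ contradicts the minimality of $r$. So there is a unique circumcenter $c$. Since every $h \in H$ is an isometry with $hB = B$, $h$ carries circumcenters to circumcenters, and uniqueness forces $hc = c$.

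\textbf{Step 2: locating $c$.} Because $B$ consists of vertices and the ambient space is a tree, the supremum $r$ is attained at two points $b_1, b_2 \in B$ with $c$ the midpoint of $[b_1,b_2]$. Otherwise, $c$ could be moved slightly in a direction that strictly decreases the distances to all the farthest points. This uses that $B$ is finite modulo the geometry near $c$: only finitely many directions at $c$ matter, since distances to vertices take values in a discrete set. Because $\d_T(b_1,b_2)$ is an integer, its midpoint $c$ is a vertex or the midpoint of an edge.

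\textbf{Main obstacle.} The delicate point is Step 2 when $B$ is infinite, for instance when $T$ is locally infinite and the ball around $c$ has infinitely many branches. Here the distances $\d_T(c,b)$ lie in the discrete set $\frac12\ZZ$ shifted by a constant, so the supremum is attained. If all farthest points lay in a single branch at $c$, moving $c$ into that branch would decrease the radius, contradicting minimality. Hence two farthest points $b_1, b_2$ lie in different branches, and $c$ is the midpoint of $[b_1, b_2]$.

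Finally, if one prefers not to realize $T$ geometrically, the same conclusion can be phrased as: $H$ fixes either a vertex, or an edge setwise, hence its midpoint. This settles Lemma~\ref{fixed point in a tree}.
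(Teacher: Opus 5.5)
Your strategy (the unique circumcenter of a bounded orbit is fixed by $H$, and it is a vertex or an edge midpoint) is the classical argument behind the two facts the paper simply cites: \cite{HV} for the existence of a fixed point and \cite{Bas} for its location. Your uniqueness computation and the invariance step are correct. The gap is the \emph{existence} of the circumcenter in Step~1. Your justification only shows that the infimum of $f(x)=\sup_{b\in B}\d_T(x,b)$ may be taken over a bounded region. Concluding that the infimum is attained there tacitly uses compactness, and bounded closed subsets of a tree that is not locally finite are not compact. The lemma is stated for arbitrary trees, and you yourself flag locally infinite trees as the delicate case, though only for Step~2. So this step needs a real argument; without it there is no point $c$ for Step~2 to start from.

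The repair is short. Your midpoint computation is in fact quantitative. If $f(x),f(y)\le r+\varepsilon$, it gives $r\le f(m)\le r+\varepsilon-\tfrac12\d_T(x,y)$, so $\d_T(x,y)\le 2\varepsilon$. Hence minimizing sequences are Cauchy. Since the metric realization of a simplicial tree is complete and $f$ is $1$-Lipschitz, the limit of such a sequence realizes $r$.

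A simpler repair also makes Step~2 unnecessary. Take $x$ on an edge $[u,u']$ at distance $s\in[0,1]$ from $u$. Then $f(x)=\max(s+A,\,1-s+A')$, where $A$ (resp.\ $A'$) is the maximal distance from $u$ (resp.\ $u'$) to a point of $B$ on its side of the edge; this is an integer, or $-\infty$ if no such point exists. So on each edge $f$ is minimized at some $s\in\{0,\tfrac12,1\}$, with value in $\tfrac12\ZZ$. Thus $r$ is the minimum of a nonempty subset of $\tfrac12\ZZ_{\ge 0}$, attained at a vertex or an edge midpoint, and uniqueness identifies that point with $c$.

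If you keep Step~2 instead, fix three things:
\begin{itemize}
\item If $c$ lies on an edge at distance $t$ from one endpoint, the values of $\d_T(c,b)$ lie in $(t+\ZZ)\cup(-t+\ZZ)$, not in a translate of $\tfrac12\ZZ$. Discreteness is all you actually need.
\item The ``move $c$ into that branch'' step needs the resulting gap $\eta>0$ between $r$ and the distances to non-farthest points of $B$. State it, and take the displacement smaller than $\eta$.
\item Drop the sentence about $B$ being ``finite modulo the geometry near $c$''; it does not mean anything precise.
\end{itemize}
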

\begin{proof}
By \cite[Chapter 3.b, Proposition 9]{HV}, every elliptic subgroup of $G$ fixes some point of $T$, and, by \cite[Proposition 7.2]{Bas}, this point should be either a vertex or a midpoint of an edge.
\end{proof}

\subsection{Elliptic radical} \label{elliptic radical}

\begin{defn}\cite{Osi17}
Given an action of a group $G$ by isometries on a hyperbolic space $S$, define the elliptic radical of $G$ with respect to this action by
$$
E_S(G)=\{g \in G \mid g s=s  \; \; \forall s \in \Lambda_S(G)\}.
$$
\end{defn}

Note that this definition makes sense for arbitrary actions on hyperbolic spaces. For general type actions, the following proposition provides an equivalent characterization.

\begin{prop}\cite[Proposition 3.4]{Osi17}\label{elliptic radical is maximal}
Suppose that a group $G$ admits a general type action on a hyperbolic space $S$. Then $E_S(G)$ is the unique maximal elliptic normal subgroup of $G$.
\end{prop}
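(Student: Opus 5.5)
The plan is to prove three things: $E_S(G)$ is normal in $G$, $E_S(G)$ is elliptic, and every elliptic normal subgroup $N\trianglelefteq G$ is contained in $E_S(G)$. The third item gives both maximality and uniqueness at once, since any other maximal elliptic normal subgroup would then be contained in $E_S(G)$ and hence equal to it.

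\emph{Normality.} The limit set $\Lambda_S(G)$ is $G$-invariant. So if $g\in E_S(G)$, $f\in G$ and $a\in\Lambda_S(G)$, then $f^{-1}a\in\Lambda_S(G)$. Hence $fgf^{-1}a=f f^{-1}a=a$, i.e.\ $fgf^{-1}\in E_S(G)$.

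\emph{Ellipticity.} Since $E_S(G)$ is normal, Lemma \ref{dichotomy} says its action on $S$ either has bounded orbits or is of general type. Suppose it were of general type. Then $E_S(G)$ would contain a loxodromic element $h$. A loxodromic element fixes exactly two points of $\partial S$. On the other hand, $h$ fixes $\Lambda_S(G)$ pointwise, and $\Lambda_S(G)$ is infinite because the action of $G$ is of general type. This is a contradiction, so $E_S(G)$ has bounded orbits.

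\emph{Every elliptic normal subgroup lies in $E_S(G)$.} Let $N\trianglelefteq G$ be elliptic, and fix $s_0\in S$. Put $R=\operatorname{diam}(Ns_0)<\infty$. Take $n\in N$ and $g\in\mathcal L_S(G)$. For every $k$ we have $g^{-k}ng^k\in N$ by normality, so
$$\d_S(ng^ks_0,\,g^ks_0)=\d_S(g^{-k}ng^ks_0,\,s_0)\le R.$$
By Lemma \ref{north-south}, $g^ks_0\to g^+$. Two sequences at uniformly bounded distance from each other have Gromov products differing by a bounded amount, so they converge to the same boundary point. Hence $ng^ks_0\to g^+$ as well. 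By continuity of the action on $\widehat S$, $ng^ks_0\to ng^+$, and therefore $ng^+=g^+$. Applying the same argument to $g^{-1}$ gives $ng^-=g^-$. Thus $n$ fixes every point of $\mathcal H_S(G)$. By Lemma \ref{set of fixed points of loxodromic elements is dense}, $\mathcal H_S(G)$ is dense in $\Lambda_S(G)$. Since $n$ acts continuously and $\partial S$ is Hausdorff, $\Fix(n)$ is closed, so $n$ fixes $\Lambda_S(G)$ pointwise. Hence $N\le E_S(G)$.

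The step needing the most care is the boundary-convergence claim in the last paragraph: that a bounded perturbation of a sequence converging to $g^+$ still converges to $g^+$. This follows directly from the definition of the Gromov product and Lemma \ref{lemma 2.2.2}. The rest of the argument is formal.
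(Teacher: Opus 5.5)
The paper does not prove this statement; it quotes it from \cite[Proposition 3.4]{Osi17}. Your proof is correct. Normality, ellipticity of $E_S(G)$, and the inclusion $N\le E_S(G)$ for every elliptic normal subgroup $N$ are all justified, and together they give both maximality and uniqueness. I have two remarks on the route.

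\emph{The ellipticity step.} You invoke Lemma \ref{dichotomy}. This is admissible within the paper, where that lemma is an earlier black box. However, it comes from the same source. Moreover, its second clause, applied to an elliptic normal $N$ (so that $\Lambda_S(N)=\emptyset\neq\Lambda_S(G)$), is literally the maximality half of the statement you are proving.

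It is cleaner, and avoids any risk of circularity, to argue directly. Your loxodromic argument already excludes the lineal, quasi-parabolic and general type cases, since each of these contains a loxodromic element. Only the parabolic case remains. If $\Lambda_S(E_S(G))=\{\xi\}$, then normality gives $g\xi=\xi$ for all $g\in G$, contradicting $\Fix_{\partial S}(G)=\emptyset$.

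\emph{The third step.} It can bypass loxodromic endpoints and the density of $\mathcal{H}_S(G)$. Given $a\in\Lambda_S(G)$, choose $g_i\in G$ with $g_is_0\to a$; this is possible since the sets $\mathcal U_{s_0}(a,r)$, $r\in\NN$, form a neighborhood base at $a$ and each meets $Gs_0$. Your estimate $\d_S(ng_is_0,g_is_0)=\d_S(g_i^{-1}ng_is_0,s_0)\le R$ then gives $ng_is_0\to a$, and hence $na=a$ at once.

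This shorter version shows that an elliptic normal subgroup acts trivially on $\Lambda_S(G)$ for \emph{any} action of $G$ on $S$. The general type hypothesis is needed only to make $E_S(G)$ itself elliptic. Your version instead goes through north-south dynamics and Lemma \ref{set of fixed points of loxodromic elements is dense}. It is equally valid here, but it uses more than is needed.
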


Recall that the {\it finite radical} $K(G)$ of an acylindrically hyperbolic group $G$ is the unique maximal finite normal subgroup of $G$ (see \cite[Thm 6.14]{DGO}). 

\begin{lem}\cite[Lemma 3.7]{BH}\label{E(G)=K(G)}
If the action of a group $G$ on a hyperbolic space $S$ is of general type and acylindrical, then $E_S(G)=K(G)$.
\end{lem}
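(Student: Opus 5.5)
The plan is to prove the two inclusions $K(G)\subseteq E_S(G)$ and $E_S(G)\subseteq K(G)$ separately, using Proposition \ref{elliptic radical is maximal} to recast $E_S(G)$ as the maximal elliptic normal subgroup of $G$.

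For $K(G)\subseteq E_S(G)$: $K(G)$ is a finite normal subgroup, so each of its orbits in $S$ is finite and hence bounded. Thus $K(G)$ is an elliptic normal subgroup, and by maximality (Proposition \ref{elliptic radical is maximal}) it lies in $E_S(G)$.

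For the reverse inclusion it suffices to show that $E:=E_S(G)$ is finite, since $K(G)$ is the unique maximal finite normal subgroup. I see two routes.

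\emph{First route.} Fix a loxodromic $g\in\mathcal L_S(G)$. Every element of $E$ fixes $\Lambda_S(G)$ pointwise, in particular $g^{\pm}$. So $E$ lies in the stabilizer of $\{g^+,g^-\}$, which is the elementary subgroup $E(g)$. For acylindrical actions, $E(g)$ is virtually cyclic and contains $\langle g\rangle$ with finite index (Dahmani--Guirardel--Osin). Since $E$ is elliptic it contains no loxodromic elements, so $E\cap\langle g\rangle=\{1\}$ (nontrivial powers of $g$ are loxodromic). A subgroup of $E(g)$ meeting the finite-index subgroup $\langle g\rangle$ trivially injects into the finite coset space $E(g)/\langle g\rangle$, so $E$ is finite.

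\emph{Second route.} To stay self-contained I would instead verify finiteness directly from acylindricity.
\begin{enumerate}
\item Pick $s$ on the standard quasi-geodesic axis $L_g$ (Lemma \ref{uniform axis}, after passing to a power of $g$). Since $E$ has bounded orbits, there is a constant $C$ with $\d_S(s,hs)\le C$ for all $h\in E$.
\item Each $h\in E$ fixes $g^{\pm}$, so $hL_g$ is a $(2,\delta)$-quasi-geodesic with the same endpoints as $L_g$. By the Morse Lemma \ref{Morse lemma}, $hL_g\subseteq\mathcal N_\sigma(L_g)$.
\item Combining the bounded displacement at $s$ with the fact that $h$ is an isometry preserving the direction toward $g^+$, I would show that the point $g^ns\in L_g$ is moved by $h$ at most $\epsilon:=C+O(\sigma+\delta)$, uniformly in $n$ and $h$.
\end{enumerate}
The argument for step 3 compares distances: $\d_S(hg^ns,hs)=\d_S(g^ns,s)$, and both $hg^ns$ and $g^ns$ lie near $L_g$ on the same side relative to $g^+$. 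Once step 3 is in place, apply acylindricity with this $\epsilon$ to get constants $R,N$. Choosing $n$ with $\d_S(s,g^ns)\ge R$, every $h\in E$ moves both $s$ and $g^ns$ by at most $\epsilon$, so $|E|\le N$.

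The main obstacle is step 3, the uniform bound on the displacement along the axis; this is where the hyperbolic-geometry bookkeeping lies. If it becomes messy, I would fall back on citing the virtually-cyclic structure of $E(g)$ from the first route.
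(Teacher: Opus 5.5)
Your proposal is correct. The paper gives no argument for this statement; it imports it from Baik--Jang \cite[Lemma 3.7]{BH}, so you are supplying a proof where the paper has only a citation.

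Your inclusion $K(G)\subseteq E_S(G)$ via Proposition \ref{elliptic radical is maximal} is fine. Your first route is a complete proof of the reverse inclusion, modulo two Dahmani--Guirardel--Osin facts: for an acylindrical action the setwise stabilizer of $\{g^+,g^-\}$ is the virtually cyclic group $E(g)$, and $K(G)$ exists.

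In your second route, the step you single out as the main obstacle follows immediately from normality of $E=E_S(G)$. For every $f\in G$ and $h\in E$ we have $f^{-1}hf\in E$, hence
\[
\d_S(hfs,fs)=\d_S(f^{-1}hfs,s)\le C:=\sup_{h'\in E}\d_S(h's,s).
\]
So you need no axis, no Morse lemma and no ``same side'' bookkeeping, and $g$ need not be loxodromic. Apply acylindricity with $\epsilon=C+1$, choose any $f\in G$ with $\d_S(s,fs)\ge R(\epsilon)$ (orbits are unbounded), and conclude $|E|\le N(\epsilon)$.

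This streamlined version gives more than the first route. It uses only the definition of acylindricity and the fact that $E_S(G)$ is an elliptic normal subgroup. Your first-inclusion argument applies to every finite normal subgroup, so each one lies in $E_S(G)$. Hence $E_S(G)$ is the largest finite normal subgroup of $G$, i.e.\ this establishes the existence of $K(G)$ instead of presupposing \cite[Thm 6.14]{DGO}.
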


The next lemma shows that the property of being faithful weakly hyperbolic and the property of being lim-free pass to normal subgroups; thus, all the results in this work pass to normal subgroups.

\begin{lem}\label{lim-free passes to normal subgroups}
Let $H$ be a non-trivial normal subgroup of a faithful weakly hyperbolic group $G$. Then $H$ is also a faithful weakly hyperbolic group. If $G$ is lim-free, then $H$ is also lim-free.
\end{lem}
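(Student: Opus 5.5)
Fix a general type action of $G$ on a hyperbolic space $S$ such that the induced action of $G$ on $\Lambda_S(G)$ is faithful. The plan is to let the normal subgroup $H$ act on the same space $S$ and show that this restricted action has all the required properties.

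First, $H$ acts nontrivially on $\Lambda_S(G)$. Indeed, $H\neq\{1\}$ and the action of $G$ on $\Lambda_S(G)$ is faithful, so any nontrivial element of $H$ moves some point of $\Lambda_S(G)$. Lemma \ref{dichotomy} then says that the action of $H$ on $S$ either has bounded orbits or is of general type. If $H$ had bounded orbits, then $H$ would be an elliptic normal subgroup of $G$. By Proposition \ref{elliptic radical is maximal}, this gives $H\le E_S(G)$, so $H$ fixes $\Lambda_S(G)$ pointwise, which is a contradiction. Hence the action of $H$ on $S$ is of general type, and Lemma \ref{dichotomy} also gives $\Lambda_S(H)=\Lambda_S(G)$. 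The induced action of $H$ on $\Lambda_S(H)=\Lambda_S(G)$ is the restriction of a faithful action, so it is faithful. Therefore $H$ is faithful weakly hyperbolic.

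For the lim-free statement, I would apply the same argument to an action of $G$ on a hyperbolic space $S$ that witnesses lim-freeness. Topological freeness implies faithfulness: since $\Lambda_S(G)\ne\emptyset$, the empty interior of $\Fix_{\Lambda_S(G)}(g)$ means that no nontrivial $g$ fixes all of $\Lambda_S(G)$. So the previous paragraph applies, and the action of $H$ on $S$ is of general type with $\Lambda_S(H)=\Lambda_S(G)$. Every nontrivial $h\in H$ is a nontrivial element of $G$, so $\Fix_{\Lambda_S(H)}(h)=\Fix_{\Lambda_S(G)}(h)$ has empty interior. Hence the action of $H$ on $\Lambda_S(H)$ is topologically free, and $H$ is lim-free.

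The only step that needs care is ruling out the bounded-orbit case. This is exactly where Proposition \ref{elliptic radical is maximal} is used: every elliptic normal subgroup of $G$ lies in $E_S(G)$, which is trivial here because the action on $\Lambda_S(G)$ is faithful. Once that case is excluded, the equality $\Lambda_S(H)=\Lambda_S(G)$ from Lemma \ref{dichotomy} makes both conclusions follow directly.
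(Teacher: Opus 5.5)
Your proposal is correct and follows the paper's proof essentially step for step. You rule out the bounded-orbit case via maximality of $E_S(G)$, get a general type action with $\Lambda_S(H)=\Lambda_S(G)$ from Lemma \ref{dichotomy}, and restrict faithfulness and topological freeness to $H$. Your remark that topological freeness already implies faithfulness, so the action witnessing lim-freeness can be used directly, is a small point the paper leaves implicit.
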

\begin{proof}
Suppose that the action of $G$ on a hyperbolic space $S$ is of general type and the induced action of $G$ on $\Lambda_S(G)$ is faithful. The subgroup $H$ is normal in $G$, so by Lemma \ref{dichotomy}, either the action of $H$ on $S$ is elliptic or it is of general type; in the latter case, $\Lambda_S(H)=\Lambda_S(G)$. Assume that $H$ is elliptic. Since $H$ is normal, $ H\leq E_S(G)$ by the maximality of $E_S(G)$, and, thus, is trivial because the action of $ G$ on $\Lambda_S(G)$ is faithful, which is a contradiction. Therefore, the action of $H$ on $S$ is of general type, and the induced action of $H$ on $\Lambda_S(H)=\Lambda_S(G)$ is faithful. If the action of $G$ on $\Lambda_S(G)$ is topologically free, so is the action of $H$ on $\Lambda_S(H)$.  
\end{proof}

We finish this subsection by proving Proposition \ref{faithful weakly hyperbolic quotient}, which allows us to apply the main results of this work to some quotients of weakly hyperbolic groups that are not faithful weakly hyperbolic. 

\begin{proof}[Proof of Proposition \ref{faithful weakly hyperbolic quotient}]
We need to show that there exists a hyperbolic space $S'$ such that the action of $G/E_S(G)$ on $S'$ is also of general type and the induced action on $\Lambda_{S'}(G/E_S(G))$ is faithful.

We will use the following lemma. The proof appears in \cite{BFGS}, though the statement is different.

\begin{lem} \cite[Lemma 4.10]{BFGS} \label{construction of a new hyp space} Suppose that $N$ is a normal subgroup of $G$, and $G$ acts on a hyperbolic space $S$ such that the action of $N$ is elliptic. Then there is a $G$-invariant  $Y \subset S$, a hyperbolic graph $S'$, a homomorphism $G / N \rightarrow$ Isom $S'$, and a quasi-isometry $\varphi: Y \rightarrow S'$ that is $G$-equivariant.
\end{lem}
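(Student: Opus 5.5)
Let $Y\subset S$ be the $G$-invariant subset, and $\varphi\colon Y\to S'$ the $G$-equivariant quasi-isometry, supplied by Lemma \ref{construction of a new hyp space} (cited from \cite{BFGS}). I will construct this data by coning off the orbits of the elliptic normal subgroup $N$, following the construction in \cite{BFGS}.

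Since $N$ is elliptic, each orbit $Ns$ has bounded diameter. Normality of $N$ makes this bound uniform: for $g\in G$, $N(gs)=g(Ns)$ has the same diameter as $Ns$. So fix a basepoint $s_0$ and let $Y=Gs_0$, which is $G$-invariant. Let $R$ be the diameter of $Ns_0$; then every orbit $Ny$ with $y\in Y$ has diameter at most $R$.

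Define $S'$ as the graph with vertex set $Y/N=\{Ny\mid y\in Y\}$. Since $G/N$ acts on $Y/N$ by $gN\cdot Ny=Ngy$, and this is well defined by normality, we get an action of $G/N$ on the vertex set. Join $Ny$ and $Ny'$ by an edge whenever $\d_S(Ny,Ny')\le C$ for a fixed constant $C$. This edge set is invariant because $G$ acts by isometries, so $G/N$ acts on $S'$ by graph isometries. Let $\varphi(y)=Ny$. This map is $G$-equivariant, with $G$ acting on $S'$ through $G/N$.

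The main step is to show that $\varphi$ is a quasi-isometry, which then gives hyperbolicity of $S'$ (a geodesic space quasi-isometric to a hyperbolic space is hyperbolic). The worry is that $Y$ is an orbit and need not be coarsely connected. I would handle this by taking $Y$ to be the union of geodesics between orbit points, or equivalently by replacing $Y$ with its quasiconvex hull $\mathcal N_{K}$-thickening, so that $Y$ is coarsely geodesic. Then I choose $C$ larger than the coarse-connectivity constant plus $R$.

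The upper bound on $S'$-distances comes from chaining points of $Y$ at spacing at most $C-R$ along a path. For the lower bound, consecutive vertices along an edge path of length $m$ have $S$-distance at most $C+R$, so $\d_S(y,y')\le m(C+R)+R$. Surjectivity of $\varphi$ is clear.

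I expect the coarse-connectivity issue to be the main technical obstacle, and I would follow \cite{BFGS} for it.
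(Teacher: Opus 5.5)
The paper gives no argument for this lemma; it imports it from \cite[Lemma 4.10]{BFGS}. Your construction (vertices $Y/N$, edges between $N$-orbits at distance at most $C$, $\varphi(y)=Ny$) is the right one, but as written it is not yet a proof.

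Your first sentence takes $Y$ and $\varphi$ ``supplied by'' the very lemma being proved. That is circular and should be deleted.

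More substantively, everything rests on a uniform bound on $\mathrm{diam}(Ny)$ over $y\in Y$. You establish this only for $Y=Gs_0$, via $N(gs_0)=g(Ns_0)$. You then rightly abandon $Y=Gs_0$, since an orbit need not be coarsely connected, and pass to the union of geodesics between orbit points. Points of that set are not orbit points, so the bound $R$ does not carry over automatically. For general points of $S$ the $N$-orbits need not be uniformly bounded at all, which is exactly why one cannot take $Y=S$. Your lower bound $\d_S(y,y')\le m(C+R)+R$ uses the diameter bound at every vertex of the edge path, so for the hull it is unjustified as it stands. The coarse connectivity needed for the upper bound, and for connectedness of $S'$, is left to \cite{BFGS}.

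Both missing facts are short:
\begin{itemize}
\item \emph{Uniform diameter bound.} Let $y\in[a,b]$ with $a,b\in Gs_0$, and let $n\in N$. Normality gives $\d_S(a,na),\d_S(b,nb)\le R$. The point $ny$ lies on the geodesic $n[a,b]$ at distance $\d_S(a,y)$ from $na$. The $2\delta$-slim quadrilateral $a,b,nb,na$ then yields $\d_S(y,ny)\le 3R+4\delta$; the worst case is $y$ within $R+2\delta$ of an endpoint.
\item \emph{Coarse connectivity.} Let $y_i\in[a_i,b_i]$ for $i=1,2$. The $2\delta$-slim quadrilateral with sides $[y_1,a_1]\subset[a_1,b_1]$, $[a_1,a_2]$, $[a_2,y_2]\subset[a_2,b_2]$ and $[y_1,y_2]$ gives $[y_1,y_2]\subset\mathcal N_{2\delta}(Y)$. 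Hence there is a chain in $Y$ from $y_1$ to $y_2$ with at most $\d_S(y_1,y_2)+1$ steps, each of length at most $2+4\delta$. Take $C\ge 2+4\delta$.
\item \emph{Hyperbolicity of $S'$.} $Y$ with the induced metric is not geodesic, so do not argue via ``a geodesic space quasi-isometric to a hyperbolic space is hyperbolic.'' Instead, note that $S'$ is a geodesic space that quasi-isometrically embeds into the geodesic hyperbolic space $S$ (a quasi-inverse of $\varphi$ followed by $Y\hookrightarrow S$). Hence $S'$ is hyperbolic.
\end{itemize}
With these additions your sketch becomes a complete proof.
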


We apply Lemma \ref{construction of a new hyp space} to $E_S(G)$. Since $Y \subset S$ is $G$-invariant, it contains orbits $\langle g \rangle y$ and $\langle f \rangle y$ of two elements $f, g \in \mathcal{L}_S(G)$ with disjoint fixed points, for some $y \in Y$. Since $\varphi: Y \rightarrow S^{\prime}$ is a quasi-isometry, the images of orbits $\varphi(\langle g \rangle y)$ and $\varphi(\langle f \rangle y)$ bi-infinitely diverge. Thus, $gE_S(G), fE_S(G) \in \mathcal{L}_{S'}(G/ E_S(G))$ with disjoint fixed points because $\varphi$ is $G$-equivariant and orbits of $E_S(G)$ are bounded. Therefore, the action of $G/E_S(G)$ on $S'$ is of general type. The preimage of $E_{S'}(G/E_S(G))$ under the homomorphism $G \rightarrow G/E_S(G)$ is a normal subgroup of $G$ and, since $\varphi: Y \rightarrow S^{\prime}$ is a $G$-equivariant quasi-isometry, it is also elliptic. By the maximality of $E_S(G)$ (Proposition \ref{elliptic radical is maximal}), the preimage of $E_{S'}(G/E_S(G))$ is contained in the kernel of the homomorphism. Thus, we have $E_{S'}(G/E_S(G)) =1$ and the action of $G/E_S(G)$ on $\Lambda_{S'}(G/E_S(G))$ is faithful.
\end{proof}

\subsection{Micro-supported actions}\label{micro-supported actions}

\begin{defn}
Suppose that a group $G$ acts on a set $X$. The {\it support of an element $g \in G$} is the set $\supp(g)=\{ x \in X \mid gx \neq x \}$. 
\end{defn}

\begin{defn}
Let $G$ be a group that acts on a set $X$. Given a subset $U \subseteq X$, the rigid stabilizer of $U$ is the subgroup
$$
\rist_G(U):=\{g \in G \mid g x=x \quad \forall x \in X\setminus U\} .
$$
\end{defn}

\begin{defn}
An action of a group $G$ by homeomorphisms on a topological space $X$ is called {\it micro-supported} if, for every nonempty open subset $U \subseteq X$, there is an element of $G$ acting non-trivially on $U$ and trivially on the complement $X \setminus U$. If the action of $G$ on $X$ is faithful, this is the same as having $\rist_G(U) \neq 1$ for every nonempty open subset $U \subseteq X$.
\end{defn}

The following fact is standard and known to experts (for example, see the proof of \cite[Proposition H]{CRW}).

\begin{lem}\label{non-abelian rigid stabilizers}
Let $G$ be a subgroup of the homeomorphism group of a Hausdorff topological space $X$. If the $G$-action is micro-supported, then $\rist_G(U)$ is non-abelian for every nonempty open subset $U \subseteq X$.
\end{lem}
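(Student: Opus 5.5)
The plan is to argue by contradiction. Suppose $U \subseteq X$ is nonempty and open, and $\rist_G(U)$ is abelian. I will produce two elements of $\rist_G(U)$ that do not commute.

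First, apply micro-supportedness to $U$ itself. This gives a nontrivial $g \in \rist_G(U)$, so there is a point $x \in U$ with $gx \neq x$.

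Next, use that $X$ is Hausdorff. Choose disjoint open neighborhoods $A \ni x$ and $B \ni gx$. Set
$$V = U \cap A \cap g^{-1}(B),$$
which is open by continuity of $g$ and contains $x$. By construction $gV \subseteq B$ and $V \subseteq A$, so $gV \cap V = \emptyset$.

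Apply micro-supportedness again, now to $V$. This gives a nontrivial $h \in \rist_G(V)$. Since $V \subseteq U$, we have $\rist_G(V) \leq \rist_G(U)$, so $h \in \rist_G(U)$ as well.

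It remains to check that $g$ and $h$ do not commute. The conjugate $ghg^{-1}$ is supported in $gV$, which is disjoint from $V$. Suppose $ghg^{-1} = h$. Then $\supp(h) \subseteq V \cap gV = \emptyset$, so $h$ acts trivially on $X$. Since $G$ is a subgroup of $\mathrm{Homeo}(X)$, the action is faithful, and so $h = 1$. This is a contradiction, so $g$ and $h$ are non-commuting elements of $\rist_G(U)$.

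The only delicate step is shrinking to $V$ so that $gV$ and $V$ are disjoint. This is exactly where the Hausdorff hypothesis is used, together with continuity of $g$.
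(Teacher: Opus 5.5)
Your proof is correct: shrinking to an open $V\subseteq U$ with $gV\cap V=\emptyset$ (using Hausdorffness and continuity of $g$), then taking a nontrivial $h\in\rist_G(V)$, forces $ghg^{-1}\neq h$, since equality would make $\supp(h)\subseteq V\cap gV=\emptyset$. The paper gives no proof of its own and cites the lemma as a known fact (pointing to the proof of Proposition H in \cite{CRW}); your write-up is the standard argument behind that fact.
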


\begin{lem}\label{top.k-transitive or micro-supported}
Suppose that the action of a group $G$ on a topological space $X$ by homeomorphisms is faithful and micro-supported. Then it is either topologically $k$-transitive for all $k \in \NN$ or $G$ satisfies a non-trivial mixed identity.
\end{lem}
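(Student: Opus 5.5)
We prove the contrapositive: if the faithful micro-supported action of $G$ on $X$ is not topologically $k$-transitive for some $k$, we write down an explicit non-trivial mixed identity. The identity comes from commutators of rigid stabilizers: two elements supported on disjoint sets commute, and Lemma \ref{non-abelian rigid stabilizers} supplies non-commuting elements inside a single $\rist_G(U)$, which makes the word non-trivial.

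\textbf{Step 1: locating the failure of transitivity.} Suppose the action is not topologically $k$-transitive for some $k$. Then there are nonempty open sets $U_1\times\dots\times U_k$ and $V_1\times\dots\times V_k$ in $X^k$ such that, for every $g\in G$, some index $i$ has $gU_i\cap V_i=\emptyset$. Using Hausdorffness, which is implicit here as in Lemma \ref{non-abelian rigid stabilizers}, we shrink the sets so that the $U_i$ are pairwise disjoint and the $V_i$ are pairwise disjoint. This is possible in each degenerate case: when $X$ has isolated points one argues directly, and otherwise $X$ is infinite with no isolated points, so distinct points with disjoint neighbourhoods can be chosen. Shrinking keeps the property, because smaller sets have fewer intersections.

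\textbf{Step 2: choosing the constants.} By Lemma \ref{non-abelian rigid stabilizers}, for each $i$ we pick $a_i,b_i\in\rist_G(U_i)$ with $[a_i,b_i]\neq 1$. We also pick $c_i,d_i\in\rist_G(V_i)$ with $[c_i,d_i]\neq1$.

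\textbf{Step 3: the word.} For each $i$, define the commutator
$$w_i(t)=\bigl[[a_i,b_i],\,t^{-1}[c_i,d_i]t\bigr].$$
Here $t^{-1}[c_i,d_i]t$ is supported in $t^{-1}V_i$. If $U_i\cap t^{-1}V_i=\emptyset$, equivalently $tU_i\cap V_i=\emptyset$, then $w_i(t)=1$. By Step 1, for every substitution $t=g$ some $i$ has $gU_i\cap V_i=\emptyset$. So we want a word $w$ that is trivial whenever at least one $w_i(g)$ is trivial: a ``product'' that vanishes when any factor vanishes.

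\textbf{Step 4: combining the $w_i$.} The standard trick is iterated commutators with conjugations. Set $W_1=w_1$ and
$$W_{j+1}=[W_j,\,h_{j+1}^{-1}w_{j+1}h_{j+1}]$$
for suitable $h_{j+1}$, which may be new constants or powers of $t$. If any $w_j(g)=1$, then $W_k(g)=1$, so $w=W_k$ is an identity. The main obstacle is ensuring that $W_k$ is non-trivial in $G*\langle t\rangle$. Since $[a_i,b_i]\ne1$ and $[c_i,d_i]\ne1$, each $w_i$ is a reduced word with constants interleaved with $t^{\pm1}$, so it is non-trivial. The risk is that nested commutators of non-trivial elements of a free product can collapse when they commute. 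Choosing the conjugators $h_j=t^{m_j}$ with $m_j$ distinct and large makes the syllable structure of the successive factors different, and then a normal-form argument in $G*\langle t\rangle$ (or the fact that elements of a free product not in a common cyclic or factor subgroup generate a free group) shows that the commutators are non-trivial. Conjugating by powers of $t$ does not affect the vanishing argument, since conjugates of $1$ are $1$. This nontriviality check is the only delicate part; the rest follows from disjointness of supports.
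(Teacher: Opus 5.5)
Your skeleton is the paper's: a box $U_1\times\dots\times U_k$, $V_1\times\dots\times V_k$ witnessing the failure of topological $k$-transitivity, the fact that disjointly supported elements commute (using faithfulness), and an iterated commutator of conjugates that turns ``some $w_i(g)=1$'' into a single identity. The gap is the step you yourself call delicate. You assert that $W_k\neq 1$ in $G*\langle t\rangle$ for conjugators $h_j=t^{m_j}$, but you give no argument. The fact you cite is also false as stated: two elements of a free product that lie in no common cyclic subgroup and no common conjugate of a factor need not generate a free group. For example, if $a\in G$ has order $2$, then $a$ and $t^{-1}at$ generate an infinite dihedral group. What you would actually need is the commutation criterion in free products: nontrivial commuting elements lie in a common conjugate of a factor or are powers of a common element. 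On top of that you would need a normal-form check, at every stage $j$, that $W_j$ and $t^{-m_{j+1}}w_{j+1}t^{m_{j+1}}$ are not powers of a common element. This can be arranged for suitable $m_j$, but it is not done.

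The paper avoids all of this by conjugating with fresh free \emph{variables}, not powers of $t$ or constants from $G$. It takes $w_k=[c_k^{x_{k-1}},[\dots,[c_1,c_2^{x_1}]\dots]]\in G*F(t,x_1,\dots,x_{k-1})$, which is legitimate since a mixed identity may involve several variables. Non-triviality is then immediate. If $u,v\neq 1$ lie in a free factor $A$ of $A*\langle x\rangle$, the word $u^{-1}x^{-1}v^{-1}xux^{-1}vx=[u,x^{-1}vx]$ is reduced. One then inducts with $A=G*F(t,x_1,\dots,x_{j-1})$.

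Your use of Lemma \ref{non-abelian rigid stabilizers} is unnecessary and costs generality. That lemma needs $X$ Hausdorff, which the statement does not assume. Its role in your proposal also rests on a misconception: non-triviality of a word in $G*\langle t\rangle$ comes from the normal form theorem, not from the constants failing to commute in $G$. Take any nontrivial $a_i\in\rist_G(U_i)$ and $c_i\in\rist_G(V_i)$. The word $[a_i,t^{-1}c_it]=a_i^{-1}t^{-1}c_i^{-1}ta_it^{-1}c_it$ is already reduced of syllable length $8$, hence nontrivial. It vanishes at $t=g$ whenever $gU_i\cap V_i=\emptyset$, and it is essentially the paper's $c_i=[a_i^t,b_i]$. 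So drop the commutators $[a_i,b_i]$, $[c_i,d_i]$ and the Hausdorff assumption. The shrinking to pairwise disjoint $U_i$ and $V_i$ in Step 1 is never used and can be omitted too.
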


\begin{proof}
    Suppose that the action of $G$ on $X$ is not topologically $k$-transitive for some $k \in \NN$.
Then there exist nonempty open subsets $A_1, \ldots, A_k$ and $B_1, \ldots, B_k$ of $X$ such that for every $g \in G$ there exists  $i \in \{1, \ldots, k \}$ with  $A_i \cap gB_i = \emptyset$.  Let us fix an arbitrary $\widehat{g} \in G$ and, without loss of generality, assume that $A_1 \cap \widehat{g}B_1 = \emptyset$.
    
     Since the action of $G$ on $X$ is micro-supported, we can find non-trivial elements $a_1, \ldots, a_k$ and $b_1, \ldots, b_k \in G$, such that $\supp(a_i) \subseteq A_i$ and $\supp(b_i) \subseteq B_i$ for each $i \in \{1, \ldots, k \}$.
    Then for any point $x \notin \widehat{g}^{-1}A_1$ we have $\widehat{g}^{-1}a_1\widehat{g}x = \widehat{g}^{-1}\widehat{g}x = x$. Therefore, $\supp(a_1^{\widehat{g}}) \subseteq \widehat{g}^{-1}A_1$ and we obtain $\supp(a_1^{\widehat{g}}) \cap \supp(b_1) = \emptyset$. We conclude that $a_1^{\widehat{g}}$ and $b_1$ commute, since they have disjoint support. 

This argument shows that for an arbitrary $g \in G$ we have
    \begin{equation} \label{commutation}
        [a_1^g, b_1]=1 \lor [a_2^g, b_2]=1 \lor \ldots \lor [a_k^g, b_k]=1.
    \end{equation}
    Denote by $c_i : = [a_i^t, b_i] \in G*\l t \rangle$.  Since $a_1, \ldots, a_k$ and $b_1, \ldots, b_k$ are non-trivial, the Normal Form Theorem (see, for example, \cite[Theorem 1.2, Chapter 4]{LS}) implies that each $c_i$ is a non-trivial element of $G* \l t \r $. 
    Let $x_1, \dots, x_{k-1}$ be elements of a free group $F(x_1, \dots, x_{k-1})$,
    and let $w_k$ be the iterated commutator of the elements $c_1, c_2^{x_1}\ldots, c_k^{x_{k-1}}$:
    $$
        w_k=[c_k^{x_{k-1}},[c_{k-2}^{x_{k-2}},[\ldots[c_{1},c_2^{x_1}]\ldots].
    $$
   Clearly, $w_k$ is a non-trivial element of $G* F(t,x_1,x_2, \dots, x_{k-1})$. By (\ref{commutation}), $G$ satisfies the mixed identity $w_k=1$.
    \end{proof}

\begin{defn}
Suppose that a group $G$ acts on a topological space $X$ by homeomorphisms. Let $H \leq G$. A subset $V$ of $X$ is called {\it $H$-compressible} if it is nonempty and if for any nonempty open subset $U$, there exists $h \in H$ such that $hV \subseteq U$. We say that the action of $G$ is {\it compressible} if a nonempty $G$-compressible open subset exists and {\it fully compressible} if every non-dense open set is $G$-compressible.
\end{defn}

\begin{lem}\label{Gen type actions admit compressible open sets}
Suppose that the action of a group $G$ on a hyperbolic space $S$ is quasi-parabolic, and let $\Fix_{\partial S}(G)=\{q\}$. Then every nonempty open subset $V \subset \Lambda_S(G)$ with $q \notin \overline{V}$ is $G$-compressible.  More precisely, given any nonempty open subset $V \subset \Lambda_S(G)$ with $q \notin \overline{V}$  and any nonempty open $U \subseteq \Lambda_S(G)$, there exists $l \in \mathcal{L}_S(G)$ such that $lV \subset U$. If the action of $G$ on $S$ is of general type, then the action of $G$ on $\Lambda_S(G)$ is fully compressible. More precisely, given any nonempty non-dense open subset $V \subset \Lambda_S(G)$ and any nonempty open $U \subseteq \Lambda_S(G)$, there exists $l \in \mathcal{L}_S(G)$ such that $lV \subset U$.
\end{lem}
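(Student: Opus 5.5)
The plan is to reduce both statements to one compression claim. Given a nonempty open $V \subset \Lambda_S(G)$ whose closure $\overline{V}$ misses a point $p$, and a nonempty open $U \subseteq \Lambda_S(G)$, I will find a loxodromic $l$ with $l^- = p$, or at least $l^-$ very close to $p$ and outside $\overline{V}$, and $l^+ \in U$. North-south dynamics (Lemma \ref{north-south}) then gives $l^n \overline{V} \subset U$ for $n$ large. The convergence $l^n s \to l^+$ is uniform on sets whose closure avoids $l^-$, and $\overline{V}$ is closed in the compact-like setting $\widehat S$, so uniform convergence into the open neighbourhood $U' \subset \widehat S$ of $l^+$ with $U' \cap \Lambda_S(G) = U$ gives $l^nV \subset U$. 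The element $l^n$ is again loxodromic.

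\textbf{General type case.} Let $V$ be non-dense open, so $W := \Lambda_S(G)\setminus \overline{V}$ is nonempty and open. By Lemma \ref{set of fixed points of loxodromic elements is dense}, the pairs $(g^+,g^-)$ are dense in $\Lambda_S(G)\times\Lambda_S(G)$. So there is $l \in \mathcal{L}_S(G)$ with $l^+ \in U$ and $l^- \in W$. Since $l^- \notin \overline{V}$, the closure of $V$ in $\widehat S$ avoids $l^-$. Here I use that the closure of $V$ in $\widehat S$ lies in $\Lambda_S(G)$, because $\Lambda_S(G)$ is closed in $\partial S$, and $\partial S$ is closed in $\widehat S$ since $S$ is open. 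The argument above then finishes this case.

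\textbf{Quasi-parabolic case.} Here every loxodromic element fixes $q$. So each loxodromic $l$ has $q \in \{l^+,l^-\}$, and the other endpoint ranges over $\mathcal{H}_S(G)\setminus\{q\}$, which is dense in $\Lambda_S(G)$ by Lemma \ref{set of fixed points of loxodromic elements is dense}. I will use the following two facts.
\begin{enumerate}
\item There is some $g$ with $g^- = q$. This holds because if $g^+ = q$ then $(g^{-1})^- = q$.
\item The repelling point $q$ can be kept while moving the attracting point anywhere. Indeed, by Remark \ref{congugation}, for $f\in G$ we have $(fgf^{-1})^- = fq = q$ and $(fgf^{-1})^+ = fg^+$.
\end{enumerate}
The set $\{fg^+ : f \in G\}$ is dense in $\Lambda_S(G)$, since its closure is a nonempty closed $G$-invariant subset of $\Lambda_S(G)$. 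I need to justify that the $G$-orbit of any point of $\Lambda_S(G)\setminus\{q\}$ is dense. I would argue that every attracting point $h^+ \neq q$ of a loxodromic $h$ with $h^- = q$ is an accumulation point of $h^n g^+$, and that these $h^+$ are dense by the previous density statement. Hence I can choose $f$ with $fg^+ \in U$, and $l = fgf^{-1}$ satisfies $l^- = q \notin \overline{V}$ and $l^+ \in U$. Compression then gives $l^n V \subset U$.

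The main subtlety I expect is this density of $G$-orbits in the quasi-parabolic case. If it gives trouble, I will fall back on taking $h$ loxodromic with $h^+ \in U$ (possibly with $h^- = q$), and then $h^n g^+ \to h^+$ provided $g^+ \neq q = h^-$.
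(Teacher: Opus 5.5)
Your proposal is correct and follows essentially the paper's proof. In both, you produce a loxodromic $l$ with $l^+\in U$ and $l^-\notin\overline V$, using density of the pairs $(g^+,g^-)$ in the general type case and $l^-=q$ in the quasi-parabolic case, and then apply north--south dynamics.

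In the quasi-parabolic case the conjugation detour is unnecessary. Its first justification via closed invariant sets is also invalid there, since $\{q\}$ is itself a closed $G$-invariant subset of $\Lambda_S(G)$. However, your repair argument already shows directly that $\{h^+ : h\in\mathcal{L}_S(G),\ h^-=q\}=\mathcal{H}_S(G)\setminus\{q\}$ is dense. So you can simply pick such an $h$ with $h^+\in U$, which is exactly what the paper does.
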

\begin{proof}
Let $V \subset \Lambda_S(G)$ be a nonempty open subset with $q \notin \overline{V}$. Take any nonempty open subset $U \subseteq \Lambda_S(G)$.
Since $q \notin \overline{V}$, there exist two disjoint open subsets $F^{+} \subset U$ and $F^{-} \subset \Lambda_S(G) \setminus \overline{V}$ such that $q \in F^-$. By Lemma \ref{set of fixed points of loxodromic elements is dense}, there exists $f \in \mathcal{L}_S(G)$ with $f^+ \in F^{+}$ and $f^- = q$. By Lemma \ref{north-south} there exists $k \in \NN$ such that $f^k(\Lambda_S(G) \setminus F^-) \subseteq F^+$.
Taking $l=f^k$, we obtain 
$$lV =f^kV \subset f^k(\Lambda_S(G) \setminus F^-) \subset F^+ \subset U.$$
The proof of the general type case is similar.
\end{proof}

\begin{lem}\label{Not_top_free actions are micro-supported}
Suppose that the action of a group $G$ on a hyperbolic space $S$ is of general type. If the induced action on $\Lambda_S(G)$ is faithful and not topologically free, it is micro-supported. 
\end{lem}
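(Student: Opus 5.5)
Since the action is not topologically free, there is a non-trivial $g_0 \in G$ such that $\Fix_{\Lambda_S(G)}(g_0)$ contains a nonempty open set $W \subseteq \Lambda_S(G)$. Because the action on $\Lambda_S(G)$ is faithful, $g_0$ moves some point of $\Lambda_S(G)$, so $\supp(g_0)$ is a nonempty open set. Its closure $\overline{\supp(g_0)}$ avoids $W$, hence is not all of $\Lambda_S(G)$. Set $V:=\Lambda_S(G)\setminus \overline{W}$. Then $V$ is open, non-dense (it misses $W$), and contains $\supp(g_0)$. By construction $g_0 \in \rist_G(V)$ and $g_0 \neq 1$.

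Now let $U \subseteq \Lambda_S(G)$ be an arbitrary nonempty open set. We need a non-trivial element supported in $U$. By Lemma \ref{Gen type actions admit compressible open sets}, the action of $G$ on $\Lambda_S(G)$ is fully compressible. Hence there is $l \in \mathcal{L}_S(G)$ with $lV \subseteq U$. Put $h := l g_0 l^{-1}$. If $x \notin lV$, then $l^{-1}x \notin V$, so $g_0 l^{-1}x = l^{-1}x$, and therefore $hx = x$. Thus $h$ acts trivially on $\Lambda_S(G)\setminus U$. Moreover $h \neq 1$, since it is conjugate to $g_0 \neq 1$ and the action is faithful. So $h$ acts non-trivially on $U$, that is, $\rist_G(U) \neq 1$. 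This proves the action is micro-supported.

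The main subtle point is ensuring that the set $V$ we compress is genuinely non-dense, because full compressibility only applies to non-dense open sets. Taking the complement of the closure of the fixed open set $W$ handles this. We also need $U$ and $V$ to be open in $\Lambda_S(G)$, not in $\partial S$; this is automatic, since we work entirely inside $\Lambda_S(G)$ with the subspace topology.
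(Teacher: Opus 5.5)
Your proof is correct and is essentially the paper's argument: take a nontrivial element supported in a nonempty non-dense open set $V$, use full compressibility (Lemma~\ref{Gen type actions admit compressible open sets}) to find $l$ with $lV\subseteq U$, and conjugate by $l$ to land in $\rist_G(U)$. The only cosmetic difference is that the paper takes $V=\Lambda_S(G)\setminus\Fix_{\Lambda_S(G)}(a)$, the support itself, instead of $\Lambda_S(G)\setminus\overline{W}$; it is non-dense for the same reason, so your detour through $\overline{W}$ is harmless but unnecessary.
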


\begin{proof}
Let $U$ be an arbitrary nonempty open subset of $\Lambda_S(G)$. Since the action of $G$ on $\Lambda_S(G)$ is not topologically free, there exists $a \in G \setminus\{1\}$ such that $\Int(\Fix_{\Lambda_S(G)}(\langle a \rangle)) \neq \emptyset$. Therefore, $V=\Lambda_S(G) \setminus \Fix_{\Lambda_S(G)}(\langle a \rangle)$ is a non-dense open subset of $\Lambda_S(G)$. Since the action of $G$ on $\Lambda_S(G)$ is faithful, $V$ is nonempty and $a \in \rist_G(V)$ acts nontrivially on $V$. Then, by Lemma \ref{Gen type actions admit compressible open sets}, there exists $l \in \mathcal{L}_S(G)$ such that $lV \subset U$. Therefore, $l\rist_G(V)l^{-1} \leq \rist_G(U)$. 
\end{proof}

\begin{cor}\label{infinite direct products}
Let $G$ be a faithful weakly hyperbolic group that is not lim-free. Then $G$ contains infinite direct sums of infinite non-abelian subgroups. 
\end{cor}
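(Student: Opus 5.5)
Fix a general type action of $G$ on a hyperbolic space $S$ whose induced action on $\Lambda_S(G)$ is faithful. Since $G$ is not lim-free, this action on $\Lambda_S(G)$ is not topologically free. By Lemma \ref{Not_top_free actions are micro-supported} it is therefore micro-supported, and by Lemma \ref{non-abelian rigid stabilizers} every rigid stabilizer $\rist_G(U)$ of a nonempty open $U\subseteq\Lambda_S(G)$ is non-abelian, because $\Lambda_S(G)\subseteq\partial S$ is Hausdorff. The plan has three steps: find infinitely many pairwise disjoint nonempty open subsets $U_1,U_2,\dots$ of $\Lambda_S(G)$; show that the subgroups $\rist_G(U_i)$ pairwise commute and together generate their direct sum; and show that each $\rist_G(U_i)$ is infinite.

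For the first step, $\Lambda_S(G)$ is infinite and Hausdorff, so such a family exists. One explicit construction takes a loxodromic $g$ together with a small open neighbourhood $V$ of a point of $\Lambda_S(G)\setminus\{g^\pm\}$ whose closure avoids $g^\pm$. By Lemma \ref{north-south}, after replacing $g$ by a power, the translates $g^nV$ for $n\in\ZZ$ are pairwise disjoint. Alternatively, one can proceed inductively: pick distinct points $x_1,x_2,\dots$, separate each from the others, and shrink the neighbourhoods.

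For the second step, put $R_i=\rist_G(U_i)$. If $i\neq j$, elements of $R_i$ and $R_j$ have disjoint supports, so they commute on $\Lambda_S(G)$. Faithfulness then upgrades this to commuting in $G$. For independence, suppose an element of $R_i$ equals a product of elements of finitely many $R_j$ with $j\neq i$. The left side is supported in $U_i$ and the right side in the union of the corresponding $U_j$, which is disjoint from $U_i$. Hence the element acts trivially on $\Lambda_S(G)$, and by faithfulness it is trivial. This gives $\langle R_i\mid i\in\NN\rangle\cong\bigoplus_i R_i$.

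The third step, infiniteness, is the only non-formal point. If $\rist_G(U)$ is nontrivial, take $1\neq a\in\rist_G(U)$. Its support is a nonempty open set inside $U$, and since $a$ moves some point, we can choose a nonempty open $W\subseteq U$ with $aW\cap W=\emptyset$. Inside $W$ we again find infinitely many disjoint nonempty open sets $W_1,W_2,\dots$. The groups $\rist_G(W_k)\subseteq\rist_G(U)$ are nontrivial and, by the second step, form an infinite direct sum. Therefore $\rist_G(U)$ is infinite. In fact it is non-abelian by Lemma \ref{non-abelian rigid stabilizers}, so each summand $R_i$ is an infinite non-abelian subgroup, completing the proof. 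I expect no serious obstacle here; the only care needed is choosing open sets so that all supports stay inside the prescribed sets.
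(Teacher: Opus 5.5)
Your proposal is correct and follows essentially the same route as the paper. Both arguments take infinitely many pairwise disjoint nonempty open subsets of $\Lambda_S(G)$, get nontrivial non-abelian rigid stabilizers from Lemma \ref{Not_top_free actions are micro-supported} and Lemma \ref{non-abelian rigid stabilizers}, use disjoint supports for commutation, and obtain infiniteness because each open set contains infinitely many disjoint open subsets; your independence check via faithfulness is a detail the paper leaves implicit.

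One minor point: in your third step, the element $a$ and the set $W$ with $aW\cap W=\emptyset$ play no role. What that step actually needs is that every nonempty open subset of $\Lambda_S(G)$ is infinite, i.e.\ that $\Lambda_S(G)$ is perfect. This follows from your step-one north--south construction, run with a loxodromic $g$ satisfying $g^+\in W$.
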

\begin{proof}
Let $S$ be a hyperbolic space such that the action of $G$ on $S$ is of general type, and the induced action on $\Lambda_S(G)$ is faithful and not topologically free. Take an infinite set of pairwise disjoint non-empty open subsets $V_1, \dots, V_n, \dots \subset \Lambda_S(G)$. By Lemma \ref{Not_top_free actions are micro-supported}, rigid stabilizers $\rist_G(V_i)$ are nonempty, and, by Lemma \ref{non-abelian rigid stabilizers}, they are non-abelian for all $i \in \NN$. Elements in $\rist_G(V_i)$ and $\rist_G(V_j)$ have disjoint support for $i \neq j$, therefore they commute. Thus, $\rist_G(V_i), i \in \NN$, form a direct sum. Every nonempty open subset of $\Lambda_S(G)$ contains infinitely many disjoint nonempty open subsets. Therefore, rigid stabilizers $\rist_G(V_i)$ are infinite.
\end{proof}

We already know that if the $G$-action on a Hausdorff topological space $X$ is micro-supported, then the rigid stabilizer of a nonempty open subset $U \subset X$ is non-abelian. If, in addition, $U$ is $G$-compressible, we can say more.  

\begin{lem}\label{wreath products in rigid stabilizers}
Let $G$ be a subgroup of the homeomorphism group of a Hausdorff topological space $X$, and $U \subset X$ be a nonempty open $G$-compressible subset. Either $\rist_G(U)$ is torsion-free, or there exists a finite $k \geq 2$ such that 
$\rist_G(U) \wr \ZZ_k < \rist_G(U)$.  
\end{lem}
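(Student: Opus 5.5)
If $\rist_G(U)$ is not torsion-free, I will embed a wreath product $\rist_G(U)\wr \ZZ_k$ into $\rist_G(U)$, taking $k$ to be the order of a suitably chosen finite-order element. The building blocks are disjoint translates of $U$ cyclically permuted by that element.

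Concretely, pick a nontrivial $g\in \rist_G(U)$ of finite order. Take $k\geq 2$ to be the order of some power of $g$ that still moves some point; for simplicity, just take $k$ to be the order of $g$. Since $g$ acts nontrivially, there is $x\in U$ with $gx\neq x$.

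The hard part is finding an open set $W\subseteq U$ whose translates $W, gW,\dots,g^{k-1}W$ are pairwise disjoint. The difficulty is that the orbit of $x$ under $\langle g\rangle$ may have fewer than $k$ points. To handle this, replace $g$ by a power $g^m$ and $x$ by a point whose $\langle g^m\rangle$-orbit has exactly the order of $g^m$ many elements. Such a point exists: choose $x$ in $\supp(h)$ for $h=g^{k/p}$ with $p$ a prime dividing $k$, and work with $h$, which has prime order $p$. Any point moved by $h$ has orbit of size exactly $p$. So I reset $k:=p\geq 2$ and $g:=h$, and pick $x\in U$ with $x, gx,\dots,g^{k-1}x$ distinct.

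Using Hausdorffness, choose pairwise disjoint open neighborhoods $O_i$ of $g^ix$, and set
\[
W=\bigcap_{i} g^{-i}O_i\cap U .
\]
This $W$ is an open neighborhood of $x$ with $g^iW\subseteq O_i$, so the translates $g^iW$ are pairwise disjoint. Because $g\in\rist_G(U)$, each $g^iW$ lies in $U$.

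Now use compressibility: there is $f\in G$ with $fU\subseteq W$. Then $H:=f\rist_G(U)f^{-1}=\rist_G(fU)\le \rist_G(W)$, and $H\cong\rist_G(U)$. The conjugates $g^iHg^{-i}\le \rist_G(g^iW)$ have pairwise disjoint supports, so they commute pairwise and generate their direct product
\[
\bigoplus_{i=0}^{k-1} g^iHg^{-i}\le \rist_G(U).
\]
Conjugation by $g$ permutes these factors cyclically, and $g^k=1$.

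It remains to check that $\langle H, g\rangle\cong H\wr \ZZ_k$, i.e.\ that $\langle g\rangle$ meets the base group trivially. Suppose $g^j$ with $0<j<k$ lies in the base group. Every base-group element maps each $g^iW$ into itself, since each factor preserves its own $g^iW$ and fixes the others pointwise. But $g^j$ sends $W$ onto the disjoint set $g^jW$, a contradiction because $W\neq\emptyset$. Hence $\langle H,g\rangle = \bigl(\bigoplus_i g^iHg^{-i}\bigr)\rtimes\langle g\rangle\cong \rist_G(U)\wr\ZZ_k$, and this subgroup sits inside $\rist_G(U)$.

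Finally, the inclusion is strict. The wreath product is supported in $\bigcup_i g^iW$, while a further compression gives nontrivial elements of $\rist_G(U)$ supported in an open set disjoint from it, and these are not in the subgroup. If strictness is only meant as "subgroup", this step can be skipped.
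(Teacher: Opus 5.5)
Your argument is correct and is essentially the paper's proof: separate a finite $\langle g\rangle$-orbit by disjoint open sets, compress $U$ into the resulting small set, and let the torsion element cyclically permute the commuting conjugates of $\rist_G(U)$. Your reduction to an element $h$ of prime order is a worthwhile refinement, because it gives an honest complement $\langle h\rangle\cong\ZZ_p$ meeting the base group trivially; with the paper's choice of a point of maximal orbit size $k$, one only knows that $g^k$ acts trivially on $V$, not that $g^k=1$.

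Your closing strictness paragraph is wrong as written. The subgroup $\langle H,h\rangle$ contains $h$ itself, and the support of $h$ need not lie in $\bigcup_i h^iW$. You can simply drop that paragraph, since the paper's $<$ only asserts an embedding as a subgroup.
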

\begin{proof}
Suppose that a non-identity element $g \in \rist_G(U)$ has finite order. Take $x \in U$ such that the orbit of $x$ under the action of $\l g \r$ has the maximum size $k$. Since the action of $G$ on $X$ is faithful, $k \geq 2$ and divides the order of $g$. Consider the distinct points $gx, \dots, g^{k-1}x$. Since $X$ is Hausdorff, there exist disjoint open subsets $V_0, \dots, V_{k-1}$, such that $g^ix \in V_i$ for $i \in \{0, \dots, k-1\}$. Consider the subset 
$$
V=\bigcap_{i=0}^{k-1} g^{-i}V_i.
$$
It is open and nonempty since $x \in V$. By the maximality of the orbit size, a quotient of $\l g \r$ isomorphic to $\ZZ^k$ acts on $V$ by permutations, and $V, gV, \dots, g^{k-1}V$ are pairwise disjoint. 

Since $U$ is $G$-compressible, there exists $f \in G$ such that $fU \subset V$. Therefore, 
$$
g^if\rist_G(U)(g^if)^{-1} \leq \rist_G(g^iV) \leq \rist_G(U), \; \; i \in \{0, \dots, k-1\}.
$$ 

The subgroups $g^if\rist_G(U)(g^if)^{-1}$ commute, since their elements have disjoint support, and the subgroup $\l g \r < \rist_G(U)$ acts on the direct product 
$$
\prod_{i=0}^{k-1}g^if\rist_G(U)(g^if)^{-1}
$$
permuting the terms.
Thus, $\rist_G(U) \wr \ZZ_k < \rist_G(U)$.
\end{proof}

\section{Boundary dynamics and algebraic properties}

\subsection{The cross-ratio and topological transitivity of actions on the limit set}

In this subsection, we develop a technical tool that is later used in the proof of Proposition \ref{non-elementary action}. We also use it to prove that the boundary action arising from a general-type action is never topologically $4$-transitive (see Proposition \ref{not 4-transitivity}). A similar notion in the case of proper hyperbolic spaces was considered in \cite{Paul}.

\begin{defn}
Let $a, b, c, d$ be four distinct points of $\partial S$ and let $o \in S$. Then the {\it cross-ratio} $[a, b, c, d]_o$ is the real number defined by
\begin{equation}\label{useful definition}
[a, b, c, d]_o = \l a|b\r_o+\l c|d\r_o-\l a|d\r_o-\l b|c\r_o.
\end{equation} 

\end{defn}

Since the Gromov product of distinct points of the boundary is finite and is preserved by isometries of $S$, we can see that the cross-ratio of distinct points of $\partial S$ is also finite, and is also preserved by isometries of $S$, i.e., if $g$ is an isometry of $S$, then 
$$
[ga, gb, gc, gd]_{go}=[a, b, c, d]_o.
$$
For $x,y \in \RR, \varepsilon \geq 0$, we say that $x \approx_{\varepsilon}y$ if $\left|x - y\right|\leq \varepsilon$.
\begin{rem}\label{technical}
Note that for arbitrary bounded sequences $(p_i), (q_i) \in \RR$, if 
$$\liminf_{i \rightarrow \infty} p_i \approx_{\varepsilon} \limsup_{i \rightarrow \infty} p_i \text{ and } \liminf_{i \rightarrow \infty} q_i \approx_{\varepsilon} \limsup_{i \rightarrow \infty} q_i, \text{ then }
$$
$$
\liminf_{i \rightarrow \infty} p_i+\liminf_{i \rightarrow \infty} q_i \approx_{2\varepsilon} \liminf_{i \rightarrow \infty} \left(p_i+q_i\right).
$$
\end{rem}
The next lemma shows that the cross-ratio does not depend much on the choice of the base point $o \in S$.
\begin{lem}\label{cross-ratio doesn't depend on o}
Let $a,b,c,d$ be distinct points of $\partial S$ and let $o, o' \in S$. Then
$$
[a,b,c,d]_o \approx_{48\delta} [a,b,c,d]_{o'}.
$$
\end{lem}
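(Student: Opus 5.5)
The plan is to compare Gromov products of sequences based at $o$ and at $o'$ directly, and then pass to boundary points using Lemma \ref{lemma 2.2.2}. For points $x,y,z,w \in S$, the identity
$$
\l x|y\r_o - \l x|y\r_{o'} = \tfrac12\bigl(\d_S(x,o)-\d_S(x,o')\bigr)+\tfrac12\bigl(\d_S(y,o)-\d_S(y,o')\bigr)
$$
follows immediately from Definition \ref{Gromov product}, since the term $\d_S(x,y)$ cancels. Write $\beta(x)=\tfrac12(\d_S(x,o)-\d_S(x,o'))$. In the alternating combination $\l x|y\r+\l z|w\r-\l x|w\r-\l y|z\r$, each of $x,y,z,w$ appears exactly once with a plus sign and once with a minus sign. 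Hence all $\beta$-terms cancel, and the finite cross-ratio $[x,y,z,w]_o$ equals $[x,y,z,w]_{o'}$ exactly. So the whole error comes from passing to the boundary.

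To pass to the boundary, choose sequences $(x_i)\to a$, $(y_i)\to b$, $(z_i)\to c$, $(w_i)\to d$ in $S$. By Lemma \ref{lemma 2.2.2}(1), for each pair (say $a,b$) and each base point, the sequence $\l x_i|y_i\r_o$ is bounded for large $i$ (because $a\neq b$, so $\l a|b\r_o<\infty$). Moreover its $\liminf$ and $\limsup$ both lie in $[\l a|b\r_o,\l a|b\r_o+2\delta]$. Therefore, along a common subsequence where all eight relevant real sequences converge (four pairs, two base points), each limit $\lim \l x_i|y_i\r_o$ is within $2\delta$ of $\l a|b\r_o$, and similarly for $o'$.

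Along that subsequence, $[x_i,y_i,z_i,w_i]_o$ converges to a number within $4\cdot 2\delta=8\delta$ of $[a,b,c,d]_o$, since there are four Gromov products, each off by at most $2\delta$. The same holds for $o'$. The finite identity then gives
$$
|[a,b,c,d]_o-[a,b,c,d]_{o'}|\le 16\delta\le 48\delta.
$$
Remark \ref{technical} is the intended tool if one prefers to avoid subsequences and work with $\liminf$ of sums directly. That route loses more constants, plausibly producing the author's $48\delta$.

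The only delicate point is the boundary passage: $\l a|b\r_o$ is defined as an infimum of $\liminf$'s over all sequences, so one must make sure a single choice of sequences approximates all four products simultaneously, for both base points. Lemma \ref{lemma 2.2.2}(1) resolves this, since it applies to arbitrary sequences converging to the given points. The rest is bookkeeping of constants.
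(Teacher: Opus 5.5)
Your proof is correct and follows essentially the same route as the paper: both use the fact that the finite expression $\tfrac12\bigl(\d_S(x_i,w_i)+\d_S(y_i,z_i)-\d_S(x_i,y_i)-\d_S(z_i,w_i)\bigr)$ does not depend on the base point, and both handle the passage to $\partial S$ with Lemma~\ref{lemma 2.2.2}(1). The only difference is bookkeeping: the paper compares sums of $\liminf$'s with the $\liminf$ of the sum via Remark~\ref{technical}, which accumulates to $48\delta$, whereas your common convergent subsequence gives the sharper bound $16\delta$ (even $8\delta$, since each product's error lies in $[0,2\delta]$ and two of the four terms carry a minus sign).
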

\begin{proof}
Let $(x_i), (y_i), (z_i)$, and $(t_i)$ be arbitrary sequences converging to $a, b, c$, and $d$, respectively. Applying Lemma \ref{lemma 2.2.2}(1), we get
\begin{equation}\label{sum of limits}
\begin{aligned}
&[a,b,c,d]_o \approx_{8\delta}
\liminf _{i \rightarrow \infty}\l x_i| y_i\r_o + \liminf _{i \rightarrow \infty}\l z_i| t_i\r_o - \liminf _{i \rightarrow \infty}\l x_i| t_i\r_o - \liminf _{i \rightarrow \infty}\l y_i| z_i\r_o.
\end{aligned}
\end{equation}
Also, since points $a, b, c, d$ are distinct, $\l a|b\r_o, \l c|d\r_o, \l a|d\r_o$, and $\l b|c\r_o$ are finite; therefore, sequences $(\l x_i| y_i\r_o), (\l z_i| t_i\r_o), (\l x_i| t_i\r_o)$ and $(\l y_i| z_i\r_o)$ are bounded.
Thus, by (\ref{sum of limits}), Lemma \ref{lemma 2.2.2}, Remark \ref{technical}, and the definition of the Gromov product, we obtain 
$$
\begin{aligned}
&[a,b,c,d]_o\approx_{24\delta} \liminf _{i \rightarrow \infty}\left(\l x_i| y_i\r_o+ \l z_i| t_i\r_o - \l x_i| t_i\r_o - \l y_i| z_i\r_o\right)=\\
&=1/2\liminf _{i \rightarrow \infty}(\d_S(x_i,t_i)+\d_S(y_i,z_i)-\d_S(x_i,y_i)-\d_S(z_i,t_i)).
\end{aligned}
$$
Therefore, the cross-ratio $[a,b,c,d]_o$ is independent of the choice of $o \in S$ up to $48\delta$.
\end{proof}
The next lemma is technical and will be used mainly in the proof of stability of the cross-ratio.
\begin{lem}\label{Gromov product is small}
Let $o\in S$, $a,b \in \partial S$ and $\l a|b\r_o=R < \infty$. Then for arbitrary $a' \in \mathcal U_o(a, R+2\delta)$ and $b' \in \mathcal U_o(b, R+2\delta)$, we have
\begin{equation}\label{a'|b}
\max\{\l a'|b\r_o, \l a|b'\r_o, \l a'|b'\r_o\} \leq \min\{ \l a'|a\r_o, \l b'|b\r_o\}
\end{equation}

\end{lem}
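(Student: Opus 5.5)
The whole argument rests on the four-point condition for boundary points, Lemma \ref{lemma 2.2.2}(2), applied twice, with the hypotheses $\l a'|a\r_o > R+2\delta$ and $\l b'|b\r_o > R+2\delta$ used to decide which term realizes each minimum. Write $\alpha=\l a'|a\r_o$ and $\beta=\l b'|b\r_o$, so $\alpha,\beta>R+2\delta$ (possibly $+\infty$ if $a'=a$ or $b'=b$; the inequalities below stay meaningful in that case). The target is to bound each of $\l a'|b\r_o$, $\l a|b'\r_o$, $\l a'|b'\r_o$ by $\min\{\alpha,\beta\}$. In fact I expect to show each is at most $R+2\delta$, which is strictly less than $\min\{\alpha,\beta\}$.

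\emph{Step 1: bound $\l a'|b\r_o$.} Apply Lemma \ref{lemma 2.2.2}(2) to the triple $a,b,a'$:
\[
R=\l a|b\r_o \ge \min\{\l a|a'\r_o,\l a'|b\r_o\}-\delta .
\]
If the minimum were $\alpha$, we would get $R\ge \alpha-\delta>R+\delta$, which is impossible. So the minimum is $\l a'|b\r_o$, and hence $\l a'|b\r_o\le R+\delta$. The symmetric argument gives $\l a|b'\r_o\le R+\delta$.

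\emph{Step 2: bound $\l a'|b'\r_o$.} Apply the same inequality to the triple $a',b,b'$:
\[
\l a'|b\r_o\ge \min\{\l a'|b'\r_o,\l b'|b\r_o\}-\delta .
\]
By Step 1 the left side is at most $R+\delta$. If the minimum were $\beta$, we would get $R+\delta\ge\beta-\delta>R+\delta$, a contradiction. So $\l a'|b'\r_o\le R+2\delta<\min\{\alpha,\beta\}$.

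The only delicate point is the bookkeeping of constants. The two applications lose $\delta$ each, which is exactly why the hypothesis uses $R+2\delta$: the strict inequalities must still rule out the wrong branch in Step 2. One also has to check that Lemma \ref{lemma 2.2.2}(2) is legitimate when some of the points coincide (for example $a'=a$) or when a product is infinite. The four-point condition for boundary points holds without any distinctness assumption, and infinite values only make the case exclusions easier.
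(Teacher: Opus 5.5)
Your proof is correct and is essentially the paper's argument. It applies Lemma \ref{lemma 2.2.2}(2) to the triples $(a,b,a')$ and $(a,b,b')$, then to $(a',b,b')$ (the paper nests this last application inside the first), and the only difference is that you derive direct upper bounds instead of arguing by contradiction, which makes explicit that all three products are at most $R+2\delta<\min\{\l a'|a\r_o,\l b'|b\r_o\}$.
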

\begin{proof}
Assume that $\l a'|b\r_o > \l a'|a\r_o$. Then by Lemma \ref{lemma 2.2.2}(2), we have 
$$
R=\l a|b\r_o \geq \min\left\{\l a|a'\r_o,\l a'|b\r_o\right\} - \delta=\l a|a'\r_o - \delta > R+\delta,
$$
which gives a contradiction. The inequality $\l a'|b\r_o \leq \l b'|b\r_o$ and inequalities for $\l a|b'\r_o$ follow by the same reasoning.

Now assume that $\l a'|b'\r_o > \l b'|b\r_o$. Then by Lemma \ref{lemma 2.2.2}(2), we have
$$
\begin{aligned}
&R=\l a|b\r_o \geq \min\left\{\l a|a'\r_o,\l a'|b\r_o\right\} - \delta \geq \min\left\{\l a|a'\r_o,\min\left\{\l a'|b'\r_o,\l b'|b\r_o\right\} - \delta \right\} - \delta=\\
&=\min\left\{\l a|a'\r_o, \l b'|b\r_o  - \delta \right\} - \delta > R,\\
\end{aligned}
$$
which gives a contradiction. Similarly, $\l a'|b'\r_o \leq \l a'|a\r_o$.
\end{proof}

\begin{lem}\label{cross-ratio is arbirtary large}
Let $o \in S$ and $|\Lambda_S(G)|=\infty$. If $\mathcal{O}$ is a dense subset of $\Lambda_S(G)$, then there exist distinct $a,b,c,d \in \mathcal{O}$ such that $[a,b,c,d]_o$ is arbitrarily large. 
\end{lem}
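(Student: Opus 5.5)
Since the limit set is infinite, it is in particular a perfect Hausdorff space (no isolated points; this follows e.g. from the density of $\mathcal{H}_S(G)$ in Lemma \ref{set of fixed points of loxodromic elements is dense} together with north-south dynamics). I will therefore make one pair of Gromov products in (\ref{useful definition}) blow up while keeping the other terms bounded.

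\textbf{Step 1: fix a bounded skeleton.} Choose two distinct points $a, d \in \mathcal{O}$ and put $R = \l a|d\r_o < \infty$. Since $\Lambda_S(G)$ has no isolated points and $\mathcal{O}$ is dense, I will take $b \in \mathcal{O} \cap \mathcal U_o(a, R+2\delta)$ with $b \neq a$, and $c \in \mathcal{O} \cap \mathcal U_o(d, R+2\delta)$ with $c \neq d$. We are free to push $b$ arbitrarily close to $a$ and $c$ arbitrarily close to $d$, so that $\l a|b\r_o$ and $\l c|d\r_o$ both exceed any prescribed $N$. The four points are distinct, since the neighborhoods $\mathcal U_o(a,R+2\delta)$ and $\mathcal U_o(d,R+2\delta)$ are disjoint by Lemma \ref{Gromov product is small}; this lemma, or the four-point condition, shows that a common point would force $\l a|d\r_o \geq R+\delta$.

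\textbf{Step 2: bound the negative terms.} The term $\l a|d\r_o = R$ is fixed. For $\l b|c\r_o$, Lemma \ref{Gromov product is small} applied to $a' = b$ and $b' = c$ (with the roles $a \to a$, $b \to d$) gives directly $\l b|c\r_o \leq \min\{\l b|a\r_o, \l c|d\r_o\}$. This is not good enough, because it is not bounded. Instead I use Lemma \ref{lemma 2.2.2}(2) twice:
\[
R = \l a|d\r_o \geq \min\{\l a|b\r_o, \l b|c\r_o, \l c|d\r_o\} - 2\delta.
\]
Since $\l a|b\r_o$ and $\l c|d\r_o$ are greater than $R+2\delta$, the minimum must be attained at $\l b|c\r_o$, which gives $\l b|c\r_o \leq R + 2\delta$.

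\textbf{Step 3: conclude.} We get
\[
[a,b,c,d]_o \geq 2N - 2R - 2\delta,
\]
and this is arbitrarily large as $N \to \infty$.

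The main obstacle is justifying that $b$ and $c$ can be chosen distinct from $a$ and $d$ with Gromov products as large as desired. This requires that $\Lambda_S(G)$ has no isolated points, which I will argue via Lemma \ref{set of fixed points of loxodromic elements is dense}: every neighborhood of a point of $\Lambda_S(G)$ contains some $g^{\pm}$. Pushing further with powers of $g$ (north-south dynamics, Lemma \ref{north-south}) gives infinitely many limit points in that neighborhood, and density of $\mathcal{O}$ then provides points of $\mathcal{O}$ there.
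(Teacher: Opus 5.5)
Your proposal is correct and is essentially the paper's argument: you fix two points of $\mathcal{O}$ with finite Gromov product, choose the other two points of $\mathcal{O}$ very close to them so that the two positive terms of the cross-ratio blow up, and bound the two negative terms via the four-point condition (Lemma~\ref{lemma 2.2.2}(2)). The only difference is cosmetic: the paper fixes $a,c$ and moves $b,d$, so each negative term is bounded by $\langle a|c\rangle_o+\delta$ after a single application of the four-point condition, whereas you fix $a,d$, so one negative term is exactly $R$ and the other needs two applications and is bounded by $R+2\delta$; your explicit checks of distinctness and of the absence of isolated points fill in details the paper leaves implicit.
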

\begin{proof}
Let $M >0$, $o\in S$, and $a, c \in \mathcal{O}$ be given. Let $\l a|c \r_o=K < \infty$. Take $N=M/2+2(K+\delta)$.
If $|\Lambda_S(G)|=\infty$, then, by  Lemma \ref{set of fixed points of loxodromic elements is dense}, $\mathcal{H}_S(G)$ is dense in $\Lambda_S(G)$; in particular, $\Lambda_S(G)$ is Hausdorff and perfect. Since $\mathcal{O}$ is dense in $\Lambda_S(G)$, it is not discrete and has no isolated points. Thus, there exist $b \in (\mathcal U_o(a,N)\cap \mathcal{O})\setminus\{a\}$ and $d \in (\mathcal U_o(c,N)\cap \mathcal{O})\setminus\{c\}$. Then, by Lemma \ref{Gromov product is small} and Lemma \ref{lemma 2.2.2}(2), we have
$$
K=\l a|c\r_o \geq \min\left\{\l a|b\r_o,\l b|c\r_o\right\} - \delta = \l b|c\r_o - \delta.
$$
Therefore, $\l b|c\r_o \leq K+\delta$. Similarly, $\l a|d\r_o \leq K+\delta$. Thus, we obtain
$$
\l a|b\r_o + \l c|d\r_o -\l a|d\r_o -\l b|c\r_o > 2N - 2(K+\delta) =M. \qedhere
$$
\end{proof}
The next lemma shows that the cross-ratio is stable, i.e., it does not change much if we slightly move the points of the boundary. 
\begin{lem}\label{cross-ratio doesn't change much}
Let $ a,b, c$, and $d$ be distinct points of $\partial S$, and let $o \in S$. Define $R=\max\left\{\l a|b\r_o, \l c|d\r_o, \l a|d\r_o, \l b|c\r_o \right\}+2\delta$. 
If $a' \in \mathcal U_o(a, R)$, $b' \in \mathcal U_o(b, R)$, $c' \in \mathcal U_o(c, R)$, and  $d' \in \mathcal U_o(d, R)$, then 
$$
[a, b, c, d]_o \approx_{8\delta} [a', b', c', d']_o.
$$
\end{lem}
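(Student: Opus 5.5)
The plan is to reduce the statement to a single two-point estimate and apply it four times. \textbf{Claim:} if $a,b\in\partial S$ are distinct with $\l a|b\r_o\le R-2\delta$, and $a'\in\mathcal U_o(a,R)$, $b'\in\mathcal U_o(b,R)$, then
$$\l a'|b'\r_o\approx_{2\delta}\l a|b\r_o .$$
By the choice $R=\max\{\l a|b\r_o,\l c|d\r_o,\l a|d\r_o,\l b|c\r_o\}+2\delta$, the hypothesis of the Claim holds for each of the pairs $(a,b)$, $(c,d)$, $(a,d)$, $(b,c)$ together with their primed versions. Each of the four terms in \eqref{useful definition} therefore moves by at most $2\delta$, which gives $[a,b,c,d]_o\approx_{8\delta}[a',b',c',d']_o$.

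\textbf{Lower bound in the Claim.} Apply Lemma \ref{lemma 2.2.2}(2) twice. First,
$$\l a|b'\r_o\ge\min\{\l a|b\r_o,\l b|b'\r_o\}-\delta=\l a|b\r_o-\delta,$$
because $\l b|b'\r_o>R>\l a|b\r_o$. Next,
$$\l a'|b'\r_o\ge\min\{\l a'|a\r_o,\l a|b'\r_o\}-\delta .$$
Since $\l a'|a\r_o>R>\l a|b\r_o-\delta$, the minimum is at least $\l a|b\r_o-\delta$. Hence $\l a'|b'\r_o\ge\l a|b\r_o-2\delta$.

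\textbf{Upper bound in the Claim.} This is the direction that needs Lemma \ref{Gromov product is small}. Since $R\ge\l a|b\r_o+2\delta$, we have $a'\in\mathcal U_o(a,\l a|b\r_o+2\delta)$ and $b'\in\mathcal U_o(b,\l a|b\r_o+2\delta)$. Lemma \ref{Gromov product is small} then gives $\l a'|b'\r_o\le\min\{\l a'|a\r_o,\l b'|b\r_o\}$. Using the four-point condition,
$$\l a'|b\r_o\ge\min\{\l a'|b'\r_o,\l b'|b\r_o\}-\delta=\l a'|b'\r_o-\delta,$$
and
$$\l a|b\r_o\ge\min\{\l a|a'\r_o,\l a'|b\r_o\}-\delta\ge\l a'|b'\r_o-2\delta,$$
where the last step again uses $\l a'|b'\r_o\le\l a|a'\r_o$.

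The only delicate point is this upper bound: the four-point inequality naturally bounds $\l a'|b'\r_o$ from below. Bounding it from above requires knowing that $a'$ and $b'$ are not closer to each other than to $a$ and $b$ respectively, and that is exactly what Lemma \ref{Gromov product is small} supplies. The rest is bookkeeping of the constant: four terms at $2\delta$ each give $8\delta$.
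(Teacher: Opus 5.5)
Your proof is correct and follows the paper's argument. The paper also proves $\l a'|b'\r_o \approx_{2\delta} \l a|b\r_o$ for each of the four pairs: the lower bound comes from two applications of Lemma~\ref{lemma 2.2.2}(2), the upper bound from Lemma~\ref{lemma 2.2.2}(2) together with Lemma~\ref{Gromov product is small}, and the four $2\delta$ errors are then summed. Your write-up spells out the upper-bound step that the paper only calls ``similar,'' including the containment $\mathcal U_o(a,R)\subseteq \mathcal U_o(a,\l a|b\r_o+2\delta)$ needed to invoke Lemma~\ref{Gromov product is small}.
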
 
\begin{proof}
By Lemma \ref{lemma 2.2.2}(2), we have
$$
\l a'|b'\r_o \geq \min\left\{\l a'|a\r_o,\min\left\{\l a|b\r_o, \l b|b'\r_o\right\} - \delta\right\} - \delta=\l a|b\r_o - 2\delta.
$$
Similarly, by Lemma \ref{lemma 2.2.2}(2) and Lemma \ref{Gromov product is small}, we obtain $\l a|b\r_o \geq \l a'|b'\r_o - 2\delta$, and thus, $\l a|b\r_o \approx_{2\delta} \l a'|b'\r_o$.
Using the same argument, we get
$$
\l c|d\r_o \approx_{2\delta} \l c'|d'\r_o, \; \l a|d\r_o \approx_{2\delta} \l a'|d'\r_o, \text{ and } \l b|c\r_o \approx_{2\delta} \l b'|c'\r_o.
$$
This gives us the desired inequality $\left|[a,b,c,d]_o - [a',b',c',d']_o\right|\leq 8\delta$.
\end{proof}

\begin{prop}\label{not 4-transitivity}
 Let $G$ be a group admitting a non-elementary action on a hyperbolic space $S$, and let $\mathcal{O}$ be a $G$-invariant subset of $\Lambda_S(G)$.
\begin{enumerate}
    \item[(a)] If the action of $G$ on $S$ is of general type, then the induced action of $G$ on $\mathcal{O}$ is not topologically $4$-transitive.
    \item[(b)] If the action of $G$ on $S$ is quasi-parabolic, then the induced action of $G$ on $\mathcal{O}$ is not topologically $3$-transitive.
\end{enumerate}
\end{prop}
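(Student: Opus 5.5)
The plan is to use the cross-ratio as a $G$-invariant, locally stable function on $4$-tuples of distinct boundary points. Topological $4$-transitivity would force it to take both very large and bounded values on the same neighbourhood, which is impossible. For (b), the fixed point $q$ plays the role of a fourth point, so three points suffice.

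For (a), assume the action on $\mathcal{O}$ is topologically $4$-transitive. Fix $o\in S$. First note that $\mathcal{O}$ is dense in $\Lambda_S(G)$: it is nonempty and $G$-invariant, so its closure is a nonempty closed $G$-invariant subset of $\Lambda_S(G)$, which equals $\Lambda_S(G)$ by Corollary \ref{action on the limit set is minimal}. Now choose two $4$-tuples of distinct points of $\mathcal{O}$:
\begin{itemize}
\item a tuple $(a,b,c,d)$ with $[a,b,c,d]_o > M$, where $M$ is large and to be fixed below; this exists by Lemma \ref{cross-ratio is arbirtary large};
\item an arbitrary tuple $(a_0,b_0,c_0,d_0)$, with $C=[a_0,b_0,c_0,d_0]_o$.
\end{itemize}
Let $R$ and $R_0$ be the radii given by Lemma \ref{cross-ratio doesn't change much} for these two tuples. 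Set $A_1=\mathcal U_o(a,R)\cap\mathcal{O}$ and define $A_2,A_3,A_4$ similarly for $b,c,d$, and define $B_1,\dots,B_4$ in the same way around $a_0,\dots,d_0$ with radius $R_0$. Shrink these if necessary so that the eight sets are pairwise disjoint; Hausdorffness of the boundary makes this possible, and it guarantees that any choice of one point from each $A_i$ (or each $B_i$) gives four distinct points. Topological $4$-transitivity then gives $g\in G$ and points $x_i\in B_i$ with $gx_i\in A_i$. Lemma \ref{cross-ratio doesn't change much} yields $[x_1,\dots,x_4]_o\approx_{8\delta} C$ and $[gx_1,\dots,gx_4]_o\approx_{8\delta}[a,b,c,d]_o>M$.

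The invariance identity gives $[gx_1,\dots,gx_4]_{go}=[x_1,\dots,x_4]_o$, and the basepoint is changed from $go$ to $o$ with Lemma \ref{cross-ratio doesn't depend on o}. Combining,
\[
M-8\delta < [gx_1,\dots,gx_4]_o \le [x_1,\dots,x_4]_o+48\delta \le C+56\delta .
\]
This is impossible once we choose $M > C+64\delta$. Since $C$ depends only on the tuple $(a_0,b_0,c_0,d_0)$, which is fixed before $M$, we should fix that tuple first and then pick $(a,b,c,d)$.

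For (b), let $q$ be the unique point of $\Fix_{\partial S}(G)$. Replacing $\mathcal{O}$ by $\mathcal{O}\setminus\{q\}$ should not affect topological transitivity, because $\mathcal{O}$ has no isolated points. We then run the same argument with triples, using the function $(a,b,c)\mapsto[a,b,c,q]_o$. This function is $G$-invariant up to $48\delta$, because $gq=q$ for every $g\in G$. To make it arbitrarily large, take $a,c$ near two distinct points of $\mathcal{O}$ away from $q$, and choose $b$ close to $a$ in the sense of Lemma \ref{cross-ratio is arbirtary large}, with $q$ playing the role of $d$. The difference from (a) is that $d$ is now the fixed point $q$ rather than a point chosen close to $c$, so $\l c|q\r_o$ is fixed; but $[a,b,c,q]_o=\l a|b\r_o+\l c|q\r_o-\l a|q\r_o-\l b|c\r_o$, and letting $\l a|b\r_o\to\infty$ while the other three products stay bounded still makes it arbitrarily large. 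Stability under perturbation is then applied only in the first three coordinates, with $q$ itself unchanged.

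The main obstacle is bookkeeping rather than ideas: the neighbourhoods must be small enough that the perturbed tuples stay distinct, and, in (b), they must avoid $q$. Beyond that, the argument is routine once the constants are fixed in the right order.
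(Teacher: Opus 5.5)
Your argument is the paper's own: fix $o$, compare two quadruples in $\mathcal{O}$ whose cross-ratios are far apart (Lemma \ref{cross-ratio is arbirtary large}), and contradict topological $4$-transitivity using Lemmas \ref{cross-ratio doesn't change much} and \ref{cross-ratio doesn't depend on o}. In (b), $q$ replaces the fourth point. Your bookkeeping is, if anything, more careful than the paper's: you fix the reference tuple before $M$, you shrink the neighbourhoods so the four chosen points are distinct, and you check directly that $[a,b,c,q]_o\to\infty$ as $b\to a$ with $q$ frozen. Note only that disjointness is needed within each quadruple, not across the two quadruples, which may share points.

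One step fails as written. In (b) you still need $\mathcal{O}$ to be dense in $\Lambda_S(G)$: to apply Lemma \ref{cross-ratio is arbirtary large}, to have no isolated points, and to find $a,b,c$ away from $q$. Your only justification is minimality, Corollary \ref{action on the limit set is minimal}. That corollary is stated for general type actions and is false for quasi-parabolic ones, since $\{q\}$ is a proper closed $G$-invariant subset of $\Lambda_S(G)$.

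Replace it by the paper's north--south argument. Every loxodromic $h\in G$ fixes $q$, so $q\in\{h^+,h^-\}$. For $x\in\mathcal{O}\setminus\{q\}$, Lemma \ref{north-south} gives $h^{n}x\to h^+$ or $h^{-n}x\to h^-$, whichever endpoint is not $q$. Thus $\overline{\mathcal{O}}\supseteq\mathcal{H}_S(G)\setminus\{q\}$. This set is dense by Lemma \ref{set of fixed points of loxodromic elements is dense}, because $\Lambda_S(G)$ has no isolated points.

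This repair needs $\mathcal{O}\not\subseteq\{q\}$, and that assumption cannot be dropped. For $\mathcal{O}=\{q\}$ (or $\mathcal{O}=\emptyset$) the action is vacuously topologically transitive. So the statement must be read with this nondegeneracy hypothesis, which the paper's proof also uses tacitly.
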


\begin{proof}
 By Lemma \ref{set of fixed points of loxodromic elements is dense}, the set of fixed points of elements of $\mathcal{L}_S(G)$ is dense in $\Lambda_S(G)$. Thus, since $\mathcal{O}$ is $G$-invariant, by Lemma \ref{north-south}, it is also dense in $\Lambda_S(G)$.
\begin{enumerate}
\item[(a)]
Let $o \in S$. By Lemma \ref{cross-ratio is arbirtary large}, the cross-ratio of distinct points of $\mathcal{O}$ can be arbitrarily large. Therefore, there exist eight distinct points $a, b, c, d, a', b', c', d' \in \mathcal{O}$ such that 
$$\left|[a, b, c, d]_o - [a', b', c', d']_o\right| \geq 65\delta.$$
Define
$$
\begin{aligned}
&R_1=\max\left\{\l a|b\r_o, \l c|d\r_o, \l a|d\r_o, \l b|c\r_o \right\}+2\delta \text{ and }\\
& R_2=\max\left\{\l a'|b'\r_o, \l c'|d'\r_o, \l a'|d'\r_o, \l b'|c'\r_o \right\}+2\delta.
\end{aligned}
$$ 
By way of contradiction, assume that the action of $G$ on $\mathcal{O}$ is topologically $4$-transitive. Then there exists $g \in G$ and points 
$$\widehat{a} \in  \mathcal U_o(a', R_2) \cap \mathcal{O}, \;  \widehat{b} \in  \mathcal U_o(b', R_2) \cap \mathcal{O}, \;  \widehat{c} \in  \mathcal U_o(c', R_2) \cap \mathcal{O}, \;  \widehat{d} \in  \mathcal U_o(d', R_2) \cap \mathcal{O}$$
such that 
$$g\widehat{a} \in \mathcal U_o(a, R_1) \cap \mathcal{O}, \; g\widehat{b} \in \mathcal U_o(b, R_1) \cap \mathcal{O}, \;  g\widehat{c} \in \mathcal U_o(c, R_1) \cap \mathcal{O}, \text{ and } g\widehat{d} \in \mathcal U_o(d, R_1) \cap \mathcal{O}.$$
Using Lemma \ref{cross-ratio doesn't change much} and Lemma \ref{cross-ratio doesn't depend on o}, we obtain
$$
\begin{aligned}
&65\delta \leq \left|[a, b, c, d]_o - [a', b', c', d']_o\right| \leq \left|[a, b, c, d]_o - [\widehat{a}, \widehat{b}, \widehat{c}, \widehat{d}]_o\right|+\left|[\widehat{a}, \widehat{b}, \widehat{c}, \widehat{d}]_o-[a', b', c', d']_o\right| \leq \\
& \leq \left|[a, b, c, d]_o - [g\widehat{a}, g\widehat{b}, g\widehat{c}, g\widehat{d}]_{o}\right|+\left|[g\widehat{a}, g\widehat{b}, g\widehat{c}, g\widehat{d}]_{o}-[g\widehat{a}, g\widehat{b}, g\widehat{c}, g\widehat{d}]_{go}\right| +\\
&+ \left|[\widehat{a}, \widehat{b}, \widehat{c}, \widehat{d}]_o-[a', b', c', d']_o\right|\leq 64\delta,
\end{aligned}
$$ which is a contradiction.
\item[(b)] Let $q$ be a global fixed point in $\Lambda_S(G)$. Let $o\in S$. Again, using Lemma \ref{cross-ratio is arbirtary large}, we find distinct points $a, b, c, a', b', c' \in \mathcal{O}$ such that
$$\left|[a,b,c,q]_o - [a',b',c',q]_o\right| \geq 65\delta.$$ The rest of the proof is similar to part $(a)$, except that we use a fixed point $q$ instead of $d$ and $d'$.\end{enumerate} \end{proof}

\begin{rem}
M. Gromov \cite[8.2.H]{Gro87} observed that if a group $G$ admits a non-elementary action on a hyperbolic space $S$, then the induced action of $G$ on $\Lambda_S(G)$ is always topologically transitive. If, in addition, the action of $G$ on $S$ is of general type, then the induced action on $\Lambda_S(G)$ is always topologically $2$-transitive.
\end{rem}

We will later use the following easy observation.
 
\begin{lem}\label{transitivity and topological transitivity}
If an action of a group $G$ on a dense subset $\mathcal{O}$ of a Hausdorff topological space $X$ without isolated points is $k$-transitive, then it is topologically $k$-transitive.
\end{lem}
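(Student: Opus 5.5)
To prove the lemma, fix nonempty open sets $A_1,\dots,A_k$ and $B_1,\dots,B_k$ in $X^k$'s factors; we must find $g\in G$ with $gB_i\cap A_i\neq\emptyset$ for all $i$. The plan is to pick distinct points of $\mathcal{O}$ in these sets and then apply $k$-transitivity to the resulting tuples.

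First, since $\mathcal{O}$ is dense and $X$ has no isolated points, every nonempty open set $W\subseteq X$ meets $\mathcal{O}$ in infinitely many points. Indeed, suppose $W\cap\mathcal{O}$ were a finite set $F$. Since $X$ is Hausdorff, hence $T_1$, $F$ is closed, so $W\setminus F$ is open. It is also nonempty, because $W$ is not finite: a finite nonempty open set in a Hausdorff space contains an isolated point. But $W\setminus F$ misses $\mathcal{O}$, contradicting density.

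Using this, we choose inductively pairwise distinct points $x_1,\dots,x_k$ with $x_i\in B_i\cap\mathcal{O}$. At step $i$ we avoid the finitely many points already chosen, which is possible because $B_i\cap\mathcal{O}$ is infinite. In the same way, we choose pairwise distinct $y_i\in A_i\cap\mathcal{O}$.

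Since the action on $\mathcal{O}$ is $k$-transitive, there is $g\in G$ with $gx_i=y_i$ for every $i$. Then $y_i\in gB_i\cap A_i$, so $g(B_1\times\dots\times B_k)$ meets $A_1\times\dots\times A_k$. Since products of open sets form a basis of $X^k$, this gives topological transitivity on $X^k$, that is, topological $k$-transitivity.

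The only real content is the distinctness requirement, which the infinitude argument in the first step supplies. If "topologically $k$-transitive" is meant relative to $\mathcal{O}$ rather than $X$, the same argument applies verbatim, since open subsets of $\mathcal{O}$ are traces of open sets of $X$.
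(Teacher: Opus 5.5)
Your proof is correct and follows the same route as the paper: choose pairwise distinct points of $\mathcal{O}$ in the given open sets and apply $k$-transitivity to the two $k$-tuples. Your argument that every nonempty open set meets $\mathcal{O}$ in infinitely many points makes explicit the step the paper compresses into the remark that $\mathcal{O}$ is itself Hausdorff without isolated points, and your closing remark correctly covers the $\mathcal{O}$-relative reading of topological $k$-transitivity, which is the one the paper uses.
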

\begin{proof}
Let $U_1, \dots, U_k$ and $V_1, \dots, V_k$ be nonempty open subsets of $\mathcal{O}$. Being a dense subset of a Hausdorff topological space without isolated points, $\mathcal{O}$ is itself Hausdorff without isolated points. Therefore, we can find pairwise distinct $x_1, \dots, x_k$ and pairwise distinct $y_1, \dots, y_k $ such that $x_i \in U_i$ and $y_i \in V_i$ for every $i=1, \dots, k$. Since the action of $G$ on $\mathcal{O}$ is $k$-transitive, there exists $g \in G$ such that $gy_i=x_i$ for all $i=1, \dots, k$; in particular, $gV_i \cap U_i \neq \emptyset$ for every $i=1, \dots, k$.
\end{proof}

\subsection{Boundary dynamics and mixed identities}\label{boundary dynamics and mixed identities}

\begin{thm}\label{real main theorem}
Suppose that an action of a group $G$ on a hyperbolic space $S$ is of general type and the induced action on $\Lambda_S(G)$ is faithful. Then the following are equivalent:
\begin{enumerate}
    \item[(1)] $G$ is MIF.
     \item[(2)] The induced action of $G$ on $\Lambda_S(G)$ is topologically free.
\end{enumerate}
\end{thm}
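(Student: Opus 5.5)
The direction (1) $\Rightarrow$ (2) is the easy one, and I would prove its contrapositive with the micro-supported machinery already in place. Suppose the action on $\Lambda_S(G)$ is faithful but not topologically free. By Lemma \ref{Not_top_free actions are micro-supported}, it is micro-supported. By Proposition \ref{not 4-transitivity}(a), applied with $\mathcal{O}=\Lambda_S(G)$, it is not topologically $4$-transitive. Lemma \ref{top.k-transitive or micro-supported} then says $G$ satisfies a non-trivial mixed identity. Concretely, with $k=4$ we obtain an iterated commutator of the words $[a_i^t,b_i]$, where the $a_i,b_i$ are chosen with supports inside the open sets witnessing the failure of topological $4$-transitivity. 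Hence $G$ is not MIF.

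For (2) $\Rightarrow$ (1), I would assume the action on $\Lambda_S(G)$ is topologically free and take a non-trivial word
$$w(x_1,\dots,x_n)=g_0x_{i_1}^{\epsilon_1}g_1\cdots x_{i_m}^{\epsilon_m}g_m.$$
First I reduce to $n=1$ by the usual substitution $x_j\mapsto t^{N_j}$ or $x_j \mapsto u^{j}tu^{-j}$ with a generic free element, keeping the word non-trivial in $G*\langle t\rangle$. Then I conjugate so that $w$ is cyclically reduced, with each $g_j$ non-trivial whenever it sits between $t^{\epsilon}$ and $t^{-\epsilon}$. The aim is a ping-pong argument: find $h\in G$ with $w(h)\neq 1$.

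The construction goes as follows. By topological freeness and faithfulness, every non-trivial $g_j$ moves some point of every nonempty open set. So, using density of pairs of fixed points of loxodromics (Lemma \ref{set of fixed points of loxodromic elements is dense}) together with Hausdorffness, I choose a loxodromic $l$ whose endpoints $l^{\pm}$ satisfy $g_jl^{\pm}\notin\{l^{+},l^{-}\}$ for every non-trivial $g_j$. Here the open sets $\{x : g_jx\neq x\}$ are dense by topological freeness, and I need the stronger avoidance $g_j l^\pm\ne l^\mp$ as well, which I get by a genericity argument on pairs. With small disjoint neighborhoods $U^\pm$ of $l^\pm$, also disjoint from their $g_j$-images, north–south dynamics (Lemma \ref{north-south}) gives that $h=l^N$ for large $N$ sends the complement of $U^-$ into $U^+$ and the complement of $U^+$ into $h^{-1}$'s attracting set $U^-$. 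Tracking a point $p$ chosen outside all the finitely many bad neighborhoods, each letter $h^{\pm1}$ pushes $p$ into $U^\pm$, and each $g_j$ moves it out of $U^{\mp}$ appropriately. The end result is that $w(h)p\in U^{\pm}$ while $p\notin U^+\cup U^-$, so $w(h)\neq1$.

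The main obstacle is the interaction of the coefficients $g_j$ with the ping-pong sets at the non-reduced junctions: the places $t\,g_j\,t^{-1}$, where $g_j$ must move $U^+$ off $U^+$. This is exactly where topological freeness is used, because it gives an open set on which $g_j$ acts without fixed points. It is also where the single-loxodromic choice may fail if some $g_j$ fixes $l^+$. I would first try to choose $l^+$ in the dense open set $\bigcap_j\{x: g_jx\ne x\}$, which is dense by Baire only if there are finitely many $j$; that is the case here since $w$ is a finite word. The alternative is to handle the coefficients one by one via the standard Hull–Osin "MIF via ping-pong on boundary" scheme.
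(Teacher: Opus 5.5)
Your proposal is correct and follows the paper's proof. Direction (1)$\Rightarrow$(2) is the same contrapositive: micro-supported, not topologically $4$-transitive, then Lemma \ref{top.k-transitive or micro-supported}. Direction (2)$\Rightarrow$(1) is the same single-loxodromic ping-pong; the paper realizes your ``genericity on pairs'' by taking $s$ moved by every coefficient, a neighborhood $U\ni s$ with $U\cap a_iU=\emptyset$, both fixed points of the loxodromic in $U\setminus\{s\}$, and $s$ itself as the test point.

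The one slip is in the reduction to one variable, where the substitutions you name can kill non-trivial words. For example, $[[x_1,x_2],g]$ dies under $x_j\mapsto t^{N_j}$, and $ux_1u^{-1}x_2^{-1}$ dies under $x_j\mapsto u^jtu^{-j}$ with $u\in G$. Either cite \cite[Rem.~5.1]{OH} as the paper does, or use $x_j\mapsto t^{a_j}gt^{b_j}$ with $1\neq g\in G$, the $a_j$ positive and pairwise distinct, and the $b_j$ positive and pairwise distinct.
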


\begin{proof}
$(1) \Rightarrow (2)$.
    By contrapositive, suppose that the action of $G$ on $\Lambda_S(G)$ is not topologically free. Then by Lemma \ref{Not_top_free actions are micro-supported}, it is micro-supported. The action of $G$ on $\Lambda_S(G)$ is not topologically 4-transitive by Proposition \ref{not 4-transitivity}. Therefore, by Lemma \ref{top.k-transitive or micro-supported}, $G$ satisfies a nontrivial mixed identity.

\begin{figure}
 \centering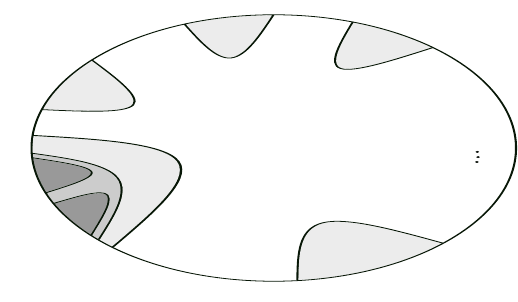\\
  \caption{For Theorem \ref{real main theorem}}\label{fig}
\end{figure}
$(2) \Rightarrow (1)$.
Assume that the action of $G$ on $\Lambda_S(G)$ is topologically free. Since the action of $G$ on $S$ is of general type, $G$ contains a free subgroup and does not satisfy any nontrivial identity. By \cite[Rem. 5.1]{OH}, $G$ satisfies some non-trivial mixed identity $w \in G * F_n$ if and only if it satisfies a non-trivial mixed identity $w^{\prime} \in G *\l t\rangle$. Thus, it is sufficient to prove that $G$ does not satisfy any mixed identity  $w \in (G*\l t \r) \setminus \l t \r$.

Take any reduced nonempty word $w \in (G*\l t \r) \setminus \l t \r$. Up to conjugation $w = t^{n_1}a_1t^{n_2}a_2 \ldots a_{m-1}t^{n_m}a_m$, where $a_i \in G\setminus \{1\}$, $n_i \in \ZZ \setminus \{0\}$, $i \in \{1, \ldots, m\}$. 
    
Since $G$ acts on $\Lambda_S(G)$ by homeomorphisms, and the action is topologically free, the sets $\Fix_{\Lambda_S(G)}(\l a_i \r)$ are closed in $\Lambda_S(G)$ and $\Int(\Fix_{\Lambda_S(G)}(\l a_i \r)) = \emptyset$ for every $i \in \{1, \ldots, m\}$. Let 
    $$M = \bigcap\limits_{i \in \{1 \ldots m\}}\Lambda_S(G) \setminus\Fix_{\Lambda_S(G)}(\l a_i \r).$$
The set $M$ is a finite intersection of open dense sets, so it is open and dense; in particular, it is nonempty, and there exists $s \in \Lambda_S(G)$ such that $a_is \neq s$ for every $i \in \{1, \ldots, m\}$.
        
Since $\Lambda_S(G)$ is Hausdorff, there exist open subsets $V_0$ and $V_1, \ldots, V_m$ of $\Lambda_S(G)$ such that $s \in V_0$, $a_is \in V_i$ and $ V_0 \cap V_i =\emptyset$ for every $i \in \{1, \ldots, m\}$. Take $U = V_0 \cap a_1^{-1}(V_1) \cap \ldots \cap a_m^{-1}(V_m)$ to ensure that $U \cap a_i(U) = \emptyset$ for all $i \in \{1, \ldots, m\}$. 
    
The action of $G$ on $S$ is of general type and $U \setminus \{s\}$ is open; therefore, by Lemma \ref{set of fixed points of loxodromic elements is dense}, there exists $f \in \mathcal{L}_S(G)$ such that $\{f^+, f^-\} \subset  U \setminus \{s\}$. Let $F^+$ and $F^-$ be two disjoint open subsets of $ U \setminus \{s\}$ such that $f^+ \in F^+$ and $f^- \in F^-$. By Lemma \ref{north-south} there exists $n \in \NN$ such that
$$
f^n(\Lambda_S(G) \setminus F^-) \subseteq F^+ \text{ and } f^{-n}(\Lambda_S(G) \setminus F^+) \subseteq F^-.
$$
Take $g=f^n$. We have
 \begin{multline*}
    w(g)s = g^{n_1}a_1 \ldots a_{m-1}g^{n_m}a_ms \in g^{n_1}a_1 \ldots a_{m-1}g^{n_m}a_m(U) \subset g^{n_1}a_1 \ldots a_{m-1}g^{n_m}(\partial S \setminus U) \subset \\
    \subset g^{n_1}a_1 \ldots a_{m-1}(F^+ \cup F^-) \subset g^{n_1}a_1g^{n_2}a_2 \ldots a_{m-1}(U) \subset \ldots \subset g^{n_1}a_1(U) \subset F^+ \cup F^-.  
    \end{multline*}
    Since $s \not\in F^+ \cup F^-$, we obtain that $w(g)s \neq s$; in particular, $w(g) \neq 1$ in $G$.
    \end{proof}

\begin{proof}[Proof of Theorem \ref{main_theorem}]
Condition (b) obviously implies (a). The equivalence of conditions (a) and (c) is the statement of Theorem \ref{real main theorem}. Being MIF is a purely algebraic property of a group. So, by Theorem \ref{real main theorem}, if $G$ is MIF, then any general type action on any hyperbolic space $S$ should induce a topologically free action on $\Lambda_S(G)$ whenever the action on $\Lambda_S(G)$ is faithful. By the definition of an elliptic radical, the induced action of $G$ on $\Lambda_S(G)$ is faithful if and only if $E_S(G)$ is trivial. Therefore, (c) implies (b).
\end{proof}

\begin{proof}[Proof of Corollary \ref{algebraic subgroups}]
Let $H$ be a proper normal subgroup of $G$. Then, by Lemma \ref{lim-free passes to normal subgroups}, $H$ is itself lim-free, and $\Lambda_S(H)=\Lambda_S(G)$. Suppose that $H$ is algebraic. Then there exists a finite collection of mixed identities $\{ w_1(t), \dots, w_n(t)\}$ in $G * \langle t \rangle$ such that for any $h \in H$ there exists $i \in \{1, \dots, n\}$ such that $w_i(h)=1$ in $G$. Up to conjugation each $w_i(t)$ is either simply $t^m$ for some $m \in \NN$ or of the form $w_i(t) = t^{n_1}a_1t^{n_2}a_2 \ldots a_{m-1}t^{n_m}a_m$, where $a_j \in G\setminus \{1\}$, $n_j \in \ZZ \setminus \{0\}$, $j \in \{1, \ldots, m\}$. Let $\{a_1, \dots a_k\}$ be the finite collection of all coefficients from $G$ appearing in the mixed identities $\{ w_1(t), \dots, w_n(t)\}$. Repeating the argument of Theorem \ref{real main theorem}, we can find $s \in \Lambda_S(H)$, such that $a_is \neq s$ for every $i \in \{1, \ldots, k\}$, and $h \in \mathcal{L}_S(H)$ such that $w_i(h)s \neq s$ simultaneously for all $w_i(t)$ containing nontrivial coefficients from $G$. Note that $h^m \neq 1$ in $G$ for every $m \in \NN$, since $h$ is loxodromic. In particular, $w_i(h) \neq 1$ for all $i=1, \dots, n$ in $G$, which contradicts that $H$ is algebraic.
\end{proof}

Now, we show that faithful weakly hyperbolic groups that are not lim-free also have an interesting algebraic property.

\begin{proof}[Proof of Proposition \ref{monolithic groups}] 
 Let $S$ be a hyperbolic space such that the action of $G$ on $S$ is of general type, and the induced action of $G$ on $\Lambda_S(G)$ is faithful and not topologically free. Let $\mathcal{C}$ be the collection of all nonempty non-dense open subsets of $\Lambda_S(G)$. Let $H=\left\langle \rist_G(Y) \mid Y \in \mathcal{C}\right\rangle$. By Lemma \ref{Not_top_free actions are micro-supported}, the action of $G$ on $\Lambda_S(G)$ is micro-supported; in particular, $H$ is a nontrivial normal subgroup of $G$. Since the action of $G$ on $\Lambda_S(G)$ is faithful, $H$ acts on $\Lambda_S(G)$ nontrivially. Therefore, by Lemma \ref{dichotomy}, the action of $H$ on $S$ is also of general type and $\Lambda_S(H) = \Lambda_S(G)$. Thus, by Lemma \ref{Gen type actions admit compressible open sets}, the action of $H$ on $\Lambda_S(G)$ is fully compressible. In particular, $\mathcal{C}$ satisfies the conditions of \cite[Corollary 1.7]{GR}, and $\Mon(G)=[H,H]$ is non-abelian and simple. Assume $\Mon(G)$ is finite. Then $\Mon(G)$ is a normal elliptic subgroup of $G$, and, by Proposition \ref{elliptic radical is maximal}, $\Mon(G) \leq E_S(G)$. In particular, $E_S(G) \neq 1$, which contradicts that the action of $G$ on $\Lambda_S(G)$ is faithful.
\end{proof}

Proposition \ref{monolithic groups} does not apply to any acylindrically hyperbolic group. In fact, every acylindrically hyperbolic group has a trivial monolith.

\begin{prop}\label{monolith is trivial}
Let $G$ be an acylindrically hyperbolic group. Then $\Mon(G)=1$.
\end{prop}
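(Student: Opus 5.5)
We need to show that an acylindrically hyperbolic group $G$ has trivial monolith. Suppose instead that $M=\Mon(G)\neq 1$. We will produce two non-trivial normal subgroups of $G$ with trivial intersection. Since each of them contains $M$, this forces $M=1$, a contradiction.

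The plan splits according to the finite radical $K(G)$, which is the maximal finite normal subgroup of $G$ (\cite[Thm 6.14]{DGO}).

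\emph{Case $K(G)\neq 1$.} Then $M\le K(G)$, so $M$ is finite. The plan is to find a non-trivial normal subgroup meeting $K(G)$ trivially; it must then contain $M$, which is impossible.
\begin{enumerate}
\item The centralizer $C=C_G(K(G))$ has finite index in $G$, because $G$ acts on the finite set $K(G)$ by conjugation. Hence $C$ is an infinite normal subgroup, and it is acylindrically hyperbolic by \cite{Osi16}.
\item By \cite{DGO}, $G/K(G)$ is acylindrically hyperbolic with trivial finite radical, and it is not virtually cyclic. Acylindrically hyperbolic groups are SQ-universal and contain free subgroups, and finite-by-(free) extensions can be split off after passing to finite index. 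Using this, one finds a normal subgroup $N\le C$ of $G$ with $N\cap K(G)=1$. A concrete choice is $N=[C,C]^{e}$-type verbal subgroups, or more directly the normal closure of a suitable power of a loxodromic WPD element $g\in C$.
\item The key point is the following. By \cite{DGO}, for a suitable loxodromic $g$ the subgroup $\langle\langle g^n\rangle\rangle$ is free for large $n$. A free group is torsion-free, so it meets the finite group $K(G)$ trivially.
\end{enumerate}
Thus $\langle\langle g^n\rangle\rangle$ is non-trivial and meets $M\le K(G)$ trivially, yet it must contain $M$. This gives the contradiction.

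\emph{Case $K(G)=1$.} The same tool is used. Pick a loxodromic WPD element $g$ whose maximal elementary subgroup is $E(g)=\langle g\rangle\times K(G)=\langle g\rangle$. By \cite[Thm 6.14/2.30]{DGO}, for all sufficiently large $n$ the normal closure $N_n=\langle\langle g^n\rangle\rangle$ is free, and $g^{k}\notin N_n$ for $0<k<n$.
\begin{enumerate}
\item Choose $n$ large and set $m=2n$.
\item Then $N_m\le N_n$, and $N_m\cap N_{m'}$ for coprime large $m,m'$ should be small.
\item An easier route is the following. $M\le N_n$ for every large $n$, so $M\le\bigcap_n N_n$. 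Also $M$ is a non-trivial normal subgroup of the free group $N_n$, and it is infinite because $G$ has no finite normal subgroups. Since $M$ is contained in all $N_{kn}$, it is contained in normal closures of arbitrarily high powers. By the small cancellation estimates of \cite{DGO}, every non-trivial element of $N_{kn}$ has length at least proportional to $kn$ in a fixed generating set. Equivalently, its translation length on the hyperbolic space is large relative to a fixed element.
\end{enumerate}
Fix a non-trivial $h\in M$. Then $h\in N_{kn}$ for all $k$. The Greendlinger-type property from \cite{DGO} says that any non-trivial element of $N_{kn}$ moves a basepoint by an amount that grows with $k$. Since $h$ is fixed, this yields a contradiction.

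The main obstacle is this last step: extracting from \cite{DGO} a uniform statement that non-trivial elements of $\langle\langle g^{n}\rangle\rangle$ have displacement tending to infinity with $n$. I expect to rely on the rotating family / Greendlinger lemma in \cite[Sec.~5]{DGO}, which gives such displacement bounds in the cone-off space. A fallback avoiding this is to use two independent loxodromic WPD elements $g_1,g_2$ whose normal closures of high powers intersect trivially, again via the small cancellation theory there.
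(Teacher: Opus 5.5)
\textbf{There is a genuine gap, in the case $K(G)=1$.} Your case $K(G)\neq 1$ is correct, and only your step 3 is needed there. Indeed, $\langle\langle g^n\rangle\rangle$ is a nontrivial free, hence torsion-free, normal subgroup, so $M\le K(G)\cap\langle\langle g^n\rangle\rangle=1$.

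In the case $K(G)=1$, which is the real content of the statement, everything rests on one claim: every nontrivial element of $\langle\langle g^{kn}\rangle\rangle$ displaces a fixed basepoint by an amount tending to infinity with $k$. You do not prove this. The rotating-family Greendlinger lemma you point to lives, as you note, in the cone-off space. There the displacement it forces is of the order of the fixed cone radius, not of the exponent, so it does not by itself give growth in $k$. Also, ``length in a fixed generating set'' presupposes finite generation, which is not assumed.

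Your fallback is false. For independent loxodromics $g_1,g_2$, the element $[g_1^n,g_2^n]\neq 1$ lies in $\langle\langle g_1^n\rangle\rangle\cap\langle\langle g_2^n\rangle\rangle$. More generally, when $K(G)=1$ no two nontrivial normal subgroups $A,B$ meet trivially. If they did, then $B\le C_G(A)$. But $A$ is not elliptic, since otherwise it would lie in $E_S(G)=K(G)=1$. So $A$ contains independent loxodromics $a_1,a_2$, and $C_G(A)\le E(a_1)\cap E(a_2)$ is a finite normal subgroup, hence trivial. So the strategy in your first paragraph cannot work in this case. What you must show is that each fixed $h\neq 1$ is avoided by some nontrivial normal subgroup.

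\textbf{The missing tool} is group-theoretic Dehn filling with injectivity on a prescribed finite set, \cite[Thm 7.19]{DGO}. Take the hyperbolically embedded subgroup $H=E(g)=\langle g\rangle\times K(G)$ and $Z=\{h,1\}$. There is a finite $F\subseteq H\setminus\{1\}$ such that for every normal subgroup $N$ of $H$ with $N\cap F=\emptyset$, the map $G\to G/\langle\langle N\rangle\rangle$ is injective on $Z$. Since $g$ is central in $H$, $N=\langle g^n\rangle$ is normal in $H$, and it avoids $F$ for large $n$. Hence $h\notin\langle\langle g^n\rangle\rangle$, contradicting $h\in M$.

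You already use the weaker form of this, injectivity on $E(g)$, to get $g^k\notin N_n$. Upgrading it to an arbitrary finite subset of $G$ is exactly the paper's argument, with $H\cong\ZZ\times K(G)$ taken from \cite[Thm 2.24 (c)]{DGO}. It handles both of your cases at once and needs neither freeness nor displacement estimates.
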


\begin{proof}
By \cite[Thm 2.24 (c)]{DGO}, there exists a hyperbolically embedded subgroup $H \leq G$ such that $H \cong \ZZ \times K(G)$, where $K(G)$ is the unique maximal finite normal subgroup of $G$. Let $g$ be any nontrivial element of $G$. Then, by \cite[Thm 7.19]{DGO}, for the finite subset $Z=\{g,1\}$  there exists a finite subset $F \subseteq H \setminus \{1\}$ such that for every normal subgroup $N$ of $H$, satisfying $N \cap F =\emptyset$, the natural epimorphism $\varepsilon: G \rightarrow \overline{G}$ is injective on $Z$, where 
$$
\overline{G}=G /\left\langle\left\langle N\right\rangle\right\rangle^G.
$$
Since $F$ is finite and $H \cong \ZZ \times K(G)$, a nontrivial normal subgroup $N$ with the property $N \cap F =\emptyset$ always exists. Thus, $\overline{G}$ is a proper quotient, in which the image of $g$ is nontrivial; therefore, the kernel of the natural homomorphism avoids $g$. Since $g$ was an arbitrary nontrivial element of $G$,  $\Mon(G)=1$.
\end{proof}

This allows us to obtain the result of D. Osin and M. Hull  \cite[Cor. 5.10]{OH} independently, as a corollary of Proposition \ref{monolith is trivial}, Theorem \ref{main_theorem}, and Proposition \ref{monolithic groups}.

\begin{cor}\label{acylindrically hyperbolic groups are lim-free}
If the action of a group $G$ on a hyperbolic space $S$ is of general type and acylindrical, then the following conditions are equivalent
\begin{enumerate}
    \item The finite radical $K(G)$ is trivial.
    \item The action of $G$ on $\Lambda_S(G)$ is faithful.
    \item The action of $G$ on $\Lambda_S(G)$ is topologically free.
    \item $G$ is MIF.
\end{enumerate}
\end{cor}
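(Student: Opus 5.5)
We will prove the equivalences by establishing $(1)\Leftrightarrow(2)$ and $(2)\Leftrightarrow(3)$ directly, and then $(3)\Leftrightarrow(4)$ by quoting earlier results.

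For $(1)\Leftrightarrow(2)$, note that acylindricity together with general type gives $E_S(G)=K(G)$ by Lemma \ref{E(G)=K(G)}. By definition, $E_S(G)$ is exactly the kernel of the action of $G$ on $\Lambda_S(G)$. Hence the action on $\Lambda_S(G)$ is faithful if and only if $K(G)=1$.

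The implication $(3)\Rightarrow(2)$ is immediate: a topologically free action is faithful. Indeed, if a nontrivial $g$ acted trivially on $\Lambda_S(G)$, then $\Fix(g)=\Lambda_S(G)$ would have nonempty interior, since $\Lambda_S(G)\neq\emptyset$.

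For $(3)\Leftrightarrow(4)$: assuming faithfulness (which follows from either condition, in the first case by the previous paragraph), $G$ is faithful weakly hyperbolic via this very action. Theorem \ref{real main theorem} then gives that $G$ is MIF if and only if the action on $\Lambda_S(G)$ is topologically free. For $(4)\Rightarrow(2)$ we still have to justify faithfulness. A MIF group has no nontrivial finite normal subgroup, since such a group is directly decomposable-type obstructed; more concretely, if $K$ is a finite normal subgroup with $1\neq k\in K$, then the centralizer $C_G(k)$ has finite index, which contradicts the infinite conjugacy class property of MIF groups \cite[Prop. 5.4]{OH}. So $K(G)=1$, and $(1)$, hence $(2)$, follows.

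The key remaining step is $(2)\Rightarrow(3)$. We argue by contradiction. Suppose the action on $\Lambda_S(G)$ is faithful but not topologically free. Then $G$ is faithful weakly hyperbolic and not lim-free, so by Proposition \ref{monolithic groups} it has a nontrivial (indeed infinite simple) monolith. But $G$ is acylindrically hyperbolic, so $\Mon(G)=1$ by Proposition \ref{monolith is trivial}, a contradiction.

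The main obstacle is making sure Proposition \ref{monolith is trivial} applies. It requires $G$ to be acylindrically hyperbolic, which holds because the action is acylindrical and of general type. Its proof uses the DGO small cancellation result, which does not depend on $K(G)$ being trivial. Everything else in the argument is a formal combination of the earlier results.
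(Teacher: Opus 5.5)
Your proof is correct and follows the paper's argument. You get $(1)\Leftrightarrow(2)$ from $E_S(G)=K(G)$, $(2)\Rightarrow(3)$ by playing Proposition \ref{monolithic groups} against Proposition \ref{monolith is trivial}, and $(3)\Leftrightarrow(4)$ from Theorem \ref{real main theorem}; your ICC argument just spells out the paper's ``obvious'' $(4)\Rightarrow(1)$. One step is left implicit: that failure of topological freeness for this particular action makes $G$ not lim-free. This follows from Theorem \ref{main_theorem}(b), or directly from the proof of Proposition \ref{monolithic groups}, which works for any faithful, non-topologically-free general type action, and the paper's own proof relies on the same fact in the reverse direction.
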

\begin{proof}
The equivalence of $(1)$ and $(2)$ follows from $K(G)=E_S(G)$, by Lemma \ref{E(G)=K(G)}.
To obtain $(2) \Longrightarrow (3)$, note that by Proposition \ref{monolith is trivial}, $\Mon(G)=1$, and by Proposition \ref{monolithic groups}, $G$ is lim-free.
$(3) \Longleftrightarrow (4)$ is Theorem \ref{main_theorem}, and $(4) \Rightarrow (1)$ is obvious.
\end{proof}

Proposition \ref{monolithic groups} also gives us another large class of lim-free weakly hyperbolic groups.

\begin{proof}[Proof of Corollary \ref{res finite groups}]
If a faithful weakly hyperbolic residually finite group $G$ is not lim-free, then, by Proposition \ref{monolithic groups}, $\Mon(G)$ is infinite and simple. This contradicts the fact that every subgroup of a residually finite group is residually finite. By a well-known result of Malcev \cite{Mal}, every finitely generated linear group is residually finite. Thus, every finitely generated faithful weakly hyperbolic linear group is lim-free. By Theorem \ref{main_theorem}, every faithful weakly hyperbolic residually finite group is MIF. 
\end{proof}

In the special case of subgroups of  $\PSL_2(\CC)$, we do not need finite generation.

\begin{proof}[Proof of Corollary \ref{generalized Tits Alternative}]
Let $\HH^3=\{z+t j: z \in \CC, t>0\}$ be the upper half-space model of hyperbolic three-space.
$$\PSL_2(\CC)=\left\{\begin{pmatrix} a & b\\ c & d \end{pmatrix} \;   \vrule \; a, b, c, d \in \CC, \;  ad-bc=1  \right\} \big / \left\{ \pm I \right \}.$$ 
Every subgroup $H$ of $\PSL_2(\CC)$ acts on $\mathbb H^3$ by M\"obius transformations:
$$
\begin{aligned}
&g(z)=\begin{pmatrix} a & b\\ c & d \end{pmatrix}z = \frac{az+b}{cz+d}, \; z \in \CC, \\
& g(\infty)=\infty \text{ if } c=0 \text{ and } g\left(\frac{-d}{c}\right)=\infty, \; g(\infty)=\frac{a}{c} \text{ if } c \neq 0.
\end{aligned}
$$

See \cite[Chapter 4]{Bea} for more details. 
The boundary of $\HH^3$ is $\widehat{\CC}=\CC \cup\{\infty\}$. Analyzing solutions of the equation 
$cz^2+(d-a)z-b=0$, where $z \in \Lambda_{\HH^3}(H)$, we obtain that an element of $H$ has more than two fixed points in $\Lambda_{\HH^3}(H)$ if and only if $c=b=0$ and $d=a$, which happens only when it is trivial. Thus, the action of $H$ on $\Lambda_{\HH^3}(H)$ is topologically free. Therefore, by Theorem \ref{main_theorem}, if the action of $H$ on $\HH^3$ is of general type, then $H$ is MIF.

Assume that the action of $H$ on $\HH^3$ is not of general type. Then either $H$ fixes a point on $\partial \HH^3$ or the action of $H$ is elliptic or lineal, exchanging the two points in the limit set. 

If the action of $H$ is elliptic, then $H$ fixes a point in $\HH^3$ \cite[Thm 4.3.7]{Bea}. Therefore, $H$ is conjugate to a subgroup of $\SO(3)$. Every proper closed (in the Euclidean topology) subgroup of $\SO(3)$ is virtually abelian. Therefore, if $H$ is not virtually solvable, then $H$ is dense in $\SO(3)$. Therefore, $H$ is Zariski dense, and, by the result of Tomanov \cite{Tom}, is MIF.

If the action of $H$ is lineal, exchanging the two points in the limit set, then $H$ contains a subgroup $N$ of index $2$, fixing these points in $\partial \HH^3$.

Now let us consider the case in which $H$ fixes a point in $\partial \HH^3$. Since the action of $\PSL_2(\CC)$ is transitive on  $\widehat{\CC}$, we can always find an element $g \in \PSL_2(\CC)$ such that $H' = g^{-1}Hg$ fixes $\infty$. 
Then $H'$ consists of the upper-triangular matrices 
$$\left\{\begin{pmatrix} a & b\\ 0 & d \end{pmatrix} \;   \vrule \; a, b, d \in \CC, \;  ad=1  \right\}$$
and is solvable. Therefore, $H$ is also solvable.
\end{proof} 

\begin{rem}
Unfortunately, Corollary \ref{res finite groups} does not generalize to arbitrary finitely generated weakly hyperbolic linear groups; the faithfulness condition is essential here. Neither does Corollary \ref{generalized Tits Alternative} generalize to subgroups of projective linear groups of higher dimension. For example, $F_2 \times \ZZ \leq \SL_3(\RR)$ is not virtually solvable, but also is not MIF. 
\end{rem}

The next corollary shows that results about mixed identities from \cite{FMMS} can be obtained as a special case of Theorem \ref{real main theorem}.

\begin{cor}\cite[Theorem B]{FMMS}\label{previous results about MIF in trees}
Let $T$ be a simplicial tree. Suppose that the action of $G$ on $T$ is faithful, minimal, and of general type. Then the following are equivalent:
\begin{enumerate}
\item $G$ is MIF;
\item The action of $G$ on $\partial T$ is topologically free.
\end{enumerate} 
\end{cor}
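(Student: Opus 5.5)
The plan is to deduce this from Theorem \ref{real main theorem}, applied with $S=T$. A simplicial tree is a geodesic $0$-hyperbolic space, so the general type action of $G$ on $T$ falls within the setting of that theorem. Two things must be checked: that $\Lambda_T(G)=\partial T$, and that the induced action of $G$ on $\partial T$ is faithful.

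\textbf{Limit set.} General type actions contain loxodromic elements, so $\mathcal{L}_T(G)\neq\emptyset$. Since the action is minimal, Lemma \ref{limit set is the whole boundary} gives $\Lambda_T(G)=\partial T$. Hence condition (2) of Theorem \ref{real main theorem} is exactly the topological freeness of the action on $\partial T$.

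\textbf{Faithfulness on $\partial T$.} This is the main step. We need $E_T(G)=1$. By Proposition \ref{elliptic radical is maximal}, $E_T(G)$ is an elliptic normal subgroup, so by Lemma \ref{fixed point in a tree} it fixes a vertex or the midpoint of an edge. Hence $\Fix_T(E_T(G))$ is a nonempty subtree, since the fixed set of a group of tree isometries is convex. It is $G$-invariant because $E_T(G)$ is normal. By minimality it equals $T$, so $E_T(G)$ acts trivially on $T$, and faithfulness of the action on $T$ gives $E_T(G)=1$.

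There is one subtlety in the midpoint case: the fixed set then contains midpoints of edges, and one has to make sure its convex hull is a genuine subtree of the simplicial tree. I would handle this by taking the subtree spanned by the fixed points, or by passing to the barycentric subdivision, which changes neither $\partial T$ nor minimality in the relevant sense. Alternatively, one can argue directly at the boundary. An element $g$ fixing every point of $\partial T=\Lambda_T(G)$ fixes every bi-infinite geodesic between boundary points setwise, and indeed pointwise, because it fixes the branching. By Proposition \ref{union of loxodromic axes} and minimality, $T$ is the union of the axes of loxodromic elements, so $g$ acts trivially on $T$.

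\textbf{Conclusion.} With both checks done, Theorem \ref{real main theorem} yields the equivalence of (1) $G$ is MIF and (2) the action on $\partial T=\Lambda_T(G)$ is topologically free.

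I expect the faithfulness step to be the only real obstacle. The cleanest route is probably the axes argument: for an axis $\ell$ with endpoints $\xi,\eta$, the element $g$ preserves $\ell$ and fixes both ends, so it acts on $\ell$ as a translation. That translation must be trivial, because some other axis branches off $\ell$ and $g$ fixes that axis's endpoints as well.
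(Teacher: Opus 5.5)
Your proposal is correct and follows essentially the same route as the paper: you get $\Lambda_T(G)=\partial T$ from Lemma \ref{limit set is the whole boundary}, pass to the barycentric subdivision so that the elliptic normal subgroup $E_T(G)$ fixes a vertex, use normality and minimality to conclude that $E_T(G)$ fixes all of $T$ and is therefore trivial by faithfulness, and then apply Theorem \ref{real main theorem}. Your alternative axes argument for faithfulness is also valid but not needed: a nonzero translation along an axis would make $g$ loxodromic, so $g$ would fix only two points of the infinite set $\partial T$.
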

\begin{proof}
To obtain the statement as a corollary of Theorem \ref{real main theorem} we need to show that if the action of $G$ on $T$ is faithful, minimal, and of general type, then the induced action on $\Lambda_T(G)$ is also faithful.
By Lemma \ref{limit set is the whole boundary} $\Lambda_T(G) = \partial T$. By taking the barycentric subdivision, we can assume that the action of $G$ on $T$ is without inversions. Therefore, by Lemma \ref{fixed point in a tree}, $E_T(G)$ fixes a vertex $t \in T$. Since $E_T(G)$ is normal in $G$, it fixes every vertex of the orbit $Gt$. Thus, it stabilizes a minimal invariant subtree $T_{Gt}$ containing this orbit. Since the action of $G$ on $T$ is minimal, $T_{Gt}=T$, and, since the action of $G$ on $T$ is faithful, $E_T(G)=1$. Therefore, the induced action of $G$ on $\partial T$ is also faithful. 
\end{proof}

We finish this section by proving Corollary \ref{selfless groups}, connecting our research to reduced $C^*$-algebras. This follows immediately from the following theorem, which is the combination of Proposition 15 and Theorem 14 from \cite{Oza}.

 \begin{thm}\label{Ozawa}
 Let $G$ be a group. Assume that there is a continuous action of $G$ on a Hausdorff topological space that is minimal, fully compressible, and topologically free.
 Then the reduced group $C^*$-algebra $C_\lambda^*(G)$ is selfless. 
 \end{thm}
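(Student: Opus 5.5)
The plan is to treat this as a black-box combination of two results of Ozawa \cite{Oza}, and to check that the hypotheses match. Proposition 15 of \cite{Oza} takes a group action with certain dynamical properties and produces a combinatorial/operator-algebraic condition. Theorem 14 of \cite{Oza} converts that condition into selflessness of $C_\lambda^*(G)$. So the argument has two steps.

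\textbf{Step 1 (dynamics to a ping-pong condition).} Fix a continuous action of $G$ on a Hausdorff space $X$ that is minimal, fully compressible and topologically free. Fix a finite set $F \subset G\setminus\{1\}$.
\begin{itemize}
\item \emph{Separation.} By topological freeness, each $\Fix_X(a)$ with $a\in F$ is closed with empty interior, so the complement of their union is open and dense. I pick a point $s$ in it and a small open $U\ni s$ with $U\cap aU=\emptyset$ for all $a\in F$, exactly as in the proof of Theorem \ref{real main theorem}.
\item \emph{Compression.} Since $U$ can be taken non-dense, full compressibility produces elements $g$ that squeeze the complement of a small open set into a prescribed small open set. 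Minimality ensures every nonempty open set can be used as a target.
\item \emph{Ping-pong.} With such $g$, the elements $g^{\pm1}$ and the elements of $F$ play ping-pong on $X$. Transferred to $\ell^2(G)$, this gives the approximate freeness Ozawa needs: the inclusion of $C_\lambda^*(G)$ into $C_\lambda^*(G)\ast_{\mathrm{red}} C_\lambda^*(\ZZ)$ is approximated by conjugating $C_\lambda^*(G)$ by a unitary $\lambda(g)$.
\end{itemize}
This is the content of Proposition 15 of \cite{Oza}.

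\textbf{Step 2 (the condition to selflessness).} Theorem 14 of \cite{Oza} states that if, for all finite $F$ and all $\varepsilon>0$, such elements exist with the required norm estimates, then the canonical embedding $C_\lambda^*(G)\to C_\lambda^*(G)\ast_{\mathrm{red}}C_\lambda^*(\ZZ)$ is approximately inner in Robert's sense. That is precisely selflessness as defined in \cite{Rob}.

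\textbf{Main obstacle.} The delicate point is Step 1, specifically the passage from the set-theoretic ping-pong on $X$ to quantitative operator-norm estimates in $C_\lambda^*(G)$. Only Hausdorffness is assumed, with no compactness, so the ping-pong sets must be chosen carefully, and the estimate must be uniform over the finitely many words involved. Since the statement is quoted directly from \cite{Oza}, I will not reprove these estimates. My job reduces to two checks:
\begin{itemize}
\item that the notions of minimality, full compressibility and topological freeness used here coincide with Ozawa's hypotheses;
\item that the case of a non-compact Hausdorff space is covered. Where it is not covered directly, I will instead pass to the compact setting, for instance via the action on the spectrum of a suitable $G$-invariant commutative subalgebra, noting that the three properties persist.
\end{itemize}
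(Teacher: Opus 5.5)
Your proposal takes the same route as the paper, which gives no argument of its own and simply quotes this statement as the combination of Proposition 15 and Theorem 14 of \cite{Oza}, so checking that the hypotheses match is all that is required. Your paraphrases of Ozawa's steps and the compactification fallback are unnecessary and should not be relied on: selflessness in Robert's sense is existentiality of the first-factor embedding into a reduced free product (here witnessed by elements $\lambda(g_n)$ asymptotically free from $C_\lambda^*(G)$), not an ``approximately inner'' embedding, and your claim that minimality and full compressibility persist on the spectrum of an invariant subalgebra is unjustified.
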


\begin{proof}[Proof of Corollary \ref{selfless groups}]
Suppose a group $G$ admits a general type action on a hyperbolic space $S$ such that the induced action on $\Lambda_S(G)$ is topologically free. Then, by Lemma \ref{Gen type actions admit compressible open sets} and Corollary \ref{action on the limit set is minimal}, the action of $G$ on $\Lambda_S(G)$ is fully compressible and minimal. Therefore, by Theorem \ref{Ozawa}, the reduced group $C^*$-algebra $C_\lambda^*(G)$ is selfless.  
\end{proof}

\subsection{Boundary dynamics and triple transitivity }\label{conjugate orbits}
The goal of this subsection is to prove Theorem \ref{thm about conjugate orbits}. The proof strategy closely follows \cite{BM}, but the details are different. The key ingredient of the proof is the following proposition.

\begin{prop}\label{non-elementary action}
 Suppose that the action of a group $G$ on a hyperbolic space $S$ is of general type. Let $\Omega$ be a set on which $G$ acts faithfully and 2-transitively. Then for any $\omega \in \Omega$, the action of a subgroup $G_\omega$ on $S$ is non-elementary, and $\Lambda_S(G_\omega)=\Lambda_S(G)$.
\end{prop}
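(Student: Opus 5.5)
The plan is to combine the maximality of point stabilizers in a $2$-transitive action with the classification of actions on hyperbolic spaces. Put $H=G_\omega$. Since the action on $\Omega$ is $2$-transitive, it is primitive, so $H$ is a maximal subgroup of $G$. Moreover, $H$ is transitive on $\Omega\setminus\{\omega\}$, so $G=H\cup HgH$ for any $g\in G\setminus H$. Faithfulness gives $H\neq G$. I would first show that $H$ is non-elementary, splitting into the elementary cases of the classification.

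\emph{Elliptic case.} Suppose $Hs$ is bounded. Then every $k=hgh'\in HgH$ satisfies
$\d_S(s,ks)\le \d_S(s,hs)+\d_S(s,gs)+\d_S(s,h's)$,
which is bounded uniformly in $h,h'$. Hence $Gs$ is bounded, contradicting the assumption that the action of $G$ is of general type.

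\emph{Parabolic, lineal or quasi-parabolic cases.} Here $H$ preserves a nonempty finite set $F\subset\partial S$ with $|F|\le 2$: the fixed point, or the pair of limit points. Since $G$ is of general type, $\mathrm{Stab}_G(F)\neq G$, and maximality of $H$ then forces $\mathrm{Stab}_G(F)=H$. Hence $\Omega$ is $G$-isomorphic to the orbit $G\cdot F$. If $|F|=1$, this is a $G$-orbit $\mathcal{O}=Gq\subset\Lambda_S(G)$ on which $G$ acts $2$-transitively. If $|F|=2$, I would pass to the index-$\le 2$ subgroup of $H$ fixing both points and run the same argument with one of them.

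The contradiction should come from the stabilizer $G_q=H$. It acts transitively on $\mathcal{O}\setminus\{q\}$, which is dense in $\Lambda_S(G)$ by Lemma \ref{north-south} and Lemma \ref{set of fixed points of loxodromic elements is dense}. Yet $G_q=H$ fixes $q$, and its own action is not of general type. I would take a loxodromic $f\in G$ with $f^{\pm}\neq q$, and points $x,y\in\mathcal{O}\setminus\{q\}$ with $\l x|q\r_o$ small and $\l y|q\r_o$ huge. Transitivity of $H$ gives $h\in H$ with $hx=y$. I would then compare this with the constraint that elements of $H$, being parabolic or fixing $q$ along a quasi-line, move Gromov products relative to $q$ in a controlled way. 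For parabolic $H$, Proposition \ref{elements of parabolic subgroups don't move some points far} together with Lemma \ref{gromov product is bounded} should bound how far $h$ can change $\l\cdot|q\r$ at points along a ray to $q$. An alternative is to use a cross-ratio $[x,y,z,q]_o$ invariant under the stabilizer of three points, combined with Lemma \ref{cross-ratio is arbirtary large} and Lemma \ref{cross-ratio doesn't change much}, in the spirit of Proposition \ref{not 4-transitivity}(b). I expect this step to be the main obstacle, because $2$-transitivity on a dense orbit yields only topological $2$-transitivity, and the usual cross-ratio argument needs three or four points. I would first try to use $2$-transitivity to swap $q$ with another point $q'=kq$, and play off the parabolic or quasi-parabolic dynamics of $H$ at $q$ against those of $kHk^{-1}$ at $q'$.

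\emph{Equality of limit sets.} Once $H$ is non-elementary, $\Lambda_S(H)\subseteq\Lambda_S(G)$ is a closed $H$-invariant set, and $H\le \mathrm{Stab}_G(\Lambda_S(H))$. By maximality, this stabilizer equals either $G$ or $H$. If it is $G$, then Corollary \ref{action on the limit set is minimal} gives $\Lambda_S(H)=\Lambda_S(G)$. If it is $H$, then $\Omega\cong\{g\Lambda_S(H)\}$ as $G$-sets, and $\Lambda_S(H)$ is a proper closed subset, so its complement in $\Lambda_S(G)$ is a nonempty open set $U$. Using Lemma \ref{set of fixed points of loxodromic elements is dense}, I would choose a loxodromic $f$ with $f^{\pm}\in U$ and apply north--south dynamics (Lemma \ref{north-south}) to obtain translates $f^n\Lambda_S(H)$ that are pairwise distinct and collapse toward $f^{+}$. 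I would then derive a contradiction with $2$-transitivity, which requires $H$ to act transitively on the other translates. For that I would use that $H$ has the limit set $\Lambda_S(H)$ and cannot send a translate near $\Lambda_S(H)$ to one near the point $f^{+}\notin\Lambda_S(H)$, since by Lemma \ref{north-south} applied to loxodromics of $H$ such translates are pushed toward $\Lambda_S(H)$.
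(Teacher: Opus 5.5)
Your elliptic case matches the paper's. After that the proof is missing, and part of it aims at a false target. You leave the parabolic and lineal contradictions open, and you also try to rule out a quasi-parabolic $G_\omega$. That cannot be done. A quasi-parabolic action is non-elementary, so the statement allows it whenever $\Lambda_S(G_\omega)=\Lambda_S(G)$. It actually occurs: $\PSL_2(\CC)$ acts on $\HH^3$ with general type and $3$-transitively on $\Omega=\widehat{\CC}$, and the stabilizer of $\infty$ is the upper-triangular group, which is quasi-parabolic with limit set $\widehat{\CC}$. Lemma~\ref{The action of a stabilizer of a point is quasi-parabolic} even shows this is forced in the non-topologically-free setting. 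For the same reason your cross-ratio idea modelled on Proposition~\ref{not 4-transitivity}(b) cannot work: $2$-transitivity on $Gq$ only says $G_q$ is transitive on $Gq\setminus\{q\}$, which is compatible with quasi-parabolic dynamics.

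The missing idea is to use $G=H\sqcup HgH$ (Lemma~\ref{double cosets}) directly, against a quantity that is coarsely invariant under multiplication by $H$ on both sides. For parabolic $H$, there are no loxodromics in $H$. So for a loxodromic $l$ and $m\gg 4n$, both $l^m$ and $l^n$ lie in $HgH$, giving $l^m=\gamma_1 l^n\gamma_2$ with $\gamma_i\in H$. Proposition~\ref{elements of parabolic subgroups don't move some points far} gives a point $s$ moved at most $56\delta$ by both $\gamma_1$ and $\gamma_2$. Hence $\d_S(s,l^ms)\le \d_S(s,l^ns)+112\delta$, which contradicts Lemma~\ref{Distance formula for loxodromic elements}: the terms $2\d_S(s,L_l)$ cancel, leaving $\frac m2 T$ against $2nT$ plus constants.

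Now take $|\Lambda_S(H)|\ge 2$ with $\Lambda_S(H)\ne\Lambda_S(G)$. This covers the lineal case and your ``stabilizer equals $H$'' case at once. It also avoids your passage to an index-$2$ subgroup, which is not a point stabilizer, so the $2$-transitive structure would not transfer anyway. Your last paragraph has the right ingredients: a loxodromic $f$ with $f^{\pm}\notin\Lambda_S(H)$, translates collapsing to $f^+$, and transitivity of $H$ on the nontrivial translates. But the step ``$H$ cannot send a translate near $\Lambda_S(H)$ to one near $f^+$'' is unjustified, and false as a statement about visual proximity. Elements of $H$ have no uniform modulus of continuity on the boundary, and north--south dynamics of loxodromics of $H$ says nothing about the rest of $H$. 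For a Schottky group in $\PSL_2(\RR)$, powers of the hyperbolic element stabilizing a gap of the Cantor limit set carry points arbitrarily close to one endpoint of the gap to points a definite distance from both endpoints.

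What you need is a coarse $H$-invariant of a translate $X$. The paper uses $[\Lambda_S(H),X]_o=\inf\{[a,b,c,d]_o \mid a,b\in\Lambda_S(H),\ c,d\in X\}$. Since $\gamma\in H$ preserves $\Lambda_S(H)$, Lemma~\ref{cross-ratio doesn't depend on o} shows this changes by at most $48\delta$ under $X\mapsto\gamma X$. By the double coset decomposition it is therefore bounded over all $X=f\Lambda_S(H)$ with $f\in G$. On the other hand, take $f=l^m$ pushing $\Lambda_S(H)$ into $\mathcal U_o(l^+,M)$, where $N$ is fixed with $\mathcal U_o(l^+,N)\cap\Lambda_S(G)$ disjoint from $\Lambda_S(H)$. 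Then $\l c|d\r_o\ge M-\delta$, while $\l a|d\r_o<N+\delta$ and $\l b|c\r_o<N+\delta$. So the invariant exceeds $M-2N-3\delta$ with $M$ arbitrary, which is the contradiction.
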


 We will need the following known and easily established fact (see, for example, \cite[ex. 47]{La}). We provide the proof to make the paper self-contained. 
     \begin{lem}\label{double cosets}
        If the action of a group $G$ on a set $\Omega$ is $2$-transitive, then for any $\omega \in \Omega$, there exists an element $g \in G$ such that $G$ admits a decomposition into double cosets $G=G_\omega \sqcup G_\omega gG_\omega$.
    \end{lem}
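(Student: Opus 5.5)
The plan is to take $g$ to be any element moving $\omega$ and to show that every element outside $G_\omega$ lies in $G_\omega g G_\omega$. If $\Omega$ has only one point, then $G=G_\omega$ and there is nothing to prove. We may therefore assume $|\Omega|\geq 2$.

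First, choose $\omega' \neq \omega$ and, by transitivity, an element $g\in G$ with $g\omega = \omega'$; in particular $g\notin G_\omega$. The decomposition claimed in Lemma \ref{double cosets} has two parts:
\begin{enumerate}
\item[(i)] the double coset $G_\omega g G_\omega$ is disjoint from $G_\omega$;
\item[(ii)] every $h\in G\setminus G_\omega$ lies in $G_\omega g G_\omega$.
\end{enumerate}

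For (i), suppose $x g y \in G_\omega$ with $x,y\in G_\omega$. Then $g = x^{-1}(xgy)y^{-1}\in G_\omega$, which contradicts the choice of $g$. Since distinct double cosets are disjoint anyway, (i) also follows from the fact that $G_\omega = G_\omega 1 G_\omega$ is itself a double coset different from $G_\omega g G_\omega$.

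For (ii), take $h\notin G_\omega$, so that $h\omega \neq \omega$. The pairs $(\omega, g\omega)$ and $(\omega, h\omega)$ both consist of distinct points. By $2$-transitivity there is $x\in G$ with $x\omega=\omega$ and $x g\omega = h\omega$. The first condition gives $x\in G_\omega$. The second gives $h^{-1}xg\omega=\omega$, so $y:=g^{-1}x^{-1}h$ lies in $G_\omega$. Then $h = xgy \in G_\omega g G_\omega$, which proves (ii) and hence $G=G_\omega\sqcup G_\omega gG_\omega$.

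There is no real obstacle here. The only care needed is to use $2$-transitivity on ordered pairs that share the first coordinate $\omega$, which is exactly what forces $x$ into the stabilizer. Faithfulness of the action is not needed for this lemma.
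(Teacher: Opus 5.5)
Your argument is correct and is essentially the paper's: both use that $2$-transitivity makes $G_\omega$ transitive on $\Omega\setminus\{\omega\}$, so any $h\notin G_\omega$ satisfies $h\omega=xg\omega$ for some $x\in G_\omega$, whence $g^{-1}x^{-1}h\in G_\omega$ and $h\in G_\omega gG_\omega$. One small correction: when $|\Omega|=1$ it is not the case that there is ``nothing to prove''---there $G_\omega gG_\omega=G_\omega=G$ for every $g$, so the disjoint decomposition actually fails and the lemma tacitly requires $|\Omega|\geq 2$ (the paper's proof also assumes this when it picks points of $\Omega\setminus\{\omega\}$), which is harmless because $\Omega$ is infinite wherever the lemma is applied.
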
 
    \begin{proof}
       Fix $\omega \in \Omega$. Take any $\omega_1, \omega_2 \in \Omega\setminus\{\omega\}$.
       Since the action of $G$ on $\Omega$ is $2$-transitive, the action of $G_\omega$ is transitive on $\Omega \setminus \{\omega\}$. Therefore, there exists $\gamma \in G_\omega$ such that $\gamma \omega_1=\omega_2$. Since $G$ acts transitively on $\Omega$, there exists $g \notin G_\omega$ such that $g\omega = \omega_1$. Hence, $\omega_2=\gamma g\omega$. Since $\omega_2$ was arbitrary, we obtain $\Omega\setminus\{\omega\} = G_\omega g\omega$. Thus, if $f \in G\setminus G_\omega$, then $f\omega=\gamma_1g\omega$ for some $\gamma_1 \in G_\omega$. So, $f=\gamma_1g\gamma_2$ for some $\gamma_2 \in G_\omega$. Therefore, $G= G_\omega \sqcup G_\omega gG_\omega$.
    \end{proof}

\begin{rem}\label{maximality of H}
    Note that the subgroup $G_\omega \leq G$ in this case is maximal.
\end{rem}

\begin{proof}[Proof of Proposition \ref{non-elementary action}]

Fix an arbitrary $\omega \in \Omega$. The action of $G$ on $\Omega$ is $2$-transitive; thus, by Lemma \ref{double cosets}, there exists $g \in G$ such that $G=G_\omega \sqcup G_\omega gG_\omega$. 
We argue by contradiction and assume that $\Lambda_S(G_\omega) \neq \Lambda_S(G)$. 

{\bf Case 1.} Assume that $\Lambda_S(G_\omega)=\emptyset$, so the action of $G_\omega$ on $S$ is elliptic. Take any point $s\in S$. Let $K=\sup\{\d_S(ts,s) \mid t \in G_\omega\}$, and $M=\d_S(gs,s)$. Let $f \in G$ be arbitrary. If $f \in G_\omega$, then $\d_S(fs, s) \leq K$. If $f \notin G_\omega$, then, by Lemma \ref{double cosets}, there exist $\gamma_1, \gamma_2 \in G_\omega$, such that $f=\gamma_1g\gamma_2$. Thus, we have 
$$
\d_S(fs,s)=\d_S(\gamma_1g\gamma_2s,s)\leq \d_S(g\gamma_2s,gs)+\d_S(gs,s)+\d_S(s,\gamma_1^{-1}s) \leq 2K+M. 
$$
Therefore, $G$ also acts on $S$ with bounded orbits, which contradicts that the action of $G$ on $S$ is of general type.

{\bf Case 2.} 
 Assume that $|\Lambda_S(G_\omega)|=1$, so the action of $G_\omega$ on $S$ is parabolic. Let $q(\omega)=\Lambda_S(G_\omega)$. Take $l \in \mathcal{L}_S(G)$ with sufficiently large asymptotic translation length $\tau_S(l)$ and fixed points $\{l^+, l^-\}$ disjoint from $q(\omega)$. 
Let $L_l$ be the standard $(2,\delta)$-quasi-geodesic axis of $l$, provided by Lemma \ref{uniform axis}. Let $T$ be the translation distance of $l$ along $L_l$. Let $N$ be a constant from Lemma \ref{Distance formula for loxodromic elements}, and let $m, n \geq N$ be such that $m \gg 4n$. Since $G_\omega$ contains no loxodromic elements, by Lemma \ref{double cosets}, $l^m$ and $l^n$ are in the same double coset of $G_\omega$, and there exist $\gamma_1, \gamma_2 \in G_\omega$ such that $l^m=\gamma_1l^n\gamma_2$.

By Proposition \ref{elements of parabolic subgroups don't move some points far}, there exists $s \in S$ such that 
$$
\d_S(s, \gamma_1s) \leq 56\delta \text{ and } \d_S(s, \gamma_2s) \leq 56\delta.
$$
We obtain the following: 
\begin{equation}\label{estimating distance for ln by lm}
\begin{aligned}
    &\d_S(l^ms,s)=\d_S(\gamma_1l^n\gamma_2s,s)\leq \d_S(s, \gamma_1s)+\d_S(\gamma_1s,\gamma_1l^n\gamma_2s)\leq\\
    &\leq \d_S(s, \gamma_1s)+\d_S(s,l^ns)+\d_S(l^ns, l^n\gamma_2s) \leq \d_S(s,l^ns) + 112\delta.
\end{aligned}
\end{equation}
By Lemma \ref{Distance formula for loxodromic elements}, there exists $D(\delta) \in \NN$ such that 
\begin{equation}\label{applying formula}
2\d_S(s, L_l)+\frac{m}{2}T - D \leq \d_S(s, l^ms) \text{ and } \d_S(s, l^ns)\leq 2\d_S(s, L_l)+2nT + D.
\end{equation}
Combining (\ref{estimating distance for ln by lm}) and (\ref{applying formula}), we get
$$
mT \leq 4nT+4(D+56\delta),
$$
which contradicts that $m \gg 4n$ and $T>0$.

{\bf Case 3.} Now, assume that $|\Lambda_S(G_\omega)| > 1$. 

For an element $f \in G$ and a point $o \in S$, define
$$
[\Lambda_S(G_\omega), f\Lambda_S(G_\omega)]_o:= \inf \{ [a,b,c,d]_o \mid a,b \in \Lambda_S(G_\omega), \;  c,d \in f\Lambda_S(G_\omega) \},
$$
if such points can be chosen pairwise distinct, so that $[a,b,c,d]_o$ is well-defined; and $[\Lambda_S(G_\omega), f\Lambda_S(G_\omega)]_o:=0$, otherwise.


 Let $f \in G$ be arbitrary. By Lemma \ref{double cosets}, there exists $\gamma_1, \gamma_2 \in G_\omega$ such that $f=\gamma_1g\gamma_2$. Therefore, $$
 [\Lambda_S(G_\omega),f\Lambda_S(G_\omega)]_o= [\Lambda_S(G_\omega),\gamma_1g\gamma_2\Lambda_S(G_\omega)]_o=[\gamma_1^{-1}\Lambda_S(G_\omega), g\gamma_2\Lambda_S(G_\omega)]_{\gamma_1^{-1}o}.
$$

Thus, since $\gamma_1$ and $\gamma_2$ preserve $\Lambda_S(G_\omega)$ setwise, using Lemma \ref{cross-ratio doesn't depend on o}, we obtain
$$
\begin{aligned}
    &|[\Lambda_S(G_\omega),f\Lambda_S(G_\omega)]_o-[\Lambda_S(G_\omega),g\Lambda_S(G_\omega)]_o|\\
    &\leq|[\gamma_1^{-1}\Lambda_S(G_\omega), g\gamma_2\Lambda_S(G_\omega)]_o-[\Lambda_S(G_\omega),g\Lambda_S(G_\omega)]_o|+48 \delta \leq 48\delta.
\end{aligned}
$$ 
In particular, we have
\begin{equation}\label{distance is bounded}
\sup\{[\Lambda_S(G_\omega),f\Lambda_S(G_\omega)]_o \mid f \in G \} < \infty.
\end{equation}

The subset $U=\Lambda_S(G) \setminus \Lambda_S(G_\omega)$ is nonempty and open in $\Lambda_S(G)$. Therefore, by Lemma \ref{set of fixed points of loxodromic elements is dense}, there exists $l \in \mathcal{L}_S(G)$ such that $\{l^+, l^- \} \subset U$. There exists $N \in \NN$ such that $V_N=\mathcal U_o(l^+, N) \cap \Lambda_S(G) =\{ x \in \Lambda_S(G) \mid  \l x| l^+ \r_o > N \} \subset U$. There exists $M \gg N$ such
that $V_M = \mathcal{U}_o(l^+,M) \cap \Lambda_S(G) \subset V_N \subset U$.
By Lemma \ref{north-south} there exists $m \in \NN$ such that $l^m\Lambda_S(G_\omega) \subset V_M$. Take $f=l^m$. Then, using Lemma \ref{lemma 2.2.2}, for arbitrary $c, d \in f\Lambda_S(G_\omega)$, we obtain
\begin{equation}\label{dot product c d is large}
\l c|d \r_o \geq \min\{\l c|l^+ \r_o, \l l^+|d \r_o \} -\delta \geq M -\delta.
\end{equation}
Also, for arbitrary $a, b \in \Lambda_S(G_\omega)$, we have
$$
\begin{aligned}
& N > \l a | l^+ \r_o \geq \min\{ \l a | d \r_o, \l d| l^+ \r_o \} - \delta, \text{ and } N > \l b | l^+ \r_o \geq \min\{ \l b | c\r_o, \l c| l^+ \r_o \} - \delta.
\end{aligned}
$$
Note that $\min\{ \l a | d \r_o, \l d| l^+\r_o \}=\l a | d \r_o$, and $\min\{ \l b| c \r_o, \l c| l^+ \r_o \}=\l b| c \r_o$ since $M \gg N$.
Thus, we obtain
\begin{equation}\label{dot products ad and bc are bounded}
\l a | d \r_o < N+\delta \text{ and } \l b| c \r_o < N+\delta.    
\end{equation}

Combining (\ref{dot product c d is large}) and (\ref{dot products ad and bc are bounded}), we get $[\Lambda_S(G_\omega), f\Lambda_S(G_\omega)]_o > M-2N-3\delta$, which contradicts (\ref{distance is bounded}), since $M$ was arbitrarily large. Therefore, $\Lambda_S(G_\omega)=\Lambda_S(G)$. The action of $G_\omega$ on $S$ is non-elementary, since $|\Lambda_S(G_\omega)|=|\Lambda_S(G)|=\infty$. 
\end{proof}

From now on, we fix an action of a group $G$ on a hyperbolic space $S$ that is of general type, and the induced action on $\Lambda_S(G)$ is faithful and not topologically free. Actions of subgroups of $G$ on $S$ are induced by this action. We assume that $G$ acts faithfully and $3$-transitively on a set $\Omega$. Note that $G$ is infinite, and thus $\Omega$ is also infinite, since the action of $G$ on $\Omega$ is faithful.

We want to show that for any $\omega \in \Omega$, the action of $G_\omega$ on $S$ is quasi-parabolic. Note that by the transitivity of $G$ on $\Omega$, all subgroups $G_\omega$, $\omega \in \Omega$ are conjugate, and thus their actions on $S$ are of the same type. Proposition \ref{non-elementary action} shows that it is either quasi-parabolic or of general type. Arguing by contradiction, we assume that the action of $G_\omega$ is of general type. The action of $G_\omega$ on $\Omega \setminus \{\omega\}$ is $2$-transitive. Therefore, we can apply Proposition \ref{non-elementary action} to point stabilizers of points of $\Omega \setminus \{\omega\}$ in $G_\omega$ to conclude that they also induce a non-elementary action on $S$. This gives us two possibilities to exclude: the action of such point stabilizers on $S$ is of general type, or it is quasi-parabolic. The former possibility will be excluded in Proposition \ref{Stab Delta is not of general type}, the latter in Proposition \ref{Stab Delta is not quasi-parabolic}. 

Denote by $\Omega^{(2)}$ the set of ordered pairs of elements of $\Omega$ minus the diagonal, i.e.,
$$
\Omega^{(2)}= \Omega \times \Omega \setminus  \left \{ (\omega, \omega) \mid \omega \in \Omega \right \}.
$$

\begin{prop}\label{Stab Delta is not of general type}
    There does not exist a set $\Omega$ such that the action of $G$ on $\Omega$ is faithful, $3$-transitive, the action of $G_\omega$ on $S$ is of general type for every $\omega \in \Omega$, and for every $\Delta \in \Omega^{(2)}$, the action of $G_\Delta$ on $S$ is also of general type. 
\end{prop}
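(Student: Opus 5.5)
The plan is to turn micro-supportedness of the $G$-action on $\Lambda_S(G)$ into a contradiction with faithfulness and $3$-transitivity on $\Omega$. The general type hypotheses on $G_\omega$ and $G_\Delta$ will be used only as a source of \emph{compressions that fix prescribed points of $\Omega$}.

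\textbf{Step 1 (compression by stabilizers).} By Proposition \ref{non-elementary action}, $\Lambda_S(G_\omega)=\Lambda_S(G)$. Next, $G_\omega$ is of general type and acts $2$-transitively on $\Omega\setminus\{\omega\}$. The hypothesis of Proposition \ref{non-elementary action} also requires this action to be faithful. For that I will use the fact that the pointwise fixator of $\Omega\setminus\{\omega\}$ fixes all of $\Omega$, hence is trivial (or I will check that faithfulness is not actually needed in the proof of Proposition \ref{non-elementary action}). Applying Proposition \ref{non-elementary action} inside $G_\omega$ then gives $\Lambda_S(G_\Delta)=\Lambda_S(G)$ for every $\Delta=(\omega,\omega')\in\Omega^{(2)}$. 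Since $G_\Delta$ acts on $S$ with general type, Lemma \ref{Gen type actions admit compressible open sets} applied to $G_\Delta$ shows the following. For every nonempty non-dense open $V\subset\Lambda_S(G)$ and every nonempty open $U$, there is $h\in G_\Delta$ with $hV\subset U$, and hence $h\,\rist_G(V)\,h^{-1}\le \rist_G(U)$. The key feature is that conjugation by $h$ does not change how an element acts on $\omega,\omega'$. Concretely, if $a\omega=\omega'$ then $(hah^{-1})\omega=\omega'$.

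\textbf{Step 2 (realizing moves with small support).} By Lemma \ref{Not_top_free actions are micro-supported} there is a non-dense open $V$ with $\rist_G(V)\ne1$. By faithfulness, some $a\in\rist_G(V)$ moves some point, say $a\omega_0=\omega_0'$. Combining $3$-transitivity (conjugating by elements of $G$ or of $G_{\omega}$) with Step 1, I will show the following. For any prescribed pair $(\omega,\omega')\in\Omega^{(2)}$ and any nonempty open $U$, there is an element of $\rist_G(U)$ mapping $\omega\mapsto\omega'$. Choosing disjoint open sets $U_1,U_2$, I then get commuting elements $x\in\rist_G(U_1)$ and $y\in\rist_G(U_2)$, each sending $\omega\mapsto\omega'$. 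Commutation gives $x\omega'=xy\omega=yx\omega=y\omega'$. So the image of $\omega'$ under any element of $\rist_G(U_2)$ sending $\omega\mapsto\omega'$ is pinned down by a single $x$.

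\textbf{Step 3 (contradiction).} Now vary $y$. Conjugating $y$ by elements of $G_{(\omega,\omega')}$, which by $3$-transitivity act transitively on $\Omega\setminus\{\omega,\omega'\}$, moves $y\omega'$ to any other point. I then recompress the support back into a set disjoint from $U_1$ using Step 1 again. Since $\Omega$ is infinite, this should produce two elements supported away from $U_1$ that both send $\omega\mapsto\omega'$ but disagree on $\omega'$, contradicting the pinning in Step 2. I will also handle the degenerate case $y\omega'=\omega$ separately, using $y^{-1}$ or a third point.

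An alternative ending, if Step 3 proves awkward, is to aim for $\rist_G(U)\le G_\omega$ for some nonempty open $U$. Compression by $G_\omega$ would then put every $\rist_G(W)$, with $W$ non-dense, inside $G_\omega$. The normal subgroup these generate (as in the proof of Proposition \ref{monolithic groups}) would then lie in every point stabilizer, hence act trivially on $\Omega$, contradicting faithfulness.

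\textbf{Main obstacle.} The hard part is the recompression in Step 3. Conjugating by $h\in G_{(\omega,\omega')}$ preserves the behavior on $\omega,\omega'$ but changes $y\omega'$ uncontrollably. To push the support back into $U_2$, I would like a compression fixing three points of $\Omega$, and the triple stabilizer is not known to be of general type. I would first try to order the choices so that only pair stabilizers are ever needed. Specifically, I would fix the target point $z$ first, compress the support using $G_{(\omega,z)}$, and only afterwards use $3$-transitivity inside $G_\omega$.
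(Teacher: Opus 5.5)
\textbf{There is a genuine gap: Step 3, where the contradiction has to come from, is never carried out.} Your Steps 1 and 2 are sound and reproduce the core mechanism of the paper's proof. You compress by elements of $G_\Delta$, which fix both points of $\Delta$, and use that disjointly supported elements commute to pin down the image of the second point. Compare the paper's identities $g_i\omega_1=\omega_2$ and $g_i\omega_2=\omega_3$.

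The route you sketch for Step 3 tries to produce a \emph{single} element $h\in G_{(\omega,\omega')}$ that both moves $\gamma:=x\omega'$ and keeps $h\,\supp(y)$ off $\supp(x)$. Your sketch never produces such an element. Conjugating to move the image point destroys the support condition, and recompressing by $G_{(\omega,\omega')}$ moves the image point again, uncontrollably. The ``alternative ending'' $\rist_G(U)\le G_\omega$ is likewise not supported by anything in the proposal.

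The degenerate case is also not a side issue. If $x$ swaps $\omega$ and $\omega'$, pinning only tells you that elements of $G_\Delta$ fix $\omega$, which is vacuous, and passing to $y^{-1}$ does not help. You must first choose $x$ of order $>2$ in a rigid stabilizer of a non-dense open set. Such an $x$ exists because rigid stabilizers are non-abelian. Take an orbit $\omega_1\mapsto\omega_2\mapsto\omega_3$ of three distinct points, and only then set $\Delta=(\omega_1,\omega_2)$, exactly as the paper does.

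The missing idea is to reverse the quantifiers and show that \emph{every} $f\in G_\Delta$ fixes $\omega_3$.

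\textbf{The paper's route.} The paper obtains this from the failure of topological $4$-transitivity (Proposition \ref{not 4-transitivity}, via cross-ratios). This gives pairwise disjoint non-dense open sets $V_1,\dots,V_8$ such that every $f\in G_\Delta$ satisfies $V_i\cap fV_j=\emptyset$ for some $i,j$. Compress $g_1$ into each $V_i$ by some $u_i\in G_\Delta$. Pinning then gives
\[
f^{-1}\omega_3=g_i^f\omega_2=g_j\omega_2=\omega_3 .
\]

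\textbf{A fix inside your own framework.} You can also close the gap directly, without cross-ratios. Put $S=\supp(x)$ and $E=\{h\in G_\Delta : hS\cap S=\emptyset\}$. Your pinning, applied to $x$ and $hxh^{-1}$, shows that every $h\in E$ fixes $\omega_3$. Moreover $E$ generates $G_\Delta$, by the following steps.
\begin{enumerate}
\item Given $g\in G_\Delta$, pick a nonempty open $W\subset\Lambda_S(G)\setminus\overline{S}$ with $W\cup gW$ non-dense. This is possible by Hausdorff separation from a third point $r\neq p,gp$.
\item Take $t\in G_\Delta$ with $tS\subset W$.
\item Take $e\in G_\Delta$ with $e(tS\cup gtS)\subset\Lambda_S(G)\setminus\overline{S}$. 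Both compressions use $\Lambda_S(G_\Delta)=\Lambda_S(G)$ from your Step 1.
\item Then $t$, $et$, $egt$ all lie in $E$, and $g=t(et)^{-1}(egt)t^{-1}$.
\end{enumerate}
Either way, $G_\Delta$ fixes $\omega_3\notin\Delta$. This contradicts the transitivity of $G_\Delta$ on the infinite set $\Omega\setminus\Delta$, which follows from $3$-transitivity.
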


\begin{proof}
    For the sake of contradiction, assume that the action of $G_\Delta$ on $S$ is of general type for some $\Delta \in \Omega^{(2)}$. Since the action of $G$ on $\Omega^{(2)}$ is transitive, all subgroups $G_\Delta$ are conjugate. Therefore, it is sufficient to arrive at a contradiction for a specifically chosen $\Delta \in \Omega^{(2)}$. We will explain the choice of $\Delta$ below.   
   
    By Proposition \ref{not 4-transitivity}, the action of $G$ on $\Lambda_S(G)$ is not topologically $4$-transitive, therefore there exist nonempty open subsets $V_1, V_2, V_3, V_4 \subset \Lambda_S(G)$ and $W_1, W_2, W_3, W_4 \subset \Lambda_S(G)$ such that there does not exist any $f \in G_\Delta$  such that
    $$
    W_i \cap f(V_i) \neq \emptyset \text{ for all } i=1,2,3,4.
    $$
    Thus, setting $V_j=W_{j-4}$ for $j=5,6,7,8,$ we obtain the list of nonempty open subsets $V_1, \dots, V_8$ with the property that for every $f \in G_\Delta$ there is a pair $i,j \in \{1, \dots, 8\}$ such that 
    \begin{equation}\label{something is disjoint}
V_i \cap fV_j = \emptyset. 
    \end{equation}
By taking smaller subsets if necessary, we can assume $V_i$ to be pairwise disjoint and not dense in $\Lambda_S(G)$ for all $i \in \{1,\dots, 8\}$. 
  
  Since the action of $G$ on $\Lambda_S(G)$ is faithful and not topologically free, by Lemma \ref{Not_top_free actions are micro-supported}, the action of $G$ on $\Lambda_S(G)$ is micro-supported. Therefore, there exists a nontrivial element $g_1 \in G$ such that $\supp(g_1) \subseteq V_1$. By Lemma \ref{non-abelian rigid stabilizers}, $\rist_G(V_1)$ cannot have all its elements of order two. Thus, we may assume that the order of $g_1$ is greater than two, and, since the action of $G$ on $\Omega$ is faithful, $g_1$ has an orbit of size at least $3$ in $\Omega$. Let $\omega_1, \omega_2, \omega_3 \in \Omega$ be distinct points such that
  \begin{equation}\label{choice of delta}
  \omega_2=g_1\omega_1, \text{ and } \omega_3=g_1\omega_2.
  \end{equation}
  Define $\Delta= (\omega_1, \omega_2)$. 

Since the action of $G_\Delta$ on $S$ is of general type, by Lemma \ref{Gen type actions admit compressible open sets}, we can find $u_2, u_3, u_4, u_5, u_6, u_7, u_8 \in G_\Delta$ such that $g_i=u_ig_1u_i^{-1}$ have supports in $V_i$, respectively, $i=2, \dots, 8$. 

Using (\ref{choice of delta}), the fact that each $u_i$ stabilizes both $\omega_1$ and $\omega_2$, and that $[g_i, g_j]=1$ for $i,j \in \{1, \dots, 8\}, \, i\neq j$, since they have disjoint support, for all $i=2,\dots, 8$ we obtain
\begin{equation}\label{1 and 2}
    \begin{aligned}
    &g_i\omega_1=u_ig_1u_i^{-1}\omega_1=u_ig_1\omega_1=u_i\omega_2=\omega_2, \text{ and}\\
    &g_i\omega_2=g_ig_1\omega_1=g_1g_i\omega_1=g_1\omega_2=\omega_3.
    \end{aligned}
\end{equation}

Take any $f \in G_\Delta$. 
By (\ref{something is disjoint}), elements $g_i^f$ and $g_j$ have disjoint support for at least one pair $i,j \in \{1,\dots, 8\}$ and, thus, $
[g_i^f,g_j]=1$.
Using this, (\ref{1 and 2}), and the fact that $f$ fixes $\omega_1$ and $\omega_2$, we obtain
\begin{equation}\label{w3}
\begin{aligned}
&g_i^f\omega_1=f^{-1}g_i\omega_1=f^{-1}\omega_2=\omega_2 \text{ and }\\
&g_i^f\omega_2 =g_i^fg_j\omega_1=g_jg_i^f\omega_1=g_j\omega_2 =\omega_3.
\end{aligned}
\end{equation}
Combining (\ref{w3}) and $g_i^f\omega_2=f^{-1}g_i\omega_2=f^{-1}\omega_3$, we obtain
$ f\omega_3=\omega_3$, which contradicts the assumption that the action of $G$ on $\Omega$ is $3$-transitive.
\end{proof}

\begin{prop}\label{Stab Delta is not quasi-parabolic}
    There does not exist a set $\Omega$ such that the action of $G$ on $\Omega$ is faithful, $3$-transitive, the action of $G_\omega$ on $S$ is of general type for every $\omega \in \Omega$, and for every $\Delta \in \Omega^{(2)}$, the action of $G_\Delta$ on $S$ is quasi-parabolic. 
\end{prop}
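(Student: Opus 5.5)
The plan is to exploit the boundary points fixed by the quasi-parabolic stabilizers $G_\Delta$. First I show that these points separate unordered pairs of $\Omega$. Then micro-support produces a nontrivial element acting trivially on $\Omega$, which contradicts faithfulness.

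\textbf{Step 1: the equivariant map.} Suppose such an $\Omega$ exists. For $\Delta=(\omega_1,\omega_2)\in\Omega^{(2)}$ the action of $G_\Delta$ is quasi-parabolic, so it fixes a unique point $q(\Delta)\in\partial S$. Since $G_\Delta$ has infinite limit set, $q(\Delta)\in\Lambda_S(G_\Delta)\subseteq\Lambda_S(G)$. Uniqueness gives $q(f\Delta)=fq(\Delta)$ for all $f\in G$. Moreover $G_{(\omega_1,\omega_2)}=G_{(\omega_2,\omega_1)}$, so $q$ is a well-defined $G$-equivariant map on unordered pairs $\{\omega_1,\omega_2\}$.

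\textbf{Step 2: $q$ is injective on unordered pairs.}
\begin{enumerate}
\item First suppose two distinct pairs share a point: $q(\{\omega,\eta\})=q(\{\omega,\eta'\})$ with $\eta\neq\eta'$. Then the stabilizer of this boundary point in $G_\omega$ contains both $G_{\omega,\eta}$ and $G_{\omega,\eta'}$. The group $G_\omega$ acts $2$-transitively on $\Omega\setminus\{\omega\}$, so by Remark \ref{maximality of H} the subgroup $G_{\omega,\eta}$ is maximal in $G_\omega$. Since $G_{\omega,\eta'}\not\le G_{\omega,\eta}$, these two subgroups generate $G_\omega$. Hence $G_\omega$ fixes a point of $\partial S$, contradicting that its action is of general type.
\item Next suppose the pairs are disjoint: $q(\{a,b\})=q(\{c,d\})$ with $\{a,b\}\cap\{c,d\}=\emptyset$. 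For every $g\in G_{a,b}$, equivariance gives $q(\{gc,gd\})=q(\{c,d\})$. By $3$-transitivity, $G_{a,b}$ is transitive on $\Omega\setminus\{a,b\}$, so I can pick $g\in G_{a,b}$ with $gc=d$. If $gd\neq c$, the pairs $\{c,d\}$ and $\{d,gd\}$ are distinct, share $d$, and have the same image, contradicting the first case.
\item The remaining worry is that every $g\in G_{a,b}$ with $gc=d$ swaps $c$ and $d$. I expect this degenerate sub-case to be the main technical obstacle. I would rule it out by composing with an element of $G_{a,b,d}$ that moves $c$. Such an element exists because the point stabilizers are infinite and the action is faithful and $3$-transitive. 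If needed, I would instead use $3$-transitivity on the triple $(a,c,d)$ to move to the shared-point case.
\end{enumerate}

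\textbf{Step 3: the points $q(\{\omega,\eta\})$ are dense.} Fix $\omega$. The set $Q_\omega=\{q(\{\omega,\eta\}) : \eta\neq\omega\}$ is $G_\omega$-invariant. By Step 2 it is infinite, since $\Omega$ is infinite. By Proposition \ref{non-elementary action}, $\Lambda_S(G_\omega)=\Lambda_S(G)$, and $G_\omega$ acts on $S$ with general type. By Corollary \ref{action on the limit set is minimal} applied to $G_\omega$, the closure of $Q_\omega$ is all of $\Lambda_S(G)$. Hence for any non-dense open set $W\subset\Lambda_S(G)$ whose complement has nonempty interior, there are $\eta\neq\eta'$ with $q(\{\omega,\eta\})$ and $q(\{\omega,\eta'\})$ outside $W$.

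\textbf{Step 4: a nontrivial element acting trivially on $\Omega$.} Take a nonempty non-dense open $W\subset\Lambda_S(G)$. By Lemma \ref{Not_top_free actions are micro-supported} the action on $\Lambda_S(G)$ is micro-supported, so there is $1\neq h\in\rist_G(W)$. Let $\omega\in\Omega$ be arbitrary and choose $\eta,\eta'$ as in Step 3. Then $h$ fixes $q(\{\omega,\eta\})$, and by equivariance $q(\{h\omega,h\eta\})=q(\{\omega,\eta\})$. By Step 2, $h$ stabilizes the pair $\{\omega,\eta\}$, and likewise the pair $\{\omega,\eta'\}$. The intersection of these two pairs is $\{\omega\}$, so $h\omega=\omega$. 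Since $\omega$ was arbitrary, $h$ acts trivially on $\Omega$, contradicting faithfulness of the action of $G$ on $\Omega$.
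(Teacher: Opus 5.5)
Your proposal has a genuine gap at Step 2.3. That sub-case is not a side issue; it is where all of the difficulty sits. Once Step 2.1 is in place, the branch $gd\neq c$ of Step 2.2 can never occur, because $\{d,gd\}$ lies in the fibre of $\{c,d\}$ and shares $d$ with it. So whenever a disjoint coincidence $q(\{a,b\})=q(\{c,d\})$ exists, every $g\in G_{a,b}$ with $gc=d$ swaps $c$ and $d$. In fact the fibre through $\{a,b\}$ is then a perfect matching of $\Omega$ preserved by the stabilizer of $q(\{a,b\})$. Its pairs are disjoint by 2.1, and they cover $\Omega$ because $\langle G_{a,b},G_{c,d}\rangle$ fixes $q(\{a,b\})$ and is transitive on $\Omega$.

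Neither of your proposed fixes excludes this configuration.
\begin{itemize}
\item An element $k\in G_{a,b,d}$ with $kc\neq c$ does not follow from faithfulness, $3$-transitivity and infinite point stabilizers. For an infinite field $K$, $\mathrm{PGL}_2(K)$ acting on $K\cup\{\infty\}$ is sharply $3$-transitive, so its $3$-point stabilizers are trivial.
\item More decisively, take $\mathrm{AGL}(V)$ with $V$ an infinite-dimensional $\mathbb{F}_2$-vector space. It is faithful and $3$-transitive. The map $\{x,y\}\mapsto x+y\in V\setminus\{0\}$ is equivariant, point stabilizers fix no point of $V\setminus\{0\}$, and so your Step 2.1 holds verbatim. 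Yet the fibres are the parallel classes $\{\{x,x+v\}\}$, and $G_{a,b,d}$ fixes $c=a+b+d$.
\item So no argument using only this permutation-group data can prove injectivity. That includes your alternative $h\colon(a,c,d)\mapsto(a,d,c)$: it only gives $q(\{a,hb\})=q(\{a,b\})$, which yields nothing when $hb=b$.
\end{itemize}

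Your Steps 3--4 are correct, but what they actually prove is the opposite of Step 2: the fibres of $q$ are never singletons, i.e.\ $G_{\Delta^{\{2\}}}\neq G_{q(\Delta)}$. This is the paper's Lemma \ref{the action of Stab is transitive on Omega}, obtained there via Lemma \ref{orbits of rigid stabilizers of size 3}; your route through minimality of $G_\omega$ on $\Lambda_S(G)$ and faithfulness on $\Omega$ is a valid alternative. Combined with Step 2.1, this forces exactly the perfect-matching configuration $\mathcal{P}_\Delta$ of Lemma \ref{2-transitive action preserves blocks}. The missing idea is how to exclude that configuration, and this needs the boundary dynamics again. The paper proceeds as follows.
\begin{itemize}
\item It shows $G^0_{q(\Delta)}G_\Delta=G_{q(\Delta)}$, using primitivity of $G_{q(\Delta)}$ on the blocks and the fact that rigid stabilizers are non-abelian.
\item It produces $h\in\rist_G(V)$ with $q(\Delta)\notin\overline{V}$, $h\Delta^{\{2\}}=\Delta'^{\{2\}}$ and $h\Delta'^{\{2\}}\neq\Delta^{\{2\}}$. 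Lemma \ref{wreath products in rigid stabilizers} supplies a cycle of length at least $3$ on the blocks.
\item It compresses by loxodromics of the quasi-parabolic $G_\Delta$ to get $g$, supported in a set disjoint from $V$, that swaps the blocks $\Delta^{\{2\}}$ and $\Delta'^{\{2\}}$.
\item Since $gh=hg$, this gives $\Delta^{\{2\}}=gh\Delta^{\{2\}}=hg\Delta^{\{2\}}=h\Delta'^{\{2\}}\neq\Delta^{\{2\}}$, a contradiction.
\end{itemize}
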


   Denote by $\Omega^{\{2\}}$ the set of unordered pairs of distinct elements of $\Omega$ and by $\Delta^{\{2\}}$ the element of $\Omega^{\{2\}}$, corresponding to $\Delta$. 
   Denote by  $G_{\Delta^{\{2\}}}$  the setwise stabilizer of $\Delta^{\{2\}}$ in $G$. Clearly, $G_\Delta$ is a subgroup of index $2$ in $G_{\Delta^{\{2\}}}$, hence is normal. For the sake of contradiction, assume that the action of $G_\Delta$ on $S$ is quasi-parabolic. Then, by the normality of $G_\Delta$ in $G_{\Delta^{\{2\}}}$, the action of $G_{\Delta^{\{2\}}}$ on $S$ is also quasi-parabolic. Note that, by Proposition \ref{non-elementary action}, $\Lambda_S(G)=\Lambda_S(G_\omega)$. We denote by $q(\Delta)$ the unique point of $\Lambda_S(G)$, which is fixed by $G_\Delta$ and $G_{\Delta^{\{2\}}}$. Note that $G_{\Delta}$ acts transitively on  $\Omega\setminus \Delta$ by the $3$-transitivity of the $G$-action on $\Omega$, and $G_{\Delta} < G_{q(\Delta)}$, so $G_{q(\Delta)}$ acts transitively on $\Omega \setminus  \Delta $.
  
   The proof of Proposition \ref{Stab Delta is not quasi-parabolic} will consist of a series of lemmas that will lead to a contradiction.

\begin{lem}\label{orbits of rigid stabilizers of size 3} 
Suppose that $G$ and $G_\omega$ satisfy the conditions of Proposition \ref{Stab Delta is not quasi-parabolic} for any $\omega \in \Omega$. Then for every nonempty open subset $U \subset \Lambda_S(G)$, all orbits of $\rist_G(U)$ in $\Omega$ have cardinality at least $3$.
\end{lem}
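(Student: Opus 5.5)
The plan is to argue by contradiction and to obtain a nontrivial normal subgroup of $G$ that fixes a point of $\Omega$. Such a subgroup would then fix every point of $\Omega$, which contradicts faithfulness.

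Suppose that for some nonempty open $U\subset\Lambda_S(G)$ the group $\rist_G(U)$ has an orbit $\mathcal{P}\subset\Omega$ with $|\mathcal{P}|\le 2$. Since $\Omega$ is infinite, we can choose $\Delta=(\omega_1,\omega_2)\in\Omega^{(2)}$ with $\mathcal{P}\subseteq\{\omega_1,\omega_2\}$. If $|\mathcal{P}|=2$ take $\{\omega_1,\omega_2\}=\mathcal{P}$; if $\mathcal{P}=\{\omega_1\}$ take $\omega_2$ arbitrary. In either case $\rist_G(U)$ leaves $\Delta^{\{2\}}$ invariant, so $\rist_G(U)\le G_{\Delta^{\{2\}}}$. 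The permutation action of $G_{\Delta^{\{2\}}}$ on $\{\omega_1,\omega_2\}$ gives a homomorphism to $\ZZ_2$ whose kernel is $G_\Delta$. Since this target is abelian, every commutator of elements of $\rist_G(U)$ lies in the kernel, so
$$C(U):=[\rist_G(U),\rist_G(U)]\le G_\Delta\le G_{\omega_1}.$$
By Lemma \ref{Not_top_free actions are micro-supported} the action on $\Lambda_S(G)$ is micro-supported. Lemma \ref{non-abelian rigid stabilizers} then shows that $\rist_G(U)$ is non-abelian, so $C(U)\ne 1$. We may shrink $U$ to a nonempty non-dense open subset, since then $\rist_G(U)$ only shrinks and still lies in $G_{\Delta^{\{2\}}}$.

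Next we spread this inclusion to all non-dense open sets, using the point stabilizer $\omega:=\omega_1$. Under the standing hypotheses of Proposition \ref{Stab Delta is not quasi-parabolic}, the action of $G_\omega$ on $S$ is of general type, and $\Lambda_S(G_\omega)=\Lambda_S(G)$ by Proposition \ref{non-elementary action}. Lemma \ref{Gen type actions admit compressible open sets}, applied to $G_\omega$, then says the action of $G_\omega$ on $\Lambda_S(G)$ is fully compressible. So for every nonempty non-dense open $W\subset\Lambda_S(G)$ there is $u\in G_\omega$ with $uW\subset U$. Conjugation gives $u\rist_G(W)u^{-1}\le\rist_G(U)$, hence $uC(W)u^{-1}\le C(U)\le G_\omega$. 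Because $u\in G_\omega$, it follows that $C(W)\le G_\omega$ for every such $W$.

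Now let $N$ be the subgroup generated by all $C(W)$, where $W$ ranges over the nonempty non-dense open subsets of $\Lambda_S(G)$. This family of sets is $G$-invariant and $gC(W)g^{-1}=C(gW)$, so $N$ is normal in $G$. It is nontrivial because $C(U)\ne1$. By the previous step, $N\le G_\omega$. Normality gives $N=gNg^{-1}\le G_{g\omega}$ for all $g\in G$, and since $G$ is transitive on $\Omega$, $N$ fixes $\Omega$ pointwise. This contradicts the faithfulness of the action on $\Omega$, so every orbit of $\rist_G(U)$ has size at least $3$.

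The step I expect to need the most care is the passage from "orbit of size at most $2$" to "inside a point stabilizer". The commutator trick handles this, but it costs the non-abelian property of rigid stabilizers (to keep $C(U)\ne1$). It also relies on the point stabilizers being of general type with full limit set, not merely quasi-parabolic, since otherwise Lemma \ref{Gen type actions admit compressible open sets} would only compress sets avoiding the fixed point.
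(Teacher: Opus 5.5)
Your proposal is correct and follows the paper's proof essentially step for step. The commutator subgroup of $\rist_G(U)$ fixes a point $\omega$, compressibility under the general-type stabilizer $G_\omega$ (using $\Lambda_S(G_\omega)=\Lambda_S(G)$) pushes this to every $[\rist_G(W),\rist_G(W)]$, and the nontrivial normal subgroup they generate lies in $G_\omega$. You finish via faithfulness plus transitivity, whereas the paper uses that a nontrivial normal subgroup of a faithful $2$-transitive action is transitive; both are valid. There is one small slip: when the orbit is $\{\omega_1\}$ and $\omega_2$ is arbitrary, $\rist_G(U)$ need not preserve $\{\omega_1,\omega_2\}$. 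This is harmless, since you only ever use $C(U)\le G_{\omega_1}$, which holds directly in that case because $\rist_G(U)\le G_{\omega_1}$.
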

\begin{proof}
    Assume for contradiction that $\rist_G(U)$ has an orbit of cardinality at most $2$. Then the induced permutation group on this orbit is abelian, and the commutator subgroup $[\rist_G(U), \rist_G(U)]$ must fix a point $\omega \in \Omega$. Since the action of $G_\omega$ on $S$ is of general type, by Lemma \ref{Gen type actions admit compressible open sets}, for any nonempty non-dense open $V \subset \Lambda_S(G)$ there exists $f \in G_\omega$ such that $f(V) \subset U$. Thus, we have 
    $$f \rist_G(V)f^{-1} \leq \rist_G(U),$$ and $$f[\rist_G(V),\rist_G(V)]f^{-1} \leq [\rist_G(U), \rist_G(U)] \leq G_\omega.$$
    Since $f \in G_\omega$, we obtain $[\rist_G(V),\rist_G(V)] \leq G_\omega$.

    By Lemma \ref{Not_top_free actions are micro-supported}, the action of $G$ on $\Lambda_S(G)$ is micro-supported, and, by Lemma \ref{non-abelian rigid stabilizers}, rigid stabilizers of nonempty open subsets of $\Lambda_S(G)$ are non-abelian. Therefore, 
    $$
    N = \l [\rist_G(V),\rist_G(V)] \mid  V \subset \Lambda_S(G) \text{ is a nonempty non-dense open subset} \r
    $$ 
   is a non-trivial normal subgroup of $G$. Thus, $G_\omega$ contains a non-trivial normal subgroup of $G$. Since the action of $G$ on $\Omega$ is $2$-transitive, this means that the action of $G_\omega$ is transitive on $\Omega$, which contradicts that  $G_\omega$ fixes $\omega \in \Omega$.  
\end{proof}

\begin{lem}\label{the action of Stab is transitive on Omega}
  $G_{\Delta^{\{2\}}} \neq G_{q(\Delta)}$ for any $\Delta^{\{2\}} \in \Omega^{\{2\}}$.
\end{lem}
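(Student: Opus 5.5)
The plan is to argue by contradiction using rigid stabilizers and Lemma \ref{orbits of rigid stabilizers of size 3}. First I record the inclusion $G_{\Delta^{\{2\}}} \leq G_{q(\Delta)}$. The action of $G_{\Delta^{\{2\}}}$ on $S$ is quasi-parabolic, so it has a unique fixed point in $\partial S$, namely $q(\Delta)$, and $G_{\Delta^{\{2\}}}$ therefore lies in the stabilizer of $q(\Delta)$. So the claim amounts to showing that this inclusion is proper.

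Suppose instead that $G_{\Delta^{\{2\}}} = G_{q(\Delta)}$ for some $\Delta = (\omega_1,\omega_2)$. Since $\Lambda_S(G)$ is infinite and Hausdorff, I choose a nonempty open subset $U \subset \Lambda_S(G)$ with $q(\Delta) \notin U$; for instance, take $U$ disjoint from some neighbourhood of $q(\Delta)$. Every element of $\rist_G(U)$ fixes $\Lambda_S(G)\setminus U$ pointwise, and in particular fixes $q(\Delta)$. Hence
$$\rist_G(U) \leq G_{q(\Delta)} = G_{\Delta^{\{2\}}}.$$
Thus $\rist_G(U)$ preserves the two-element set $\{\omega_1,\omega_2\}$. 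Its orbit of $\omega_1$ is then contained in $\{\omega_1,\omega_2\}$ and has cardinality at most $2$.

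This contradicts Lemma \ref{orbits of rigid stabilizers of size 3}, which says that every orbit in $\Omega$ of the rigid stabilizer of a nonempty open subset of $\Lambda_S(G)$ has cardinality at least $3$. That lemma applies here because we are working under the standing hypotheses of Proposition \ref{Stab Delta is not quasi-parabolic}.

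I expect no real obstacle. The only points needing care are that $q(\Delta)$ is genuinely fixed by $G_{\Delta^{\{2\}}}$, which follows from the normality of $G_\Delta$ in $G_{\Delta^{\{2\}}}$ and the uniqueness of the fixed point of a quasi-parabolic action, and that an open set avoiding $q(\Delta)$ exists, which is immediate since $\Lambda_S(G)$ is infinite and Hausdorff.
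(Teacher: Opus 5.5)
Your argument is correct and is essentially the paper's own proof. Both note that for any nonempty open $U\subset\Lambda_S(G)$ avoiding $q(\Delta)$ one has $\rist_G(U)\leq G_{q(\Delta)}$, so an equality $G_{q(\Delta)}=G_{\Delta^{\{2\}}}$ would give $\rist_G(U)$ an orbit in $\Omega$ of size at most $2$, contradicting Lemma~\ref{orbits of rigid stabilizers of size 3}. The inclusion $G_{\Delta^{\{2\}}}\leq G_{q(\Delta)}$ that you record first is true but not needed for the contradiction.
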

\begin{proof}
    Fix any $\Delta^{\{2\}}=\{\omega', \omega'' \} \in \Omega^{\{2\}}$. The orbit  $G_{\Delta^{\{2\}}}\omega'$ in $\Omega$ is of size at most $2$. For any nonempty open subset $U \subset \Lambda_S(G)$, not containing $q(\Delta)$, the subgroup $\rist_G(U)$ fixes $q(\Delta)$, and hence $\rist_G(U) \leq G_{q(\Delta)}$. By Lemma \ref{orbits of rigid stabilizers of size 3}, any orbit of $\rist_G(U)$ in $\Omega$ is of size at least $3$. Therefore, $G_{\Delta^{\{2\}}} \neq G_{q(\Delta)}$.
\end{proof}

 Define $\pi \colon \Omega^{\{2\}} \longrightarrow \Lambda_S(G)$ by $\Delta^{\{2\}} \mapsto q(\Delta)$. This map is $G$-equivariant.

\begin{lem}\label{2-transitive action preserves blocks}
    The fiber $\pi^{-1}(q(\Delta))$ forms a partition $\mathcal{P}_\Delta$ of $\Omega$ into blocks of size $2$, and $G_{q(\Delta)}$ preserves $\mathcal{P}_\Delta$ and acts $2$-transitively on its blocks.
\end{lem}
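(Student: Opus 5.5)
The plan is to work entirely with the $G$-equivariant map $\pi\colon \Omega^{\{2\}}\to\Lambda_S(G)$, using two facts: the $3$-transitivity of $G$ on $\Omega$, and the fact that $G_\omega$ acts on $S$ with general type. Write $q=q(\Delta)$ and $\mathcal{P}_\Delta=\pi^{-1}(q)$. Since $\pi$ is $G$-equivariant, $G_q$ maps $\mathcal{P}_\Delta$ to itself. So the statement reduces to three claims: (i) every point of $\Omega$ lies in some pair of $\mathcal{P}_\Delta$; (ii) no point of $\Omega$ lies in two distinct pairs of $\mathcal{P}_\Delta$; (iii) the induced action of $G_q$ on the blocks is $2$-transitive.

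\emph{Covering (i).} By Lemma \ref{the action of Stab is transitive on Omega}, $G_{\Delta^{\{2\}}}\neq G_q$. Since $G_{\Delta^{\{2\}}}\le G_q$, there is $g\in G_q$ with $g\Delta^{\{2\}}\neq\Delta^{\{2\}}$. By equivariance, $\pi(g\Delta^{\{2\}})=gq=q$, so $g\Delta^{\{2\}}\in\mathcal{P}_\Delta$, and this pair contains a point $\omega_0\notin\Delta$. The union of the pairs in $\mathcal{P}_\Delta$ is a $G_q$-invariant subset of $\Omega$ that contains $\omega_0$. As noted before Lemma \ref{orbits of rigid stabilizers of size 3}, $G_q$ acts transitively on $\Omega\setminus\Delta$, so this union contains $\Omega\setminus\Delta$. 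It also contains $\Delta$ itself, hence it is all of $\Omega$.

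\emph{Disjointness (ii).} This is the key step. Suppose $\{\omega,a\}$ and $\{\omega,b\}$ both lie in $\mathcal{P}_\Delta$ with $a\neq b$. Consider the map $\Omega\setminus\{\omega\}\to\Lambda_S(G)$ given by $x\mapsto\pi(\{\omega,x\})$; it is $G_\omega$-equivariant. By $3$-transitivity, $G_\omega$ acts $2$-transitively on $\Omega\setminus\{\omega\}$. So for any distinct $a',b'\neq\omega$ there is $h\in G_\omega$ with $ha=a'$ and $hb=b'$. Then
\[
\pi(\{\omega,a'\})=hq=\pi(\{\omega,b'\}),
\]
so the map is constant on $\Omega\setminus\{\omega\}$ (note $|\Omega|\ge 4$, since $\Omega$ is infinite). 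Its constant value is then fixed by all of $G_\omega$, because the map is equivariant. But by hypothesis $G_\omega$ acts on $S$ with general type, so $\Fix_{\partial S}(G_\omega)=\emptyset$, a contradiction. Together with (i), this shows that $\mathcal{P}_\Delta$ is a partition of $\Omega$ into blocks of size $2$.

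\emph{$2$-transitivity on blocks (iii).} First, $G_q$ is transitive on blocks. It is transitive on $\Omega\setminus\Delta$, hence transitive on the blocks other than $\Delta^{\{2\}}$. By (i) some $g\in G_q$ moves $\Delta^{\{2\}}$ to another block, so all blocks lie in one orbit. Next, the stabilizer of the block $\Delta^{\{2\}}$ in $G_q$ is $G_q\cap G_{\Delta^{\{2\}}}=G_{\Delta^{\{2\}}}$. It contains $G_\Delta$, which is transitive on $\Omega\setminus\Delta$, so it is transitive on the remaining blocks. A transitive action in which a block stabilizer is transitive on the other blocks is $2$-transitive, which finishes the proof.

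The step I expect to carry the weight is (ii), since it is the only place where the general-type hypothesis on $G_\omega$ enters. Once the constancy argument is set up, the remaining steps are formal bookkeeping with equivariance and Lemma \ref{the action of Stab is transitive on Omega}.
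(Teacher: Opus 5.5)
Your proof is correct. It differs from the paper's mainly in the disjointness step.

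\textbf{Disjointness.} The paper assumes two pairs $\{\omega,\omega_1\}$ and $\{\omega,\omega_2\}$ of the fiber meet in $\omega$. It then proves by a direct generation argument that $G_\omega=\langle G_{\Delta_1},G_{\Delta_2}\rangle$, using that each two-point stabilizer is transitive on the complement. Since both generators fix $q(\Delta)$, so does $G_\omega$, a contradiction. You instead observe that $x\mapsto\pi(\{\omega,x\})$ is $G_\omega$-equivariant. By $2$-transitivity of $G_\omega$ on $\Omega\setminus\{\omega\}$, this map is constant as soon as two distinct points share a value, and its constant value is then a $G_\omega$-fixed boundary point. This is the same device the paper uses later to prove injectivity of $q$ in Theorem \ref{thm about conjugate orbits}. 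It makes the generation lemma unnecessary. (Your remark that $|\Omega|\ge 4$ is not needed: $a\neq b$ already gives two points of $\Omega\setminus\{\omega\}$.)

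\textbf{Covering and transitivity on blocks.} The paper appeals to transitivity of $G_{q(\Delta)}$ on all of $\Omega$. That fact is only implicit in the proof of Lemma \ref{the action of Stab is transitive on Omega}, via the orbit-size bound of Lemma \ref{orbits of rigid stabilizers of size 3}. You use just the stated conclusion $G_{\Delta^{\{2\}}}\neq G_{q(\Delta)}$ together with transitivity on $\Omega\setminus\Delta$. This is slightly tighter bookkeeping.

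Both routes rest on the same hypotheses: $3$-transitivity on $\Omega$ and the general-type action of $G_\omega$.
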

\begin{proof} 
    Fix $\Delta \in \Omega^{\{2\}}$. We first argue that two distinct $\Delta_1^{\{2\}}, \Delta_2^{\{2\}} \in \pi^{-1}(q(\Delta))$ are always disjoint. Assume that this is not the case. Then $\Delta_1^{\{2\}} \cap \Delta_2^{\{2\}}$ is a singleton, say $\Delta_1^{\{2\}} \cap \Delta_2^{\{2\}}=\{\omega\}$.  Let $\Delta_1^{\{2\}}=\{\omega, \omega_1\}$ and $\Delta_2^{\{2\}}= \{\omega, \omega_2\}$. Since the action of $G_\omega$ on $\Omega \setminus \{\omega\}$ is $2$-transitive, the actions of $G_{\Delta_1}$ and $G_{\Delta_2}$ on $\Omega \setminus \{\omega, \omega_1\}$ and $\Omega \setminus \{\omega, \omega_2\}$ are transitive, and we claim that 
    $$
    G_\omega = \langle G_{\Delta_1}, G_{\Delta_2}\rangle.
    $$
 Indeed, it is obvious that $\langle G_{\Delta_1}, G_{\Delta_2}\rangle \leq G_\omega$, so we need to show that 
    $G_\omega \leq \langle G_{\Delta_1}, G_{\Delta_2}\rangle$.
    Take an arbitrary $h \in G_\omega$. If $h\omega_i=\omega_i$, for some $i=1,2$, then $h \in G_{\Delta_i}$. Suppose $h\omega_i=\widehat{\omega}_i \neq \omega_i$ for both $i=1,2$. Without loss of generality, assume $h\omega_1= \widehat{\omega} \neq \omega_1$. Then there exists $h_1 \in G_{\Delta_1}$ such that $h_1\widehat{\omega}=\omega'$, where $\omega' \notin \{\omega_1, \omega_2\}$. Thus, we can find $h_2 \in G_{\Delta_2}$, such that $h_2\omega' =\omega_1$. Therefore, $h_2h_1h\omega_1=\omega_1$, and $h \in \langle G_{\Delta_1}, G_{\Delta_2}\rangle$. Since $\Delta_1^{\{2\}}, \Delta_2^{\{2\}} \in \pi^{-1}(q(\Delta))$, subgroups $G_{\Delta_1}$ and $G_{\Delta_2}$ both fix $q(\Delta)$. Thus, it follows that the subgroup $G_\omega$ also fixes $q(\Delta)$. This contradicts the assumption that the action of $G_\omega$ on $\Lambda_S(G)$ is of general type.

    It remains to show that the pairs of $\pi^{-1}(q(\Delta))$ cover $\Omega$. Let $\Sigma$ be the union of these pairs. Note that $\Sigma$ defines a subset of $\Omega$, and that this subset is $G_{q(\Delta)}$-invariant by the equivariance of the map $\pi$. Since $G_{q(\Delta)}$ is transitive on $\Omega$, we must have $\Sigma=\Omega$.

    The subgroup $G_{q(\Delta)}$ fixes $q(\Delta)$, thus it preserves $\mathcal{P}_{\Delta}$. The action of $G_{q(\Delta)}$ on the blocks of $\mathcal{P}_{\Delta}$ is transitive because the action of $G_{q(\Delta)}$ on $\Omega$ already is. In order to show that this action of $G_{q(\Delta)}$ on the blocks of $\mathcal{P}_{\Delta}$ is $2$-transitive, we need to show that the block stabilizers act transitively, i.e., we need to show that  $G_{\Delta^{\{2\}}}$ acts transitively on $\mathcal{P}_{\Delta} \setminus \{\Delta\}$. Since $G_\Delta < G_{\Delta^{\{2\}}}$, it is enough to show that the action of $G_\Delta$ is transitive on $\mathcal{P}_{\Delta} \setminus \{\Delta\}$. Indeed, take any $\Delta^{\prime}, \Delta^{\prime \prime} \in \mathcal{P}_{\Delta}$ that are distinct from $\Delta$. Choose $\omega^{\prime} \in \Delta^{\prime}$ and $\omega^{\prime \prime} \in \Delta^{\prime \prime}$. Since $\Delta^{\prime}, \Delta^{\prime \prime}$ are disjoint from $\Delta$ we have $\omega^{\prime}, \omega^{\prime \prime} \in \Omega \setminus \{\Delta\}$. Since $G_{\Delta}$ acts transitively on $\Omega \setminus \{\Delta\}$, there exists $f \in G_{\Delta}$ such that $f\omega^{\prime}=\omega^{\prime \prime}$. Since $G_{\Delta}$ preserves the blocks, it follows that we must have $f\Delta^{\prime}=\Delta^{\prime \prime}$, and hence $G_{\Delta}$ acts transitively on $\mathcal{P}_{\Delta} \setminus \{\Delta\}$.
\end{proof}

 By $G_{q(\Delta)}^0$ we denote the subgroup of $G_{q(\Delta)}$ consisting of elements that fix some open neighborhood $U \subset \Lambda_S(G)$ of $q(\Delta)$ pointwise, i.e.,
 $$
 G_{q(\Delta)}^0 = \bigcup \{\rist_G(V) \mid V \text{ is open and } q(\Delta) \not\in  \overline{V}\}.
 $$ 
 
 Observe that $G_{q(\Delta)}^0$ is a non-trivial normal subgroup of $G_{q(\Delta)}$ since the action of $G$ on $\Lambda_S(G)$ is micro-supported by Lemma \ref{Not_top_free actions are micro-supported}.
\begin{lem}\label{product coinsides with the stabilizer}
    For every $\Delta^{\{2\}} \in \Omega^{\{2\}}$, we have $G_{q(\Delta)}^0G_\Delta=G_{q(\Delta)}$.
\end{lem}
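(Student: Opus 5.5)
The goal is to show $G_{q(\Delta)}=G^0_{q(\Delta)}G_\Delta$. The inclusion $\supseteq$ is immediate, since both $G^0_{q(\Delta)}$ and $G_\Delta$ lie in $G_{q(\Delta)}$. For $\subseteq$ we use that $G^0_{q(\Delta)}$ is normal in $G_{q(\Delta)}$, so $G^0_{q(\Delta)}G_\Delta$ is a subgroup. It then suffices to show that $G^0_{q(\Delta)}$ acts transitively on $\Omega$; in fact transitivity on the block system $\mathcal{P}_\Delta$ plus the ability to swap the two points of $\Delta$ will do. Granting this, for $h\in G_{q(\Delta)}$ we pick $k\in G^0_{q(\Delta)}$ with $k h\Delta=\Delta$ as ordered pairs, so $kh\in G_\Delta$ and $h\in G^0_{q(\Delta)}G_\Delta$.

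Alternatively, we can reduce to showing that $G^0_{q(\Delta)}$ acts transitively on ordered pairs lying in blocks of $\mathcal{P}_\Delta$. This follows if $G^0_{q(\Delta)}$ acts transitively on $\Omega$ and $G^0_{q(\Delta)}\cap G_\omega$ moves the partner of $\omega$ when needed; but blocks have size $2$, so the stabilizer of $\omega$ in $G_{q(\Delta)}$ fixes its partner automatically. Hence transitivity of $G^0_{q(\Delta)}$ on $\Omega$ is exactly enough: if $kh\omega_1=\omega_1$ for $\Delta=(\omega_1,\omega_2)$, then $kh$ preserves the block $\{\omega_1,\omega_2\}$ and so fixes $\omega_2$ too.

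To prove transitivity, we note that the orbits of the normal subgroup $G^0_{q(\Delta)}$ form a $G_{q(\Delta)}$-invariant partition of $\Omega$, because $G_{q(\Delta)}$ is transitive on $\Omega$ (shown before Lemma \ref{2-transitive action preserves blocks}). We push this down to the blocks: $G_{q(\Delta)}$ acts $2$-transitively on $\mathcal{P}_\Delta$ by Lemma \ref{2-transitive action preserves blocks}, so its action there is primitive. The normal subgroup $G^0_{q(\Delta)}$ is therefore either trivial on $\mathcal{P}_\Delta$ or transitive on it.

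We exclude the trivial case using Lemma \ref{orbits of rigid stabilizers of size 3}. Take a nonempty open $V$ with $q(\Delta)\notin\overline V$; then $\rist_G(V)\le G^0_{q(\Delta)}$. If $G^0_{q(\Delta)}$ fixed every block, each orbit of $\rist_G(V)$ in $\Omega$ would be contained in a block and so have size at most $2$, which contradicts Lemma \ref{orbits of rigid stabilizers of size 3}.

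So $G^0_{q(\Delta)}$ is transitive on blocks. It remains to see that it is transitive on $\Omega$, i.e.\ that some element of $G^0_{q(\Delta)}$ swaps the two points of a block. Lemma \ref{orbits of rigid stabilizers of size 3} gives an orbit of $\rist_G(V)$ of size at least $3$. Such an orbit meets at least two blocks, but that alone does not force a swap within one block. This is the main obstacle.

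The first thing to try is the following. Consider the block stabilizer $B=G^0_{q(\Delta)}\cap G_{\Delta^{\{2\}}}$, which is normal in $G_{\Delta^{\{2\}}}$. Suppose no element of $G^0_{q(\Delta)}$ swaps a block. Then the orbits of $G^0_{q(\Delta)}$ on $\Omega$ are a $G_{q(\Delta)}$-invariant partition into two classes, each meeting every block exactly once. This gives a homomorphism $G_{q(\Delta)}\to \ZZ/2$ whose kernel $K$ has index $2$ and contains $G^0_{q(\Delta)}$. But $G_\Delta\le K$, since $G_\Delta$ fixes $\omega_1$ and so preserves its class. Then $K\supseteq G^0_{q(\Delta)}G_\Delta$, and we must still show the latter is all of $G_{q(\Delta)}$.

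To finish, we would use an element of $G_{\Delta^{\{2\}}}\setminus G_\Delta$ that swaps $\omega_1,\omega_2$; it exists by $2$-transitivity of $G$ on $\Omega$ and lies in $G_{q(\Delta)}$. We would try to write it as a product of an element of $G^0_{q(\Delta)}$ and an element of $G_\Delta$ via compressibility near $q(\Delta)$. This is the delicate point, and the remaining argument must go through it.
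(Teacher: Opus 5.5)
\textbf{There is a genuine gap: the proposal stops at the decisive step.} Your reduction is correct: since blocks have size $2$, transitivity of $G^0_{q(\Delta)}$ on $\Omega$ suffices. Your primitivity argument is also correct. So is your exclusion of the trivial action on blocks via Lemma \ref{orbits of rigid stabilizers of size 3}, which is a valid substitute for the paper's exponent-two argument.

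What you leave open is ruling out the case where $G^0_{q(\Delta)}$ has two orbits $O_1\ni\omega_1$ and $O_2\ni\omega_2$ on $\Omega$. The completion you suggest cannot work. You propose writing a swapping element $f\in G_{\Delta^{\{2\}}}\setminus G_\Delta$ as a product of an element of $G^0_{q(\Delta)}$ and an element of $G_\Delta$. Given the facts already established, that is equivalent to the lemma itself; it is also exactly how Lemma \ref{L:3.13} later uses the lemma, so the step is circular. Worse, under your two-orbit assumption such a decomposition is impossible: $f$ maps $O_1$ to $O_2$, so $f$ lies outside the index-two subgroup $K\supseteq G^0_{q(\Delta)}G_\Delta$. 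What is needed at this point is a contradiction, not a decomposition, and compressibility near $q(\Delta)$ plays no role.

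\textbf{The missing idea is to use $3$-transitivity, in the form that $G_\Delta$ acts transitively on $\Omega\setminus\Delta$.} You already observed that $G_\Delta\le K$, i.e.\ $G_\Delta$ preserves both $O_1$ and $O_2$. Each $O_i$ meets every block of $\mathcal{P}_\Delta$. Since $|\Omega|\ge 3$, there is a block other than $\Delta^{\{2\}}$. Hence $O_1\setminus\{\omega_1\}$ and $O_2\setminus\{\omega_2\}$ are disjoint, nonempty, $G_\Delta$-invariant subsets of $\Omega\setminus\Delta$. This contradicts the transitivity of $G_\Delta$ there.

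Therefore $G^0_{q(\Delta)}$ is transitive on $\Omega$, and your first paragraph finishes the proof. This is essentially the paper's argument. The paper shows that $G^0_{q(\Delta)}G_\Delta$ is transitive on $\Omega$ and then uses $G_\omega\cap G_{q(\Delta)}=G_\Delta$ for $\omega\in\Delta$; that is the same block-size-two observation you make.
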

\begin{proof}
   Recall that a group action is called {\it primitive} if it does not preserve any non-trivial partition. By Lemma \ref{2-transitive action preserves blocks}, the action of $G_{q(\Delta)}$ on the blocks of the partition $\mathcal{P}_{\Delta}$ is $2$-transitive, thus it is primitive. Since $G_{q(\Delta)}^0$ is a normal subgroup of $G_{q(\Delta)}$, and the action of $G_{q(\Delta)}$ on the blocks of the partition $\mathcal{P}_{\Delta}$ is transitive and primitive, it follows that the action of $G_{q(\Delta)}^0$ on these blocks is either trivial or transitive. Otherwise, the non-trivial partition into $G_{q(\Delta)}^0$-orbits would be preserved by the action of $G_{q(\Delta)}$.
    
   If the action of $G_{q(\Delta)}^0$ on the blocks of $\mathcal{P}_{\Delta}$ were trivial, then $G_{q(\Delta)}^0$ would consist of elements of order at most two, since all blocks have size $2$, the action of $G_{q(\Delta)}^0$ on $\Omega$ is faithful, and $G_{q(\Delta)}^0$ preserves the partition $\mathcal{P}_{\Delta}$ as a subgroup of $G_{q(\Delta)}$. This cannot happen since $G_{q(\Delta)}^0$ contains a rigid stabilizer of some nonempty open subset $V \subset \Lambda_S(G)$, disjoint from $q(\Delta)$, and, by Lemma \ref{non-abelian rigid stabilizers}, $\rist_G(V)$ cannot be abelian. 
   
   Therefore, this action is transitive, and it follows that $G_{q(\Delta)}^0$ has at most two orbits in $\Omega$, and each orbit intersects each block of $\mathcal{P}_{\Delta}$.
   
   Consider the subgroup $G_{q(\Delta)}^0G_\Delta \leq G_{q(\Delta)}$.
   It is indeed a subgroup since $G_{\Delta}$ normalizes $G_{q(\Delta)}^0$. We claim that $G_{q(\Delta)}^0G_\Delta$ acts transitively on $\Omega$. There are two cases to consider. First, if $G_{q(\Delta)}^0$ has only one orbit, then this is clear. Second, if $G_{q(\Delta)}^0$ has two orbits, then the subgroup $G_\Delta$ does not preserve the partition of $\Omega$ into these two orbits because $G_\Delta$ acts transitively on $\Omega \setminus \{\Delta\}$. Thus, the claim follows. 

    Take $\omega \in \Delta$ and any $f \in G_\omega \cap G_{q(\Delta)}$. By the equivariance of $\pi$, we have
    $$
    q(f\Delta)=fq(\Delta)=q(\Delta),
    $$ thus $f\Delta$ and $\Delta$  belong to the same fiber of the partition. On the other hand,
    $\omega \in f\Delta \cap \Delta$ since $f$ fixes $\omega$. Therefore, $f\Delta=\Delta$ and 
    \begin{equation}\label{contains stabilizer}
    G_\omega \cap G_{q(\Delta)}=G_\Delta.
    \end{equation}
    Recall that if a group $G$ acts on a set $\Omega$ transitively and a subgroup $H \leq G$ also acts on $\Omega$ transitively, then for any $\omega \in \Omega$,
    $$
    G=H G_\omega.
    $$
    Therefore, since $G_{q(\Delta)}^0G_\Delta$ contains a stabilizer of a point in $\Omega$ under the action of $G_{q(\Delta)}$ by (\ref{contains stabilizer}), and both actions of $G_{q(\Delta)}$ and $G_{q(\Delta)}^0 G_{\Delta}$ are transitive on $\Omega$, we finally obtain
    $$
    G_{q(\Delta)}^0 G_\Delta=G_{q(\Delta)}. \qedhere
    $$
\end{proof}

\begin{lem}\label{Delta and Delta'}
    Fix $\Delta^{\{2\}} \in \Omega^{\{2\}}$ and $\Delta'^{\{2\}} \in \mathcal{P}_\Delta$ distinct from $\Delta^{\{2\}}$. Then there exists a nonempty open subset $V \subset \Lambda_S(G)$ such that $q(\Delta) \notin \overline{V}$ and $h \in \rist_G(V)$ such that $h\Delta^{\{2\}} = \Delta'^{\{2\}}$ and $h\Delta'^{\{2\}} \neq \Delta^{\{2\}}$.
\end{lem}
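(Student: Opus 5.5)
Throughout, $\mathcal{P}_\Delta$ is viewed as a partition of $\Omega$ into blocks of size $2$, and Lemma \ref{product coinsides with the stabilizer} supplies the element $h$. By Lemma \ref{2-transitive action preserves blocks}, $G_{q(\Delta)}$ acts $2$-transitively on the blocks of $\mathcal{P}_\Delta$. Let $\Delta''^{\{2\}}\in\mathcal{P}_\Delta$ be a third block, distinct from $\Delta^{\{2\}}$ and $\Delta'^{\{2\}}$; it exists because $\Omega$ is infinite.

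\textbf{Step 1: an element of $G_{q(\Delta)}$ with the right behaviour on blocks.} By $2$-transitivity on blocks, pick $k\in G_{q(\Delta)}$ with $k\Delta^{\{2\}}=\Delta'^{\{2\}}$ and $k\Delta'^{\{2\}}=\Delta''^{\{2\}}$. In particular $k\Delta'^{\{2\}}\neq\Delta^{\{2\}}$.

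\textbf{Step 2: replace $k$ by an element of $G^0_{q(\Delta)}$.} Apply Lemma \ref{product coinsides with the stabilizer} not at $\Delta$ but at $\Delta'$. Since $q(\Delta')=q(\Delta)$, this gives $G_{q(\Delta)}=G^0_{q(\Delta)}G_{\Delta'}$. Write $k=h\gamma$ with $h\in G^0_{q(\Delta)}$ and $\gamma\in G_{\Delta'}$. The element $\gamma$ fixes $\Delta'^{\{2\}}$ setwise and fixes $q(\Delta)$, so it preserves $\mathcal{P}_\Delta$. Hence
$$h\Delta'^{\{2\}}=h\gamma\Delta'^{\{2\}}=k\Delta'^{\{2\}}=\Delta''^{\{2\}}\neq\Delta^{\{2\}}.$$

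\textbf{Step 3: the main obstacle, controlling $h\Delta^{\{2\}}$.} We also need $h\Delta^{\{2\}}=\Delta'^{\{2\}}$, which requires $\gamma\Delta^{\{2\}}=\Delta^{\{2\}}$, and $\gamma$ need not fix $\Delta^{\{2\}}$. To get around this, choose $k$ with the extra freedom available. The pair $\Delta'^{\{2\}}\mapsto\Delta''^{\{2\}}$ is the one we must control, so decompose on the other side instead: write $k=\gamma' h$ with $\gamma'\in G_{\Delta''}$. This is legitimate because $G^0_{q(\Delta)}$ is normal, so $G_{q(\Delta)}=G_{\Delta''}G^0_{q(\Delta)}$. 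Then $h=\gamma'^{-1}k$ sends $\Delta'^{\{2\}}$ to $\gamma'^{-1}\Delta''^{\{2\}}=\Delta''^{\{2\}}\neq\Delta^{\{2\}}$.

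The remaining requirement is $h\Delta^{\{2\}}=\gamma'^{-1}\Delta'^{\{2\}}=\Delta'^{\{2\}}$, i.e.\ that $\gamma'$ also stabilizes $\Delta'^{\{2\}}$. This still fails in general, so I would instead aim directly for an element $h\in G^0_{q(\Delta)}$ satisfying just the weaker pair of conditions in the statement. Consider the set $A=\{x\in G^0_{q(\Delta)}: x\Delta^{\{2\}}=\Delta'^{\{2\}}\}$. It is nonempty because, by the proof of Lemma \ref{product coinsides with the stabilizer}, $G^0_{q(\Delta)}$ acts transitively on the blocks.

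Suppose, for contradiction, that every $x\in A$ satisfies $x\Delta'^{\{2\}}=\Delta^{\{2\}}$. Fix $x_0\in A$. Then for every $y\in G^0_{q(\Delta)}$ stabilizing $\Delta^{\{2\}}$, the element $x_0y\in A$, so $x_0y\Delta'^{\{2\}}=\Delta^{\{2\}}$, hence $y\Delta'^{\{2\}}=x_0^{-1}\Delta^{\{2\}}=\Delta'^{\{2\}}$. Thus the stabilizer of $\Delta^{\{2\}}$ in $G^0_{q(\Delta)}$ fixes $\Delta'^{\{2\}}$. Conjugating by elements of $G_\Delta$, which normalize $G^0_{q(\Delta)}$ and act transitively on $\mathcal{P}_\Delta\setminus\{\Delta\}$, this stabilizer fixes every block.

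Consequently the block-stabilizer in $G^0_{q(\Delta)}$ consists of elements of order at most $2$ on $\Omega$, so it is elementary abelian. However, it contains $G^0_{q(\Delta)}\cap G_\Delta$, which contains $\rist_G(V)\cap G_\Delta$ for suitable $V$. I would derive a contradiction using Lemma \ref{non-abelian rigid stabilizers} together with the compression argument of Lemma \ref{orbits of rigid stabilizers of size 3}, with $G_\Delta$ of quasi-parabolic type compressing sets away from $q(\Delta)$. This contradiction yields the desired $h$, and $h\in\rist_G(V)$ for some open $V$ with $q(\Delta)\notin\overline V$ by the definition of $G^0_{q(\Delta)}$.
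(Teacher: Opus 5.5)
Your reduction is sound as far as it goes. Assuming no suitable $h$ exists, the set-$A$ argument shows that the stabilizer $B$ of $\Delta^{\{2\}}$ in $G^0_{q(\Delta)}$ fixes $\Delta'^{\{2\}}$. Conjugation by $G_\Delta$ then shows that $B$ fixes every block, i.e.\ $B=\ker\psi$ for the block action $\psi\colon G^0_{q(\Delta)}\to\Sym(\mathcal{P}_\Delta)$.

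The gap is the final contradiction, which you only gesture at, and the route you indicate cannot produce it.
\begin{itemize}
\item ``$B$ is elementary abelian'' carries no information by itself. The kernel $\ker\psi$ is \emph{always} an elementary abelian $2$-group, since $G^0_{q(\Delta)}$ preserves a partition into $2$-element blocks and acts faithfully on $\Omega$.
\item To contradict it you would need a non-abelian subgroup of $B$. Lemma \ref{non-abelian rigid stabilizers} applies only to full rigid stabilizers, and none of them lies in $B$: by Lemma \ref{orbits of rigid stabilizers of size 3}, every $\rist_G(U)$ has all its orbits in $\Omega$ of size at least $3$, so it cannot preserve the pair $\Delta^{\{2\}}$.
\item Nothing you cite shows that $\rist_G(V)\cap G_\Delta$ is non-abelian, or even non-trivial. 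Compressing by $l\in G_\Delta$ only gives $l(\rist_G(V)\cap G_\Delta)l^{-1}=\rist_G(lV)\cap G_\Delta$, which moves these intersections around without putting any elements into them.
\end{itemize}

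The information in your contradiction hypothesis lives in the image $\psi(G^0_{q(\Delta)})$, and that is where the contradiction has to be found. Conjugate your hypothesis by $G_\Delta$, which normalizes $G^0_{q(\Delta)}$, fixes $\Delta^{\{2\}}$, and is transitive on the other blocks. This shows that every $x\in G^0_{q(\Delta)}$ with $x\Delta^{\{2\}}=\Delta''^{\{2\}}\neq\Delta^{\{2\}}$ satisfies $x\Delta''^{\{2\}}=\Delta^{\{2\}}$, so $x^2\in B=\ker\psi$. Elements fixing $\Delta^{\{2\}}$ already lie in $B=\ker\psi$. Hence $\psi(G^0_{q(\Delta)})$ has exponent $2$, and $G^0_{q(\Delta)}$ has exponent dividing $4$. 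This contradicts Lemma \ref{wreath products in rigid stabilizers}, which yields elements of order greater than $4$ in $\rist_G(V)\le G^0_{q(\Delta)}$.

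That is exactly the ingredient of the paper's proof, used there directly rather than by contradiction:
\begin{itemize}
\item An element of order greater than $4$ cannot have $\psi$-image of order at most $2$. So some $h_0\in\rist_G(V_0)$ with $q(\Delta)\notin\overline{V_0}$ moves three distinct blocks as $\Delta_1^{\{2\}}\mapsto\Delta_2^{\{2\}}\mapsto\Delta_3^{\{2\}}$.
\item Choose $f\in G_{q(\Delta)}$ with $f\Delta_1^{\{2\}}=\Delta^{\{2\}}$ and $f\Delta_2^{\{2\}}=\Delta'^{\{2\}}$, by Lemma \ref{2-transitive action preserves blocks}.
\item Then $h=fh_0f^{-1}\in\rist_G(fV_0)$ works, and $q(\Delta)\notin\overline{fV_0}$ because $f$ fixes $q(\Delta)$.
\end{itemize}
This conjugation step is also what your Steps 1--3 were missing. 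Start from a suitable element of $G^0_{q(\Delta)}$ and conjugate it by $G_{q(\Delta)}$, instead of trying to push a prescribed $k\in G_{q(\Delta)}$ into $G^0_{q(\Delta)}$.
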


\begin{proof}
Consider the action of $G_{q(\Delta)}^0$ on $\Omega$, and denote by 
$$
\psi \colon G_{q(\Delta)}^0 \longrightarrow \Sym(\mathcal{P}_{\Delta})
$$
the induced action on the set of blocks of $\mathcal{P}_{\Delta}$.
We have a short exact sequence 
$$
1 \longrightarrow \ker(\psi) \longrightarrow G_{q(\Delta)}^0 \longrightarrow \psi(G_{q(\Delta)}^0) \longrightarrow 1.
$$
Note that $\ker(\psi)$ is an elementary abelian 2-group, since $G_{q(\Delta)}^0$ preserves the partition $\mathcal{P}_{\Delta}$, which consists of blocks of size $2$.

 The subgroup $G_{q(\Delta)}^0$ contains a rigid stabilizer of some nonempty open subset $V \subset \Lambda_S(G)$ with $q(\Delta) \notin \overline{V}$, thus, by Lemma \ref{wreath products in rigid stabilizers}, it does not consist of elements of order not greater than $4$. Therefore, it is not an extension of two elementary abelian 2-groups, and  $\psi(G_{q(\Delta)}^0)$ does not consist only of elements of order $2$. Thus, there exists an element of $G_{q(\Delta)}^0$ having a cycle of length at least $3$ under the action of $G_{q(\Delta)}^0$ on $\mathcal{P}_{\Delta}$, i.e., there exist an element $h_0$ and distinct $\Delta_1^{\{2\}}, \Delta_2^{\{2\}}, \Delta_3^{\{2\}} \in \mathcal{P}_{\Delta}$ such that $$
h_0\Delta_i^{\{2\}}=\Delta_{i+1}^{\{2\}}, i=1,2.
$$ 

By the definition of $G_{q(\Delta)}^0$, there exists an open subset $V_0 \subset \Lambda_S(G)$, not containing $q(\Delta)$, such that $h_0 \in \rist_G(V_0)$. 

Now, since the action of $G_{q(\Delta)}$ on $\mathcal{P}_{\Delta}$ is $2$-transitive by Lemma \ref{2-transitive action preserves blocks}, we may find $f \in G_{q(\Delta)}$ such that $f\Delta_1^{\{2\}}=\Delta^{\{2\}}$ and $f\Delta_2^{\{2\}}=\Delta'^{\{2\}}$.
Let $V=fV_0$. Note that $ q(\Delta) \notin V$, since $f$ fixes $q(\Delta)$.
Consider the element $h=f h_0 f^{-1}$. We have $h \in \rist_G(V)$, and 
$$
\begin{aligned}
&h\Delta^{\{2\}}= fh_0f^{-1}\Delta^{\{2\}}=fh_0\Delta_1^{\{2\}}=f\Delta_2^{\{2\}}=\Delta'^{\{2\}};\\
&h\Delta'^{\{2\}}= fh_0f^{-1}\Delta'^{\{2\}}=fh_0\Delta_2^{\{2\}}=f\Delta_3^{\{2\}} \neq \Delta^{\{2\}}.
\end{aligned}
$$
Therefore, $h$ satisfies the conclusion.
\end{proof}

\begin{lem}\label{L:3.13}
    Fix $\Delta^{\{2\}} \in \Omega^{\{2\}}$. Then for every nonempty open subset $U \subset \Lambda_S(G)$ there exists $g \in \rist_G(U)$ such that $g$ exchanges the two elements of $\Delta^{\{2\}}$.
\end{lem}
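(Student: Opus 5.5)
I would follow the strategy of \cite{BM}, and prove the statement in two steps. First I produce \emph{some} element exchanging the two points of $\Delta^{\{2\}}=\{\omega,\omega'\}$ whose support avoids a neighborhood of $q(\Delta)$. Then I push its support into the prescribed $U$ using the general type action of the point stabilizers together with compressibility.

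\textbf{Step 1 (an exchanging element in $G^0_{q(\Delta)}$).} Choose $\Delta'^{\{2\}}\in\mathcal P_\Delta$ distinct from $\Delta^{\{2\}}$. By Lemma \ref{Delta and Delta'} there is $h\in\rist_G(V)$ with $q(\Delta)\notin\overline V$, $h\Delta^{\{2\}}=\Delta'^{\{2\}}$ and $h\Delta'^{\{2\}}\neq\Delta^{\{2\}}$. Since $G_{q(\Delta)}$ acts $2$-transitively on $\mathcal P_\Delta$ (Lemma \ref{2-transitive action preserves blocks}), its block stabilizers act transitively on the remaining blocks. Hence, using Lemma \ref{product coinsides with the stabilizer}, I can pick $k\in G^0_{q(\Delta)}$ with $k\Delta'^{\{2\}}=\Delta^{\{2\}}$. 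Then I compose suitable elements, e.g.\ a commutator such as $[h,k]$ or a product $k h$, which stays in $G^0_{q(\Delta)}$ because that set is a normal subgroup of $G_{q(\Delta)}$. I would then analyze its action on the two points of $\Delta$. Elements of $G^0_{q(\Delta)}$ stabilizing the block $\Delta^{\{2\}}$ either fix or swap its points. I need one that swaps.

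To guarantee a swap, I use that $G^0_{q(\Delta)}$ meets every orbit structure: by the proof of Lemma \ref{product coinsides with the stabilizer}, $G^0_{q(\Delta)}$ acts transitively on blocks and each of its orbits meets every block. Suppose no element of $G^0_{q(\Delta)}\cap G_{\Delta^{\{2\}}}$ swaps $\Delta$. Then I would show that $G^0_{q(\Delta)}$ has two orbits, each containing exactly one point from each block. These would give a $G_{q(\Delta)}$-invariant system of two ``halves''. I would then contradict this using $G_\Delta$, which is transitive on $\Omega\setminus\Delta$ and normalizes $G^0_{q(\Delta)}$. Each point $\omega''$ of $\Omega\setminus\Delta$ can be moved by $G_\Delta$ to its block partner, and $G_\Delta$ fixes $\omega$ and hence preserves the half containing $\omega$. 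So $G_\Delta$ preserves the two halves, yet it would send a point to its partner in the other half. This is the contradiction I expect, and it yields $g_0\in\rist_G(V_0)$ swapping $\omega,\omega'$, with $q(\Delta)\notin\overline{V_0}$.

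\textbf{Step 2 (moving the support into $U$).} Now $G_\Delta$ is quasi-parabolic with fixed point $q(\Delta)$ and limit set $\Lambda_S(G)$. By Lemma \ref{Gen type actions admit compressible open sets}, any open $V_0$ with $q(\Delta)\notin\overline{V_0}$ can be moved into any nonempty open $U$ by some $l\in\mathcal L_S(G_\Delta)\subset G_\Delta$. Then $g=lg_0l^{-1}\in\rist_G(lV_0)\subseteq\rist_G(U)$. Since $l$ fixes $\omega$ and $\omega'$, the element $g$ still exchanges them.

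The main obstacle is Step 1, namely producing a genuine swap rather than an element that merely permutes blocks. The argument via the invariant ``halves'' partition and the transitivity of $G_\Delta$ on $\Omega\setminus\Delta$ is the part I would check most carefully. If it fails, I would fall back on the direct construction of \cite{BM}: conjugate $h$ by elements of $G_{q(\Delta)}$ fixing $\Delta'^{\{2\}}$ to build an element with the $3$-cycle pattern on blocks, and compose to obtain the transposition.
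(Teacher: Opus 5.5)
Your proposal is correct. Step 2 is exactly the paper's argument: you conjugate by some $l\in\mathcal L_S(G_\Delta)$, obtained from Lemma \ref{Gen type actions admit compressible open sets} in the quasi-parabolic case, to move the support into $U$.

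Step 1 is where you take a different route. The paper gets the swapping element in $G^0_{q(\Delta)}$ in one line from the factorization in Lemma \ref{product coinsides with the stabilizer}. Any $f\in G$ exchanging the two points of $\Delta^{\{2\}}$ lies in $G_{\Delta^{\{2\}}}\le G_{q(\Delta)}$, so $f=hh'$ with $h\in G^0_{q(\Delta)}$ and $h'\in G_\Delta$, and $h=fh'^{-1}$ still swaps them.

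Your ``halves'' argument instead proves this directly, and it is valid:
\begin{itemize}
\item If $\omega,\omega'$ lie in the same $G^0_{q(\Delta)}$-orbit, an element carrying $\omega$ to $\omega'$ preserves the block $\Delta^{\{2\}}$ and swaps it.
\item Otherwise, the orbit $O=G^0_{q(\Delta)}\omega$ is $G_\Delta$-invariant, because $G_\Delta$ normalizes $G^0_{q(\Delta)}$ and fixes $\omega$.
\item $O$ meets every block, by transitivity of $G^0_{q(\Delta)}$ on blocks.
\item For a block $\{a,b\}\neq\Delta^{\{2\}}$ with $a\in O$, pick $y\in G^0_{q(\Delta)}$ with $y\omega=a$. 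Then $y\omega'=b$, so $b\notin O$.
\item Hence $O\cap(\Omega\setminus\Delta)$ is a nonempty proper $G_\Delta$-invariant subset. This contradicts the transitivity of $G_\Delta$ on $\Omega\setminus\Delta$.
\end{itemize}

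This is essentially the reasoning the paper uses inside the proof of Lemma \ref{product coinsides with the stabilizer}, so you are inlining that lemma rather than citing its conclusion. The paper's route is shorter. Yours needs only the transitivity of $G^0_{q(\Delta)}$ on blocks, not the full factorization with the identity $G_\omega\cap G_{q(\Delta)}=G_\Delta$.

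The preliminary material in Step 1 plays no role and should be dropped. This covers the element $h$ from Lemma \ref{Delta and Delta'}, the choice of $k$, the candidates $[h,k]$ and $kh$, and the fallback to \cite{BM}. Lemma \ref{Delta and Delta'} is used only later, in the final contradiction of Proposition \ref{Stab Delta is not quasi-parabolic}.
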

\begin{proof}
Choose an element $f \in G$ such that $f$ exchanges the two elements of $\Delta^{\{2\}}$. By Lemma \ref{product coinsides with the stabilizer}, there exist $h \in G_{q(\Delta)}^0$ and $h^{\prime} \in G_{\Delta}$ such that $f=h h^{\prime}$. So the element $h$ also exchanges the two elements of $\Delta^{\{2\}}$, and since $h \in G_{q(\Delta)}^0$ there exists a nonempty open subset $V \subset \Lambda_S(G)$ such that $q(\Delta) \notin \overline{V}$ and $h \in \rist_G(V)$.

Now let $U$ be an arbitrary nonempty open subset of $\Lambda_S(G)$. Since the action of $G_{\Delta}$ on $\Lambda_S(G)$ is quasi-parabolic, by Lemma \ref{Gen type actions admit compressible open sets} there exists $l \in \mathcal{L}_S(G_{\Delta})$ such that $lV \subset U$ and, thus, $g=l hl^{-1} \in \rist_G(U)$. Note that $g$ also exchanges the two elements of $\Delta^{\{2\}}$ since $l \in G_{\Delta}$.
\end{proof}

We complete the proof of Proposition \ref{Stab Delta is not quasi-parabolic}.

\begin{proof}[Proof of Proposition \ref{Stab Delta is not quasi-parabolic}]
    Let $\Delta^{\{2\}} \in \Omega^{\{2\}}$ and $\Delta'^{\{2\}} \in \mathcal{P}_\Delta$ distinct from $\Delta^{\{2\}}$, $V \subset \Lambda_S(G)$ and $h \in \rist_G(V)$ be as in Lemma \ref{Delta and Delta'}, and let $\widetilde{\Delta}^{\{2\}} \in \Omega^{\{2\}}$ such that $\widetilde{\Delta}^{\{2\}} \cap \Delta^{\{2\}}$ and $\widetilde{\Delta}^{\{2\}} \cap \Delta'^{\{2\}}$ are nonempty. Say $\Delta^{\{2\}}=\{\omega, \omega'' \}, \Delta'^{\{2\}}= \{\omega', \widetilde{\omega}\}$ and $\widetilde{\Delta}^{\{2\}}=\{\widetilde{\omega}, \omega\}$, where $\omega, \omega', \widetilde{\omega}$, and $\omega'' \in \Omega$ are distinct. Choose a nonempty open subset $W \subset \Lambda_S(G)\setminus V$ with $q(\Delta) \notin \overline{W}$. We apply Lemma \ref{L:3.13} and find $g \in \rist_G(W)$ such that $g\widetilde{\omega}=\omega$ and $g\omega=\widetilde{\omega}$. Note that $g$ fixes $q(\Delta)$, and hence $g$ must preserve the partition $\mathcal{P}_{\Delta}$. Since $g$ exchanges two elements of $\Delta^{\{2\}}$ and $\Delta'^{\{2\}}$, it follows that $g$ actually exchanges the blocks $\Delta^{\{2\}}$ and $\Delta'^{\{2\}}$.
    Thus, we have
    \begin{equation}\label{contradiction equation}
        hg\Delta^{\{2\}}=h\Delta'^{\{2\}} \neq \Delta^{\{2\}}.
    \end{equation}
    Since $g$ and $h$ are supported in disjoint open subsets of $\Lambda_S(G)$, they commute. Therefore,
    $$
    hg\Delta^{\{2\}}=gh\Delta^{\{2\}}=g\Delta'^{\{2\}}=\Delta^{\{2\}},
    $$ which contradicts (\ref{contradiction equation}).
\end{proof}

The next lemma completes the proof outlined after Proposition \ref{non-elementary action}.

\begin{lem}\label{The action of a stabilizer of a point is quasi-parabolic}
    Assume that $G$ acts faithfully and $3$-transitively on a set $\Omega$. Then for any $\omega \in \Omega$, the action of a subgroup $G_\omega$ on $S$ is quasi-parabolic.
\end{lem}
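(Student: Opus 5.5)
The plan is to combine Proposition \ref{non-elementary action} with Propositions \ref{Stab Delta is not of general type} and \ref{Stab Delta is not quasi-parabolic}, following the outline given after Proposition \ref{non-elementary action}. Throughout, we are in the standing setting: the action of $G$ on $S$ is of general type, and the induced action on $\Lambda_S(G)$ is faithful and not topologically free.

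First, since $G$ acts faithfully and $3$-transitively (in particular $2$-transitively) on $\Omega$, Proposition \ref{non-elementary action} shows that for every $\omega\in\Omega$ the action of $G_\omega$ on $S$ is non-elementary and $\Lambda_S(G_\omega)=\Lambda_S(G)$. So this action is either quasi-parabolic or of general type. All the stabilizers $G_\omega$ are conjugate in $G$, so they all have the same type. Suppose, for a contradiction, that they are all of general type.

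Next, fix $\omega$ and consider $G_\omega$ acting on $\Omega\setminus\{\omega\}$. This action is $2$-transitive, because the $G$-action is $3$-transitive. It is also faithful: an element of $G_\omega$ acting trivially on $\Omega\setminus\{\omega\}$ fixes all of $\Omega$, hence is trivial. Since $G_\omega$ acts on $S$ with general type, Proposition \ref{non-elementary action} applies to $G_\omega$ in place of $G$. It gives that for each $\omega'\neq\omega$ the stabilizer $(G_\omega)_{\omega'}=G_\Delta$, with $\Delta=(\omega,\omega')$, acts non-elementarily on $S$ with limit set $\Lambda_S(G_\omega)=\Lambda_S(G)$. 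The group $G$ acts transitively on $\Omega^{(2)}$, so all the $G_\Delta$ are conjugate and share one type. Hence either every $G_\Delta$ acts with general type, or every $G_\Delta$ acts quasi-parabolically.

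The first alternative is ruled out by Proposition \ref{Stab Delta is not of general type}, and the second by Proposition \ref{Stab Delta is not quasi-parabolic}. In both propositions the hypotheses are exactly the present situation: a faithful $3$-transitive action on $\Omega$, every $G_\omega$ of general type, and the standing assumptions on the $G$-action on $S$. This contradiction shows that $G_\omega$ is quasi-parabolic.

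The main point to check is that Proposition \ref{non-elementary action} really applies to $G_\omega$. It needs only a general type action on $S$ and a faithful $2$-transitive action on a set, and both hold as verified above. The nontrivial work is already contained in the two preceding propositions, so the remaining argument is mostly bookkeeping of conjugacy invariance of the action type.
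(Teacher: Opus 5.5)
Your proposal is correct and follows essentially the same route as the paper. Proposition~\ref{non-elementary action} reduces the question to excluding a general type action for $G_\omega$. A second application, to $G_\omega$ acting $2$-transitively on $\Omega\setminus\{\omega\}$, makes $G_\Delta$ non-elementary, and Propositions~\ref{Stab Delta is not of general type} and~\ref{Stab Delta is not quasi-parabolic} exclude both remaining types. Your extra checks are details the paper leaves implicit: faithfulness of $G_\omega$ on $\Omega\setminus\{\omega\}$, and conjugacy invariance of the action type, which is needed to match the \emph{for every} $\omega$ and $\Delta$ hypotheses of those two propositions.
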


\begin{proof}
    Fix any $\omega \in \Omega$. By Proposition \ref{non-elementary action}, the action of $G_\omega$ on $S$ is non-elementary. Therefore, it suffices to prove that it is not of general type. For the sake of contradiction, assume that the action of $G_\omega$ on $S$ is of general type. The action of $G$ on $\Omega$ is $3$-transitive. Therefore, the action of $G_\omega$ on $\Omega \setminus \{\omega\}$ is $2$-transitive. Let $\omega_1 \in  \Omega \setminus \{\omega\}$ and $\Delta=(\omega_1, \omega)$. We apply Proposition \ref{non-elementary action} to $G_\Delta \leq G_\omega$ and obtain that $G_\Delta$ is itself non-elementary. By Proposition \ref{Stab Delta is not of general type}, the action of $G_\Delta$ on $S$ cannot be of general type, and by Proposition \ref{Stab Delta is not quasi-parabolic}, it cannot be quasi-parabolic. This leads to a contradiction. 
\end{proof}

Now we are ready to prove Theorem \ref{thm about conjugate orbits}.

\begin{proof}[Proof of Theorem \ref{thm about conjugate orbits}]
By Lemma \ref{The action of a stabilizer of a point is quasi-parabolic}, the action of $G_\omega$ on $S$ is quasi-parabolic for any $\omega \in \Omega$. Let $q(\omega) \in \Lambda_S(G_\omega)$ be the unique fixed point of $G_\omega$. Consider the map $q \colon \Omega \rightarrow \partial S$, given by $\omega \mapsto q(\omega)$. This map is $G$-equivariant. For the sake of contradiction, assume that $q$ is not injective, i.e., there exists $\omega_1, \omega_2 \in \Omega$, such that $\omega_1 \neq \omega_2$, but  $q(\omega_1)=q(\omega_2)$. The action of $G$ is $2$-transitive on $\Omega$, so for any distinct $\omega_1', \omega_2' \in \Omega$ there exists $g \in G$ such that $g\omega_1=\omega_1'$ and $g\omega_2=\omega_2'$. Therefore, $G_{\omega_1}=g^{-1}G_{\omega_1'}g$ and $G_{\omega_2}=g^{-1}G_{\omega_2'}g$. Thus, $q(\omega_1')=gq(\omega_1)=gq(\omega_2)=q(\omega_2')$. Then the map $q$ is constant, and it follows that $G$ fixes a point in $\Lambda_S(G)$, which contradicts the assumption that the action of $G$ on $S$ is of general type. 
Therefore, $q$ is injective, and the $G$-action on $\Omega$ is conjugate to the action on $\mathcal{O}=q(\Omega)$.
\end{proof}

\begin{proof}[Proof of Corollary \ref{td is leq 3}]
The group $G$ admits a general type action on a hyperbolic space $S$ such that the induced action of $G$ on $\Lambda_S(G)$ is faithful and not topologically free. Thus, by Theorem \ref{thm about conjugate orbits}, if $G$ acts faithfully and 3-transitively on any set $\Omega$, then this action is conjugate to the action of $G$ on some $G$-orbit $\mathcal{O} \subset \Lambda_S(G)$.  By Lemma \ref{set of fixed points of loxodromic elements is dense}, the set of fixed points of elements of $\mathcal{L}_S(G)$ is dense in $\Lambda_S(G)$. Thus, since $\mathcal{O}$ is $G$-invariant, by Lemma \ref{north-south}, it is also dense in $\Lambda_S(G)$. Therefore, by Proposition \ref{not 4-transitivity}, the action of $G$ on $\mathcal{O}$ is not topologically $4$-transitive. Thus, by Lemma \ref{transitivity and topological transitivity}, it is at most $3$-transitive. Therefore, $G$ does not admit any faithful $4$-transitive action on any set $\Omega$, and the transitivity degree of $G$ is at most $3$.
\end{proof}

\begin{proof}[Proof of Corollary \ref{topological 3-transitivity}]
Since $G$ is faithful weakly hyperbolic and not lim-free, it admits a general type action on a hyperbolic space $S$ such that the induced action of $G$ on $\Lambda_S(G)$ is faithful and not topologically free. Moreover, by Theorem \ref{main_theorem}, for any general type action of $G$ on any hyperbolic space, such that the induced action on the limit set is faithful, the induced action on the limit set is not topologically free. Since the transitivity degree of $G$ is $3$, there exists a set $\Omega$ such that the $G$-action on $\Omega$ is faithful and $3$-transitive. Then, by Theorem \ref{thm about conjugate orbits}, the $G$-action on $\Omega$ is conjugate to the $G$-action on some orbit $\mathcal{O} \subset \Lambda_S(G)$. Since the action of $G$ on $S$ is of general type, $\mathcal{O}$ is dense in $\Lambda_S(G)$. Therefore, the action of $G$ on $\Lambda_S(G)$ is topologically $3$-transitive. \end{proof}

\begin{cor}\cite[Theorem 1.4]{BM}\label{conjugate orbits for trees}
Let $T$ be a simplicial tree. Suppose that the action of $G \leq \Aut(T)$ on $T$ is minimal and of general type and that the action of $G$ on $\partial T$ is not topologically free. Assume that $G$ acts faithfully and 3-transitively on a set $\Omega$. Then there exists a $G$-orbit $\mathcal{O} \subset \partial T$ such that the actions of $G$ on $\Omega$ and on $\mathcal{O}$ are conjugate. 
\end{cor}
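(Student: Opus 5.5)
The plan is to deduce this from Theorem \ref{thm about conjugate orbits}, in the same way Corollary \ref{previous results about MIF in trees} was deduced from Theorem \ref{real main theorem}. We regard $T$ (its geometric realization) as a $0$-hyperbolic geodesic space; the action of $G\leq \Aut(T)$ is of general type by assumption. So the theorem applies once two things are known: $\Lambda_T(G)=\partial T$, and the induced action of $G$ on $\Lambda_T(G)$ is faithful and not topologically free. With these in hand, the theorem gives a $G$-orbit $\mathcal{O}\subset \Lambda_T(G)=\partial T$ on which the action is conjugate to the action on $\Omega$, which is exactly the claim.

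First, since the action is minimal and of general type, $\mathcal{L}_T(G)\neq\emptyset$. Lemma \ref{limit set is the whole boundary} then gives $\Lambda_T(G)=\partial T$. Consequently, "not topologically free on $\partial T$" is literally the same as "not topologically free on $\Lambda_T(G)$".

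Second, we show faithfulness on the limit set, i.e.\ $E_T(G)=1$, repeating the argument from the proof of Corollary \ref{previous results about MIF in trees}.
\begin{enumerate}
\item Pass to the barycentric subdivision, so that $G$ acts without inversions. This changes neither $\partial T$, nor minimality, nor the faithfulness of the action on $T$.
\item $E_T(G)$ is elliptic by Proposition \ref{elliptic radical is maximal}, so by Lemma \ref{fixed point in a tree} it fixes a vertex $t$.
\item Since $E_T(G)$ is normal in $G$, it fixes the whole orbit $Gt$, hence pointwise fixes the subtree spanned by $Gt$.
\item That subtree is $G$-invariant, so by minimality it is all of $T$.
\item Therefore $E_T(G)$ acts trivially on $T$, and since $G\leq\Aut(T)$ acts faithfully, $E_T(G)=1$.
\end{enumerate}

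The only delicate point is the bookkeeping around the subdivision. One must check that minimality is preserved: a proper invariant subtree of the subdivision yields, after taking the smallest subcomplex of the original tree containing it, a proper invariant subtree of $T$, unless it already meets the interior of every edge. One must also note that the fixed point supplied by Lemma \ref{fixed point in a tree} may be an edge midpoint, which becomes a vertex after subdivision. This is routine, and the paper already handled it in the same way in Corollary \ref{previous results about MIF in trees}. Otherwise the corollary follows directly from Theorem \ref{thm about conjugate orbits}.
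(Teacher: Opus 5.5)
Your proposal is correct and is essentially the paper's proof: the paper also just checks the hypotheses of Theorem \ref{thm about conjugate orbits} by reusing the argument from Corollary \ref{previous results about MIF in trees}. That argument gives $\Lambda_T(G)=\partial T$ from Lemma \ref{limit set is the whole boundary}, and $E_T(G)=1$ via barycentric subdivision, a fixed vertex, normality, minimality and $G\leq\Aut(T)$. Your extra bookkeeping about the subdivision is fine, and can be shortened by noting that the minimal invariant subtree is the union of the axes of loxodromic elements (Proposition \ref{union of loxodromic axes}), so it is the same for $T$ and for its subdivision.
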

\begin{proof}
    It is sufficient to show that $G$ satisfies the conditions of Theorem \ref{thm about conjugate orbits}. This was already shown in the proof of Corollary \ref{previous results about MIF in trees}.
\end{proof}

\subsection{Examples and necessity of assumptions}\label{examples}

\begin{ex}\label{Verbal products}
{\bf Verbal products.}
This example provides a family of non-faithful weakly hyperbolic groups that satisfy a mixed identity.

Let $V=\left\{f_\nu\left(x_1 \ldots x_{n_\nu}\right): \nu \in I\right\}$ be a subset of words in an infinitely generated free group. A {\it verbal $V$-subgroup} $V(G)$ of a group $G$ is a subgroup generated by all possible values of all words of $V$, when $x_1, x_2, \ldots$ run through the entire group $G$ independently of each other. A verbal $V$-subgroup $V(G)$ is normal in $G$. Let $G_i, i \in I$ be some collection of groups and let $F = *_{i\in I}G_i$. Let $\pi \colon F  \longrightarrow \prod_{i\in I}G_i$ be the natural epimorphism. Then the {\it Cartesian subgroup} $C < F$ is the kernel of $\pi$. Let $V(F)$ be a verbal $V$-subgroup of $F$. Then the {\it verbal product} $*_V G_i$ of groups $G_i$ with respect to $V$ is $F/(V(F) \cap C)$.
\end{ex}
\begin{lem}\label{Verbal products are not MIF}

Let $A$ and $B$ be nontrivial groups, and let $V$ be a nonempty subset of words in a free group $F(x_1, \cdots, x_k)$. Then the group $G=A *_V B$ satisfies a mixed identity.
\end{lem}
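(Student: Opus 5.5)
The plan is to write down an explicit nontrivial mixed identity built from two elements of the Cartesian subgroup. Write $G=A*_VB=F/(V(F)\cap C)$ with $F=A*B$. Fix nontrivial $a\in A$ and $b\in B$, and consider the commutator $[a,b]$. This element lies in $C$, the kernel of $\pi\colon F\to A\times B$, because $\pi(a)$ and $\pi(b)$ commute in the direct product.

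The first step is to use that $C\cap V(F)$ is killed in $G$. Take a word $f=f_\nu(x_1,\dots,x_k)\in V$ and substitute elements of $C$. Every value $f(c_1,\dots,c_k)$ with $c_i\in C$ lies in $V(F)\cap C$, since $C$ is a subgroup. Hence it is trivial in $G$. So the image $\overline C$ of $C$ in $G$ satisfies the law $f=1$. Moreover, $\overline C$ is normal in $G$.

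The second step turns this into a mixed identity with one variable $t$. I would set
\[
c_i(t)=[a,b]^{t\,u_i}\in G*\langle t\rangle
\]
for suitable distinct fixed elements $u_i$ of $G$ (for instance $u_i=b^{i}$ or $(ab)^i$). Then I take
\[
w(t)=f\bigl(c_1(t),\dots,c_k(t)\bigr).
\]
For every $g\in G$, each $c_i(g)$ is a conjugate of $[a,b]$ and so lies in $\overline C$. Therefore $w(g)=1$, and $G$ satisfies $w=1$.

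The remaining issue, which I expect to be the main obstacle, is nontriviality of $w$ in $G*\langle t\rangle$. Two things can go wrong.

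The first is that $f$ might be a trivial word. This is harmless: if $f=1$ in the free group then $V(F)=1$ and $G=F$, and I would handle that degenerate reading separately (presumably $V$ is meant to contain a nontrivial word). So assume $f\neq 1$.

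The second is that $[a,b]$ might be trivial in $G$. I need $[a,b]\neq 1$ in $G$. I would argue this by showing $[a,b]\notin V(F)$, using that $A$ and $B$ embed in $G$ and looking at normal forms. If that fails for general $V$, I can fall back to working directly with $c_i$ built from $t$-conjugates of a fixed nontrivial element $x$ of $\overline C$. Here I would note that $\overline C\neq 1$, since otherwise $G\cong A\times B$ and the argument simplifies to commuting identities such as $[a^t,b]$-type laws.

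Given a nontrivial $x\in\overline C$, the elements $x^{t u_i}$ generate a free subgroup of $G*\langle t\rangle$ of rank $k$. I would check this by the Normal Form Theorem for free products (as in the proof of Lemma~\ref{top.k-transitive or micro-supported}), provided the $u_i$ are chosen so that no cancellation occurs. This holds for instance if $u_iu_j^{-1}\notin\langle x\rangle$ for $i\neq j$, or more simply by choosing the $c_i=x^{t^{i}}$ with $t$-powers separating them. With that choice a nontrivial reduced $f$ evaluates to a nontrivial element, so $w\neq1$.

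Finally, the case $\overline C=1$ is immediate. There $G=A\times B$ satisfies the nontrivial mixed identity $[a^t,b]=1$ whenever $a^t\in A$, which fails for general $t$. So in that case I would instead use the identity $[[a,a'^{t}],\dots]$-style words obtained from direct-decomposability, in the spirit of \cite[Prop.~5.4]{OH}, which says MIF groups are directly indecomposable.
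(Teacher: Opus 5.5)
Your first step is correct: the image $\overline C$ of $C$ is normal in $G$ and satisfies the law $f=1$. So any element of $G*\langle t\rangle$ that lands in $\overline C$ under every substitution can be plugged into $f$.

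The gap is in the nontriviality step. You plug in conjugates $x^{tu_i}$ (or $x^{t^i}$) of a \emph{fixed} element $x\in\overline C$ and claim they freely generate a free group of rank $k$. By the normal form theorem they generate the free product of $k$ conjugates of $\langle x\rangle$. That product is free only when $x$ has infinite order, and you cannot arrange this: if $V$ contains $x_1^n$, then $\overline C$ has exponent dividing $n$.

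Concretely, let $A=B=\ZZ$ and $V=\{x_1^3\}$. Then $C=F'$ and $\overline C\cong F'F^3/F^3$, which is the derived subgroup of the Burnside group $F/F^3$ of order $27$. So $\overline C$ has order $3$, and $[a,b]\neq 1$ in $G$ but $[a,b]^3=1$ in $G$. Your word $w(t)=\bigl([a,b]^{tu_1}\bigr)^3=\bigl([a,b]^3\bigr)^{tu_1}$ is therefore the trivial element of $G*\langle t\rangle$. The same happens for every nontrivial $x\in\overline C$, so neither your main construction nor the fallback gives a nontrivial identity.

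The missing idea is to put the free variable inside the commutator. Then the element of $G*F$ you feed into the law has infinite order, even though all its values in $G$ may be torsion. The paper takes $w=v\bigl([a^{x_1},b],\dots,[a^{x_k},b]\bigr)$ with $v\in V$ a nontrivial word.

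\begin{itemize}
\item $w$ vanishes on $G$: for any values $g_i\in G$, lifted to $A*B$, the image of $a^{g_i}$ in $A\times B$ lies in $A$ and so commutes with $b$. Hence $[a^{g_i},b]\in C$, and $v$ of these lies in $V(F)\cap C$.
\item $w\neq 1$: we have $a,b\neq 1$ in $G$, because $A\cap C=B\cap C=1$. So the reduced words $[a^{x_i},b]=x_i^{-1}a^{-1}x_ib^{-1}x_i^{-1}ax_ib$ freely generate a free subgroup of $G*F(x_1,\dots,x_k)$.
\end{itemize}

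This handles all cases at once, so your worries about whether $[a,b]$ or $\overline C$ is trivial disappear. Separately, in your case $\overline C=1$ you dismiss $[a^t,b]=1$ wrongly. $A$ is normal in $A\times B$, so $a^g\in A$ for every $g$, and $[a^t,b]=1$ is a valid nontrivial mixed identity there.
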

\begin{proof}
Let $v=v(x_1, x_2, x_3, \cdots, x_k)$ be a nonempty word in $V$, and let $a \in A$ and $b \in B$ be non-trivial elements of $G$. Consider the element  
$$w(a,b,x_1, x_2, x_3, \cdots, x_k)=v([a^{x_1},b], [a^{x_2}, b], [a^{x_3},b], \cdots, [a^{x_k}, b]) \in G*F(x_1, \cdots, x_k).$$ For any $i \in \{1, \dots, k\}$ and values of $x_i$ in $G$, elements $[a^{x_i}, b]$ are in the Cartesian subgroup of $A*B$, thus $v([a^{x_1},b], [a^{x_2}, b], [a^{x_3},b], \cdots, [a^{x_k}, b]) \in V(A*B)$. Therefore, $w(a,b,g_1, g_2, g_3, \cdots, g_k)=1$ in $G$ for any values $g_1, \cdots, g_k \in G$. Since $a, b$ are non-trivial in $G$, $w(a,b,x_1, x_2, x_3, \cdots, x_k)$ is non-trivial in $G*F(x_1, \cdots, x_k)$.
\end{proof}

\begin{prop}\label{WH not FWH groups}
Let $A$, $B$ be nontrivial groups, and let at least one of $A$ or $B$ be weakly hyperbolic. Let $V$ be a nonempty subset of words in a free group $F(x_1, \cdots, x_k)$. Then $G=A *_V B$ is a weakly hyperbolic group that is not faithful weakly hyperbolic.
\end{prop}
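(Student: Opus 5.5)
The plan has two parts: show that $G$ is weakly hyperbolic, and show that it is not faithful weakly hyperbolic. The second part reduces to Theorem \ref{real main theorem} and Lemma \ref{Verbal products are not MIF}.

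\textbf{Weak hyperbolicity.} Suppose $A$ is weakly hyperbolic; the case of $B$ is symmetric. The natural epimorphism $\pi\colon A*B\to A\times B$ kills $V(F)\cap C$, because that subgroup lies in $C=\ker\pi$. So $\pi$ factors through $G=A*_VB$, giving an epimorphism $G\to A\times B\to A$. Pull back along this map a general type action of $A$ on a hyperbolic space $S$. The resulting action of $G$ has the same orbits and the same loxodromic elements as the action of $A$, so it is still of general type. Hence $G$ is weakly hyperbolic.

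\textbf{Not faithful weakly hyperbolic.} Argue by contradiction. Suppose $G$ admits a general type action on some hyperbolic space $S'$ for which the induced action on $\Lambda_{S'}(G)$ is faithful. By Theorem \ref{real main theorem}, $G$ is MIF if and only if the action on $\Lambda_{S'}(G)$ is topologically free. Lemma \ref{Verbal products are not MIF} shows that $G$ satisfies a nontrivial mixed identity, so the action is faithful but not topologically free.

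It is tempting to stop here, but that is not yet a contradiction. The definition of faithful weakly hyperbolic only asks for faithfulness, and a faithful weakly hyperbolic group need not be lim-free. So an extra argument is needed. I would use Proposition \ref{monolithic groups}: since $G$ is faithful weakly hyperbolic and not lim-free, $G$ is monolithic with an infinite simple monolith $\Mon(G)$. I would then aim to contradict this by exhibiting two nontrivial normal subgroups of $G$ with trivial intersection.

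The natural candidates are the images $\overline A$ and $\overline B$ of the normal closures of $A$ and $B$ in $G$. Their intersection lies in the image $\overline C$ of the Cartesian subgroup. This is where the verbal relations enter. The image $V(\overline C)$ of $V(F)\cap C$ in $G$ is trivial, so $\overline C$ satisfies the laws in $V$, which is a nontrivial identity since $V$ is nonempty.

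Now suppose $\Mon(G)\le\overline C$. Then $\Mon(G)$ satisfies a nontrivial law. By Lemma \ref{dichotomy}, $\Mon(G)$ either has bounded orbits or acts on $S'$ with general type. Bounded orbits are impossible: $\Mon(G)$ is a nontrivial normal subgroup, so Proposition \ref{elliptic radical is maximal} would force $E_{S'}(G)\ne1$, contradicting faithfulness. So the action is of general type, and by ping-pong $\Mon(G)$ contains a free subgroup, which is incompatible with satisfying a nontrivial law. Hence $\Mon(G)\not\le\overline C$.

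This last step is the main obstacle: showing that $\Mon(G)$ lies in $\overline C$. Since $\Mon(G)$ is contained in every nontrivial normal subgroup, it suffices that $\overline A$ and $\overline B$ are nontrivial, which holds because $A$ and $B$ embed in $G$, and that $\Mon(G)\le\overline A\cap\overline B\le\overline C$. The inclusion $\overline A\cap\overline B\le\overline C$ should follow by applying the map to $A\times B$: the image of $\overline A$ there is $A\times1$ and the image of $\overline B$ is $1\times B$, so their intersection maps into $(A\times1)\cap(1\times B)=1$, that is, into $\ker(G\to A\times B)=\overline C$. This yields the desired contradiction.
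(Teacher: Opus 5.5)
Your argument is correct. It uses the same ingredients as the paper: the epimorphism $G\to A$, the fact that the image $\overline C$ of the Cartesian subgroup satisfies the laws in $V$, Lemma \ref{dichotomy} together with maximality of the elliptic radical, and Proposition \ref{monolithic groups}. The difference is in how you assemble them.

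The paper splits into two cases.
\begin{itemize}
\item If $\overline C=1$, then $G\cong A\times B$. This group is not MIF and has trivial monolith, so Proposition \ref{monolithic groups} rules out faithfulness.
\item If $\overline C\neq1$, then $\overline C$ itself is a nontrivial normal subgroup satisfying a law. It is therefore elliptic for every general type action, hence contained in $E_S(G)$.
\end{itemize}

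You avoid the case split. You first obtain $\Mon(G)\neq1$ from Lemma \ref{Verbal products are not MIF}, Theorem \ref{main_theorem} and Proposition \ref{monolithic groups}. You then push $\Mon(G)$ into $\overline C$ via $\overline A\cap\overline B\le\ker(G\to A\times B)=\overline C$, and run the law/dichotomy argument on $\Mon(G)$ instead of on $\overline C$.

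What each route buys:
\begin{itemize}
\item Your version is uniform. Your explicit inclusion is also precisely what justifies the paper's unproved claim that $\Mon(G)=1$ in the degenerate case.
\item The paper's version needs the heavy machinery (the MIF characterization and the Garrido--Reid monolith result) only when $G\cong A\times B$. In the main case it gives an elementary and stronger conclusion: $\overline C\le E_S(G)$ for every general type action, not merely failure of faithfulness.
\end{itemize}

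One small correction, which the paper shares. You write that $V$ gives a nontrivial identity ``since $V$ is nonempty.'' That inference is not valid as stated: you need $V$ to contain a nontrivial word. For $V=\{1\}$ the verbal product is the free product $A*B$. For example, $F_2*\mathbb{Z}$ acts on its Bass--Serre tree faithfully, minimally and with general type, so it is faithful weakly hyperbolic. Hence the statement itself requires this hypothesis.
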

\begin{proof}
Without loss of generality, assume that $A$ is weakly hyperbolic. There exists a surjective homomorphism of $G$ onto $A$, thus $G$ is also weakly hyperbolic. Let $\phi(C)$ be the image of the Cartesian subgroup $C < A*B$ under the homomorphism $\phi \colon A*B \rightarrow G$.

If $\phi(C)=1$, then $G$ is a direct product of two non-trivial groups, so it is not MIF and $\Mon(G)=1$. Thus, by Proposition \ref{monolithic groups}, $G$ cannot be faithful weakly hyperbolic. 

Assume now that $\phi(C) \neq 1$. Since $\phi$ is surjective, $\phi(C)$ is normal in $G$. Therefore, by Lemma \ref{dichotomy}, for any general type action of $G$ on any hyperbolic space $S$, the subgroup $\phi(C)$ is either elliptic or of general type. Since $V(\phi(C))=\{1\}$, $\phi(C)$ satisfies some non-trivial identity $v=1$, $v \in V$ and cannot contain a non-abelian free subgroup. So, $\phi(C)$ is elliptic, and since it is normal in $G$, $\phi(C) < E_S(G)$ by the maximality of $E_S(G)$. Therefore, $E_S(G)$ is non-trivial, and $G$ is not a faithful weakly hyperbolic group. 
\end{proof}

\begin{ex}{\bf Non-lim-free HNN extensions and amalgams.} The following proposition provides restrictions on associate subgroups of $\HNN$-extensions and amalgamated products when the action on the boundary of the Bass-Serre tree is not topologically free.
\end{ex}

\begin{defn}
Let $H$ be a group and let $\phi \colon C \rightarrow \phi(C)$ be an isomorphism between subgroups of $H$. Then {\it $\HNN$-extension} associated to $(H, C, \phi)$ is the group defined by the following presentation
$$
\HNN(H, C, \phi):=\langle H, t \mid t^{-1}ct=\phi(c) \text{ for all } c \in C \rangle,
$$
where $t$ is an extra generator, not belonging to $H$, called {\it a stable letter}.
\end{defn}
Recall that it is called {\it ascending} if one of the subgroups $C, \phi(C)$ is equal to $H$.

\begin{defn}
Let $l_1: C \rightarrow A$ and $l_2: C \rightarrow B$ be injective group homomorphisms. We will denote by $C_j$ the image of $l_j$ and by $\phi: C_1 \rightarrow C_2$ the isomorphism sending $l_1(c)$ to $l_2(c)$ for all $c \in C$. The {\it free product with amalgamation (or amalgam for short)} associated to $(A, B, C, \phi)$ is the group defined by the following presentation:
$$
A *_{C} B:=\left\langle A, B \mid l_1(c)=l_2(c) \text{ for all } c \in C \right\rangle = \left\langle A, B \mid c=\phi(c) \text{ for all } c \in C_1 \right\rangle.
$$
\end{defn}
Recall that such an amalgam is said to be {\it non-trivial} if $A, B \neq C_j$ for $j=1,2$, and {\it non-degenerate} if moreover $\left[A : C_1\right] \geq 3$ or $\left[B : C_2\right] \geq 3$.

Given an $\HNN$-extension $G=\HNN(H, C, \phi)$ or an amalgam $G=A*_{C}B$, we can construct a Bass-Serre tree $T$ associated to $G$. The group $G$ acts on $T$
transitively on the edges of $T$. Stabilizers of edges are conjugates of $C$ or $\phi(C)$. The action of $G$ on $T$ is always minimal. We refer the reader to \cite{Ser} for more information. If $\HNN(H, C, \phi)$ is non-ascending or $G=A*_{C}B$ is non-degenerate, then the action of $G$ on $T$ is of general type (see, for example, \cite[Sections 2.5, 2.6]{FMMS}).

\begin{prop}
Let $G=\HNN(H, C, \phi)$ be a non-ascending HNN-extension or  a non-degenerate amalgam $G=A*_{C}B$. If the induced action of $G$ on the boundary $\partial T$ of the Bass-Serre tree is faithful but not topologically free, then $C$ is non-abelian. 
\end{prop}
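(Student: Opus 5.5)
Since the action of $G$ on $T$ is faithful, minimal, and of general type, $\Lambda_T(G)=\partial T$ by Lemma \ref{limit set is the whole boundary}, and the induced action on $\partial T$ is faithful by hypothesis. So we are in the setting of Lemma \ref{Not_top_free actions are micro-supported}, and the action of $G$ on $\partial T$ is micro-supported. By Lemma \ref{non-abelian rigid stabilizers}, $\rist_G(U)$ is then non-abelian for every nonempty open $U\subseteq\partial T$. The plan is to choose $U$ so that $\rist_G(U)$ lies inside a conjugate of $C$ (or of $\phi(C)$). Since these conjugates are isomorphic to $C$, this will show that $C$ is non-abelian.

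The natural choice is a shadow. Pick an (oriented) edge $e$ of $T$ with endpoints $v,w$, and let $U_e\subset\partial T$ be the set of ends of rays from $v$ that pass through $e$, i.e.\ the boundary of the half-tree on the $w$ side of $e$. This is a clopen subset of $\partial T$. It is nonempty and also has nonempty complement, because minimality and general type imply that every half-tree contains a loxodromic axis and hence has boundary points. Take $g\in\rist_G(\partial T\setminus U_e)$, so that $g$ fixes pointwise the ends of the half-tree $T_v$ on the $v$ side of $e$. We claim that $g$ fixes $e$. Every vertex $x$ of $T_v$ (including $v$) lies on a bi-infinite geodesic contained in $T_v$ whose two ends are in $\partial T\setminus U_e$. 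This is again a consequence of minimality: every vertex of a minimal tree lies on an axis, and we can push such geodesics into $T_v$, for instance by using loxodromics whose axes pass through $v$ on both branches. An automorphism fixing both ends of a geodesic preserves that geodesic, and it translates along it by an amount that must be zero, since $g$ fixes additional ends branching off the geodesic. Hence $g$ fixes $T_v$ pointwise. In particular it fixes $v$ and the edge $e$, because $e$ lies on geodesics in the closure of $T_v$ joining ends of $\partial T\setminus U_e$ through $v$. Therefore $\rist_G(\partial T\setminus U_e)\le G_e$, and $G_e$ is a conjugate of $C$ or $\phi(C)$.

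The main obstacle is exactly the tree-geometric step above: checking carefully that pointwise fixing an open set of ends forces the element to fix the relevant edge. In particular we must verify that the half-tree $T_v$ has enough ends, with at least two geodesic directions through $v$ inside it. To do this I would first use that $G$ acts on the edges transitively up to orientation, so that the edge $e$ can be chosen freely, and then use the general type hypothesis (via Proposition \ref{union of loxodromic axes}, which gives $T$ as a union of axes) to produce the required axes inside $T_v$. If that degree bound is awkward, an alternative is to take $U$ to be the ends through a long path $e_1e_2\cdots e_k$ and let the rigid stabilizer of the complement fix the initial edge. Once $\rist_G(U)\le G_e\cong C$ is established, non-abelianness of $\rist_G(U)$ gives the conclusion.
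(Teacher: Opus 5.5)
Your proposal is correct and follows the paper's argument. Micro-support gives non-abelian rigid stabilizers; the rigid stabilizer of a suitable open set of ends (for you, a half-tree shadow) fixes an edge, so it lies in a conjugate of $C$. You supply the tree-geometric details that the paper only asserts.

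Three small repairs are needed when you write it up:
\begin{enumerate}
\item The elements fixing $\partial T\setminus U_e$ pointwise form $\rist_G(U_e)$, not $\rist_G(\partial T\setminus U_e)$. This is harmless, since both sets are nonempty and open.
\item No geodesic joining two ends of $T_v$ passes through $e$, so that justification fails. Conclude $ge=e$ instead from the fact that $g$ fixes $v$ and every other edge at $v$.
\item Not every vertex of $T_v$ need lie on a bi-infinite geodesic in $T_v$: this fails for $v$ itself when $\deg v=2$. As you suggest, choose $e$ with $\deg v\ge 3$; this is possible for a non-degenerate amalgam and automatic for a non-ascending HNN-extension.
\end{enumerate}
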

\begin{proof}
 By Lemma \ref{Not_top_free actions are micro-supported}, the action of $G$ on $\partial T$ is micro-supported, and, by Lemma \ref{non-abelian rigid stabilizers}, $\rist_G(U)$ is non-abelian for every nonempty open subset $U \subseteq \partial T$.  Take an arbitrary open $U$ that is not dense in $\partial T$. Consider two adjacent vertices $v_1$ and $v_2$ in $T$, such that the ends of infinite rays from these vertices are in $\partial T \setminus U$. Then $\rist_G(U)$ fixes both $v_1$ and $v_2$; thus, stabilizes the edge $e$ incident to $v_1$ and $v_2$. Since stabilizers of edges are conjugate to $C$, there exists $g \in G$ such that $\rist_G(U) \leq C^g$. Therefore, $C$ is non-abelian.
\end{proof}

\begin{cor}
The action of a non-solvable Baumslag-Solitar group $BS(m,n)$ on the boundary of its Bass-Serre tree is not topologically free if and only if $|m|=|n|$.
\end{cor}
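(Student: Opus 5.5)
The plan is to treat $BS(m,n)=\langle a,t\mid t^{-1}a^mt=a^n\rangle$ as the HNN-extension $\HNN(\langle a\rangle, \langle a^m\rangle,\phi)$ with $\phi(a^m)=a^n$, and let $T$ be its Bass--Serre tree. Recall that $BS(m,n)$ is solvable exactly when $|m|=1$ or $|n|=1$. So non-solvability gives $|m|,|n|\geq 2$. Hence neither $\langle a^m\rangle$ nor $\langle a^n\rangle$ equals $\langle a\rangle$, the extension is non-ascending, and the action of $G$ on $T$ is minimal and of general type. By Lemma \ref{limit set is the whole boundary}, $\Lambda_T(G)=\partial T$. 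I then prove the two directions separately.

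\emph{If $|m|=|n|$.} Here $t^{-1}a^mt=a^{\pm m}$, so $t$ normalizes $N=\langle a^m\rangle$. Since $a$ commutes with $a^m$, $N$ is a nontrivial normal subgroup of $G$. It fixes an edge of $T$, so it is elliptic, and Proposition \ref{elliptic radical is maximal} gives $N\leq E_T(G)$. Thus $a^m\neq 1$ fixes all of $\partial T=\Lambda_T(G)$. Its fixed set then has nonempty interior, so the action on $\partial T$ is not topologically free. Note that it is not even faithful.

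\emph{If $|m|\neq|n|$.} I argue by contrapositive using the preceding proposition. Since $C=\langle a^m\rangle\cong\ZZ$ is abelian, a faithful boundary action cannot be non-topologically-free. So it suffices to show that the action on $\partial T$ is faithful, i.e.\ that $E_T(G)=1$.

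\begin{enumerate}
\item As in the proof of Corollary \ref{previous results about MIF in trees}, Lemma \ref{fixed point in a tree} together with normality and minimality shows that $E_T(G)$ lies in the kernel $K$ of the action on $T$. (There are no inversions here, since $G$ acts without them on its Bass--Serre tree.)
\item All vertices lie in one $G$-orbit, so $K$ is the normal core of $\langle a\rangle$. It is cyclic, say $K=\langle a^k\rangle$, and it also fixes edges, so $a^k\in\langle a^m\rangle$. Write $k=mj$.
\item Because $K$ is normal, conjugation by $t$ restricts to an automorphism of $K\cong\ZZ$ (or of the trivial group). Hence $t^{-1}a^kt=a^{\pm k}$. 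But $t^{-1}a^{mj}t=a^{nj}$, so $|nj|=|mj|$. This forces $j=0$, i.e.\ $K=1$.
\end{enumerate}

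The only step needing care is the identification of $E_T(G)$ with the normal core of $\langle a\rangle$ together with this core computation. It is elementary but is where $|m|\neq|n|$ is genuinely used.
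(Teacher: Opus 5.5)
Your proof is correct and follows the route the paper intends; the corollary is stated without proof right after the proposition that a faithful, non-topologically-free boundary action forces $C$ to be non-abelian. When $|m|=|n|$ you use that the nontrivial normal elliptic subgroup $\langle a^m\rangle$ acts trivially on $\partial T$, and when $|m|\neq|n|$ you combine that proposition (with $C\cong\ZZ$ abelian) with faithfulness, which you correctly establish by identifying $E_T(G)$ with the normal core of $\langle a\rangle$ and showing $t^{-1}a^{mj}t=a^{nj}=a^{\pm mj}$ forces $j=0$, a detail the paper leaves implicit.
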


\begin{ex}
{\bf Solvable Baumslag-Solitar groups.}
This example shows that the conclusion of Theorem \ref{real main theorem} does not hold if we take a quasi-parabolic action instead of a general-type action in the assumptions.

The Baumslag-Solitar group $BS(1,2)$ admits a quasi-parabolic action on its Bass-Serre tree $T_{BS}$ since it is an ascending HNN extension (see \cite[Section 2.5]{FMMS}), such that the induced action on $\Lambda_{T_{BS}}(BS(1,2))=\partial T_{BS}$ is topologically free \cite[Lemma 8.6]{FMMS}. However, $BS(1,2)$ is solvable of order $3$, so it satisfies the identity $$
[[[x_1,x_2],[x_3,x_4]],[[x_5,x_6],[x_7,x_8]]]=1 .
$$
\end{ex}

\begin{ex}
{\bf Topologically free action on the boundary.} This example shows why in Theorem \ref{real main theorem} we must consider topologically free actions on the limit set, not the whole boundary as an equivalent condition. A similar example is given in \cite[Proposition 9.2]{FMMS}, where they discuss the necessity of minimality of the action on the tree in their case, which exactly gives faithfulness of the action on the limit set.

Let $G = F_2 \times \ZZ$, where $F_2$ is a free group of rank $2$. The action of $F_2$ on its Cayley graph $T$ is of general type. Let $E$ be a countable bundle of rays from a unique point $o$, enumerated by integers. The group $\ZZ$ acts on $E$ as follows: $n\cdot r_m = r_{n+m}$ for every $n \in \ZZ$ and every ray $r_m \subset E$. This action of $\ZZ$ on $E$ is elliptic since it fixes a point $o$. Let $S$ be a space, constructed by gluing over each vertex of $T$ a copy of $E$ at its fixed point $o$. Then $G$ acts on $S$ by isometries, where elements from $F_2$ act on a copy of $T$ and elements of $\ZZ$ act on copies of $E$ in $S$. This action is of general type, and the induced action on $\partial S$ is topologically free. However, it is not topologically free (and not even faithful) on $\Lambda_S(G) = \partial T$, since elements $(1, n) \in F_2 \times \ZZ$, $n\neq 0$ fix $\partial T$. Obviously, $G$ satisfies a nontrivial mixed identity $[f^x, n]=1$, where $f \in F_2$, $x$ is a free variable, and $n \in \ZZ\setminus\{0\}$.  
\end{ex}

\begin{ex}{\bf Monolithic lim-free groups}. This example shows that the converse of Proposition \ref{monolithic groups} is not true, i.e., there exist lim-free weakly hyperbolic (hence MIF) groups with infinite simple monolith.

Consider $\PSL_2(\CC)$, $\PSL_2(\RR)$, $\PSL_2(\mathbb{Q})$, or any $\PSL_2(F)$, where $F$ is a field of $char=0$ with cardinality less or equal to continuum. By the proof of Corollary \ref{generalized Tits Alternative}, they are lim-free, and it is well known that they are simple (see \cite{Dix}), so coincide with their monolith.
\end{ex}
\begin{ex}{\bf Non-faithful weakly hyperbolic MIF groups.}\label{example H}
\end{ex}
\begin{prop}\label{construction of MIF WH not FWH groups}
Every countable group can be embedded into a finitely generated MIF weakly hyperbolic group, which is not faithful.
\end{prop}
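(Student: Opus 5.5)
We want: every countable group $C$ embeds into a finitely generated group $H$ that is MIF and weakly hyperbolic, but not faithful weakly hyperbolic. The plan is to first embed $C$ into a finitely generated MIF group with a faithful general type action, and then take a direct product with a second MIF weakly hyperbolic group, so that the product acquires a large elliptic normal subgroup.

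\textbf{Step 1: a finitely generated MIF overgroup.} Using standard small cancellation or acylindrically hyperbolic embedding results (e.g.\ Dahmani--Guirardel--Osin or Hull--Osin), embed $C$ into a finitely generated acylindrically hyperbolic group $K$ with trivial finite radical. For instance, embed $C$ into a 2-generated group, and then into a torsion-free relatively hyperbolic or small cancellation quotient of a free product. By Corollary \ref{acylindrically hyperbolic groups are lim-free}, $K$ is MIF and lim-free.

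\textbf{Step 2: the product.} Set $H=K\times K$, which is finitely generated and weakly hyperbolic via the action of the first factor. However, $H$ is not MIF, since it is directly decomposable. So the plain direct product does not work, and a variant is needed that destroys faithfulness while keeping MIF. This is the main obstacle.

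\textbf{Step 3: handling the obstacle.} The first idea is to exploit the fact that MIF only forbids mixed identities, while faithfulness fails as soon as some nontrivial normal subgroup acts elliptically in every general type action. By Lemma \ref{dichotomy}, that happens whenever a nontrivial normal subgroup $N$ cannot act with general type, for example when $N$ satisfies a law. But a MIF group cannot contain a nontrivial normal subgroup satisfying a law: a law $v$ on $N$ yields the nontrivial mixed identity $v(n^{x_1},\dots)=1$ with $n\in N\setminus\{1\}$.

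So the correct mechanism must be different. One can take a normal subgroup $N$ that admits general type actions, but such that in every general type action of $H$, the subgroup $N$ is forced to be elliptic. This works if $N$ is normal and every element of $N$ has a bounded or commensurated-type conjugacy structure. A natural candidate is an infinite simple torsion normal subgroup that is elliptic in any action because it is generated by elements of finite order with bounded generation, for instance a boundedly generated simple group.

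\textbf{Concrete construction.} Let $N$ be an infinite MIF group with property (FH)-type rigidity for hyperbolic spaces, i.e.\ every action of $N$ on a hyperbolic space has bounded orbits. Possible choices are a Higman-type or uniformly perfect group with bounded generation by torsion, or $\mathrm{SL}_n(\mathbb Z)$-like groups (but those are not MIF because of their center). Form $H=N\rtimes_\alpha K$ or a permutational wreath-type extension in which $K$ acts on $N$ with MIF preserved, using the Hull--Osin criterion that extensions of MIF groups by MIF groups with suitable conjugation are MIF.

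\textbf{Verification.} In any general type action of $H$ on a hyperbolic space $S$, the normal subgroup $N$ is elliptic, so $N\le E_S(H)$ by Proposition \ref{elliptic radical is maximal}. Hence $E_S(H)\neq 1$, and $H$ is not faithful weakly hyperbolic. Weak hyperbolicity of $H$ comes from the quotient $H/N\cong K$. The hardest part is proving MIF for $H$, and I would attempt it via the transitivity/ping-pong criterion of Hull--Osin.
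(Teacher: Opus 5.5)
There is a genuine gap: the proposal never actually produces the group. You abandon Step~2, and in the ``concrete construction'' you exhibit no specific $N$ and specify no action of $K$ on $N$. The MIF property of $N\rtimes_\alpha K$, which you yourself flag as the hardest part, is left to a ``criterion that extensions of MIF groups by MIF groups with suitable conjugation are MIF''. That is not an available result, and MIF is not automatic here. If some $c\neq 1$ in $H$ centralizes $N$, then for $n\in N\setminus\{1\}$ the word $[n^x,c]$ is a nontrivial mixed identity of $H$. So at the very least the conjugation map $H\to\Aut(N)$ would have to be injective, and even that yields no proof of MIF. As written, the statement is reduced to an unproved existence claim.

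The mechanism is also misidentified. You do not need $N$ to be MIF, to have bounded orbits in all of its own actions (an (FH)-type property), or to ``admit general type actions''. By Lemma~\ref{dichotomy}, a normal subgroup of $H$ in a general type action is either elliptic or of general type, and a general type action requires loxodromic, hence infinite order, elements. So \emph{any torsion} normal subgroup is automatically elliptic, and therefore lies in $E_S(H)$ by Proposition~\ref{elliptic radical is maximal}. You briefly mention torsion, but then add bounded generation and rigidity conditions that play no role.

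Your observation that a MIF group has no nontrivial normal subgroup satisfying a law is correct, but it does not block this route. Locally finite groups need not satisfy any law; for example, the finitary alternating group on an infinite set contains every finite group.

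This is exactly what the paper exploits:
\begin{enumerate}
\item It embeds the countable group into a finitely generated non-ascending HNN extension $G$, which is weakly hyperbolic via its Bass--Serre tree. Your acylindrically hyperbolic $K$ would serve equally well.
\item It then invokes Hyde--Lodha \cite[Thm 1.1]{HL}: there is a finitely generated MIF group $H\geq G$ with $1\to N\to H\to G\times\ZZ\to 1$ and $N$ locally finite.
\item $H$ is weakly hyperbolic through its quotient.
\item $N\neq 1$, since otherwise $H\cong G\times\ZZ$, which has nontrivial center and so is not MIF.
\item $N$ is torsion and normal, so it lies in the elliptic radical of every general type action, and $H$ is not faithful weakly hyperbolic.
\end{enumerate}
The missing ingredient in your proposal is precisely such an embedding theorem, one that supplies a MIF group with a nontrivial locally finite normal subgroup and a weakly hyperbolic quotient.
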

\begin{proof} Let $K$ be an arbitrary countable group. By the Higman-Neumann-Neumann Embedding Theorem \cite{HNN}, $K$ can be embedded in a finitely generated group $G$. By the construction in \cite{HNN}, $G$ is a non-ascending HNN-extension; therefore, it is weakly hyperbolic. Then by \cite[Thm 1.1]{HL}, there exists a finitely generated group $H$, such that:
\begin{enumerate}
    \item $G \leq H \leq \operatorname{Sym}(G \times \ZZ)$;
    \item There is a short exact sequence:

$$
1 \longrightarrow N \longrightarrow H \longrightarrow(G \times \ZZ) \longrightarrow 1,
$$

where $N$ is locally finite;
\item  $H$ is MIF.
\end{enumerate}
The group $H$ maps surjectively on $G \times \ZZ$, and $G \times \ZZ$ is weakly hyperbolic, since $G$ is weakly hyperbolic. Therefore, $H$ also admits a general type action on the same hyperbolic space as $G$. Note that the subgroup $N$ is non-trivial; otherwise $H$ would be isomorphic to $G \times \ZZ$, which is impossible since $H$ is MIF and  $G \times \ZZ$ has a non-trivial center. Since $N$ is locally finite, it consists of torsion elements of $H$. Since $N$ is normal in $H$, it cannot be a parabolic subgroup with respect to any action on any hyperbolic space, so it is always elliptic, and, by Proposition \ref{elliptic radical is maximal}, it should be in an elliptic radical of $H$ with respect to any general type action. Therefore, any elliptic radical of $H$ is non-trivial, and $H$ is not faithful weakly hyperbolic. 
\end{proof}

\begin{ex} 
{\bf Finitely presented simple MIF groups.}\label{ex BM}
\end{ex}
{\bf Kac-Moody groups.} We partially answer Question 3.6 from \cite{BFFHZ}.

\begin{proof}[Proof of Proposition \ref{Kac-Moody}]
    Let $\Gamma(F)$ be a finitely presented Kac-Moody group over a finite field, constructed in \cite{CR}, such that its quotient by the finite center $G(F)=\Gamma(F)/Z(\Gamma(F))$ is an infinite finitely presented simple group. We want to show that $G(F)$ is lim-free weakly hyperbolic. First, we will show that $G(F)$ is weakly hyperbolic.

    The group $G(F)$ is a nonuniform lattice in a product of two buildings.  Since $F$ is finite, the associated building is locally finite; hence its Davis realization $X$ is a proper $\CAT(0)$ space. Since $F$ is a field, by the proof of \cite[Corollary 1.3 (page 19, line 1 of the proof)]{CF}, $G(F)$ acts Weyl-transitively on each of the two buildings.
    By \cite[Proposition 5.3]{CF}, the action of $G(F)$ on one of these buildings $X$ contains two rank-one elements $g_1$ and $g_2$ with disjoint limit points in the boundary of $X$. 

    By \cite[Theorem B]{PSZ}, there exists $\delta$ such the curtain model $X_D$ associated to $X$ is $\delta$-hyperbolic and $\Isom(X) \leq \Isom(X_D)$. By \cite[Theorem C]{PSZ}, two rank-one elements $g_1$ and $g_2$ of the action of $G(F)$ on $X$ become loxodromic elements of the action of $G(F)$ on $X_D$. Finally, by \cite[Theorem L]{PSZ}, the boundary $\partial X_D$ embeds as an $\Isom$-invariant subset of $\partial X$; thus, the four limit points of rank-one elements $g_1$ and $g_2$ in $\partial X$ must also be four distinct points when looking at the corresponding limit points on $\partial X_D$. Therefore, the action of $G(F)$ on $X_D$ is of general type. Since $G(F)$ is simple, $E_{X_D}(G(F))= \{1\}$, and the action of $G(F)$ on $\Lambda_{X_D}(G(F))$ is faithful. Note that the space $X_D$ is not necessarily geodesic, though, by \cite[Proposition N]{PSZ}, there is a geodesic hyperbolic space $E(X_D)$, called the {\it injective hull}, such that the canonical embedding $X_D \rightarrow E(X_D)$ is an $\Isom(X_D)$-equivariant quasi-isometry. In particular, it induces an $\Isom(X)$-equivariant homeomorphism between the Gromov boundaries of $X_D$ and $E(X_D)$. Thus, $g_1$ and $g_2$ act loxodromically on $E(X_D)$ and have disjoint pairs of fixed points. Moreover, the elliptic radical
$$
E_{E(X_D)}(G(F))=E_{X_D}(G(F))=\{1\}.
$$
Thus, we can pass to the space $E(X_D)$ to conclude that $G(F)$ is faithful weakly hyperbolic.

    Now, assume that the action of $G(F)$ on $\Lambda_{E(X_D)}(G(F))$ is not topologically free. Then, by Corollary \ref{infinite direct products}, the rigid stabilizers of nonempty open subsets of $\Lambda_{E(X_D)}(G(F))$ are infinite and contain infinite direct sums of other rigid stabilizers. Assume that a rigid stabilizer of some nonempty non-dense open subset $U$ of $\Lambda_{E(X_D)}(G(F))$ has an element of order $1 < n < \infty$. Then Lemma \ref{wreath products in rigid stabilizers} allows us to construct elements of arbitrarily large finite order in $\rist_{G(F)}(U)$. Since the center $Z(\Gamma(F))$ is finite, this contradicts \cite[Corollary 1.3]{Cap}. Therefore, the rigid stabilizers of nonempty non-dense open subsets in the limit set $\Lambda_{E(X_D)}(G(F))$ are torsion-free. Thus, $G(F)$ contains an infinite direct sum $P$ of infinite cyclic subgroups. By \cite[Theorem 64]{BD}, the asymptotic dimension of $G(F)$ is at least the supremum of the asymptotic dimensions of its finitely generated subgroups. For every $n \in \NN$, a subgroup $P$ contains a direct sum $P_n$ isomorphic to $\ZZ^n$. Since the asymptotic dimension of $\ZZ^n$ is $n$, we obtain that the asymptotic dimension of $G(F)$ is infinite. This yields a contradiction with the fact that the asymptotic dimension of $G(F)$ is bounded from above by the asymptotic dimension of the product of two buildings it acts on \cite[last paragraph]{SWZ}, which is finite by \cite{DS}. Therefore, the action of $G(F)$ on $\Lambda_{E(X_D)}(G(F))$ is topologically free and, by Theorem \ref{main_theorem}, $G(F)$ is MIF. By Corollary \ref{selfless groups}, the reduced $C^*$-algebra of $G(F)$ is selfless.
    \end{proof}

{\bf Burger-Mozes group.} In \cite{BM97}, the finitely presented simple Burger-Mozes group $G$ was constructed. It acts by isometries on a locally finite tree $T$ such that the induced action on $\partial T$ is $2$-transitive (\cite[Remark 5]{BM97}). Therefore, the action of $G$ on $T$ is of general type. By construction, $G$ acts properly and cocompactly by isometries on the product of two locally finite trees. Hence, it has finite asymptotic dimension. So, the argument of Proposition \ref{Kac-Moody} can be applied to $G$ to conclude that $G$ is MIF, and the reduced $C^*$-algebra of $G$ is selfless.   
The fact that $G$ is MIF was first observed in \cite[Remark 3.5]{BFFHZ}.

{\bf Amir-Lazarovich groups.}
Very recently, in \cite{AL}, M. Amir and N. Lazarovich constructed simple finitely presented groups acting on products of two Davis complexes: a $c$-regular tree with $c \geq 6$ and the right-angled Davis complex of an Odd graph. By \cite[Theorem 5.2]{AL}, the action of such groups on a tree is vertex-transitive and therefore is of general type. Thus, the rest of the argument of Proposition \ref{Kac-Moody} also applies, and such groups are MIF and have selfless reduced $C^*$-algebras.

\begin{rem}
In \cite{TMW} a family of lattices acting on two-dimensional Euclidean buildings was introduced, and recently it was proved that this construction also produces a family of infinite simple groups \cite{LW}. Unfortunately, the reasoning from Proposition \ref{Kac-Moody} does not apply to this family, and we do not know whether they are mixed identity free or not.  
\end{rem}

\vspace{5mm}

\noindent \textbf{Ekaterina Rybak: } Department of Mathematics, Vanderbilt University, Nashville, TN, U.S.A.

\noindent E-mail: \emph{ekaterina.rybak@vanderbilt.edu}


\begin{thebibliography}{99}

\bibitem[ABDHMW]{Spencer}
T. Aougab, H. Bray, S. Dowdall, H. Hoganson, S. Maloni, B. Whitfield, Constructing reducibly geometrically finite subgroups of the mapping class group. \emph{Groups Geom. Dyn.}, 2026.

\bibitem[AG]{AG}
N. Avni, T. Gelander, Mixed identities in linear groups -- effective version,  \emph{arXiv:2510.03492v1}.

\bibitem[AGKEP]{AGKEP}
T. Amrutam, D. Gao, S. Kunnawalkam Elayavalli, G. Patchell, Strict comparison in reduced group $C^*$-algebras. \emph {Invent. math.} \textbf{242} (2025), 639--657.

\bibitem[AL]{AL}
M. Amir, N. Lazarovich, Simple lattices in products of Davis complexes, \emph{arXiv:2605.09493}.

\bibitem[AMS]{AMS}
R. Aoun, P. Mathieu and \c C. Sert, Random walks on hyperbolic spaces: second order expansion of the rate function at the drift, \emph{J. \'Ec. polytech. Math.} \textbf{10} (2023), 549--573.

\bibitem[AS]{AS}
R. Aoun and \c C. Sert, Random walks on hyperbolic spaces: concentration inequalities and probabilistic Tits alternative, \emph{Probab. Theory Related Fields} {\bf 184} (2022), no.~1-2, 323--365.


\bibitem[Bas]{Bas}
H. Bass, Covering theory for graphs of groups, \emph{J. Pure Appl. Algebra} {\bf 89} (1993), no.~1-2, 3--47.

\bibitem[BD]{BD}
G. Bell, A. Dranishnikov, Asymptotic dimension, \emph{Topology Appl.}, \textbf{155} (2008), 1265--1296.

\bibitem[Bea]{Bea}
A. Beardon, The geometry of discrete groups. Graduate Texts in Mathematics, \emph{Springer}, {\bf 91}, New York etc., 1995.

\bibitem[BEH]{BEH}
C. Bleak, L. Elliott, J.~T. Hyde, Sufficient conditions for a group of homeomorphisms of the Cantor set to be two-generated, \emph{J. Inst. Math. Jussieu} {\bf 23} (2024), no.~6, 2825--2858.

\bibitem[Bes]{Bes}
M. Bestvina. $\mathbb{R}$-trees in topology, geometry, and group theory. In \emph{Handbook of Geometric Topology}, pages 55-91. North Holland, 2001.

\bibitem[BFFHZ]{BFFHZ}
J. Belk, F. Fournier-Facio, J. Hyde, M. C. B. Zaremsky, Boone-Higman embeddings of $Aut(F_n)$ and mapping class groups of punctured surfaces,  arXiv:2503.21882v2.

\bibitem[BFGS]{BFGS}
S.Balasubramanya, F. Fournier-Facio, A.Genevois, Property (NL) for hyperbolic group actions, with Appendix by A.Sisto, {\it arXiv:2212.14292}.

\bibitem[BH]{BH}
H. Baik, W. Jang, On the kernel of actions on asymptotic cones, \emph{Journal of Group Theory}, \textbf{28} (2025) 753--809.

\bibitem[BH99]{BH99} M.R. Bridson, A. Haefliger,
Metric spaces of non-positive curvature.
\emph{Grundlehren der Mathematischen Wissenschaften} [Fundamental Principles of Mathematical Sciences] \textbf{319}. \emph{Springer-Verlag, Berlin}, 1999.

\bibitem[BM]{BM}
A. Le Boudec, N. Matte Bon, Triple transitivity and non-free actions in dimension one, \emph{J. Lond. Math. Soc.} \textbf{105} (2022), 2, 884--908.

\bibitem[BM97]{BM97}
M. Burger and S. Mozes, Finitely presented simple groups and products of trees, $C$. R. \emph{Acad. Sci. Paris Sér.} I Math., (1997) \textbf{324} 747--752.

\bibitem[BMSS]{BMSS}
A. Boulanger, P. Mathieu, C. Sert, A. Sisto, Large deviations for random walks on Gromov-hyperbolic spaces, \emph{Ann. Sci. \'Ec. Norm. Sup\'er.} (4) {\bf 56} (2023), no.~3, 885--944.

\bibitem[BS07]{BS07}
Sergei Buyalo and Viktor Schroeder. Elements of asymptotic geometry. \emph{EMS Monographs in Mathematics. European Mathematical Society (EMS)}, Zurich, 2007.

\bibitem[BS]{BS}
H. Bradford, A. Sisto, Non-solutions to mixed equations in acylindrically hyperbolic groups coming from random walks, \emph{arXiv:2504.15456v1}.


\bibitem[CDP]{french book}
M. Coornaert, T. Delzant, A. Papadopoulos, Géométrie et théorie des groupes, {\emph Lecture Notes in Mathematics}, {\bf 1441}, Springer-Verlag, Berlin, 1990.

\bibitem[Cap]{Cap}
P-E. Caprace, A uniform bound on the nilpotency degree of certain subalgebras of Kac–Moody algebras, \emph{J. of Algebra} \textbf{317}(2) (2007), 867--876.

\bibitem[CF]{CF}
P-E. Caprace, K. Fujiwara, Rank-One Isometries of Buildings and Quasi-Morphisms of Kac–Moody Groups. \emph{Geom. Funct. Anal.} \textbf{19} (2010), 1296--1319.

\bibitem[Cho]{Cho}
I. Choi, Central limit theorem and geodesic tracking on hyperbolic spaces and Teichm\"uller spaces, \emph{Adv. Math.}, {\bf 431} (2023), 68.

\bibitem[Cou16]{Cou16}
R. Coulon, Partial periodic quotients of groups acting on a hyperbolic space, \emph{Ann. Inst. Fourier (Grenoble)}, {\bf 66} (2016), 1773--1857.

\bibitem[CR]{CR}
P.-E. Caprace, B. Rémy, Simplicity and superrigidity of twin building lattices, \emph{Invent. Math.}, \textbf{176}(2009), 169--221.

\bibitem[CRW]{CRW}
 P. Caprace, C. Reid, G. Willis, Locally normal subgroups of totally disconnected groups. Part II: Compactly generated simple groups, \emph{Forum Math. Sigma} \textbf{5} (2017), no. e12, 89.

\bibitem[DGO]{DGO}
F. Dahmani, V. Guirardel, and D. Osin, Hyperbolically embedded subgroups and rotating families in groups acting on hyperbolic spaces, \emph{Mem. Amer. Math. Soc.}, \textbf{245} (2017), v+152.

\bibitem[Dix]{Dix}
L. E. Dickson, Theory of linear groups in an arbitrary field, {\emph Trans. Amer. Math. Soc.} \textbf{2} (1901), 363--394.

\bibitem[DSU]{DSU}
T. Das, D. Simmons,  M. Urbanski, Geometry and dynamics in Gromov hyperbolic metric spaces: With an emphasis on non-proper settings, Mathematical Surveys and Monographs \textbf{218}, 2017.



\bibitem[FMMS]{FMMS}
P. Fima, F. Le Ma\^{i}tre, S. Moon, Y. Stalder, A characterization of high transitivity for groups acting on trees, \emph{Discrete Anal.} \textbf{8} (2022), 63.

\bibitem[GGS]{GGS}
T. Gelander, Y. Glasner, G. Soifer, Maximal subgroups of countable groups: a survey. In Dynamics, geometry, number theory: the impact of Margulis on modern mathematics, \emph{University of Chicago Press} (2022), 169--210.


\bibitem[GOR]{GOR}
G. Goffer, D.~V. Osin and E. Rybak, Frattini subgroups of hyperbolic-like groups, \emph{J. Algebra} {\bf 658} (2024), 22--37.

\bibitem[Gro87]{Gro87}
M. Gromov, Hyperbolic groups, Essays in Group Theory, \emph{MSRI Series} \textbf{8}, (S.M. Gersten, ed.), \emph{Springer}, 1987, 75--263.

\bibitem[GR]{GR}
A. Garrido, C. D. Reid, Compressible subgroups and simplicity, {\it arxiv:2412.18891}

\bibitem[DS]{DS}
J. Dymara, T. Schick, Buildings have finite asymptotic dimension, \emph{Russ. J. Math. Phys.}, \textbf{16} (2009), 409-–412.

\bibitem[GTT22]{GTT22}
I. Gekhtman, S.~J. Taylor and G. Tiozzo, Central limit theorems for counting measures in coarse negative curvature, \emph{Compos. Math.} {\bf 158} (2022), no.~10, 1980--2013.


\bibitem[Ham]{Ham}
M. Hamann, Group actions on metric spaces: fixed points and free subgroups, \emph{Abh. Math. Semin. Univ. Hambg.} \textbf{87} (2017), 245--263.

\bibitem[HL]{HL}
J. Hyde, Y. Lodha, Embeddings into highly transitive and mixed identity free groups, \emph{arXiv:2509.09788}, 2025

\bibitem[HNN]{HNN}
G. Higman, B.~H. Neumann and H. Neumann, Embedding theorems for groups, \emph{J. London Math. Soc.} {\bf 24} (1949), 247--254.

\bibitem[HO13]{HO13}
M. Hull, D. Osin, Conjugacy growth of finitely generated groups, \emph{Advances in Mathematics.} \textbf{235} (2013) (1), 361--389.

\bibitem[HO16]{OH}
M. Hull, D. Osin, Transitivity degrees of countable groups and acylindrical hyperbolicity, \emph{Israel J. Math.} \textbf{216} (2016), no.1, 307--353.

\bibitem[Hor]{Hor}
C. Horbez, Central limit theorems for mapping class groups and ${\rm Out}(F_N)$, Geom. Topol. {\bf 22} (2018), no.~1, 105--156.

\bibitem[HV]{HV}
P. de la Harpe, Alain Valette. La propriété $(T)$ de Kazhdan pour les groupes localement compacts (avec un appendice de Marc Burger).\emph{ Astérisque, No. 175. Société Mathématique de France}, Paris, 1989. With an appendix by M. Burger.

\bibitem[Jac]{Jac}
B. Jacobson, Algebraic subgroups of acylindrically hyperbolic groups, \emph{J. of Algebra}, \textbf{476} (2017), 113--133.

\bibitem[KM]{KM}
O. Kharlampovich, A. Myasnikov, Definable sets in a hyperbolic group, \emph{Internat. J. of Algebra and Comput.} \textbf{23} (2013), 91--110.

\bibitem[KS]{KS}
M. Kapovich, P. Sardar, Trees of hyperbolic spaces, \emph{Mathematical surveys and monographs}, {\bf 282}, 2024.

\bibitem[La]{La}
S. Lang, Algebra. Volume 1. Revised third edition, \emph{Graduate Texts in Mathematics} {\bf 211}, New York, NY: \emph{Springer} (2002)

\bibitem[LS]{LS}
R. Lyndon, P. Schupp, Combinatorial Group Theory,\emph{Classics in Mathematics, Springer Berlin, Heidelberg} (2001)

\bibitem[LW]{LW}
J. Lécureux, S. Witzel, The Normal Subgroup Theorem for lattices on two-dimensional Euclidean buildings, \emph{arXiv:2605.06163}

\bibitem[Mah]{Mah}
J. Maher, Linear progress in the complex of curves, \emph{Trans. Amer. Math. Soc.} \textbf{362} (2010),  2963--2991.

\bibitem[Mal]{Mal}
A.I. Malcev, On isomorphic matrix representations of infinite groups of matrices (Russian), \emph{Mat. Sb.} {\bf 8} (1940), 405–422, \emph {Amer. Math. Soc. Transl.} {\bf (2) 45} (1965), 1--18.

\bibitem[MT]{MT}
J. Maher, G. Tiozzo, Random walks on weakly hyperbolic groups, \emph{J. Reine Angew. Math.}, \textbf {742} (2018), 187--239.

\bibitem[Osi16]{Osi16}
D. V. Osin, Acylindrically hyperbolic groups, \emph{Trans. Amer. Math. Soc.}, \textbf{368} (2016), pp. 851--888.

\bibitem[Osi17]{Osi17}
D. Osin, Invariant random subgroups of groups acting on hyperbolic spaces. \emph{Proc. Amer. Math. Soc.}, \textbf{145} (2017), no. 8, 3279--88.

\bibitem[Oza]{Oza}
N. Ozawa, Proximality and selflessness for group $C^*$-algebras,  \emph{arXiv:2508.07938v5}.

\bibitem[Pau96]{Paul}
F. Paulin, Un groupe hyperbolique est déterminé par son bord. \emph{J. London Math. Soc.} \textbf{54}(1) (1996), 50--74.

\bibitem[PSZ]{PSZ}
H. Petyt, D. Spriano, A. Zalloum, Hyperbolic models for $\CAT(0)$ spaces, \emph{Adv. in Math.}, \textbf{450} (2024).

\bibitem[Rob]{Rob}
L. Robert, Selfless $C^*$-algebras, \emph{Adv. Math.} \textbf{478} (2025), 110409.

\bibitem[Ser]{Ser}
P. Serre (with H. Bass), Trees, \emph{Springer-Verlag}, 1980.

\bibitem[Sis]{Sis}
A. Sisto, Quasi-convexity of hyperbolically embedded subgroups, \emph{Mathematische Zeitschrift}. \textbf{283} (2016) (3–4), 649--658.

\bibitem[Sun]{Sun}
M. Sunderland, Linear progress with exponential decay in weakly hyperbolic groups, \emph{Groups Geom. Dyn.}, \textbf {14}(2) (2020), 539--566.

\bibitem[TMW]{TMW}
T. Titz Mite, S. Witzel, Non-residually finite $\widetilde{C}_2$-lattices, \emph{	arXiv:2509.05054}.

\bibitem[SWZ]{SWZ}
R. Skipper, S. Witzel, M.C.B. Zaremsky, Simple groups separated by finiteness properties, \emph{Invent. math.} \textbf{215} (2019), 713–740.

\bibitem[Tom]{Tom}
G. Tomanov, Generalized group identities in linear groups,(Russian) \emph{Mat. Sb. (N.S.)}\textbf{1}, (1984) 35--49.

\bibitem[Vid]{Vid}
I. Vigdorovich, Structural properties of reduced $C^*$-algebras associated with higher-rank lattices, \emph{arXiv:2503.12737v3}.

\end{thebibliography}
\end{document}